\documentclass[a4paper,11pt]{amsart}
\usepackage{quiver}
\usepackage{pdflscape}

\usepackage[table,xcdraw]{xcolor}

\usepackage{amsthm,amsmath,amssymb,stmaryrd,enumerate,euscript,mathrsfs,a4,array, microtype, stmaryrd, url}
\usepackage{multirow}
\usepackage{arydshln}
\usepackage{pictexwd,dcpic,graphicx,cite}
\usepackage[export]{adjustbox}
\usepackage{subcaption}
\usepackage[latin1]{inputenc}
\usepackage{indentfirst}
\usepackage{calligra}
\usepackage{graphicx}
\usepackage{amssymb}
\usepackage{fancyhdr}
\usepackage{amsmath}
\usepackage{latexsym}
\usepackage{amsthm}
\usepackage[all]{xy} 
\usepackage{amscd}
\usepackage{hyperref}
\usepackage{tikz-cd}
\usepackage{tikz}
\usepackage{todonotes}
\usepackage{comment}

\usetikzlibrary{arrows.meta}

\usepackage[a4paper,top=3cm,bottom=3cm,inner=2.5cm,outer=2.5cm]{geometry}

\usepackage{tikz}
\usetikzlibrary{positioning,decorations.markings,arrows.meta,calc,fit,quotes,cd,math,arrows,backgrounds,shapes.geometric}
\usepackage{amsmath,mathtools}
\usepackage{xparse}

\makeatletter
\newenvironment{smallermatrix}[1][c]
{\null\,\vcenter\bgroup
  \Let@\restore@math@cr\default@tag
  \baselineskip0pt \lineskip0.4pt \lineskiplimit0pt
  \ialign\bgroup\if#1l\else\hfil\fi$\m@th\scriptstyle##$\if#1r\else\hfil\fi&&\thickspace\hfil
  $\m@th\scriptstyle##$\hfil\crcr
}{%
  \crcr\egroup\egroup\,%
}
\makeatother

\NewDocumentCommand{\ts}{O{c} e{^?_}}{
  \begin{smallermatrix}[#1]
  \mathstrut\IfValueT{#2}{#2} \\
  \mathstrut\IfValueT{#3}{#3} \\
  \mathstrut\IfValueT{#4}{#4}
  \end{smallermatrix}%
}

\makeatletter
\def\hlinewd#1{%
\noalign{\ifnum0=`}\fi\hrule \@height #1 \futurelet
\reserved@a\@xhline}
\makeatother

\graphicspath{ {images/} }
\usepackage{babel}
\usepackage{amsthm}
\usepackage{thmtools}
\usepackage{cleveref}
\crefrangeformat{equation}{(#3#1#4-#5#2#6)}

\newtheorem{thm}{Theorem}[section]
\newtheorem*{theorem*}{Theorem}

\newtheorem{cor}[thm]{Corollary}
\newtheorem{prop}[thm]{Proposition}

\newtheorem{lemma}[thm]{Lemma}

\newtheorem{defn}[thm]{Definition}

\newtheorem{rmk}[thm]{Remark}

\newtheorem*{remark*}{Remark}
\newtheorem*{remarks*}{Remarks}
\newtheorem*{notation*}{Notation}
\newtheorem*{convention*}{Convention}

\newcommand{\co}{\mathit{co}}
\newcommand{\rev}{\mathit{rev}}
\newcommand{\nb}{\mathit{nb}}

\newcommand{\cat}{\mathcal} 

\newcommand{\catA}{\cat{A}}
\newcommand{\catB}{\cat{B}}
\newcommand{\catC}{\cat{C}}
\newcommand{\catD}{\cat{D}}

\newcommand{\ca}{\cat{A}}
\newcommand{\cb}{\cat{B}}
\newcommand{\cc}{\cat{C}}
\newcommand{\cd}{\cat{D}}
\newcommand{\ce}{\cat{E}}

\newcommand{\cx}{\cat{X}}

\newcommand{\mul}{\mathbb}

\newcommand{\mc}{\mul{C}}

\newcommand{\tcat}{\mathbf}

\newcommand{\Gray}{\tcat{2}\text{-}\tcat{Cat}}
\newcommand{\GCat}{\tcat{G}\text{-}\tcat{Cat}}

\newcommand{\GCatm}{\mathbb{L}\tcat{ax}}

\newcommand{\GCatmp}{\mathbb{P}\tcat{sd}}

\newcommand{\aeq}{\textrm{ae}} 
\newcommand{\aexi}{\Theta} 

\newcommand{\Set}{\mathbf{Set}}

\newcommand{\lax}{\mathbf{Lax}}
\newcommand{\psd}{\mathbf{Psd}}
\newcommand{\psden}{\mathbf{Psd}_\nb}

\newcommand{\oplax}{\mathbf{Oplax}}

\def\red{\color{red}}
\def\blue{\color{blue}}
\def\black{\color{black}}

\def\grey{\color{lightgray}}

\makeatletter
\protected\def\xvcenter{%
  \hbox\bgroup$\everyvbox{\everyvbox{}\aftergroup\m@th\aftergroup$\aftergroup\egroup}%
  \vcenter
}

\DeclareRobustCommand{\midscript}[1]{
  \mathchoice{\mid@script\scriptstyle{#1}}
    {\mid@script\scriptstyle{#1}}
    {\mid@script\scriptscriptstyle{#1}}
    {\mid@script\scriptscriptstyle{#1}}
}
\newcommand{\mid@script}[2]{
  \vcenter{\hbox{$\m@th#1#2$}}
}

\makeatletter

\makeindex
\title{The symmetric skew monoidal structure of Gray-categories}
\author{John Bourke}
\address{Department of Mathematics and Statistics, Masaryk University, Kotl\'a\v rsk\'a 2, Brno 61137, Czech Republic}
\email{bourkej@math.muni.cz}

\author{Gabriele Lobbia}
\address{Departmental of Mathematics, University of Milan, Via Saldini, 50, 20133 Milano, Italy}
\email{gabriele.lobbia@unimi.it}

\begin{document}

\begin{abstract}
In previous work, we constructed a closed skew monoidal structure on the category of Gray-categories and Gray-functors whose internal hom $\psd(\ca,\cb)$ captures pseudomaps and pseudo-transformations.  In the present paper we give a symmetric description of the Gray-category $\psd(\ca,\cb)$ and use this to prove that the skew monoidal structure is symmetric skew monoidal.
\end{abstract}

\maketitle

\tableofcontents

\section{Introduction}

A stumbling block in the algebraic approach to higher categories is that the category $\GCat$ of Gray-categories (or semi-strict $3$-categories) does not admit a symmetric monoidal closed structure capturing higher-dimensional pseudo-transformations.  In our previous paper \cite{BourkeLobbia}, we overcame this problem by equipping $\GCat$ with two closed \emph{skew monoidal structure} capturing higher-dimensional pseudo-transformations.

In the present paper, we focus on the weaker or most flexible of these, which we call the flexible skew structure $(\GCat, \otimes_p,1)$.   Its internal hom $\psd(\ca,\cb)$ contains pseudomaps of Gray-categories, pseudo-transformations, modifications and perturbations.    

The main goal of the present paper is to prove that the flexible skew monoidal structure $(\GCat, \otimes_p,1)$ is \emph{symmetric skew monoidal} in the sense of \cite{BourkeLack:symmetric}.  Note that symmetry in the skew setting is a little different to classical symmetry, not involving a natural isomorphism $\ca \otimes \cb \to \cb \otimes \ca$ but, rather, a natural isomorphism $(\ca \otimes \cb) \otimes \cc \to (\ca \otimes \cc) \otimes \cb$ where the first variable is kept fixed.  Nonetheless, these coincide with classical braidings when the underlying skew monoidal category is monoidal.

In order to do this, we give a new symmetric description of the Gray-categories $\psd(\ca,\cb)$.  Whereas in \cite{BourkeLobbia}, pseudo-transformations are defined as certain lax transformations, we give a new \emph{unbiased} description of them, as interacting lax and oplax transformations.  In addition to being of independent interest, this lends itself naturally to proving that $(\GCat, \otimes_p,1)$ is symmetric.

Why is this important?  Our primary applications of the symmetry are in \cite{BourkeLobbia2}, where we introduce several flavours of semi-strict $4$-category, of which the strictest are called \emph{sharp} and the weakest are called \emph{loose}.  The loose $4$-categories are precisely categories enriched in the multicategory $\GCatmp^l$ underlying $(\GCat, \otimes_p,1)$.  Corresponding to the fact that $(\GCat, \otimes_p,1)$ is closed symmetric skew monoidal, $\GCatmp^l$ is a symmetric closed multicategory.  This means that categories enriched in it --- that is, loose $4$-categories --- permit many useful constructions, such as opposites and enriched left and right representable functors.  In addition, we make significant use of the symmetric structure in the homotopical aspects of \cite{BourkeLobbia2}, where we prove that the homotopy category of Gray-categories is symmetric monoidal closed.

Let us now give a more detailed overview of our results.  In Section \ref{sect:prelim-psd} we start by recalling the necessary preliminaries about Gray-categories and the flexible skew structure, as well as the Gray-categories of $\psd(\ca,\cb)$ of pseudomaps and pseudotransformations.

 Section~\ref{sect:unbiased} is devoted to developing the unbiased description of pseudo-transformations (see \Cref{prop:equ-psd-transf}) and their higher transfors of $\psd(\ca,\cb)$.  In particular, we describe the Gray-category $\psden(\ca,\cb)$ of unbiased pseudo-transformations, which in \Cref{unbiased} is shown to be isomorphic to $\psd(\ca,\cb)$.
 
 Section~\ref{sect:unbiased} describes the necessary background about skew monoidal categories and their associated skew multicategories, focusing on the example of $(\GCat,\otimes_p,1)$ and its associated skew multicategory $\GCatmp$.   This is essential in Section~\ref{sect:braiding} where we construct the braiding for the skew monoidal structure.  Note that rather than working with the skew monoidal category directly, we will instead establish the braiding on the skew multicategory $\GCatmp$ that it represents and then apply results of \cite{Lob:short-multi} to obtain the skew monoidal braiding.  The necessary axioms are verified in Section~\ref{sect:axioms}.
 
 Finally, in the appendices we describe a number of technical results used within the paper.
%
%
%
%
%

\section{Background on pseudo-transformations}\label{sect:prelim-psd}

In this section we recall the background needed to give the symmetric presentation of pseudo-transformations of Gray-categories, here called \emph{unbiased pseudo-transformations}. 
We start by considering adjoint equivalences in a 2-category and describe some useful properties of constructions involving \black mates (\Cref{rmk:mates-over-adj-equiv-calculations}). 
\black 

\subsection{Adjoint equivalences in a 2-category}

Let $\cc$ be a 2-category. 
Recall that an adjoint equivalence $(\epsilon, f\dashv g,\eta)\colon X\to Y$ in $\cc$ consists of  1-cells $g\colon X\to Y$ and $f\colon Y\to X$, together with invertible 2-cells $\epsilon\colon fg\to 1_X$ and $\eta\colon 1_Y\to gf$ satisfying the so called triangle equalities for an adjunction. 
We now set some notation which will be useful in the paper, while recalling some basic constructions involving adjoint equivalences.

\begin{notation*}
	\begin{itemize}
		\item[]
		\item For any adjoint equivalence $\aexi=(\epsilon,f\dashv g,\eta)\colon X\to Y$, also $(\eta^{-1},g\dashv f,\epsilon^{-1})\colon Y\to X$ forms an adjoint equivalence, which we refer to as the \emph{reverse} adjoint equivalence and denote it with $\aexi^\rev$. 
		
		\item For  adjoint equivalences $\aexi=(\epsilon,f\dashv g,\eta)\colon X\to Y$ and $\aexi'=(\epsilon',f'\dashv g',\eta')\colon Y\to Z$ as below
\[\begin{tikzcd}[ampersand replacement=\&]
	X \& Y \& Z
	\arrow[""{name=0, anchor=center, inner sep=0}, "g"', curve={height=12pt}, from=1-1, to=1-2]
	\arrow[""{name=1, anchor=center, inner sep=0}, "f"', curve={height=12pt}, from=1-2, to=1-1]
	\arrow[""{name=2, anchor=center, inner sep=0}, "{g'}"', curve={height=12pt}, from=1-2, to=1-3]
	\arrow[""{name=3, anchor=center, inner sep=0}, "{f'}"', curve={height=12pt}, from=1-3, to=1-2]
	\arrow["\perp"{description}, draw=none, from=1, to=0]
	\arrow["\perp"{description}, draw=none, from=3, to=2]
\end{tikzcd}\]
		there is a \emph{composite} adjoint equivalence defined by $(\epsilon'\cdot f'\epsilon g',f'f\dashv gg',g\eta' f\cdot\eta)\colon X\to Z$. 
		We will denote it by 
		$\aexi' \cdot \aexi $.
		
		\item We write $1_\aeq$ for the identity adjoint equivalence $(1_{1_X},1_X\dashv 1_X,1_{1_X})$. 
		
		
		\item Any 2-functor preserves adjoint equivalences:
		that is, given any adjoint equivalence $\aexi=(\epsilon,f\dashv g,\eta)$ in $\cc$ and 2-functor $F\colon \cc\to \cd$, then $F\aexi:= (F\epsilon,Ff\dashv Fg,F\eta)$ is an adjoint equivalence in $\cd$. 


		\black 
	\end{itemize} 
\end{notation*}

Consider adjoint pairs $f\dashv g\colon X\to Y$ and $f\dashv g\colon X'\to Y'$ in a 2-category $\cc$. Then for fixed 1-cells $a\colon X\to X'$ and $b\colon Y\to Y'$, there is a bijection between 2-cells as below 
\[\begin{tikzcd}[ampersand replacement=\&]
	Y \& {Y'} \&\& X \& {X'} \\
	X \& {X'} \&\& Y \& {Y'}
	\arrow[""{name=0, anchor=center, inner sep=0}, "b", from=1-1, to=1-2]
	\arrow["f"', from=1-1, to=2-1]
	\arrow["{f'}", from=1-2, to=2-2]
	\arrow["a", from=1-4, to=1-5]
	\arrow[""{name=1, anchor=center, inner sep=0}, "g"', from=1-4, to=2-4]
	\arrow[""{name=2, anchor=center, inner sep=0}, "{g'}", from=1-5, to=2-5]
	\arrow[""{name=3, anchor=center, inner sep=0}, "a"', from=2-1, to=2-2]
	\arrow["b"', from=2-4, to=2-5]
	\arrow["\lambda", between={0.3}{0.7}, Rightarrow, nfold, from=0, to=3]
	\arrow["\mu", between={0.3}{0.7}, Rightarrow, nfold, from=1, to=2]
\end{tikzcd}\]
obtained by pasting with the units and counits of the adjunctions (see for instance \cite{KellyStreet:rew-el-2cats}, where the corresponding 2-cells are called \emph{mates}).  Clearly, if the adjoint pairs we consider are adjoint equivalences, then the mate of an invertible 2-cell is still invertible. 

\begin{rmk}
	\label{rmk:mates-over-adj-equiv-calculations}
	Consider
	adjoint pairs $f\dashv g\colon X\to Y$ and $f'\dashv g'\colon X'\to Y'$ in a 2-category. 
	For any pair of 1-cells $a\colon X\to X'$ and $b\colon Y\to Y'$, and an invertible 2-cell $\lambda\colon g'a\to bg$, we can define $\lambda\flat$ as the mate of $\lambda^{-1}$, i.e. as the pasting below. 
	\begin{center}
		${\lambda}\flat:=$
		\begin{tikzcd}[ampersand replacement=\&,row sep=scriptsize]
			Y \& \\
			X \& Y \\
			{X'} \& {Y'} \\
			\& {X'}
			\arrow["f"', from=1-1, to=2-1]
			\arrow[""{name=0, anchor=center, inner sep=0}, curve={height=-6pt}, equals, from=1-1, to=2-2]
			\arrow[""{name=1, anchor=center, inner sep=0}, "g"{description}, from=2-1, to=2-2]
			\arrow["a"', from=2-1, to=3-1]
			\arrow["b", from=2-2, to=3-2]
			\arrow[""{name=2, anchor=center, inner sep=0}, "{g'}"{description}, from=3-1, to=3-2]
			\arrow[""{name=3, anchor=center, inner sep=0}, curve={height=6pt}, equals, from=3-1, to=4-2]
			\arrow["{f'}", from=3-2, to=4-2]
			\arrow["\eta"', between={0.3}{0.7}, Rightarrow, from=0, to=1]
			\arrow["{\lambda^{-1}}", shift right=2, between={0.4}{0.6}, Rightarrow, from=1, to=2]
			\arrow["\epsilon'", between={0.3}{0.7}, Rightarrow, from=2, to=3]
		\end{tikzcd}
	\end{center}
	Then, using the definition of $\overline{\lambda}$ and the triangle equalities of the adjoints, we can see that the following equations hold.
	
	\begin{minipage}{0.48\textwidth}
		\begin{equation}
			\begin{tikzcd}[ampersand replacement=\&]
				X \& {X'} \\
				Y \& {Y'} \\
				X \& {X'}
				\arrow[""{name=0, anchor=center, inner sep=0}, "a", from=1-1, to=1-2]
				\arrow["g"{description}, from=1-1, to=2-1]
				\arrow[""{name=1, anchor=center, inner sep=0}, curve={height=30pt}, equals, from=1-1, to=3-1]
				\arrow["{g'}", from=1-2, to=2-2]
				\arrow[""{name=2, anchor=center, inner sep=0}, "b"{description}, from=2-1, to=2-2]
				\arrow["f"{description}, from=2-1, to=3-1]
				\arrow["{f'}", from=2-2, to=3-2]
				\arrow[""{name=3, anchor=center, inner sep=0}, "a"', from=3-1, to=3-2]
				\arrow["\lambda", between={0.4}{0.6}, Rightarrow, from=0, to=2]
				\arrow["\epsilon"'{pos=0.2}, between={0}{0.7}, Rightarrow, from=2-1, to=1]
				\arrow["{\lambda\flat}", between={0.4}{0.6}, Rightarrow, from=2, to=3]
			\end{tikzcd}
			=
			\begin{tikzcd}[ampersand replacement=\&]
				X \& {X'} \\
				\& {Y'} \\
				\& {X'}
				\arrow["a", from=1-1, to=1-2]
				\arrow["{g'}", from=1-2, to=2-2]
				\arrow[""{name=0, anchor=center, inner sep=0}, curve={height=30pt}, equals, from=1-2, to=3-2]
				\arrow["{f'}", from=2-2, to=3-2]
				\arrow["{\epsilon'}"'{pos=0.2}, between={0}{0.7}, Rightarrow, from=2-2, to=0]
			\end{tikzcd}
		\end{equation}
	\end{minipage}
	\hfill
	\begin{minipage}{0.48\textwidth}
		\begin{equation}
			\begin{tikzcd}[ampersand replacement=\&]
				Y \& {Y'} \\
				X \& {X'} \\
				Y \& {Y'}
				\arrow[""{name=0, anchor=center, inner sep=0}, "b", from=1-1, to=1-2]
				\arrow["f"', from=1-1, to=2-1]
				\arrow["{f'}"{description}, from=1-2, to=2-2]
				\arrow[""{name=1, anchor=center, inner sep=0}, curve={height=-30pt}, equals, from=1-2, to=3-2]
				\arrow[""{name=2, anchor=center, inner sep=0}, "a"{description}, from=2-1, to=2-2]
				\arrow["g"', from=2-1, to=3-1]
				\arrow["{g'}"{description}, from=2-2, to=3-2]
				\arrow[""{name=3, anchor=center, inner sep=0}, "b"', from=3-1, to=3-2]
				\arrow["{\lambda\flat}", between={0.4}{0.6}, Rightarrow, from=0, to=2]
				\arrow["{\eta'}"'{pos=0.7}, between={0.3}{1}, Rightarrow, from=1, to=2-2]
				\arrow["\lambda", between={0.4}{0.6}, Rightarrow, from=2, to=3]
			\end{tikzcd}
			=
			\begin{tikzcd}[ampersand replacement=\&]
				Y \& \\
				X \\
				Y \& {Y'}
				\arrow["f"', from=1-1, to=2-1]
				\arrow[""{name=0, anchor=center, inner sep=0}, curve={height=-30pt}, equals, from=1-1, to=3-1]
				\arrow["g"', from=2-1, to=3-1]
				\arrow["b"', from=3-1, to=3-2]
				\arrow["\eta"', between={0.3}{1}, Rightarrow, from=0, to=2-1]
			\end{tikzcd}
		\end{equation}
	\end{minipage}
	
	These equations will be useful in \Cref{prop:equ-psd-mod}, where we prove the unbiased characterisation for modifications.
	
	
	Similarly, if we start with an invertible 2-cell $\gamma\colon af\to f'b$, we can define $\gamma\sharp$ as the mate of $\gamma^{-1}$ as below.
	\begin{center}
		$\gamma\sharp:=$
		\begin{tikzcd}[ampersand replacement=\&]
			X \& Y \& {Y'} \& \\
			\& X \& {X'} \& {Y'}
			\arrow[""{name=0, anchor=center, inner sep=0}, "g", from=1-1, to=1-2]
			\arrow[""{name=1, anchor=center, inner sep=0}, curve={height=12pt}, equals, from=1-1, to=2-2]
			\arrow[""{name=2, anchor=center, inner sep=0}, "b", from=1-2, to=1-3]
			\arrow["f"{description}, from=1-2, to=2-2]
			\arrow["{f'}"{description}, from=1-3, to=2-3]
			\arrow[""{name=3, anchor=center, inner sep=0}, curve={height=-12pt}, equals, from=1-3, to=2-4]
			\arrow[""{name=4, anchor=center, inner sep=0}, "a"', from=2-2, to=2-3]
			\arrow[""{name=5, anchor=center, inner sep=0}, "{g'}"', from=2-3, to=2-4]
			\arrow["\epsilon", between={0.3}{0.7}, Rightarrow, from=0, to=1]
			\arrow["{\gamma^{-1}}", shift right, between={0.3}{0.7}, Rightarrow, from=2, to=4]
			\arrow["{\eta'}"', between={0.3}{0.7}, Rightarrow, from=3, to=5]
		\end{tikzcd}
	\end{center}
Much as in the previous case, we obtain the following equations, which will be useful in \Cref{sec:ternary-braidings}.
	\begin{equation}
		\begin{tikzcd}[ampersand replacement=\&]
			Y \& X \& Y \\
			{Y'} \& {X'} \& {Y'}
			\arrow[""{name=0, anchor=center, inner sep=0}, "f"{description}, from=1-1, to=1-2]
			\arrow[""{name=1, anchor=center, inner sep=0}, curve={height=-30pt}, equals, from=1-1, to=1-3]
			\arrow["b"', from=1-1, to=2-1]
			\arrow[""{name=2, anchor=center, inner sep=0}, "g"{description}, from=1-2, to=1-3]
			\arrow["a"{description}, from=1-2, to=2-2]
			\arrow["b", from=1-3, to=2-3]
			\arrow[""{name=3, anchor=center, inner sep=0}, "{f'}"', from=2-1, to=2-2]
			\arrow[""{name=4, anchor=center, inner sep=0}, "{g'}"', from=2-2, to=2-3]
			\arrow["\eta"{pos=0.6}, shift right=2, between={0.4}{0.9}, Rightarrow, from=1, to=1-2]
			\arrow["\gamma", shift right, between={0.3}{0.7}, Rightarrow, from=0, to=3]
			\arrow["{\gamma\sharp}", shift right, between={0.3}{0.7}, Rightarrow, from=2, to=4]
		\end{tikzcd}
		=
		\begin{tikzcd}[ampersand replacement=\&]
			Y \&\& \\
			{Y'} \& {X'} \& {Y'}
			\arrow["b"', from=1-1, to=2-1]
			\arrow["{f'}"', from=2-1, to=2-2]
			\arrow[""{name=0, anchor=center, inner sep=0}, curve={height=-30pt}, equals, from=2-1, to=2-3]
			\arrow["{g'}"', from=2-2, to=2-3]
			\arrow["{\eta'}"{pos=0.6}, shift right=2, between={0.4}{0.9}, Rightarrow, from=0, to=2-2]
		\end{tikzcd}
	\end{equation}
	\begin{equation}
		\begin{tikzcd}[ampersand replacement=\&]
			X \& Y \& X \\
			{X'} \& {Y'} \& {X'}
			\arrow[""{name=0, anchor=center, inner sep=0}, "g", from=1-1, to=1-2]
			\arrow["a"', from=1-1, to=2-1]
			\arrow[""{name=1, anchor=center, inner sep=0}, "f", from=1-2, to=1-3]
			\arrow["b"{description}, from=1-2, to=2-2]
			\arrow["b", from=1-3, to=2-3]
			\arrow[""{name=2, anchor=center, inner sep=0}, "{g'}"{description}, from=2-1, to=2-2]
			\arrow[""{name=3, anchor=center, inner sep=0}, curve={height=30pt}, equals, from=2-1, to=2-3]
			\arrow[""{name=4, anchor=center, inner sep=0}, "{f'}"{description}, from=2-2, to=2-3]
			\arrow["{\gamma\sharp}", shift right, between={0.3}{0.7}, Rightarrow, from=0, to=2]
			\arrow["\gamma", shift right, between={0.3}{0.7}, Rightarrow, from=1, to=4]
			\arrow["{\epsilon'}"{pos=0.3}, shift right, between={0.1}{0.6}, Rightarrow, from=2-2, to=3]
		\end{tikzcd}
		=
		\begin{tikzcd}[ampersand replacement=\&]
			X \& Y \& X \\
			\&\& {X'}
			\arrow["g", from=1-1, to=1-2]
			\arrow[""{name=0, anchor=center, inner sep=0}, curve={height=30pt}, equals, from=1-1, to=1-3]
			\arrow["f", from=1-2, to=1-3]
			\arrow["b", from=1-3, to=2-3]
			\arrow["\epsilon"{pos=0.3}, shift right, between={0.1}{0.6}, Rightarrow, from=1-2, to=0]
		\end{tikzcd}
	\end{equation}
	
	Let us mention that these constructions were used in \cite[Section 3.5]{BourkeLobbia} for oplax transformations and also in  \cite[Appendix E]{BourkeLobbia} (repeated here in Appendix \ref{app:inc-sub-b-in-b-1st} and \ref{app:inc-sub-b-in-b-2nd}) for the incubator in the substitution of binary maps into binary.   In the applications of $\sharp$ and $\flat$ in \cite{BourkeLobbia}, the adjoint equivalences are given by invertible 1-cells. We will use the above equations in the proof of \Cref{prop:first-3-braid-eq}.
%
%
%
\end{rmk}

\black

\subsection{Gray-categories}

\black
A Gray-category $\ca$ is a $(\Gray,\otimes_p,1)$-enriched category, where $\otimes_p$ denotes the Gray-tensor product, as described in Section 3.2 of \cite{Gurskibook} or Section 4 of \cite{Gordon1995Coherence}.   
In particular, it amounts to:
\begin{enumerate}[(i)]
\item A category $\ca_1$ of $0$ and $1$-cells with composition of $f\colon x \to y$ and $g\colon y \to z$ denoted by $gf\colon x \to z$, and with identities denoted by $1_x\colon x \to x$.
\item\label{ax:strict-hom} For $0$-cells $x,y$, a $2$-category $\ca(x,y)$ whose objects are the $1$-cells of $\ca_1$, and whose $n$-cells are referred to as the $(n+1)$-cells of $\ca$.
\item Given $1$-cells $f\colon w \to x$ and $g\colon y \to z$, there are \emph{whiskering} $2$-functors $- \circ f\colon \ca(x,y) \to \ca(w,y)$ and $g \circ -\colon \ca(x,y) \to \ca(x,z)$ whose action on objects is composition in $\ca_1$.  These whiskering operations are functorial in $\ca_1$ and compatible, in the sense that $(g \circ -) \circ f = g \circ (- \circ f)$.
\item Given a pair of $2$-cells 
\[\begin{tikzcd}
	x & y & z
	\arrow[""{name=0, anchor=center, inner sep=0}, "f", curve={height=-12pt}, from=1-1, to=1-2]
	\arrow[""{name=1, anchor=center, inner sep=0}, "{f'}"', curve={height=12pt}, from=1-1, to=1-2]
	\arrow[""{name=2, anchor=center, inner sep=0}, "g", curve={height=-12pt}, from=1-2, to=1-3]
	\arrow[""{name=3, anchor=center, inner sep=0}, "{g'}"', curve={height=12pt}, from=1-2, to=1-3]
	\arrow["\phi"{xshift=0.1cm}, between={0.2}{0.8}, Rightarrow, nfold, from=0, to=1]
	\arrow["\psi"{xshift=0.1cm}, between={0.2}{0.8}, Rightarrow, nfold, from=2, to=3]
\end{tikzcd}\]\black
	there is an invertible $3$-cell
\[\begin{tikzcd}[ampersand replacement=\&]
	gf \& {g'f} \\
	{gf'} \& {g'f'}
	\arrow[""{name=0, anchor=center, inner sep=0}, "{\psi\circ f}", from=1-1, to=1-2]
	\arrow["{g\circ\phi}"', from=1-1, to=2-1]
	\arrow["{g'\circ\phi}", from=1-2, to=2-2]
	\arrow[""{name=1, anchor=center, inner sep=0}, "{\psi\circ f'}"', from=2-1, to=2-2]
	\arrow["{\psi:\phi}", shift right=2, between={0.4}{0.6}, Rightarrow, nfold, from=0, to=1]
\end{tikzcd}\]
\black
	called the interchange map.
	The interchange maps satisfy several equations, for which we refer the reader to \cite[Definition~1.1]{GambLob:psm}.\footnote{Notice that the direction of the interchange isomorphism here is the opposite with respect to both \cite{BourkeLobbia} and \cite[Definition~1.1]{GambLob:psm}.  This choice of convention matches the viewpoint of pseudo-natural transformations as lax natural transformations (rather than oplax ones) and fits better with our treatment of pseudo-transformations of Gray-categories as certain lax transformations.  Since in \cite{BourkeLobbia} we used the dashed arrows notation throughout for interchange cells, the two papers can be read consistently --- the only exception to this is in a single axiom for a pseudomap in Appendix A.1.(vii), whose interchange map (according to the present convention) should be replaced by its inverse.}
\end{enumerate}


For (vertically) composable 2-cells $\phi_1\colon f\to f'$ and $\phi_2\colon f'\to f''$, we will denote by $\phi_2\cdot\phi_1\colon f\to f''$ their vertical composition. 
The same symbol will be used also to denote whiskering of 3-cells by 2-cells, that is: for 1-cells $f,f',f''\colon x\to y$ and 3-cells 
\[\begin{tikzcd}[ampersand replacement=\&]
	f \& {f'} \& {f''}
	\arrow[""{name=0, anchor=center, inner sep=0}, "\phi", curve={height=-12pt}, from=1-1, to=1-2]
	\arrow[""{name=1, anchor=center, inner sep=0}, "{\phi'}"', curve={height=12pt}, from=1-1, to=1-2]
	\arrow[""{name=2, anchor=center, inner sep=0}, "\varphi", curve={height=-12pt}, from=1-2, to=1-3]
	\arrow[""{name=3, anchor=center, inner sep=0}, "{\varphi'}"', curve={height=12pt}, from=1-2, to=1-3]
	\arrow["\Lambda"{xshift=0.1cm}, shift right, between={0.3}{0.7}, Rightarrow, nfold, from=0, to=1]
	\arrow["\Pi"{xshift=0.1cm}, shift right, between={0.3}{0.7}, Rightarrow, nfold, from=2, to=3]
\end{tikzcd}\]
we will write $\Pi\cdot\phi\colon \varphi\cdot\phi\to\varphi'\cdot\phi$ and $\varphi\cdot\Gamma\colon\varphi\cdot\phi\to\varphi\cdot\phi'$ for the left and right whiskering. 
We will use $\Lambda'\ast\Lambda$ for vertical composition of 3-cells in $\ca(x,y)$. \black 
We will rarely need any such notation, since composites in the hom-2-categories will almost always be given using diagrams.  Our diagram style follows that of Gurski \cite{Gurskibook}.  In diagrams, we will abuse notation for whiskering $2/3$-cells by a $1$-cell and only write $1\circ-$ and $-\circ1$, as in the diagram below. Finally, in some larger diagrams, where the context will make it clear, we will avoid labelling the interchange and its inverse \black and only indicate it with a dashed double arrow again as in the diagrams below. 
\begin{center}
\begin{tikzcd}[ampersand replacement=\&]
	gf \& {g'f} \\
	{gf'} \& {g'f'}
	\arrow[""{name=0, anchor=center, inner sep=0}, "{\psi\circ1}", from=1-1, to=1-2]
	\arrow["{1\circ\phi}"', from=1-1, to=2-1]
	\arrow["{1\circ\phi}", from=1-2, to=2-2]
	\arrow[""{name=1, anchor=center, inner sep=0}, "{\psi\circ1}"', from=2-1, to=2-2]
	\arrow[between={0.3}{0.7}, Rightarrow, nfold, dashed, from=0, to=1]
\end{tikzcd}
\hspace{0.5cm}
\begin{tikzcd}
	gf & {gf'} \\
	{g'f} & {g'f'}
	\arrow[""{name=0, anchor=center, inner sep=0}, "{1\circ\phi}", from=1-1, to=1-2]
	\arrow["{\psi\circ1}"', from=1-1, to=2-1]
	\arrow["{\psi\circ1}", from=1-2, to=2-2]
	\arrow[""{name=1, anchor=center, inner sep=0}, "{1\circ\phi}"', from=2-1, to=2-2]
	\arrow[between={0.3}{0.6}, Rightarrow, nfold, dashed, from=0, to=1]
\end{tikzcd}
\end{center}
\black

%
%

In the next lemma we establish some equations that follow from the axioms regarding the interchange maps. 
These equations will be useful in \Cref{sect:unbiased}. 
Note that we will usually use these equations for units or counits of adjoint equivalences, but \Cref{lemma:interchange-property} holds for any 3-cells in a Gray-category with domain (or codomain) the identity.
\begin{lemma}
	\label{lemma:interchange-property}
	Consider $1$-cells $f,f'\colon x\rightrightarrows y$ and $g,g'\colon y\rightrightarrows z$ in a Gray-category $\ca$. \black
	 Given 2-cells $\phi\colon f\to f'$, $\mu\colon f'\to f, \psi\colon g\to g'$ and 3-cells $\Delta\colon 1_x\to \mu\cdot\phi$ and $\Theta \colon\phi\cdot\mu\to 1_y$ the following equations hold.
	\begin{equation}
		\label{eq:lem-interchange-1}
\begin{tikzcd}[ampersand replacement=\&,column sep=small]
	gf \&\& {g'f} \& {} \& \\
	{gf'} \&\& {g'f'} \&\& {g'f}
	\arrow[""{name=0, anchor=center, inner sep=0}, "{\psi\circ1}", from=1-1, to=1-3]
	\arrow["{1\circ \phi}"', from=1-1, to=2-1]
	\arrow["{1\circ \phi}"{description}, from=1-3, to=2-3]
	\arrow[curve={height=-12pt}, equals, nfold, from=1-3, to=2-5]
	\arrow[""{name=1, anchor=center, inner sep=0}, "{\psi\circ1}"', from=2-1, to=2-3]
	\arrow[""{name=2, anchor=center, inner sep=0}, "{1\circ \mu}"', from=2-3, to=2-5]
	\arrow["{(\psi:\phi)}"', shift left=5, between={0.3}{0.7}, Rightarrow, nfold, dashed, from=0, to=1]
	\arrow["{1\circ\Delta}"', shift left=3, between={0.3}{0.7}, Rightarrow, nfold, from=1-4, to=2]
\end{tikzcd}
		\hspace{0.5cm}=\hspace{0.5cm}
\begin{tikzcd}[ampersand replacement=\&,column sep=small]
	gf \& {} \& \\
	{gf'} \&\& gf \\
	{g'f'} \&\& {g'f}
	\arrow["{1\circ \phi}"', from=1-1, to=2-1]
	\arrow[curve={height=-12pt}, equals, nfold, from=1-1, to=2-3]
	\arrow[""{name=0, anchor=center, inner sep=0}, "{1\circ \mu}"{description}, from=2-1, to=2-3]
	\arrow["{\psi\circ1}"', from=2-1, to=3-1]
	\arrow["{\psi\circ1}", from=2-3, to=3-3]
	\arrow[""{name=1, anchor=center, inner sep=0}, "{1\circ \mu}"', from=3-1, to=3-3]
	\arrow["{1\circ\Delta}"{pos=0.4}, shift right=4, between={0.2}{0.6}, Rightarrow, nfold, from=1-2, to=0]
	\arrow["{(\psi:\mu)^{-1}}", shift right=4, between={0.3}{0.7}, Rightarrow, nfold, dashed, from=0, to=1]
\end{tikzcd}
	\end{equation}
	\begin{equation}
\begin{tikzcd}[ampersand replacement=\&,column sep=small]
	{gf'} \&\& gf \\
	{g'f'} \&\& {g'f} \\
	\& {} \& {g'f'}
	\arrow[""{name=0, anchor=center, inner sep=0}, "{1\circ \mu}", from=1-1, to=1-3]
	\arrow["{\psi\circ1}"', from=1-1, to=2-1]
	\arrow["{\psi\circ1}", from=1-3, to=2-3]
	\arrow[""{name=1, anchor=center, inner sep=0}, "{1\circ \mu}"{description}, from=2-1, to=2-3]
	\arrow[curve={height=12pt}, equals, nfold, from=2-1, to=3-3]
	\arrow["{1\circ \phi}", from=2-3, to=3-3]
	\arrow["{(\psi:\mu)^{-1}}", shift right=4, between={0.3}{0.7}, Rightarrow, nfold, dashed, from=0, to=1]
	\arrow["{1\circ\Theta}"{pos=0.4}, shift right=3, between={0.2}{0.6}, Rightarrow, nfold, from=1, to=3-2]
\end{tikzcd}
	\hspace{0.5cm}=\hspace{0.5cm}
\begin{tikzcd}[ampersand replacement=\&,column sep=small]
	{g'f} \&\& gf \&\& {g'f} \\
	\& {} \& {gf'} \&\& {g'f'}
	\arrow[""{name=0, anchor=center, inner sep=0}, "{1\circ \mu}", from=1-1, to=1-3]
	\arrow[""{name=1, anchor=center, inner sep=0}, "{\psi\circ1}", from=1-3, to=1-5]
	\arrow["{1\circ \phi}"{description}, from=1-3, to=2-3]
	\arrow["{1\circ \phi}", from=1-5, to=2-5]
	\arrow[curve={height=-12pt}, equals, nfold, from=2-3, to=1-1]
	\arrow[""{name=2, anchor=center, inner sep=0}, "{\psi\circ1}"', from=2-3, to=2-5]
	\arrow["{1\circ\Theta}"{pos=0.4}, shift right=3, between={0.2}{0.6}, Rightarrow, nfold, from=0, to=2-2]
	\arrow["{(\psi:\phi)}", shift right=3, between={0.3}{0.7}, Rightarrow, nfold, dashed, from=1, to=2]
\end{tikzcd}	
	\end{equation}
\end{lemma}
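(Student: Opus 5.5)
The plan is to derive both equations from the naturality of the interchange maps with respect to $3$-cells, together with their compatibility with vertical composition of $2$-cells. Both axioms are among those listed in \cite[Definition~1.1]{GambLob:psm}.

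Concretely, for fixed $\psi\colon g\to g'$ the family $(\psi:-)$ is natural in its second argument with respect to $3$-cells. That is, for a $3$-cell $\Gamma\colon \chi\Rightarrow\chi'$ between $2$-cells $f\to f''$, the composite of $(\psi:\chi)$ with $g'\circ\Gamma$ equals the composite of $g\circ\Gamma$ with $(\psi:\chi')$, suitably whiskered by $\psi\circ1$. Moreover, $(\psi:\mu\cdot\phi)$ is the pasting of $(\psi:\phi)$ and $(\psi:\mu)$, and $(\psi:1_f)$ is the identity.

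For the first equation, I will apply naturality to $\Delta\colon 1_f\to\mu\cdot\phi$ (here $1_x$ in the statement should mean the identity $2$-cell on $f$). The resulting equation is
\[(\psi:\mu\cdot\phi)\ast\bigl((1\circ\Delta)\cdot(\psi\circ1)\bigr)=\bigl((\psi\circ1)\cdot(1\circ\Delta)\bigr)\ast(\psi:1_f),\]
with whiskerings interpreted appropriately. Since $(\psi:1_f)$ is an identity, and $(\psi:\mu\cdot\phi)$ decomposes as the pasting of $(\psi:\phi)$ followed by $(\psi:\mu)$, I then paste both sides with $(\psi:\mu)^{-1}$ on the appropriate face. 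The $(\psi:\mu)$ factor cancels and exactly the two pastings of \eqref{eq:lem-interchange-1} remain. I will check that the sources and targets match: both sides are $3$-cells from $(\psi\circ1)\cdot 1$ to $(1\circ\mu)\cdot(\psi\circ 1)\cdot(1\circ\phi)$, written in the notation of the diagrams.

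The second equation is proved identically. I apply naturality of $(\psi:-)$ to $\Theta\colon\phi\cdot\mu\to 1_{f'}$, decompose $(\psi:\phi\cdot\mu)$ as the pasting of $(\psi:\mu)$ and $(\psi:\phi)$, and use $(\psi:1_{f'})=1$. Pasting with $(\psi:\mu)^{-1}$ on the other face then gives the equation.

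The main obstacle is bookkeeping rather than content. I need the direction convention of the interchange here, which is reversed relative to \cite{GambLob:psm}, and I need the precise form of the composition axiom, in particular the order in which $(\psi:\phi)$ and $(\psi:\mu)$ are pasted. To be safe, I will first write the composition axiom explicitly in the present convention, by inverting the cited one. Only then will I carry out the cancellation, so that the inverse $(\psi:\mu)^{-1}$ ends up on the correct side in each equation.
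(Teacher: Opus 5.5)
Your proposal is correct and is essentially the paper's argument. The paper pastes the desired equation with $(\psi:\mu)$ and reduces it, via the compatibility of interchange with vertical composition, its naturality in $3$-cells and $(\psi:1_f)=1$, to a true identity; you run the same computation in the opposite direction, cancelling with $(\psi:\mu)^{-1}$, and your reading of $1_x$, $1_y$ as the identity $2$-cells $1_f$, $1_{f'}$ is the intended one.
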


\begin{proof}
	The strategy to establish both equations is the same: first paste both sides with one interchange map and then use the axioms of a Gray-category. 
	Let us explicitly show it for \eqref{eq:lem-interchange-1}. 
	First, notice that proving \eqref{eq:lem-interchange-1}, by pasting with $(\psi:\phi)$, is equivalent to proving the following equality. 
	\begin{center}
\begin{tikzcd}[ampersand replacement=\&]
	gf \&\& {g'f} \\
	{gf'} \&\& {g'f'} \\
	gf \&\& {g'f}
	\arrow[""{name=0, anchor=center, inner sep=0}, "{\psi\circ1}", from=1-1, to=1-3]
	\arrow["{1\circ \phi}"', from=1-1, to=2-1]
	\arrow["{1\circ \phi}"{description}, from=1-3, to=2-3]
	\arrow[""{name=1, anchor=center, inner sep=0}, shift left=2, curve={height=-30pt}, equals, nfold, from=1-3, to=3-3]
	\arrow[""{name=2, anchor=center, inner sep=0}, "{\psi\circ1}"{description}, from=2-1, to=2-3]
	\arrow["{1\circ \mu}"', from=2-1, to=3-1]
	\arrow["{1\circ \mu}"{description}, from=2-3, to=3-3]
	\arrow[""{name=3, anchor=center, inner sep=0}, "{\psi\circ1}"', from=3-1, to=3-3]
	\arrow["{(\psi:\phi)}"', shift left=5, between={0.3}{0.7}, Rightarrow, nfold, dashed, from=0, to=2]
	\arrow["{1\circ\Delta}"', shift left=3, between={0.3}{0.7}, Rightarrow, nfold, from=1, to=2-3]
	\arrow["{(\psi:\mu)}"', shift left=5, between={0.3}{0.7}, Rightarrow, nfold, dashed, from=2, to=3]
\end{tikzcd}
$\quad=\quad$
\begin{tikzcd}[ampersand replacement=\&,column sep=small]
	gf \& {} \& \\
	{gf'} \&\& gf \\
	\&\& {g'f}
	\arrow["{1\circ \phi}"', from=1-1, to=2-1]
	\arrow[curve={height=-12pt}, equals, nfold, from=1-1, to=2-3]
	\arrow[""{name=0, anchor=center, inner sep=0}, "{1\circ \mu}"{description}, from=2-1, to=2-3]
	\arrow["{\psi\circ1}", from=2-3, to=3-3]
	\arrow["{1\circ\Delta}"{pos=0.4}, shift right=4, between={0.2}{0.6}, Rightarrow, nfold, from=1-2, to=0]
\end{tikzcd}
	\end{center}
For this, we start from the right hand side and use two axioms of the interchange (specifically (1.2) and then (1.1) as in \cite[Remark~1.2]{GambLob:psm}).
	\begin{center}
\begin{tikzcd}[ampersand replacement=\&]
	gf \&\& {g'f} \\
	{gf'} \&\& {g'f'} \\
	gf \&\& {g'f}
	\arrow[""{name=0, anchor=center, inner sep=0}, "{\psi\circ1}", from=1-1, to=1-3]
	\arrow["{1\circ \phi}"', from=1-1, to=2-1]
	\arrow["{1\circ \phi}"{description}, from=1-3, to=2-3]
	\arrow[""{name=1, anchor=center, inner sep=0}, shift left=2, curve={height=-30pt}, equals, nfold, from=1-3, to=3-3]
	\arrow["{1\circ \mu}"', from=2-1, to=3-1]
	\arrow["{1\circ \mu}"{description}, from=2-3, to=3-3]
	\arrow[""{name=2, anchor=center, inner sep=0}, "{\psi\circ1}"', from=3-1, to=3-3]
	\arrow["{(\psi:\mu\cdot\phi)}"', shift left=5, between={0.3}{0.7}, Rightarrow, nfold, dashed, from=0, to=2]
	\arrow["{1\circ\Delta}"', shift left=3, between={0.3}{0.7}, Rightarrow, nfold, from=1, to=2-3]
\end{tikzcd}
$\quad=\quad$
\begin{tikzcd}[ampersand replacement=\&]
	gf \&\& \\
	{gf'} \\
	gf \&\& {g'f}
	\arrow["{1\circ \phi}"', from=1-1, to=2-1]
	\arrow[""{name=0, anchor=center, inner sep=0}, shift left=2, curve={height=-30pt}, equals, nfold, from=1-1, to=3-1]
	\arrow["{1\circ \mu}"', from=2-1, to=3-1]
	\arrow["{\psi\circ1}"', from=3-1, to=3-3]
	\arrow["{1\circ\Delta}"'{pos=0.6}, shift left=2, between={0.4}{0.8}, Rightarrow, nfold, from=0, to=2-1]
\end{tikzcd}

	\end{center}
\end{proof}

\black 

\subsection{Pseudo-transformations}\label{sec:psd-transf}
In this section, we recall from \cite{BourkeLobbia} the Gray-categories $\psd(\ca,\cb)$, $\lax(\ca,\cb)$ and $\oplax(\ca,\cb)$ of pseudomaps, pseudo/lax/oplax-transformations and their higher cells.
These $n$-cells are sometimes called $n$-transfors.

%
%



\begin{itemize}
	\item A \emph{pseudomap} $F\colon\ca\to\cb$ of Gray-categories consists of  
\begin{itemize}
\item for each object $x \in \ca$, an object $Fx \in \cb$,
\item for each pair of objects $x,y \in \ca$, a $2$-functor $F_{x,y} \colon \ca(x,y) \to \cb(Fx,Fy)$,
	\item  for any pair of 1-cells $f_1\colon x\to y$ and $f_2\colon y\to z$ in $\ca$, an invertible 2-cell 
	$$F^2_{f_2,f_1}\colon Ff_2\circ Ff_1\to F(f_2\circ f_1)$$
	in $\cb$ satisfying the nine axioms listed in \cite[Appendix~A.1]{BourkeLobbia}. 
\end{itemize}  
We point out that pseudomaps are locally $2$-functors, preserve identity $1$-cells strictly and are very close to the notion of normal pseudofunctors in $2$-category theory.  A Gray-functor (or strict map) is a pseudomap for which the invertible $2$-cells $F^2_{f_2,f_1}$ are identities.

\item 
A \emph{lax/oplax transformation} $\alpha^l/\alpha^o\colon F\to G$ consists of:
\begin{itemize}
	\item for any object $x\in\ca$ a 1-cell
	\begin{center}
		\vspace{-0.25cm}
		\begin{tabular}{>{\centering\arraybackslash}p{0.5\textwidth}:
				>{\centering\arraybackslash}p{0.4\textwidth}}
		$\alpha^l_x\colon Fx\to Gx$ 
		& $\alpha^o_x\colon Fx\to Gx$;
		\end{tabular}
	\end{center}
		\vspace{0.1cm}
	\item for any 1-cell $f\colon x\to y$ in $\ca$, a 2-cell in $\cb$
	\begin{center}
		\vspace{-0.25cm}
		\begin{tabular}{>{\centering\arraybackslash}p{0.5\textwidth}:
				>{\centering\arraybackslash}p{0.4\textwidth}}
			\begin{tikzcd}
				{Fx} & {Gx} \\
				{Fy} & {Gy}
				\arrow[""{name=0, anchor=center, inner sep=0}, "{\alpha^l_x}", from=1-1, to=1-2]
				\arrow[""{name=1, anchor=center, inner sep=0}, "{\alpha^l_y}"', from=2-1, to=2-2]
				\arrow["Ff"', from=1-1, to=2-1]
				\arrow["{Gf}", from=1-2, to=2-2]
				\arrow["{\alpha^l_f}"{xshift=0.1cm}, shorten <=10pt, shorten >=10pt, Rightarrow, from=0, to=1]
			\end{tikzcd} 
			& \begin{tikzcd}
				{Fx} & {Gx} \\
				{Fy} & {Gy;}
				\arrow[""{name=0, anchor=center, inner sep=0}, "{\alpha^o_x}", from=1-1, to=1-2]
				\arrow[""{name=1, anchor=center, inner sep=0}, "{\alpha^o_y}"', from=2-1, to=2-2]
				\arrow["Ff"', from=1-1, to=2-1]
				\arrow["{Gf}", from=1-2, to=2-2]
				\arrow["{\alpha^o_f}"{xshift=-0.1cm}, shorten <=10pt, shorten >=10pt, Rightarrow, from=1, to=0]
			\end{tikzcd}
		\end{tabular}
	\end{center}
	\vspace{0.1cm}
	
	\item for any 2-cell $\phi\colon f\to f'$ in $\ca$ a 3-cell in $\cb$ of the form
	\begin{center}
		\vspace{-0.25cm}
		\begin{tabular}{>{\centering\arraybackslash}p{0.5\textwidth}:
				>{\centering\arraybackslash}p{0.4\textwidth}}
			\begin{tikzcd}[ampersand replacement=\&]
				{Gf\alpha^l_x} \& {\alpha^l_y Ff} \\
				{Gf\alpha^l_x} \& {\alpha^l_y Ff}
				\arrow[""{name=0, anchor=center, inner sep=0}, "{\alpha^l_f}", from=1-1, to=1-2]
				\arrow["{G\phi\circ1}"', from=1-1, to=2-1]
				\arrow["{1\circ F\phi}", from=1-2, to=2-2]
				\arrow[""{name=1, anchor=center, inner sep=0}, "{\alpha^l_{f'}}"', from=2-1, to=2-2]
				\arrow["{\alpha^l_\phi}", between={0.3}{0.7}, Rightarrow, from=0, to=1]
			\end{tikzcd} 
			& 
			\begin{tikzcd}[ampersand replacement=\&]
				{\alpha^o_y Ff} \& {\alpha^o_y Ff'} \\
				{Gf\alpha^o_x} \& {Gf'\alpha^o_x}
				\arrow[""{name=0, anchor=center, inner sep=0}, "{1\circ F\phi}", from=1-1, to=1-2]
				\arrow["{\alpha^o_f}"', from=1-1, to=2-1]
				\arrow["{\alpha^o_{f'}}", from=1-2, to=2-2]
				\arrow[""{name=1, anchor=center, inner sep=0}, "{G\phi\circ1}"', from=2-1, to=2-2]
				\arrow["{\alpha^o_\phi}", between={0.3}{0.7}, Rightarrow, from=0, to=1]
			\end{tikzcd}
		\end{tabular}
	\end{center}
	\vspace{0.1cm}
	
	\item 1-cells $f_1\colon x\to y$ and $f_2\colon y\to z$ in $\ca$, an invertible 3-cell in $\cb$
	\begin{center}
		\vspace{-0.25cm}
		\begin{tabular}{>{\centering\arraybackslash}p{0.5\textwidth}:
				>{\centering\arraybackslash}p{0.4\textwidth}}
				\scalebox{0.9}{
			\begin{tikzcd}[ampersand replacement=\&]
				{Gf_2Gf_1\alpha^l_x} \& {Gf_2\alpha^l_Y Ff_1} \& {\alpha^l_z Ff_2Ff_1} \\
				{G(f_2f_1) \alpha^l_x} \&\& {\alpha^l_zF(f_2f_1)}
				\arrow["{1\circ\alpha^l_{f_1}}", from=1-1, to=1-2]
				\arrow["{G^2_{f_2,f_1}\circ1}"', from=1-1, to=2-1]
				\arrow["{\alpha^l_{f_2}\circ1}", from=1-2, to=1-3]
				\arrow["{1\circ F^2_{f_2,f_1}}", from=1-3, to=2-3]
				\arrow[""{name=0, anchor=center, inner sep=0}, "{\alpha^l_{f_2f_1}}"', from=2-1, to=2-3]
				\arrow["{(\alpha^l)^2_{f_2,f_1}}", between={0.3}{0.7}, Rightarrow, from=1-2, to=0]
			\end{tikzcd}
		}
			& 
		\scalebox{0.9}{
			\begin{tikzcd}[ampersand replacement=\&]
				{\alpha^o_zFf_2Ff_1} \&\& {\alpha^o_zF(f_2f_1)} \\
				{Gf_2\alpha^o_yFf_1} \\
				{Gf_2Gf_1\alpha^o_x} \&\& {G(f_2f_1)\alpha_x.}
				\arrow[""{name=0, anchor=center, inner sep=0}, "{1\circ F_{f_2,f_1}^2}", from=1-1, to=1-3]
				\arrow["{{\alpha^o}_{f_2}\circ1}"', from=1-1, to=2-1]
				\arrow[""{name=1, anchor=center, inner sep=0}, "{G_{f_2,f_1}^2\circ1}"', from=3-1, to=3-3]
				\arrow["{1\circ\alpha^o_{f_1}}"', from=2-1, to=3-1]
				\arrow["{\alpha^o_{f_2f_1}}", from=1-3, to=3-3]
				\arrow["{(\alpha^o)^{f_2,f_1}_2}"{xshift=0.1cm}, shorten <=25pt, shorten >=25pt, Rightarrow, from=0, to=1]
			\end{tikzcd}
			}
		\end{tabular}
	\end{center}
	\vspace{
		0.1cm}
\end{itemize} 
The data of a lax transformation has to satisfy the seven axioms listed in \cite[Appendix~A.2]{BourkeLobbia}, whilst the data of an oplax transformation has to satisfy dual axioms, written explicitly in \Cref{subsec:ax-oplax-transf}. 

Finally, \emph{pseudo-transformations} are lax transformations $\alpha^l\colon F\to G$ equipped with, for any 1-cell $f\colon x\to y\in\ca$, an adjoint equivalence $\aexi^\alpha_f := \alpha^o_f \dashv \alpha^l_f\colon Gf\alpha^l_x \to \alpha^l_yFf$ \footnote{The notation used here hints at the fact that $\alpha^o_f$ will be seen as part of an oplax transformation in \Cref{prop:equ-psd-transf}.} such that:
\begin{enumerate}[(a)]
	\item at an object $x\in\ca$, $\alpha^o_{1_x} \dashv \alpha^l_{1_x}$ is the identity adjoint equivalence;
	\item at a $2$-cell $\phi\colon f\to f'\in\ca$, the 3-cell $\alpha_\phi$ is invertible.
\end{enumerate}

\item A \emph{lax/oplax modification} 
$\Lambda^l\colon\alpha^l\to\beta^l$/$\Lambda^o\colon\alpha^o\to\beta^o$ consists of:
\begin{itemize}
	\item for any object $x\in\ca$, a 2-cell in $\cb$
	\begin{center}
		\vspace{-0.25cm}
	\begin{tabular}{>{\centering\arraybackslash}p{0.5\textwidth}:
			>{\centering\arraybackslash}p{0.4\textwidth}}
		$\Lambda^l_x\colon \alpha^l_x\to \beta^l_x$ 
		& $\Lambda^o_x\colon \alpha^o_x\to \beta^o_x$;
	\end{tabular}
	\vspace{0.1cm}
	\end{center}
	
	\item for any 1-cell $f\colon x\to y$ in $\ca$, a 3-cell $\Lambda_f$ in $\cb$ of the form
	\begin{center}
		\vspace{-0.25cm}
	\begin{tabular}{>{\centering\arraybackslash}p{0.5\textwidth}:
			>{\centering\arraybackslash}p{0.4\textwidth}}
			\begin{tikzcd}[ampersand replacement=\&]
				{Gf\alpha_x^l} \& {Gf\beta_x^l} \\
				{\alpha_y^l Ff} \& {\beta_y^l Ff}
				\arrow[""{name=0, anchor=center, inner sep=0}, "{1\circ\Lambda^l_X}", from=1-1, to=1-2]
				\arrow["{\alpha_f^l}"', from=1-1, to=2-1]
				\arrow["{\beta^l_{f}}", from=1-2, to=2-2]
				\arrow[""{name=1, anchor=center, inner sep=0}, "{\Lambda_y^l\circ1}"', from=2-1, to=2-2]
				\arrow["{\Lambda_f^l}", between={0.3}{0.7}, Rightarrow, from=0, to=1]
			\end{tikzcd}
		& 
		\begin{tikzcd}[ampersand replacement=\&]
			{\alpha^o_yFf} \& {Gf\alpha^o_x} \\
			{\beta^o_yFf} \& {Gf\beta^o_x.}
			\arrow[""{name=0, anchor=center, inner sep=0}, "{\alpha^o_f}", from=1-1, to=1-2]
			\arrow["{\Lambda^o_y\circ1}"', from=1-1, to=2-1]
			\arrow["{1\circ\Lambda^o_x}", from=1-2, to=2-2]
			\arrow[""{name=1, anchor=center, inner sep=0}, "{\beta^o_f}"', from=2-1, to=2-2]
			\arrow["{\Lambda^o_f}", between={0.3}{0.7}, Rightarrow, from=0, to=1]
		\end{tikzcd}
	\end{tabular}
	\end{center}
	\vspace{0.1cm}
\end{itemize}
	The data of a lax modification have to satisfy the three axioms listed in \cite[Appendix~A.3]{BourkeLobbia}, whilst the data of an oplax modifaction has to satisfy dual axioms written explicitly in \Cref{subsec:ax-oplax-mod}. \black

	\emph{Pseudo-modifications} are lax modifications $\Lambda\colon \alpha\to \beta$ such that, for any 1-cell $f\colon x\to y\in\ca$, the 3-cell $\Lambda_f$ is invertible. 
	
	\item In each case, the data for a lax/pseudo/oplax-perturbation $\theta\colon\Lambda\to\Gamma$ consists of, for any object $x\in\ca$, a 3-cell $\theta_x\colon \Lambda_x\to\Gamma_x$ in $\cb$.  \emph{Lax perturbations} have to satisfy the axiom in \cite[Appendix~A.4]{BourkeLobbia} whilst the corresponding axiom for an \emph{oplax perturbation} is given in \Cref{ax:oplax-pert}.  A \emph{pseudo-perturbation} between pseudo-modifications is simply a lax perturbation.
\end{itemize}

Following Section 3.2 \cite{BourkeLobbia}, we let $\GCat$ denote the category of Gray-categories and Gray-functors and $\GCat_p$ denote the category of Gray-categories and pseudomaps.

\begin{rmk}\label{plusminus}
There is a natural isomorphism $(-)_{\ca,\cb}^{+}\colon\oplax(\ca,\cb) \cong \lax(\ca^{co},\cb^{co})^{co}$ where $\ca^{co}$ and $\cb^{co}$ are the Gray-categories with 2-cells reversed.  At an oplax transformation $\alpha \colon F \to G$, the lax transformation $\alpha^+ \colon F^{co} \to G^{co}$ has $(\alpha^+)_x = \alpha_x$, $(\alpha^+)_f = \alpha_f$ (now with the reverse orientation), $(\alpha^+)_\phi = \alpha_\phi$ and with $(\alpha^+)^2_{f_2,f_1} = (\alpha_2^{f_2,f_1})\sharp$ for the the $(-)\sharp$ operation described in \Cref{rmk:mates-over-adj-equiv-calculations}, in which the adjoint equivalences are given by the invertible 1-cells $1\circ F^2_{f_2,f_1}$ and $G^2_{f_2,f_1}\circ1$.   Full details are described in \cite[Section~3.5]{BourkeLobbia}. 

The inverse operation to $(-)^{+}$ is called $(-)^{-}$ in \cite[Section~3.5]{BourkeLobbia}.  It involves the $(-)\flat$ operation from  \Cref{rmk:mates-over-adj-equiv-calculations}.  These combine to natural isomorphisms $(-)_{\ca,\cb}^{-} \colon \colon\lax(\ca,\cb) \cong \oplax(\ca^{co}.\cb^{co})^{co}$. 
\end{rmk}

\section{An unbiased presentation of pseudo-transformations}\label{sect:unbiased}

 
In this section, as hinted at in \cite[Remark~6.1]{BourkeLobbia}, we give a symmetric presentation of pseudo-transformations by combining lax and oplax transformations.  This presentation will be needed in \Cref{sec:bin-braiding} to construct the binary symmetry for the skew multicategory $\GCatmp$ and hence, in \Cref{sec:symm-skw-mon-structure}, to prove that the flexible skew monoidal structure $(\GCat,\otimes_p,1)$ is symmetric skew monoidal.
\subsection{Characterisation of $n$-cells in $\psd(\ca,\cb)$}

We begin with 1-cells (i.e. pseudo-transformations) in \Cref{prop:equ-psd-transf}, before moving on to 2- and 3-cells in  Propositions~\ref{prop:equ-psd-mod} and \ref{prop:equ-psd-pert}. 


\begin{prop}
	\label{prop:equ-psd-transf}
	Let $F,G\colon\ca \rightrightarrows \cb$ be pseudomaps and consider a family $\alpha_x\colon Fx\to Gx$ of 1-cells in $\cb$ equipped with, for any 1-cell $f\colon x\to y$ in $\ca$, an adjoint pair $\aexi^\alpha_f$
	\begin{center}
		\begin{tikzcd}[ampersand replacement=\&]
			{Gf\alpha_x} \& {\alpha_yFf}
			\arrow[""{name=0, anchor=center, inner sep=0}, "{\alpha_f^l}", curve={height=-12pt}, from=1-1, to=1-2]
			\arrow[""{name=1, anchor=center, inner sep=0}, "{\alpha_f^o}", curve={height=-12pt}, from=1-2, to=1-1]
			\arrow["\top"{description}, draw=none, from=1, to=0]
		\end{tikzcd}\hspace{0.25cm} (with unit $\eta_f$ and counit $\epsilon_f$) 
	\end{center}
	in $\cb(Fx,Gy)$ such that for any $x\in\ca$, $\aexi^\alpha_{1_x}=1_\aeq$. 
	Then, there is a bijective correspondence between:
	\begin{enumerate}[(1)]
		\item Extensions of $(\alpha_x,\alpha_f^l)$ to a lax transformation $\alpha^l$.
		
		\item Extensions of $(\alpha_x,\alpha_f^o)$ to an oplax transformation $\alpha^o$.

		\item Extensions of $(\alpha_x,\alpha_f^l)$ to a lax transformation $\alpha^l$ and of $(\alpha_x,\alpha_f^o)$ to an oplax transformation $\alpha^o$ such that 		
		\begin{center}
			$\alpha^o_\phi=$
\begin{tikzcd}[ampersand replacement=\&]
	{\alpha_yFf} \& \\
	{Gf\alpha_x} \& {\alpha_yFf} \\
	{Gf'\alpha_x} \& {\alpha_yFf'} \\
	\& {Gf'\alpha_x}
	\arrow["{{\alpha^o_f}}"', from=1-1, to=2-1]
	\arrow[""{name=0, anchor=center, inner sep=0}, curve={height=-12pt}, equals, from=1-1, to=2-2]
	\arrow[""{name=1, anchor=center, inner sep=0}, "{\alpha^l_f}"{description}, from=2-1, to=2-2]
	\arrow["{G\phi\circ1}"', from=2-1, to=3-1]
	\arrow["{1\circ F\phi}", from=2-2, to=3-2]
	\arrow[""{name=2, anchor=center, inner sep=0}, "{\alpha^l_{f'}}"{description}, from=3-1, to=3-2]
	\arrow[""{name=3, anchor=center, inner sep=0}, curve={height=12pt}, equals, from=3-1, to=4-2]
	\arrow["{\alpha^o_{f'}}", from=3-2, to=4-2]
	\arrow["{\eta_{f}}"'{pos=0.6}, between={0.3}{0.6}, Rightarrow, from=0, to=1]
	\arrow["{\alpha^l_\phi}", between={0.4}{0.6}, Rightarrow, from=1, to=2]
	\arrow["{\epsilon_{f'}}"{pos=0.4}, between={0.3}{0.6}, Rightarrow, from=2, to=3]
\end{tikzcd}
			\hspace{1cm} and \\
			${(\alpha^o)}_2^{f_2,f_1}=$
\begin{tikzcd}[ampersand replacement=\&]
	{\alpha_Z Ff_2Ff_1} \&\& {} \&\& \\
	{Gf_2\alpha_Y Ff_1} \& {} \\
	{G{f_2}G{f_1}\alpha_X} \&\& {Gf_2\alpha_Y Ff_1} \&\& {\alpha_Z Ff_2Ff_1} \\
	{G({f_2}{f_1}) \alpha_X} \&\& {} \& {} \& {\alpha_ZF(f_2f_1)} \\
	\&\&\& {} \& {G({f_2}{f_1}) \alpha_X}
	\arrow["{{\alpha^o_{f_2}}\circ1}"', from=1-1, to=2-1]
	\arrow[curve={height=-12pt}, equals, from=1-1, to=3-5]
	\arrow["{\eta_{f_2}\circ1}"'{pos=0.6}, between={0.5}{0.8}, Rightarrow, from=1-3, to=3-3]
	\arrow["{1\circ\alpha^o_{f_1}}"', from=2-1, to=3-1]
	\arrow[curve={height=-12pt}, equals, from=2-1, to=3-3]
	\arrow[""{name=0, anchor=center, inner sep=0}, "{1\circ\alpha^l_{f_1}}"{description}, from=3-1, to=3-3]
	\arrow["{G^2_{f_2,f_1}\circ1}"', from=3-1, to=4-1]
	\arrow["{\alpha^l_{f_2}\circ1}"{description}, from=3-3, to=3-5]
	\arrow["{{(\alpha^l)}^2_{f_2,f_1}}", between={0.3}{0.7}, Rightarrow, from=3-3, to=4-3]
	\arrow["{1\circ F^2_{f_2,f_1}}", from=3-5, to=4-5]
	\arrow["{\alpha^l_{f_2f_1}}"{description}, from=4-1, to=4-5]
	\arrow[curve={height=12pt}, equals, from=4-1, to=5-5]
	\arrow["{\epsilon_{f_2f_1}}"', between={0.1}{0.7}, Rightarrow, from=4-4, to=5-4]
	\arrow["{{\alpha^o_{f_2f_1}}}", from=4-5, to=5-5]
	\arrow["{1\circ\eta_{f_1}}"'{pos=0.4}, between={0.3}{0.7}, Rightarrow, from=2-2, to=0]
\end{tikzcd}
		\end{center}
In other words, $\alpha^o_\phi$ and $\alpha^l_\phi$ are mates with respect to the adjoint equivalences $(\aexi^\alpha_f,\aexi^\alpha_{f'})$ and 
and $({\alpha^o})_2^{f_2,f_1}$ and $({\alpha^l})^2_{f_2,f_1}$ are mates with the respect to the adjoint equivalences $(\,(\aexi^\alpha_{f_2}\circ1)\cdot (1\circ\aexi^\alpha_{f_1}),\aexi^{\alpha}_{f_2,f_1}\,)$.

	\end{enumerate}
\end{prop}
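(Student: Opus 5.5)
The plan is to show that (3) projects bijectively onto both (1) and (2). The two mate formulas in (3) express the oplax data $(\alpha^o_\phi,(\alpha^o)_2^{f_2,f_1})$ in terms of the lax data $(\alpha^l_\phi,(\alpha^l)^2_{f_2,f_1})$. Since taking mates with respect to fixed adjoint equivalences is a bijection on 3-cells, the same formulas, read the other way, express the lax data in terms of the oplax data. So it suffices to prove two implications:
\begin{enumerate}[(i)]
\item if $\alpha^l$ satisfies the seven lax axioms of \cite[Appendix~A.2]{BourkeLobbia}, then the mated data satisfy the oplax axioms of \Cref{subsec:ax-oplax-transf};
\item the dual implication, from oplax to lax.
\end{enumerate}
Once these hold, the maps $(3)\to(1)$ and $(3)\to(2)$ (forgetting one half) are bijections, and this gives the correspondence.

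To get (ii) from (i) without repeating work, I would use the isomorphisms $(-)^{\pm}$ of \Cref{plusminus}. An oplax transformation $\alpha^o\colon F\to G$ is the same thing as a lax transformation $(\alpha^o)^+\colon F^{co}\to G^{co}$. In $\cb^{co}$, the reversed adjoint equivalence $(\aexi^\alpha_f)^{\rev}$ has $\alpha^o_f$ as its right-hand part and $\alpha^l_f$ as its left-hand part. So the situation in (ii) is exactly the situation in (i), transported along $(-)^{co}$. What remains is to check that the mate formulas of (3) are self-dual under this translation: taking the mate formula in $\cb^{co}$ with respect to the reversed adjoint equivalences, and then applying $\sharp$ to the composite $(\alpha^+)^2$, should reproduce the inverse mate formula. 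This is a routine check using the triangle identities and the fact that $\sharp$ is itself a mate construction.

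The core of the proof is (i). I would go through the oplax axioms one at a time and verify each by mate calculus.
\begin{itemize}
\item The unit axioms hold because $\aexi^\alpha_{1_x}=1_\aeq$, so both mates reduce to the corresponding lax 3-cells.
\item Functoriality in 2-cells, meaning compatibility of $\alpha^o_\phi$ with vertical composition and identities of 2-cells, follows from the fact that mates compose: the counit $\epsilon_{f'}$ and unit $\eta_{f'}$ in the middle of the pasting cancel by a triangle identity.
\item Compatibility of $\alpha^o_\phi$ with 3-cells of $\ca$ is immediate by naturality.
\item The associativity axiom for $(\alpha^o)_2$ follows from the lax associativity axiom by substituting the mate expressions and cancelling the inner unit/counit pairs with triangle identities.
\end{itemize}
The axioms that mix $\alpha_\phi$ with $\alpha_2$, namely naturality of $\alpha_2$ in $f_1$ and $f_2$, are where interchange cells appear. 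There, whiskered units $1\circ\eta_{f_1}$ and counits $\epsilon_{f_2}\circ 1$ have to be moved past interchange isomorphisms $(\psi:\phi)$. This is exactly what \Cref{lemma:interchange-property} provides: taking $\Delta=\eta$ and $\Theta=\epsilon$, it lets us slide a unit or counit across an interchange at the cost of inverting the interchange. The pseudomap coherence for $F^2,G^2$ enters only through the invertible 1-cells $1\circ F^2$ and $G^2\circ 1$, which pass through the pastings by naturality.

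The main obstacle will be these mixed naturality axioms, and especially the one involving the interchange with $G\phi\circ 1$ against $1\circ\alpha_{f_1}$. There, several units and counits from the composite adjoint equivalence $(\aexi^\alpha_{f_2}\circ1)\cdot(1\circ\aexi^\alpha_{f_1})$ must be moved simultaneously through interchangers in the correct order. I would handle this by first rewriting the composite mate as an iterated mate, one factor at a time, and then applying \Cref{lemma:interchange-property} once per factor, before invoking the corresponding lax axiom.
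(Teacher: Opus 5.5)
Your proposal follows the paper's proof essentially step for step. In both, the forgetful maps $(3)\to(1)$ and $(3)\to(2)$ are shown to be bijective by defining the missing oplax data as mates, the oplax axioms are checked via triangle identities, \Cref{lemma:interchange-property} and the corresponding lax axioms, and the other direction follows formally from $\oplax(\ca,\cb)\cong\lax(\ca^{\co},\cb^{\co})^{\co}$. One correction: the associativity axiom for $(\alpha^o)_2$ also contains interchange cells, namely of $\alpha^o_{f_3}$ with $F^2_{f_2,f_1}$ and of $G^2_{f_3,f_2}$ with $\alpha^o_{f_1}$. So there too, as in the paper, you need \Cref{lemma:interchange-property} applied to $\eta_{f_3}\circ 1$ and $1\circ\eta_{f_1}$, not only triangle identities.
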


\begin{proof}
	To get from $(3)$ to either $(1)$ or $(2)$ we consider the mapping which forgets either the oplax or lax part in $(3)$, respectively. 
	Let us call these $u_1\colon(3)\to(1)$ and $u_2\colon(3)\to(2)$. 
	
	We start by constructing the inverse mapping $v_1\colon(1)\to(3)$ to $u_1$.  Given a lax transformation $\alpha^l$ as in (1), we define $v_1(\alpha^l):=(\alpha^l,\,(\alpha_x,\alpha_f^o,\alpha^o_\phi,{(\alpha^o)}_2^{f_2,f_1})\,)$ where $\alpha^o_\phi$ and ${(\alpha^o)}_2^{f_2,f_1}$ are the mates as in $(3)$.  Let us assume for a moment that this data actually defines an oplax transformation so that $v_1$ is defined. Then $u_1v_1=1$ since neither $u_1$ nor $v_1$ change  the lax part, whilst $v_1u_1=1$ since in $(3)$ both $\alpha^o_\phi$ and ${(\alpha^o)}_2^{f_2,f_1}$ must be defined following those equations. 
	
It remains then to prove that defining $\alpha^o_\phi$ and ${(\alpha^o)}_2^{f_2,f_1}$ as in $(3)$ 
       produces an oplax transformation.  We will see that the axioms for an oplax transformation follow from the axioms for the lax transformation, triangle identities for the adjoint pairs and the axioms regarding the interchange isomorphism in a Gray-category. \black
First, the degeneracy equations (Axioms (\ref{ax:oplax-tr.deg-2-cell}), (\ref{ax:oplax-tr.deg-3-cell}) and (\ref{ax:oplax-tr.deg-alpha2}) in \Cref{subsec:ax-oplax-transf})
follow by the degeneracies for $\alpha_l$ as a lax transformation and the assumption $\aexi^\alpha_{1_x}=1_\aeq$. 
Then, we proceed with the compatibility with 3-cells, i.e. Axiom (\ref{ax:oplax-tr.3-cell}) in \Cref{subsec:ax-oplax-transf}. 
This asserts that at a 3-cell $\Gamma\colon\phi\to\phi'$ in $\ca$, the first and last diagrams below are equal.
In order to prove it, we notice that: the first and last equation below are the definition of $\alpha^o_\phi$; the equation in the middle follows by the corresponding axiom for $\alpha$ as a lax transformation (Axiom A.2 (ii) of \cite{BourkeLobbia}).\footnote{
		This axiom and the next one follow by the properties of mates inducing isomorphism of double categories (see \cite[Proposition ~2.2]{KellyStreet:rew-el-2cats}) and the corresponding axioms for a lax transformation. }
	\begin{center}
		\scalebox{0.9}{
			\begin{tikzcd}[ampersand replacement=\&]
				{\alpha_yFf} \&\& {\alpha_yFf'} \\
				\\
				{Gf\alpha_x} \&\& {Gf'\alpha_x}
				\arrow[""{name=0, anchor=center, inner sep=0}, "{1\circ F\phi}", curve={height=-12pt}, from=1-1, to=1-3]
				\arrow[""{name=1, anchor=center, inner sep=0}, "{1\circ F\phi'}"', curve={height=12pt}, from=1-1, to=1-3]
				\arrow["{\alpha^o_f}"', from=1-1, to=3-1]
				\arrow["{\alpha^o_{f'}}", from=1-3, to=3-3]
				\arrow[""{name=2, anchor=center, inner sep=0}, "{G\phi'\circ1}"', curve={height=12pt}, from=3-1, to=3-3]
				\arrow["{1\circ F\Gamma}", shift right=3, between={0.3}{0.7}, Rightarrow, from=0, to=1]
				\arrow["{\alpha^o_{\phi'}}", between={0.4}{0.6}, Rightarrow, from=1, to=2]
			\end{tikzcd}
		}
		=
		\scalebox{0.9}{
		\begin{tikzcd}[ampersand replacement=\&]
			\&\& {\alpha_yFf} \&\& {\alpha_yFf'} \&\& {Gf'\alpha_x} \\
			\\
			{\alpha_yFf} \&\& {Gf\alpha_x} \&\& {Gf'\alpha_x}
			\arrow[""{name=0, anchor=center, inner sep=0}, "{1\circ F\phi}", curve={height=-12pt}, from=1-3, to=1-5]
			\arrow[""{name=1, anchor=center, inner sep=0}, "{1\circ F\phi'}"', curve={height=12pt}, from=1-3, to=1-5]
			\arrow["{\alpha^o_{f'}}", from=1-5, to=1-7]
			\arrow[""{name=2, anchor=center, inner sep=0}, equals, nfold, from=3-1, to=1-3]
			\arrow["{\alpha^o_f}"', from=3-1, to=3-3]
			\arrow[""{name=3, anchor=center, inner sep=0}, "{\alpha_f^l}"{description}, from=3-3, to=1-3]
			\arrow["{G\phi'\circ1}"', curve={height=12pt}, from=3-3, to=3-5]
			\arrow[""{name=4, anchor=center, inner sep=0}, "{\alpha_{f'}^l}"{description}, from=3-5, to=1-5]
			\arrow[""{name=5, anchor=center, inner sep=0}, equals, nfold, from=3-5, to=1-7]
			\arrow["{1\circ F\Gamma}", shift right=3, between={0.3}{0.7}, Rightarrow, nfold, from=0, to=1]
			\arrow["{\eta_{f}}", shift right=5, between={0.3}{0.6}, Rightarrow, nfold, from=2, to=3]
			\arrow["{\alpha_{\phi}^l}", shift right=4, between={0.4}{0.6}, Rightarrow, nfold, from=3, to=4]
			\arrow["{\epsilon_{f'}}", between={0.4}{0.6}, Rightarrow, nfold, from=4, to=5]
		\end{tikzcd}
		}
		= \\
		\scalebox{0.9}{
		\begin{tikzcd}[ampersand replacement=\&]
			\&\& {\alpha_yFf} \&\& {\alpha_yFf'} \&\& {Gf'\alpha_x} \\
			\\
			{\alpha_yFf} \&\& {Gf\alpha_x} \&\& {Gf'\alpha_x}
			\arrow["{1\circ F\phi}", curve={height=-12pt}, from=1-3, to=1-5]
			\arrow["{\alpha^o_{f'}}", from=1-5, to=1-7]
			\arrow[""{name=0, anchor=center, inner sep=0}, equals, nfold, from=3-1, to=1-3]
			\arrow["{\alpha^o_f}"', from=3-1, to=3-3]
			\arrow[""{name=1, anchor=center, inner sep=0}, "{\alpha_f^l}"{description}, from=3-3, to=1-3]
			\arrow[""{name=2, anchor=center, inner sep=0}, "{G\phi'\circ1}"', curve={height=12pt}, from=3-3, to=3-5]
			\arrow[""{name=3, anchor=center, inner sep=0}, "{G\phi\circ1}", curve={height=-12pt}, from=3-3, to=3-5]
			\arrow[""{name=4, anchor=center, inner sep=0}, "{\alpha_{f'}^l}"{description}, from=3-5, to=1-5]
			\arrow[""{name=5, anchor=center, inner sep=0}, equals, nfold, from=3-5, to=1-7]
			\arrow["{\eta_{f}}", shift right=5, between={0.3}{0.6}, Rightarrow, nfold, from=0, to=1]
			\arrow["{\alpha_{\phi}^l}", shift left=4, between={0.4}{0.6}, Rightarrow, nfold, from=1, to=4]
			\arrow["{G\Gamma\circ1}", shift right=3, between={0.3}{0.7}, Rightarrow, nfold, from=3, to=2]
			\arrow["{\epsilon_{f'}}", between={0.4}{0.6}, Rightarrow, nfold, from=4, to=5]
		\end{tikzcd}
		}
		=
		\scalebox{0.9}{
			\begin{tikzcd}[ampersand replacement=\&]
				{\alpha_yFf} \&\& {\alpha_yFf'} \\
				\\
				{Gf\alpha_x} \&\& {Gf'\alpha_x}
				\arrow[""{name=0, anchor=center, inner sep=0}, "{1\circ F\phi}", curve={height=-12pt}, from=1-1, to=1-3]
				\arrow["{\alpha^o_f}"', from=1-1, to=3-1]
				\arrow["{\alpha^o_{f'}}", from=1-3, to=3-3]
				\arrow[""{name=1, anchor=center, inner sep=0}, "{G\phi'\circ1}"', curve={height=12pt}, from=3-1, to=3-3]
				\arrow[""{name=2, anchor=center, inner sep=0}, "{G\phi\circ1}", curve={height=-12pt}, from=3-1, to=3-3]
				\arrow["{\alpha^o_\phi}", between={0.4}{0.6}, Rightarrow, from=0, to=2]
				\arrow["{G\Gamma\circ1}", shift right=3, between={0.3}{0.7}, Rightarrow, from=2, to=1]
			\end{tikzcd}
		}
	\end{center}
	
	We now turn to compatibility with vertical composition of 2-cells $\phi_1\colon f\to f'$ and $\phi_2\colon f'\to f''$, i.e. Axiom (\ref{ax:oplax-tr.comp-2-cell}) in \Cref{subsec:ax-oplax-transf}. \black

	This axiom follows by the equation below (which is true by one of the triangle identities for $\alpha_{f'}^o\dashv\alpha^l_{f'}$) and the corresponding axiom of $\alpha$ (i.e. Axiom A.2 (iii) of \cite{BourkeLobbia}). 
	\begin{center}
\[\begin{tikzcd}[ampersand replacement=\&]
	{\alpha_yFf} \&\&\&\& \\
	{Gf\alpha_x} \&\& {\alpha_yFf} \\
	{Gf'\alpha_x} \&\& {\alpha_yFf'} \\
	\&\& {Gf'\alpha_x} \&\& {\alpha_yFf'} \\
	\&\& {Gf'\alpha_x} \&\& {\alpha_yFf''} \\
	\&\&\&\& {Gf'\alpha_x}
	\arrow["{\alpha^o_f}"', from=1-1, to=2-1]
	\arrow[""{name=0, anchor=center, inner sep=0}, curve={height=-6pt}, equals, nfold, from=1-1, to=2-3]
	\arrow[""{name=1, anchor=center, inner sep=0}, "{\alpha_f}"{description}, from=2-1, to=2-3]
	\arrow["{G\phi_1\circ1}"', from=2-1, to=3-1]
	\arrow["{1\circ F\phi_1}", from=2-3, to=3-3]
	\arrow[""{name=2, anchor=center, inner sep=0}, "{\alpha_{f'}}"{description}, from=3-1, to=3-3]
	\arrow[""{name=3, anchor=center, inner sep=0}, curve={height=6pt}, equals, nfold, from=3-1, to=4-3]
	\arrow["{\alpha^o_{f'}}"{description}, from=3-3, to=4-3]
	\arrow[""{name=4, anchor=center, inner sep=0}, curve={height=-6pt}, equals, nfold, from=3-3, to=4-5]
	\arrow[""{name=5, anchor=center, inner sep=0}, "{\alpha_{f'}}"{description}, from=4-3, to=4-5]
	\arrow["{G\phi_2\circ1}"', from=4-3, to=5-3]
	\arrow["{1\circ F\phi_2}", from=4-5, to=5-5]
	\arrow[""{name=6, anchor=center, inner sep=0}, from=5-3, to=5-5]
	\arrow[""{name=7, anchor=center, inner sep=0}, curve={height=6pt}, equals, nfold, from=5-3, to=6-5]
	\arrow["{\alpha^o_{f''}}", from=5-5, to=6-5]
	\arrow["{\eta_f}"', shift right=5, between={0.3}{0.7}, Rightarrow, nfold, from=0, to=1]
	\arrow["{\alpha^l_{\phi_1}}", between={0.4}{0.6}, Rightarrow, nfold, from=1, to=2]
	\arrow["{\epsilon_{f'}}", shift left=5, between={0.3}{0.7}, Rightarrow, nfold, from=2, to=3]
	\arrow["{\eta_{f'}}"', shift right=5, between={0.3}{0.7}, Rightarrow, nfold, from=4, to=5]
	\arrow["{\alpha^l_{\phi_2}}", between={0.4}{0.6}, Rightarrow, nfold, from=5, to=6]
	\arrow["{\epsilon_{f'}}", shift left=5, between={0.3}{0.7}, Rightarrow, nfold, from=6, to=7]
\end{tikzcd}		=
\begin{tikzcd}[ampersand replacement=\&]
	{\alpha_yFf} \&\& \\
	{Gf\alpha_x} \&\& {\alpha_yFf} \\
	{Gf'\alpha_x} \&\& {\alpha_yFf'} \\
	{Gf'\alpha_x} \&\& {\alpha_yFf''} \\
	\&\& {Gf'\alpha_x}
	\arrow["{\alpha^o_f}"', from=1-1, to=2-1]
	\arrow[""{name=0, anchor=center, inner sep=0}, curve={height=-6pt}, equals, nfold, from=1-1, to=2-3]
	\arrow[""{name=1, anchor=center, inner sep=0}, "{\alpha_f}"{description}, from=2-1, to=2-3]
	\arrow["{G\phi_1\circ1}"', from=2-1, to=3-1]
	\arrow["{1\circ F\phi_1}", from=2-3, to=3-3]
	\arrow[""{name=2, anchor=center, inner sep=0}, "{\alpha_{f'}}"{description}, from=3-1, to=3-3]
	\arrow["{G\phi_2\circ1}"', from=3-1, to=4-1]
	\arrow["{1\circ F\phi_2}", from=3-3, to=4-3]
	\arrow[""{name=3, anchor=center, inner sep=0}, from=4-1, to=4-3]
	\arrow[""{name=4, anchor=center, inner sep=0}, curve={height=6pt}, equals, nfold, from=4-1, to=5-3]
	\arrow["{\alpha^o_{f''}}", from=4-3, to=5-3]
	\arrow["{\eta_f}"', shift right=5, between={0.3}{0.7}, Rightarrow, nfold, from=0, to=1]
	\arrow["{\alpha^l_{\phi_1}}", between={0.4}{0.6}, Rightarrow, nfold, from=1, to=2]
	\arrow["{\alpha^l_{\phi_2}}", between={0.4}{0.6}, Rightarrow, nfold, from=2, to=3]
	\arrow["{\epsilon_{f'}}", shift left=5, between={0.3}{0.7}, Rightarrow, nfold, from=3, to=4]
\end{tikzcd}\]
\end{center}
	Then, let us consider compatibility with composition of three 1-cells $w\xrightarrow{f_1}x\xrightarrow{f_2}y\xrightarrow{f_3}z$, i.e. Axiom (\ref{ax:oplax-tr.comp-1-cell}) in \Cref{subsec:ax-oplax-transf}.\black 
	We start from the right hand side and move to the left hand side using the definition of $(\alpha^o)^2)_{-,-}$ and (in order): triangle identity for $\alpha^o_{f_2f_1}\dashv\alpha^l_{f_2f_1}$; \Cref{lemma:interchange-property} for $\eta_{f_3}\circ1$; the corresponding axiom (A.2(v) of \cite{BourkeLobbia}) for $\alpha$ as a lax transformation; \Cref{lemma:interchange-property} for $1\circ\eta_{f_1}$; triangle identity for $\alpha^o_{f_3f_2}\dashv\alpha^l_{f_3f_2}$.
	For the details see \Cref{app:big-diagram}.

	Finally, we need to prove the axioms regarding whiskering, i.e. Axioms (\ref{ax:oplax-tr.whisk-alpha2}).(L)/(R) in \Cref{subsec:ax-oplax-transf}. \black
	We will give the calculations for the right whiskering, the left case is completely analogous. 
	Consider a 2-cell $\phi\colon f_1\to f_2\in\ca(x,y)$ and a 1-cell $f_2\colon y\to z\in\ca$.
	\black 
	We start with the left hand side and obtain the following equality using (in order): the definitions of $({\alpha^o})_2^{f_2,f_1}$ and $({\alpha^o})_{f_2\phi}$; one triangle equality of $\alpha^o_{f_2f_1}\dashv\alpha^l_{f_2f_1}$.
	
	\begin{center}
		\scalebox{0.9}{
			\begin{tikzcd}[ampersand replacement=\&]
				{\alpha_Z Ff_2Ff_1} \&\&\&\&\&\&\& \\
				{Gf_2\alpha_Y Ff_1} \&\& {} \\
				{G{f_2}G{f_1}\alpha_X} \&\& {Gf_2\alpha_Y Ff_1} \&\& {\alpha_Z Ff_2Ff_1} \\
				{G({f_2}{f_1}) \alpha_X} \&\& {} \& {} \& {\alpha_ZF(f_2f_1)} \\
				\&\&\& {} \& {G({f_2}{f_1}) \alpha_X} \&\&\& {\alpha_ZF(f_2f_1)} \\
				\&\&\&\& {G({f_2}{f'_1}) \alpha_x} \&\&\& {\alpha_zF(f_2f'_1)} \\
				\&\&\&\&\&\&\& {G({f_2}{f'_1}) \alpha_x}
				\arrow["{{\alpha^o_{f_2}}\circ1}"', from=1-1, to=2-1]
				\arrow[equals, from=1-1, to=3-5]
				\arrow["{1\circ\alpha^o_{f_1}}"', from=2-1, to=3-1]
				\arrow[""{name=0, anchor=center, inner sep=0}, equals, from=2-1, to=3-3]
				\arrow["{\eta_{f_2}\circ1}"', between={0.3}{0.7}, Rightarrow, from=2-3, to=3-3]
				\arrow[""{name=1, anchor=center, inner sep=0}, "{1\circ\alpha^l_{f_1}}"{description}, from=3-1, to=3-3]
				\arrow["{G^2_{f_2,f_1}\circ1}"', from=3-1, to=4-1]
				\arrow["{\alpha^l_{f_2}\circ1}"{description}, from=3-3, to=3-5]
				\arrow["{({\alpha^l})^2_{f_2,f_1}}", between={0.3}{0.7}, Rightarrow, from=3-3, to=4-3]
				\arrow["{1\circ F^2_{f_2,f_1}}", from=3-5, to=4-5]
				\arrow["{\alpha^l_{f_2f_1}}"{description}, from=4-1, to=4-5]
				\arrow[equals, from=4-1, to=5-5]
				\arrow["{\epsilon_{f_2f_1}}"{pos=0.2}, between={0.2}{0.4}, Rightarrow, from=4-4, to=5-4]
				\arrow["{{\alpha^o_{f_2f_1}}}"{description}, from=4-5, to=5-5]
				\arrow[""{name=2, anchor=center, inner sep=0}, equals, from=4-5, to=5-8]
				\arrow[""{name=3, anchor=center, inner sep=0}, "{{\alpha^l_{f_2f_1}}}"{description, pos=0.6}, from=5-5, to=5-8]
				\arrow["{1\circ G(f_2\phi)}"', from=5-5, to=6-5]
				\arrow["{1\circ F(f_2\phi)}", from=5-8, to=6-8]
				\arrow[""{name=4, anchor=center, inner sep=0}, "{{\alpha^l_{f_2f'_1}}}"{description, pos=0.4}, from=6-5, to=6-8]
				\arrow[""{name=5, anchor=center, inner sep=0}, Rightarrow, from=6-5, to=7-8]
				\arrow["{{\alpha^o_{f_2f'_1}}}", from=6-8, to=7-8]
				\arrow["{1\circ\eta_{f_1}}"'{pos=0.3}, between={0.3}{0.7}, Rightarrow, from=0, to=1]
				\arrow["{\eta_{f_2f_1}}"'{pos=0.4}, shift right=5, between={0.3}{0.7}, Rightarrow, from=2, to=3]
				\arrow["{\alpha^l_\phi}", between={0.4}{0.6}, Rightarrow, from=3, to=4]
				\arrow["{\epsilon_{f_2f'_1}}"{pos=0.6}, shift left=5, between={0.3}{0.7}, Rightarrow, from=4, to=5]
			\end{tikzcd}
		} \\
		$=$ 
		\scalebox{1}{
			\begin{tikzcd}[ampersand replacement=\&]
				{\alpha_Z Ff_2Ff_1} \&\&\&\& \\
				{Gf_2\alpha_Y Ff_1} \&\& {} \\
				{G{f_2}G{f_1}\alpha_X} \&\& {Gf_2\alpha_Y Ff_1} \&\& {\alpha_Z Ff_2Ff_1} \\
				{G({f_2}{f_1}) \alpha_X} \&\& {} \&\& {\alpha_ZF(f_2f_1)} \\
				{G({f_2}{f'_1}) \alpha_x} \&\&\&\& {\alpha_zF(f_2f'_1)} \\
				\&\&\&\& {G({f_2}{f'_1}) \alpha_x}
				\arrow["{{\alpha^o_{f_2}}\circ1}"', from=1-1, to=2-1]
				\arrow[equals, from=1-1, to=3-5]
				\arrow["{1\circ\alpha^o_{f_1}}"', from=2-1, to=3-1]
				\arrow[""{name=0, anchor=center, inner sep=0}, equals, from=2-1, to=3-3]
				\arrow["{\eta_{f_2}\circ1}"', between={0.3}{0.7}, Rightarrow, from=2-3, to=3-3]
				\arrow[""{name=1, anchor=center, inner sep=0}, "{1\circ\alpha^l_{f_1}}"{description}, from=3-1, to=3-3]
				\arrow["{G^2_{f_2,f_1}\circ1}"', from=3-1, to=4-1]
				\arrow["{\alpha^l_{f_2}\circ1}"{description}, from=3-3, to=3-5]
				\arrow["{({\alpha^l})^2_{f_2,f_1}}", between={0.3}{0.7}, Rightarrow, from=3-3, to=4-3]
				\arrow["{1\circ F^2_{f_2,f_1}}", from=3-5, to=4-5]
				\arrow[""{name=2, anchor=center, inner sep=0}, "{\alpha^l_{f_2f_1}}"{description}, from=4-1, to=4-5]
				\arrow["{G(f_2\phi)\circ1}"', from=4-1, to=5-1]
				\arrow["{1\circ F(f_2\phi)}", from=4-5, to=5-5]
				\arrow[""{name=3, anchor=center, inner sep=0}, "{{\alpha^l_{f_2f'_1}}}"{description, pos=0.4}, from=5-1, to=5-5]
				\arrow[""{name=4, anchor=center, inner sep=0}, Rightarrow, from=5-1, to=6-5]
				\arrow["{{\alpha^o_{f_2f'_1}}}", from=5-5, to=6-5]
				\arrow["{1\circ\eta_{f_1}}"'{pos=0.3}, between={0.3}{0.7}, Rightarrow, from=0, to=1]
				\arrow["{\alpha^l_\phi}", between={0.4}{0.6}, Rightarrow, from=2, to=3]
				\arrow["{\epsilon_{f_2f'_1}}"{pos=0.6}, shift left=5, between={0.3}{0.7}, Rightarrow, from=3, to=4]
			\end{tikzcd}
		} 
		$=:(A)$
	\end{center}
	
	On the other hand, let us start with the left hand side using (in order): the definition of ${(\alpha^o)}_2^{f_2,f'_1}$ and $\alpha^o_{\phi}$; 
	one triangle identity of $\alpha^o_{f_1}\dashv\alpha^l_{f_1}$; \Cref{lemma:interchange-property} for $\eta_{f_2}$. 
	
	\begin{center}
		\begin{tikzcd}[ampersand replacement=\&,column sep=scriptsize,row sep=small]
			{\alpha_Z Ff_2Ff_1} \&\&\&\&\&\& \\
			{Gf_2\alpha_Y Ff_1} \& {} \&\&\& {\alpha_Z Ff_2Ff'_1} \\
			{G{f_2}G{f_1}\alpha_X} \&\& {Gf_2\alpha_Y Ff_1} \\
			{G{f_2}G{f'_1}\alpha_X} \&\& {Gf_2\alpha_Y Ff'_1} \& {} \\
			\& {} \& {G{f_2}G{f'_1}\alpha_X} \&\& {Gf_2\alpha_Y Ff'_1} \&\& {\alpha_Z Ff_2Ff'_1} \\
			\&\& {G({f_2}{f'_1}) \alpha_x} \&\&\& {} \& {\alpha_zF(f_2f'_1)} \\
			\&\&\&\&\& {} \& {G({f_2}{f'_1}) \alpha_x}
			\arrow["{{\alpha^o_{f_2}}\circ1}"', from=1-1, to=2-1]
			\arrow[""{name=0, anchor=center, inner sep=0}, "{1\circ F\phi}", curve={height=-12pt}, from=1-1, to=2-5]
			\arrow["{1\circ\alpha^o_{f_1}}"', from=2-1, to=3-1]
			\arrow[curve={height=-12pt}, equals, from=2-1, to=3-3]
			\arrow["{{\alpha^o_{f_2}}\circ1}"{description}, from=2-5, to=4-3]
			\arrow["{\eta_{f_2}\circ1}", between={0.4}{0.6}, Rightarrow, from=2-5, to=5-5]
			\arrow[curve={height=-12pt}, equals, from=2-5, to=5-7]
			\arrow[""{name=1, anchor=center, inner sep=0}, "{1\circ\alpha^l_{f_1}}"{description}, from=3-1, to=3-3]
			\arrow["{1\circ G\phi\circ1}"', from=3-1, to=4-1]
			\arrow["{1\circ F\phi}"', from=3-3, to=4-3]
			\arrow[""{name=2, anchor=center, inner sep=0}, "{1\circ\alpha^l_{f'_1}}"{description}, from=4-1, to=4-3]
			\arrow[curve={height=12pt}, equals, from=4-1, to=5-3]
			\arrow[from=4-3, to=5-3]
			\arrow[curve={height=-12pt}, equals, from=4-3, to=5-5]
			\arrow[""{name=3, anchor=center, inner sep=0}, "{1\circ\alpha^l_{f'_1}}"{description}, from=5-3, to=5-5]
			\arrow["{G^2_{f_2,f'_1}\circ1}"', from=5-3, to=6-3]
			\arrow["{\alpha^l_{f_2}\circ1}", from=5-5, to=5-7]
			\arrow["{1\circ F^2_{f_2,f'_1}}", from=5-7, to=6-7]
			\arrow[""{name=4, anchor=center, inner sep=0}, "{{\alpha^l_{f_2f'_1}}}"{description, pos=0.4}, from=6-3, to=6-7]
			\arrow[curve={height=12pt}, Rightarrow, from=6-3, to=7-7]
			\arrow["{\epsilon_{f_2f'_1}}"'{pos=0.4}, shift right=5, between={0.2}{0.8}, Rightarrow, from=6-6, to=7-6]
			\arrow["{{\alpha^o_{f_2f'_1}}}", from=6-7, to=7-7]
			\arrow[between={0.3}{0.7}, Rightarrow, dashed, from=0, to=3-3]
			\arrow["{1\circ\eta_{f_1}}"'{pos=0.3}, between={0.2}{0.6}, Rightarrow, from=2-2, to=1]
			\arrow["{\alpha^l_\phi}"', between={0.4}{0.6}, Rightarrow, from=1, to=2]
			\arrow["{1\circ\epsilon_{f_1'}}"{pos=0.6}, between={0.4}{0.8}, Rightarrow, from=2, to=5-2]
			\arrow["{1\circ\eta_{f_1'}}"'{pos=0.3}, between={0.1}{0.6}, Rightarrow, from=4-4, to=3]
			\arrow["{{\alpha^l}^2_{f_2,f'_1}}"{pos=0.4}, between={0.2}{0.7}, Rightarrow, from=5-5, to=4]
		\end{tikzcd}\\
		$=$
		\begin{tikzcd}[ampersand replacement=\&]
			{\alpha_Z Ff_2Ff_1} \&\&\&\& \\
			{Gf_2\alpha_Y Ff_1} \& {} \&\& {\alpha_Z Ff_2Ff'_1} \\
			{G{f_2}G{f_1}\alpha_X} \&\& {Gf_2\alpha_Y Ff_1} \\
			{G{f_2}G{f'_1}\alpha_X} \&\& {Gf_2\alpha_Y Ff'_1} \& {} \& {\alpha_Z Ff_2Ff'_1} \\
			{G({f_2}{f'_1}) \alpha_x} \&\& {} \& {} \& {\alpha_zF(f_2f'_1)} \\
			\&\&\& {} \& {G({f_2}{f'_1}) \alpha_x}
			\arrow["{{\alpha^o_{f_2}}\circ1}"', from=1-1, to=2-1]
			\arrow[""{name=0, anchor=center, inner sep=0}, "{1\circ F\phi}", curve={height=-12pt}, from=1-1, to=2-4]
			\arrow["{1\circ\alpha^o_{f_1}}"', from=2-1, to=3-1]
			\arrow[curve={height=-12pt}, equals, from=2-1, to=3-3]
			\arrow["{{\alpha^o_{f_2}}\circ1}"{description, pos=0.3}, from=2-4, to=4-3]
			\arrow["{\eta_{f_2}\circ1}", between={0.4}{0.6}, Rightarrow, from=2-4, to=4-4]
			\arrow[curve={height=-12pt}, equals, from=2-4, to=4-5]
			\arrow[""{name=1, anchor=center, inner sep=0}, "{1\circ\alpha^l_{f_1}}"{description}, from=3-1, to=3-3]
			\arrow["{1\circ G\phi\circ1}"', from=3-1, to=4-1]
			\arrow["{1\circ F\phi}"', from=3-3, to=4-3]
			\arrow[""{name=2, anchor=center, inner sep=0}, "{1\circ\alpha^l_{f'_1}}"{description}, from=4-1, to=4-3]
			\arrow["{G^2_{f_2,f'_1}\circ1}"', from=4-1, to=5-1]
			\arrow["{\alpha^l_{f_2}\circ1}", from=4-3, to=4-5]
			\arrow["{{\alpha^l}^2_{f_2,f'_1}}"{pos=0.4}, between={0.2}{0.7}, Rightarrow, from=4-3, to=5-3]
			\arrow["{1\circ F^2_{f_2,f'_1}}", from=4-5, to=5-5]
			\arrow["{{\alpha^l_{f_2f'_1}}}"{description, pos=0.4}, from=5-1, to=5-5]
			\arrow[curve={height=12pt}, Rightarrow, from=5-1, to=6-5]
			\arrow["{\epsilon_{f_2f'_1}}"'{pos=0.4}, shift right=5, between={0.2}{0.8}, Rightarrow, from=5-4, to=6-4]
			\arrow["{{\alpha^o_{f_2f'_1}}}", from=5-5, to=6-5]
			\arrow[between={0.3}{0.7}, Rightarrow, dashed, from=0, to=3-3]
			\arrow["{1\circ\eta_{f_1}}"'{pos=0.3}, between={0.2}{0.6}, Rightarrow, from=2-2, to=1]
			\arrow["{\alpha^l_\phi}"', between={0.4}{0.6}, Rightarrow, from=1, to=2]
		\end{tikzcd}\\
		$=$
		\scalebox{1}{
			\begin{tikzcd}[ampersand replacement=\&]
				{\alpha_Z Ff_2Ff_1} \&\&\&\& \\
				{Gf_2\alpha_Y Ff_1} \&\& {} \\
				{G{f_2}G{f_1}\alpha_X} \&\& {Gf_2\alpha_Y Ff_1} \&\& {\alpha_Z Ff_2Ff_1} \\
				{G{f_2}G{f'_1}\alpha_X} \&\& {Gf_2\alpha_Y Ff'_1} \&\& {\alpha_Z Ff_2Ff'_1} \\
				{G({f_2}{f'_1}) \alpha_x} \&\&\&\& {\alpha_zF(f_2f'_1)} \\
				\&\&\&\& {G({f_2}{f'_1}) \alpha_x}
				\arrow["{{\alpha^o_{f_2}}\circ1}"', from=1-1, to=2-1]
				\arrow[equals, from=1-1, to=3-5]
				\arrow["{1\circ\alpha^o_{f_1}}"', from=2-1, to=3-1]
				\arrow[""{name=0, anchor=center, inner sep=0}, equals, from=2-1, to=3-3]
				\arrow["{\eta_{f_2}\circ1}"', between={0.3}{0.7}, Rightarrow, from=2-3, to=3-3]
				\arrow[""{name=1, anchor=center, inner sep=0}, "{1\circ\alpha^l_{f_1}}"{description}, from=3-1, to=3-3]
				\arrow["{1\circ G\phi\circ1}"', from=3-1, to=4-1]
				\arrow[""{name=2, anchor=center, inner sep=0}, "{\alpha^l_{f_2}\circ1}"{description}, from=3-3, to=3-5]
				\arrow["{1\circ F\phi}"', from=3-3, to=4-3]
				\arrow["{1\circ F\phi}", from=3-5, to=4-5]
				\arrow[""{name=3, anchor=center, inner sep=0}, "{1\circ\alpha^l_{f'_1}}"', from=4-1, to=4-3]
				\arrow["{G^2_{f_2,f'_1}\circ1}"', from=4-1, to=5-1]
				\arrow[""{name=4, anchor=center, inner sep=0}, "{\alpha^l_{f_2}\circ1}"', from=4-3, to=4-5]
				\arrow["{1\circ F^2_{f_2,f'_1}}", from=4-5, to=5-5]
				\arrow[""{name=5, anchor=center, inner sep=0}, "{{\alpha^l_{f_2f'_1}}}"{description, pos=0.4}, from=5-1, to=5-5]
				\arrow[""{name=6, anchor=center, inner sep=0}, Rightarrow, from=5-1, to=6-5]
				\arrow["{{\alpha^o_{f_2f'_1}}}", from=5-5, to=6-5]
				\arrow["{1\circ\eta_{f_1}}"'{pos=0.3}, between={0.3}{0.7}, Rightarrow, from=0, to=1]
				\arrow["{\alpha^l_\phi}"', between={0.4}{0.6}, Rightarrow, from=1, to=3]
				\arrow[between={0.3}{0.7}, Rightarrow, dashed, from=2, to=4]
				\arrow["{{\alpha^l}^2_{f_2,f'_1}}"{pos=0.4}, between={0.3}{0.6}, Rightarrow, from=4-3, to=5]
				\arrow["{\epsilon_{f_2f'_1}}"{pos=0.6}, shift left=5, between={0.3}{0.7}, Rightarrow, from=5, to=6]
			\end{tikzcd}
		}
	\end{center}
	We conclude noticing that this diagram is equal to $(A)$ above by the corresponding axiom of $\alpha$ as a lax transformation (Axiom A.2 (vii).(L) of \cite{BourkeLobbia}). \\
	
	
The construction of the inverse $v_2\colon(2)\to(3)$ to $u_2$ is dual and follows formally using the isomorphism $\lax(\ca^\co,\cb^\co)^\co\cong\oplax(\ca,\cb)$.\black

\end{proof}

\begin{rmk}
	\label{rmk:inv-transf}
	Following the above notation\black, it is straightforward \black to check that $\alpha^l_\phi$ is invertible if and only if $\alpha^o_\phi$ is (since in an adjoint equivalence the unit and counit are invertible).
\end{rmk}

Thanks to \Cref{prop:equ-psd-transf} and \Cref{rmk:inv-transf}, we can give an equivalent notion of pseudo-transformation as in the following definition. \black

\begin{defn}\label{def:unb-psd-transf}
	Let $F,G\colon\ca\rightrightarrows \cb$ be  pseudomaps. 
	An \emph{unbiased pseudo-transformation} $\alpha=(\alpha^l,\alpha^o,\aexi^\alpha_f)\colon F\to G$ consists of:
	\begin{itemize}
		\item for any object $x\in\ca$, a 1-cell $\alpha_x\colon Fx\to Gx$;
		\item for any 1-cell $f\colon x\to y\in\ca$, an adjoint equivalence $\aexi^\alpha_f=(\alpha^o_f\dashv\alpha^l_f\colon Gf\alpha_x\to\alpha_yFf)$ such that $\aexi^\alpha_{1_x}=1_\aeq$;
		\item extensions of $(\alpha_x,\alpha_f^l)$ to a lax transformation and of $(\alpha_x,\alpha_f^o)$ to an oplax transformation such that the components $\alpha^o_\phi$ and ${(\alpha^o)}_2^{f_2,f_1}$ are mates of $\alpha^l_\phi$ and ${(\alpha^l)}^2_{f_2,f_1}$ (see point (3) of \Cref{prop:equ-psd-transf} for more details).
		Moreover, we require $\alpha^l_\phi$ (equivalently its mate $\alpha^o_\phi$) to be invertible.\footnote{That one is invertible just when the other is is because they are mates through adjoint equivalences, as remarked in \Cref{rmk:inv-transf}.}
		\end{itemize} 
\end{defn}


\begin{notation*}
	Given an unbiased pseudo-transformation $\alpha$, we write $\alpha^l$ and $\alpha^o$ for the corresponding lax and oplax transformations.  
\end{notation*}

We now turn to 2-cells in $\psd(\ca,\cb)$, i.e. pseudo-modifications.

\begin{prop}
	\label{prop:equ-psd-mod}
	Let $\alpha,\beta \colon F \rightrightarrows G \in \psd(\ca,\cb)$ be unbiased pseudo-transformations.
	Given a family of 2-cells $\Lambda_x\colon\alpha_x\to\beta_x$, there is a bijective correspondence between:
	\begin{enumerate}[(1)]
		\item For any 1-cell $f\colon x\to y$ in $\ca$, an invertible 3-cell
		\[\begin{tikzcd}[ampersand replacement=\&]
			{Gf\alpha_x} \& {Gf\beta_x} \\
			{\alpha_yFf} \& {\beta_yFf}
			\arrow[""{name=0, anchor=center, inner sep=0}, "{1\circ\Lambda_x}", from=1-1, to=1-2]
			\arrow["{\beta^l_f}", from=1-2, to=2-2]
			\arrow["{\alpha^l_f}"', from=1-1, to=2-1]
			\arrow[""{name=1, anchor=center, inner sep=0}, "{\Lambda_y\circ1}"', from=2-1, to=2-2]
			\arrow["{\Lambda_f^l}", shorten <=9pt, shorten >=9pt, Rightarrow, from=0, to=1]
		\end{tikzcd}\]
		making $(\Lambda_x,\Lambda_f^l)$ a modification $\alpha^l\to\beta^l$ in $\lax(\ca,\cb)$. 
		\item For any 1-cell $f\colon x\to y$ in $\ca$, an invertible 3-cell
		\[\begin{tikzcd}[ampersand replacement=\&]
			{\alpha_yFf} \& {Gf\alpha_x} \\
			{\beta_yFf} \& {Gf\beta_x}
			\arrow["{1\circ\Lambda_x}", from=1-2, to=2-2]
			\arrow[""{name=0, anchor=center, inner sep=0}, "{\beta^o_f}"', from=2-1, to=2-2]
			\arrow[""{name=1, anchor=center, inner sep=0}, "{\alpha^o_f}", from=1-1, to=1-2]
			\arrow["{\Lambda_y\circ1}"', from=1-1, to=2-1]
			\arrow["{\Lambda_f^o}", shorten <=9pt, shorten >=9pt, Rightarrow, from=1, to=0]
		\end{tikzcd}\]
		making $(\Lambda_x,\Lambda_f^o)$ a modification $\alpha^o\to\beta^o$ in $\oplax(\ca,\cb)$. 
		\item For any 1-cell $f\colon x\to y$ in $\ca$,  invertible 3-cells $\Lambda_f^l$ and $\Lambda^o_f$ as above making $(\Lambda_x,\Lambda_f^l)$ and $(\Lambda_x,\Lambda_f^o)$ modifications in $\lax(\ca,\cb)$ and $\oplax(\ca,\cb)$ respectively, such that $\Lambda_f^o$ and $\Lambda_f^l$ are mates with respect to $(({\aexi^\alpha_f})^{\rev}(,{\aexi^\beta_{f}})^{\rev})$, i.e.
		\begin{center}
			$\Lambda_f^o=$
\begin{tikzcd}[ampersand replacement=\&]
	{\alpha_yFf} \&\& {Gf\alpha_x} \&\& {Gf\beta_x} \&\& \\
	\&\& {\alpha_yFf} \&\& {\beta_yFf} \&\& {Gf\beta_x}
	\arrow[""{name=0, anchor=center, inner sep=0}, "{\alpha^o_f}", from=1-1, to=1-3]
	\arrow[""{name=1, anchor=center, inner sep=0}, curve={height=12pt}, equals, nfold, from=1-1, to=2-3]
	\arrow[""{name=2, anchor=center, inner sep=0}, "{1\circ \Lambda_x}", from=1-3, to=1-5]
	\arrow["{\alpha^l_f}"{description}, from=1-3, to=2-3]
	\arrow["{\beta^l_f}"{description}, from=1-5, to=2-5]
	\arrow[""{name=3, anchor=center, inner sep=0}, curve={height=-12pt}, equals, nfold, from=1-5, to=2-7]
	\arrow[""{name=4, anchor=center, inner sep=0}, "{\Lambda_y\circ1}"', from=2-3, to=2-5]
	\arrow[""{name=5, anchor=center, inner sep=0}, "{\beta^o_f}"', from=2-5, to=2-7]
	\arrow["{({\eta_f^\alpha})^{-1}}", between={0.3}{0.7}, Rightarrow, nfold, from=0, to=1]
	\arrow["{\Lambda_f^l}", between={0.4}{0.6}, Rightarrow, nfold, from=2, to=4]
	\arrow["{({\epsilon_f^{\beta}})^{-1}}"', between={0.3}{0.7}, Rightarrow, nfold, from=3, to=5]
\end{tikzcd}
		\end{center}
	\end{enumerate}
\end{prop}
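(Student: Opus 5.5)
I will mirror the structure of the proof of \Cref{prop:equ-psd-transf}. Let $u_1\colon(3)\to(1)$ and $u_2\colon(3)\to(2)$ be the forgetful maps. The inverse $v_1\colon(1)\to(3)$ sends a lax modification $(\Lambda_x,\Lambda^l_f)$ to the pair consisting of itself and $(\Lambda_x,\Lambda^o_f)$, where $\Lambda^o_f$ is defined by the mate formula in (3). This formula pastes $\Lambda^l_f$ with $(\eta^\alpha_f)^{-1}$ and $(\epsilon^\beta_f)^{-1}$.

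The composites $u_1v_1$ and $v_1u_1$ are identities for the same reason as before. Neither map alters the lax part, and in (3) the oplax component is forced by the mate equation. Invertibility of $\Lambda^o_f$ is immediate, since it is a pasting of invertible 3-cells. The real content is therefore that $(\Lambda_x,\Lambda^o_f)$ satisfies the oplax modification axioms of \Cref{subsec:ax-oplax-mod}. The dual statement for $v_2\colon(2)\to(3)$ then follows formally via the isomorphism $\oplax(\ca,\cb)\cong\lax(\ca^\co,\cb^\co)^\co$ of \Cref{plusminus}, exactly as at the end of \Cref{prop:equ-psd-transf}.

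To verify the three oplax modification axioms, I will treat them in turn.

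\begin{itemize}
\item \textbf{Units.} The axiom at identity 1-cells follows from the lax one, because $\aexi^\alpha_{1_x}=\aexi^\beta_{1_x}=1_\aeq$, so the mate formula reduces to $\Lambda^o_{1_x}=\Lambda^l_{1_x}$.
\item \textbf{Compatibility with 2-cells $\phi\colon f\to f'$.} Here I substitute the mate definitions of $\alpha^o_\phi$, $\beta^o_\phi$ and $\Lambda^o_f,\Lambda^o_{f'}$. I then cancel the adjacent pairs $\epsilon^\alpha_{f'}$ and $(\eta^\alpha_{f'})^{-1}$, and their $\beta$ counterparts, using the triangle identities. The equations of \Cref{rmk:mates-over-adj-equiv-calculations} are the convenient form of this cancellation. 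Finally I apply the lax axiom relating $\Lambda^l$ to $\alpha^l_\phi$ and $\beta^l_\phi$. This step is essentially functoriality of mates with respect to the double category of squares, as in \cite[Proposition~2.2]{KellyStreet:rew-el-2cats}.
\item \textbf{Compatibility with composition of 1-cells $f_1,f_2$.} I will show $\Lambda^o_{f_2f_1}$ is compatible with $(\alpha^o)^{f_2,f_1}_2$ and $(\beta^o)^{f_2,f_1}_2$, substituting the mate formula for these from \Cref{prop:equ-psd-transf}(3).
\end{itemize}

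I expect the composition axiom to be the main obstacle. After substitution the diagram contains units and counits of $\aexi_{f_1}$, $\aexi_{f_2}$ and $\aexi_{f_2f_1}$, whiskered on both sides, interleaved with the whiskered cells $1\circ\Lambda_x$ and $\Lambda_z\circ1$. To bring the relevant unit/counit pairs together, these units and counits must be moved past those whiskered cells using interchange 3-cells. My plan is as follows.
\begin{enumerate}[(i)]
\item Use \Cref{lemma:interchange-property}, applied to $\eta_{f_2}\circ1$ and $1\circ\eta_{f_1}$ (and their $\beta$-versions), to slide these cells across the interchanges with $\Lambda$.
\item Cancel the adjacent unit/counit pairs by triangle identities.
\item Apply the lax modification axiom for composites, Axiom A.3 of \cite{BourkeLobbia}.
\item Reassemble the result into the mate form.
\end{enumerate}
Following the approach of \Cref{app:big-diagram}, this is long but mechanical diagram chasing.
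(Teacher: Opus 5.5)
Your architecture matches the paper's: the forgetful maps $u_1,u_2$, the inverse $v_1$ given by the mate formula, the unit axiom from $\aexi^\alpha_{1_x}=\aexi^\beta_{1_x}=1_\aeq$, and $v_2$ by duality. The gap is in the two substantive axioms, where the cancellation you describe does not go through as written.

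The cells $\alpha^o_\phi$, $\beta^o_\phi$, $(\alpha^o)^{f_2,f_1}_2$, $(\beta^o)^{f_2,f_1}_2$ are mates with respect to $\aexi^\alpha_f,\aexi^\beta_f$ themselves, so they are built from $\eta$ and $\epsilon$. By contrast, $\Lambda^o_f$ is a mate with respect to the \emph{reversed} adjunctions, so it is built from $\eta^{-1}$ and $\epsilon^{-1}$. Substituting directly into the oplax axioms as stated therefore produces no unit/counit pairs:
\begin{itemize}
\item On the left of the 2-cell axiom, $\eta^\alpha_f$ (from $\alpha^o_\phi$) is matched with $(\eta^\alpha_f)^{-1}$ (from $\Lambda^o_f$), not with a counit.
\item On the right of the 2-cell axiom, $(\epsilon^\beta_{f'})^{-1}$ is matched with $\epsilon^\beta_{f'}$ in the same way.
\item In the composition axiom, $\eta^\alpha_{f_2}\circ1$ and $1\circ\eta^\alpha_{f_1}$ meet their inverses, and $(\epsilon^\beta_{f_2f_1})^{-1}$ meets $\epsilon^\beta_{f_2f_1}$.
\end{itemize}
The pair you name, $\epsilon^\alpha_{f'}$ and $(\eta^\alpha_{f'})^{-1}$, lies on opposite sides of the equation. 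Since both have identity target, they could not form a snake even if they were adjacent. For the same reason you cannot quote Kelly--Street functoriality directly: the pasting mixes two mate conventions.

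The missing move is to first paste both sides of each oplax axiom with the relevant inverses and interchange isomorphisms. For the 2-cell axiom these are $(\Lambda^o_f)^{-1}$ and $(\Lambda^o_{f'})^{-1}$; for the composition axiom they are $(\Lambda^o_{f_2})^{-1}\circ1$, $1\circ(\Lambda^o_{f_1})^{-1}$ and $(\Lambda^o_{f_2f_1})^{-1}$. Since $(\Lambda^o_f)^{-1}=\Lambda^l_f\flat$ is the \emph{ordinary} mate of $(\Lambda^l_f)^{-1}$, every cell is then a mate for the same adjunctions. The two equations of \Cref{rmk:mates-over-adj-equiv-calculations} then do exactly the cancellation you want.
\begin{itemize}
\item In the 2-cell case, they replace the pasting of $\eta^\beta_f$, $\Lambda^l_f$, $(\Lambda^o_f)^{-1}$ by $\eta^\alpha_f$. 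They also replace the pasting of $\epsilon^\alpha_{f'}$, $\Lambda^l_{f'}$, $(\Lambda^o_{f'})^{-1}$ by $\epsilon^\beta_{f'}$. This turns the lax axiom into the rewritten oplax one.
\item In the composition case, you apply them to $1\circ\eta^\alpha_{f_1}$ and $\eta^\alpha_{f_2}\circ1$. Then you use the lax composition axiom (A.3 of \cite{BourkeLobbia}). Finally you apply them to $\epsilon^\beta_{f_2f_1}$.
\end{itemize}

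Your step (i) is not needed. The only Gray interchange cells in these axioms pair $\Lambda_x,\Lambda_y,\Lambda_z$ with $G\phi,F\phi,G^2,F^2$, so no unit or counit shares a strand with one of them. Hence \Cref{lemma:interchange-property} has nothing to act on, and every other rearrangement is strict interchange in the hom 2-category.
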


\begin{proof}
As in the proof of \Cref{prop:equ-psd-transf}, the maps $u_1\colon(3)\rightarrow(1)$ and $u_2\colon(3)\rightarrow(2)$ forget, respectively, the oplax and lax part in $(3)$.  Proving that $u_1$ and $u_2$ are bijections is identical in form to the proof of \Cref{prop:equ-psd-transf}.  In particular, the inverse $v_1$ to $u_1$ is defined as $v_1(\Lambda):=(\Lambda,\,(\Lambda_x,\Lambda^o_f)\,)$ where $\Lambda^o_f$ is now the mate described in $(3)$ and only non-trivial thing to check is that the data $(\Lambda_x,\Lambda^o_f)\,)$ defines an oplax modification.  This is what we now prove.\black

%
%

Firstly, the degeneracy axiom for $\Lambda^o_f$ (i.e. Axiom (\ref{ax:op-mod.deg-3}) in \Cref{subsec:ax-oplax-mod})
\black
 follows from the degeneracy axioms for $\aexi^\alpha_f,\,\aexi^\beta_f$, and $\Lambda_f$.  
The non-trivial axioms are those concerning compatibility with composition of 1-cells and compatibility with 2-cells. \black
	For both of these, we will use \Cref{rmk:mates-over-adj-equiv-calculations} applied to $\Lambda^o_f$, noticing that $\Lambda^o_f$ is the inverse of $\Lambda_f\flat$. 

	Given composable 1-cells $x\xrightarrow{f_1}y\xrightarrow{f_2}z$ in $\ca$, we must establish Axiom (\ref{ax:op-mod.comp-1}) in \Cref{subsec:ax-oplax-mod}. \black
	Firstly, we notice that axiom is equivalent to the equation below (by pasting both sides with $({\Lambda^o_{f_2}})^{-1}\circ1$, $1\circ ({\Lambda^o_{f_1}})^{-1}$, $({\Lambda^o_{f_2f_1}})^{-1}$, and the appropriate interchange isomorphisms). 
	
	\begin{center}
		\scalebox{0.75}{
			\begin{tikzcd}[ampersand replacement=\&,column sep=small,row sep=scriptsize]
				{\alpha_z Ff_2Ff_1} \&\& {\beta_z Ff_2Ff_1} \&\& \\
				\&\& {\alpha_zF(f_2f_1)} \&\& {\beta_zF(f_2f_1)} \\
				{Gf_2\alpha_y Ff_1} \\
				\&\&\&\& {} \\
				{G{f_2}G{f_1}\alpha_x} \\
				\&\& {G({f_2}{f_1}) \alpha_x} \&\& {G({f_2}{f_1}) \beta_x}
				\arrow[""{name=0, anchor=center, inner sep=0}, "{\Lambda_z\circ1}", from=1-1, to=1-3]
				\arrow[""{name=1, anchor=center, inner sep=0}, "{1\circ F^2_{f_2,f_1}}"'{pos=0.6}, from=1-1, to=2-3]
				\arrow["{{\alpha^o_{f_2}}\circ1}"', from=1-1, to=3-1]
				\arrow["{1\circ F^2_{f_2,f_1}}", from=1-3, to=2-5]
				\arrow[""{name=2, anchor=center, inner sep=0}, "{\Lambda_z\circ1}"', from=2-3, to=2-5]
				\arrow["{{\alpha^o_{f_2f_1}}}", from=2-3, to=6-3]
				\arrow["{{\beta^o_{f_2f_1}}}", from=2-5, to=6-5]
				\arrow["{1\circ\alpha^o_{f_1}}"', from=3-1, to=5-1]
				\arrow[""{name=3, anchor=center, inner sep=0}, "{G^2_{f_2,f_1}\circ1}"'{pos=0.6}, from=5-1, to=6-3]
				\arrow[""{name=4, anchor=center, inner sep=0}, "{1\circ\Lambda_x}"', from=6-3, to=6-5]
				\arrow[between={0.3}{0.7}, Rightarrow, dashed, from=0, to=2]
				\arrow["{({\alpha^o})_2^{f_2,f_1}}", between={0.4}{0.6}, Rightarrow, from=1, to=3]
				\arrow["{({\Lambda^o_{f_2f_1}})^{-1}}", between={0.4}{0.6}, Rightarrow, from=2, to=4]
			\end{tikzcd}
		}
		$=$
		\scalebox{0.75}{
			\begin{tikzcd}[ampersand replacement=\&,column sep=small,row sep=scriptsize]
				{\alpha_z Ff_2Ff_1} \&\& {\beta_z Ff_2Ff_1} \&\&\& \\
				\&\&\&\& {\beta_zF(f_2f_1)}  \\
				{Gf_2\alpha_y Ff_1} \&\& {Gf_2\beta_y Ff_1} \\
				\&\&\&\& {} \\
				{G{f_2}G{f_1}\alpha_x} \&\& {G{f_2}G{f_1}\beta_x} \\
				\&\& {G({f_2}{f_1}) \alpha_x} \&\& {G({f_2}{f_1}) \beta_x}
				\arrow[""{name=0, anchor=center, inner sep=0}, "{\Lambda_z\circ1}", from=1-1, to=1-3]
				\arrow["{\Lambda_z\circ1}"', from=1-1, to=3-1]
				\arrow[""{name=1, anchor=center, inner sep=0}, "{1\circ F^2_{f_2,f_1}}"{pos=0.4}, from=1-3, to=2-5]
				\arrow["{{\beta^o_{f_2}}\circ1}"{description}, from=1-3, to=3-3]
				\arrow["{{\beta^o_{f_2f_1}}}", from=2-5, to=6-5]
				\arrow[""{name=2, anchor=center, inner sep=0}, "{1\circ\Lambda_y\circ1}", from=3-1, to=3-3]
				\arrow["{1\circ\alpha^o_{f_1}}"', from=3-1, to=5-1]
				\arrow["{1\circ\beta^o_{f_1}}"{description}, from=3-3, to=5-3]
				\arrow[""{name=3, anchor=center, inner sep=0}, "{1\circ\Lambda_x}", from=5-1, to=5-3]
				\arrow["{G^2_{f_2,f_1}\circ1}"'{pos=0.4}, from=5-1, to=6-3]
				\arrow[""{name=4, anchor=center, inner sep=0}, "{G^2_{f_2,f_1}\circ1}"{pos=0.4}, from=5-3, to=6-5]
				\arrow[""{name=5, anchor=center, inner sep=0}, "{1\circ\Lambda_x}"', from=6-3, to=6-5]
				\arrow["{({\Lambda^o_{f_2})^{-1} \circ1}}"', between={0.4}{0.6}, Rightarrow, from=0, to=2]
				\arrow["{({\beta^o})_2^{f_2,f_1}}", between={0.4}{0.6}, Rightarrow, from=1, to=4]
				\arrow["{{1\circ (\Lambda^o_{f_1})^{-1}}}"', between={0.4}{0.6}, Rightarrow, from=2, to=3]
				\arrow[between={0.3}{0.7}, Rightarrow, dashed, from=3, to=5]
			\end{tikzcd}
		}
	\end{center}
To verify this equation, we move from the left hand side and to the right hand side using (in order): the definition of $({\alpha^o})^{f_2,f_1}_2$; \Cref{rmk:mates-over-adj-equiv-calculations} applied to $1\circ\eta^\alpha_{f_1}$ and $\eta^\alpha_{f_2}\circ1$; the corresponding axiom of $\Lambda$ as a lax modification; \Cref{rmk:mates-over-adj-equiv-calculations} applied to $\epsilon^\beta_{f_2f_1}$; and finally the definition of ${(\beta^o)}^{f_2,f_1}_2$.
	
	\begin{center}
		\begin{tikzcd}[ampersand replacement=\&,column sep=small,row sep=scriptsize]
			{\alpha_z Ff_2Ff_1} \&\&\&\&\&\& \\
			\\
			{Gf_2\alpha_y Ff_1} \&\& {} \\
			\& {} \\
			{G{f_2}G{f_1}\alpha_x} \&\& {Gf_2\alpha_y Ff_1} \&\& {\alpha_z Ff_2Ff_1} \&\& {\beta_z Ff_2Ff_1} \\
			{G({f_2}{f_1}) \alpha_x} \&\& {} \&\& {\alpha_zF(f_2f_1)} \&\& {\beta_zF(f_2f_1)} \\
			\\
			\&\& {} \&\& {G({f_2}{f_1}) \alpha_x} \&\& {G({f_2}{f_1}) \beta_x}
			\arrow["{{\alpha^o_{f_2}}\circ1}"', from=1-1, to=3-1]
			\arrow[curve={height=-18pt}, equals, from=1-1, to=5-5]
			\arrow["{1\circ\alpha^o_{f_1}}"', from=3-1, to=5-1]
			\arrow[curve={height=-12pt}, equals, from=3-1, to=5-3]
			\arrow["{\eta^\alpha_{f_2}\circ1}"{pos=0.3}, between={0.2}{0.5}, Rightarrow, from=3-3, to=5-3]
			\arrow[""{name=0, anchor=center, inner sep=0}, "{1\circ\alpha^l_{f_1}}"', from=5-1, to=5-3]
			\arrow["{G^2_{f_2,f_1}\circ1}"', from=5-1, to=6-1]
			\arrow["{{\alpha^l_{f_2}}\circ1}", from=5-3, to=5-5]
			\arrow["{({\alpha^l)}^2_{f_2,f_1}}", shift left=3, between={0.2}{0.6}, Rightarrow, from=5-3, to=6-3]
			\arrow[""{name=1, anchor=center, inner sep=0}, "{\Lambda_z\circ1}", from=5-5, to=5-7]
			\arrow["{1\circ F^2_{f_2,f_1}}"'{pos=0.6}, from=5-5, to=6-5]
			\arrow["{1\circ F^2_{f_2,f_1}}", from=5-7, to=6-7]
			\arrow["{{\alpha^l_{f_2f_1}}}"', from=6-1, to=6-5]
			\arrow[curve={height=12pt}, equals, from=6-1, to=8-5]
			\arrow["{\epsilon^\alpha_{f_2f_1}}", between={0.3}{0.6}, Rightarrow, from=6-3, to=8-3]
			\arrow[""{name=2, anchor=center, inner sep=0}, "{\Lambda_z\circ1}"', from=6-5, to=6-7]
			\arrow["{{\alpha^o_{f_2f_1}}}", from=6-5, to=8-5]
			\arrow["{{\beta^o_{f_2f_1}}}", from=6-7, to=8-7]
			\arrow[""{name=3, anchor=center, inner sep=0}, "{1\circ\Lambda_x}"', from=8-5, to=8-7]
			\arrow["{1\circ\eta^\alpha_{f_1}}"{pos=0.3}, shift right=2, between={0}{0.5}, Rightarrow, from=4-2, to=0]
			\arrow[between={0.3}{0.7}, Rightarrow, dashed, from=1, to=2]
			\arrow["{({\Lambda^o_{f_2f_1})}^{-1}}", between={0.4}{0.6}, Rightarrow, from=2, to=3]
		\end{tikzcd}
		$=$\\
		\adjustbox{width=\textwidth}{
\begin{tikzcd}[ampersand replacement=\&]
	{\alpha_z Ff_2Ff_1} \&\& {\beta_z Ff_2Ff_1} \&\&\&\& \\
	\\
	{Gf_2\alpha_y Ff_1} \&\& {Gf_2\beta_y Ff_1} \\
	\&\&\& {} \& {} \\
	{G{f_2}G{f_1}\alpha_x} \&\& {G{f_2}G{f_1}\beta_x} \& {} \& {} \\
	\&\& {Gf_2\alpha_y Ff_1} \&\& {Gf_2\beta_y Ff_1} \\
	{G({f_2}{f_1}) \alpha_x} \&\&\&\& {\alpha_z Ff_2Ff_1} \&\& {\beta_z Ff_2Ff_1} \\
	\&\& \textcolor{white}{{Gf_2\alpha_y Ff_1}} \& {} \\
	\&\&\&\& {\alpha_zF(f_2f_1)} \&\& {\beta_zF(f_2f_1)} \\
	\&\&\& {} \\
	\&\&\&\& {G({f_2}{f_1}) \alpha_x} \&\& {G({f_2}{f_1}) \beta_x}
	\arrow[""{name=0, anchor=center, inner sep=0}, "{\Lambda_z\circ1}", from=1-1, to=1-3]
	\arrow["{{\alpha^o_{f_2}}\circ1}"', from=1-1, to=3-1]
	\arrow["{{\beta^o_{f_2}}\circ1}"{description}, from=1-3, to=3-3]
	\arrow[curve={height=-18pt}, equals, nfold, from=1-3, to=7-7]
	\arrow[""{name=1, anchor=center, inner sep=0}, "{1\circ\Lambda_y\circ1}", from=3-1, to=3-3]
	\arrow["{1\circ\alpha^o_{f_1}}"', from=3-1, to=5-1]
	\arrow["{1\circ\beta^o_{f_1}}"{description}, from=3-3, to=5-3]
	\arrow[curve={height=-12pt}, equals, nfold, from=3-3, to=6-5]
	\arrow["{1\circ\eta^\beta_{f_1}}"', shift right=2, between={0.3}{1}, Rightarrow, nfold, from=4-4, to=5-4]
	\arrow["{\eta^\beta_{f_2}\circ1}"', Rightarrow, nfold, from=4-5, to=5-5]
	\arrow[""{name=2, anchor=center, inner sep=0}, "{1\circ\Lambda_x}", from=5-1, to=5-3]
	\arrow["{1\circ\alpha^l_{f_1}}"', from=5-1, to=6-3]
	\arrow["{G^2_{f_2,f_1}\circ1}"', from=5-1, to=7-1]
	\arrow["{1\circ\beta^l_{f_1}}", from=5-3, to=6-5]
	\arrow[""{name=3, anchor=center, inner sep=0}, "{1\circ\Lambda_y\circ1}"{description}, from=6-3, to=6-5]
	\arrow["{{\alpha^l_{f_2}}\circ1}"{description}, from=6-3, to=7-5]
	\arrow["{{(\alpha^l)}^2_{f_2,f_1}}"{pos=0.4}, shift left=3, between={0.2}{0.6}, Rightarrow, nfold, from=6-3, to=8-3]
	\arrow["{{\beta^l_{f_2}}\circ1}", from=6-5, to=7-7]
	\arrow["{{\alpha^l_{f_2f_1}}}"{description}, from=7-1, to=9-5]
	\arrow[curve={height=12pt}, equals, nfold, from=7-1, to=11-5]
	\arrow[""{name=4, anchor=center, inner sep=0}, "{\Lambda_z\circ1}"', from=7-5, to=7-7]
	\arrow["{1\circ F^2_{f_2,f_1}}"', from=7-5, to=9-5]
	\arrow["{1\circ F^2_{f_2,f_1}}", from=7-7, to=9-7]
	\arrow["{\epsilon^\alpha_{f_2f_1}}"'{pos=0.6}, between={0.4}{0.8}, Rightarrow, nfold, from=8-4, to=10-4]
	\arrow[""{name=5, anchor=center, inner sep=0}, "{\Lambda_z\circ1}"', from=9-5, to=9-7]
	\arrow["{{\alpha^o_{f_2f_1}}}", from=9-5, to=11-5]
	\arrow["{{\beta^o_{f_2f_1}}}", from=9-7, to=11-7]
	\arrow[""{name=6, anchor=center, inner sep=0}, "{1\circ\Lambda_x}"', from=11-5, to=11-7]
	\arrow["{{(\Lambda^o_{f_2})}^{-1}\circ1}"', shift left=3, between={0.4}{0.6}, Rightarrow, nfold, from=0, to=1]
	\arrow["{1\circ{(\Lambda^o_{f_1})}^{-1}}"', shift left=2, between={0.4}{0.6}, Rightarrow, nfold, from=1, to=2]
	\arrow["{1\circ\Lambda_{f_1}}", shift right=2, between={0.4}{0.6}, Rightarrow, nfold, from=2, to=3]
	\arrow["{\Lambda_{f_1}\circ1}", shift right=2, between={0.4}{0.6}, Rightarrow, nfold, from=3, to=4]
	\arrow[between={0.3}{0.7}, Rightarrow, nfold, dashed, from=4, to=5]
	\arrow["{{(\Lambda^o_{f_2f_1})}^{-1}}", between={0.4}{0.6}, Rightarrow, nfold, from=5, to=6]
\end{tikzcd}		}\\
		$=$
		\scalebox{0.8}{
			\begin{tikzcd}[ampersand replacement=\&]
				{\alpha_z Ff_2Ff_1} \&\& {\beta_z Ff_2Ff_1} \&\&\&\& \\
				\\
				{Gf_2\alpha_y Ff_1} \&\& {Gf_2\beta_y Ff_1} \\
				\&\&\& {} \& {} \\
				{G{f_2}G{f_1}\alpha_x} \&\& {G{f_2}G{f_1}\beta_x} \& {} \& {} \\
				\&\&\&\& {Gf_2\beta_y Ff_1} \\
				{G({f_2}{f_1}) \alpha_x} \&\& {G({f_2f_1})\beta_x} \&\&\&\& {\beta_z Ff_2Ff_1} \\
				\&\& \textcolor{white}{{Gf_2\alpha_y Ff_1}} \&\& {} \\
				\&\&\& {} \& {\alpha_zF(f_2f_1)} \&\& {\beta_zF(f_2f_1)} \\
				\&\&\& {} \\
				\&\&\&\& {G({f_2}{f_1}) \alpha_x} \&\& {G({f_2}{f_1}) \beta_x}
				\arrow[""{name=0, anchor=center, inner sep=0}, "{\Lambda_z\circ1}", from=1-1, to=1-3]
				\arrow["{{\alpha^o_{f_2}}\circ1}"', from=1-1, to=3-1]
				\arrow["{{\beta^o_{f_2}}\circ1}"{description}, from=1-3, to=3-3]
				\arrow[curve={height=-18pt}, equals, from=1-3, to=7-7]
				\arrow[""{name=1, anchor=center, inner sep=0}, "{1\circ\Lambda_y\circ1}", from=3-1, to=3-3]
				\arrow["{1\circ\alpha^o_{f_1}}"', from=3-1, to=5-1]
				\arrow["{1\circ\beta^o_{f_1}}"{description}, from=3-3, to=5-3]
				\arrow[curve={height=-12pt}, equals, from=3-3, to=6-5]
				\arrow["{1\circ\eta^\beta_{f_1}}"', shift right=2, between={0.3}{1}, Rightarrow, from=4-4, to=5-4]
				\arrow["{\eta^\beta_{f_2}\circ1}"', Rightarrow, from=4-5, to=5-5]
				\arrow[""{name=2, anchor=center, inner sep=0}, "{1\circ\Lambda_x}", from=5-1, to=5-3]
				\arrow["{G^2_{f_2,f_1}\circ1}"', from=5-1, to=7-1]
				\arrow["{1\circ\beta^l_{f_1}}"', from=5-3, to=6-5]
				\arrow["{G^2_{f_2,f_1}\circ1}"{description}, from=5-3, to=7-3]
				\arrow["{{\beta^l_{f_2}}\circ1}", from=6-5, to=7-7]
				\arrow["{({\beta^l)}^2_{f_2,f_1}}", between={0.3}{0.7}, Rightarrow, from=6-5, to=8-5]
				\arrow[""{name=3, anchor=center, inner sep=0}, "{1\circ\Lambda_x}", from=7-1, to=7-3]
				\arrow["{{\alpha^l_{f_2f_1}}}"{description}, from=7-1, to=9-5]
				\arrow[curve={height=12pt}, equals, from=7-1, to=11-5]
				\arrow["{\Lambda_{f_2f_1}}"{pos=0.6}, shift right, between={0.4}{0.6}, Rightarrow, from=7-3, to=9-5]
				\arrow["{\beta^l_{f_2f_1}}"{description}, from=7-3, to=9-7]
				\arrow["{1\circ F^2_{f_2,f_1}}", from=7-7, to=9-7]
				\arrow["{\epsilon^\alpha_{f_2f_1}}", Rightarrow, from=9-4, to=10-4]
				\arrow[""{name=4, anchor=center, inner sep=0}, "{\Lambda_z\circ1}"', from=9-5, to=9-7]
				\arrow["{{\alpha^o_{f_2f_1}}}", from=9-5, to=11-5]
				\arrow["{{\beta^o_{f_2f_1}}}", from=9-7, to=11-7]
				\arrow[""{name=5, anchor=center, inner sep=0}, "{1\circ\Lambda_x}"', from=11-5, to=11-7]
				\arrow["{({\Lambda^o_{f_2}})^{-1}\circ1}"', shift left=3, between={0.4}{0.6}, Rightarrow, from=0, to=1]
				\arrow["{1\circ ({\Lambda^o_{f_1}})^{-1}}"', shift left=2, between={0.4}{0.6}, Rightarrow, from=1, to=2]
				\arrow[between={0.3}{0.7}, Rightarrow, dashed, from=2, to=3]
				\arrow["{({\Lambda^o_{f_2f_1}})^{-1}}", between={0.4}{0.6}, Rightarrow, from=4, to=5]
			\end{tikzcd}
		} \\

		\adjustbox{width=\textwidth}{
			$=$
			\begin{tikzcd}[ampersand replacement=\&]
				{\alpha_z Ff_2Ff_1} \&\& {\beta_z Ff_2Ff_1} \&\&\&\& \\
				\\
				{Gf_2\alpha_y Ff_1} \&\& {Gf_2\beta_y Ff_1} \\
				\&\&\& {} \& {} \\
				{G{f_2}G{f_1}\alpha_x} \&\& {G{f_2}G{f_1}\beta_x} \& {} \& {} \\
				\&\&\&\& {Gf_2\beta_y Ff_1} \\
				{G({f_2}{f_1}) \alpha_x} \&\& {G({f_2f_1})\beta_x} \&\&\&\& {\beta_z Ff_2Ff_1} \\
				\&\& \textcolor{white}{{Gf_2\alpha_y Ff_1}} \&\& {} \\
				\&\&\&\&\& {} \& {\beta_zF(f_2f_1)} \\
				\&\&\&\&\& {} \\
				\&\&\&\&\&\& {G({f_2}{f_1}) \beta_x}
				\arrow[""{name=0, anchor=center, inner sep=0}, "{\Lambda_z\circ1}", from=1-1, to=1-3]
				\arrow["{{\alpha^o_{f_2}}\circ1}"', from=1-1, to=3-1]
				\arrow["{{\beta^o_{f_2}}\circ1}"{description}, from=1-3, to=3-3]
				\arrow[curve={height=-18pt}, equals, from=1-3, to=7-7]
				\arrow[""{name=1, anchor=center, inner sep=0}, "{1\circ\Lambda_y\circ1}", from=3-1, to=3-3]
				\arrow["{1\circ\alpha^o_{f_1}}"', from=3-1, to=5-1]
				\arrow["{1\circ\beta^o_{f_1}}"{description}, from=3-3, to=5-3]
				\arrow[curve={height=-12pt}, equals, from=3-3, to=6-5]
				\arrow["{1\circ\eta^\beta_{f_1}}"', shift right=2, between={0.3}{1}, Rightarrow, from=4-4, to=5-4]
				\arrow["{\eta^\beta_{f_2}\circ1}"', Rightarrow, from=4-5, to=5-5]
				\arrow[""{name=2, anchor=center, inner sep=0}, "{1\circ\Lambda_x}", from=5-1, to=5-3]
				\arrow["{G^2_{f_2,f_1}\circ1}"', from=5-1, to=7-1]
				\arrow["{1\circ\beta^l_{f_1}}"', from=5-3, to=6-5]
				\arrow["{G^2_{f_2,f_1}\circ1}"{description}, from=5-3, to=7-3]
				\arrow["{{\beta^l_{f_2}}\circ1}", from=6-5, to=7-7]
				\arrow["{{(\beta^l)}^2_{f_2,f_1}}", between={0.3}{0.7}, Rightarrow, from=6-5, to=8-5]
				\arrow[""{name=3, anchor=center, inner sep=0}, "{1\circ\Lambda_x}", from=7-1, to=7-3]
				\arrow["{\beta^l_{f_2f_1}}"{description}, from=7-3, to=9-7]
				\arrow[curve={height=12pt}, equals, from=7-3, to=11-7]
				\arrow["{1\circ F^2_{f_2,f_1}}", from=7-7, to=9-7]
				\arrow["{\epsilon^\beta_{f_2f_1}}"', Rightarrow, from=9-6, to=10-6]
				\arrow["{{\beta^o_{f_2f_1}}}", from=9-7, to=11-7]
				\arrow["{({\Lambda^o_{f_2}})^{-1}\circ1}"', shift left=3, between={0.4}{0.6}, Rightarrow, from=0, to=1]
				\arrow["{1\circ ({\Lambda^o_{f_1}})^{-1}}"', shift left=2, between={0.4}{0.6}, Rightarrow, from=1, to=2]
				\arrow[between={0.3}{0.7}, Rightarrow, dashed, from=2, to=3]
			\end{tikzcd}
		}
	\end{center}
	
	
	
Finally, we consider the compatibility with 2-cells $\phi \colon f \to f' \in \ca(x,y)$, that is Axiom (\ref{ax:op-mod.2-cell}) in \Cref{subsec:ax-oplax-mod}.
	Since the $\Lambda_f$'s are components of a lax modification, the following equation holds.
	\begin{center}
		\adjustbox{scale=0.85}{\begin{tikzcd}
				{Gf\alpha_x} && {Gf\beta_x} \\
				&& {\alpha_yFf} && {\beta_yFf} \\
				{Gf'\alpha_x} \\
				&& {\alpha_yFf'} && {\beta_yFf'}
				\arrow[""{name=0, anchor=center, inner sep=0}, "{1\circ\Lambda_x}", from=1-1, to=1-3]
				\arrow[""{name=1, anchor=center, inner sep=0}, "{\alpha_f}"{description}, from=1-1, to=2-3]
				\arrow["{\beta_f}", from=1-3, to=2-5]
				\arrow[""{name=2, anchor=center, inner sep=0}, "{\Lambda_y\circ1}"{description}, from=2-3, to=2-5]
				\arrow["{1\circ F\phi}"{description}, from=2-3, to=4-3]
				\arrow["{1\circ F\phi}", from=2-5, to=4-5]
				\arrow["G\phi\circ1"', from=1-1, to=3-1]
				\arrow[""{name=3, anchor=center, inner sep=0}, "{\alpha_{f'}}"', from=3-1, to=4-3]
				\arrow[""{name=4, anchor=center, inner sep=0}, "{\Lambda_y\circ1}"', from=4-3, to=4-5]
				\arrow["{\Lambda_f}"{xshift=0.1cm,yshift=-0.1cm}, shorten <=30pt, shorten >=30pt, Rightarrow, from=0, to=2]
				\arrow[shorten <=20pt, shorten >=20pt, Rightarrow, dashed, from=2, to=4]
				\arrow["{\alpha_\phi}"{xshift=0.1cm}, shorten <=20pt, shorten >=20pt, Rightarrow, from=1, to=3]
			\end{tikzcd} $=$
			\begin{tikzcd}[ampersand replacement=\&]
				{Gf\alpha_x} \&\& {Gf\beta_x} \\
				\&\&\&\& {\beta_yFf} \\
				{Gf'\alpha_x} \&\& {Gf'\beta_x} \\
				\&\& {\alpha_yFf'} \&\& {\beta_yFf'}
				\arrow[""{name=0, anchor=center, inner sep=0}, "{1\circ\Lambda_x}", from=1-1, to=1-3]
				\arrow[""{name=1, anchor=center, inner sep=0}, "{\beta_f}", from=1-3, to=2-5]
				\arrow["{1\circ F\phi}", from=2-5, to=4-5]
				\arrow["{\alpha_{f'}}"', from=3-1, to=4-3]
				\arrow[""{name=2, anchor=center, inner sep=0}, "{\Lambda_Y\circ1}"', from=4-3, to=4-5]
				\arrow[""{name=3, anchor=center, inner sep=0}, "{1\circ\Lambda_x}", from=3-1, to=3-3]
				\arrow["G\phi\circ1"{description}, from=1-3, to=3-3]
				\arrow[""{name=4, anchor=center, inner sep=0}, "{\beta_{f'}}"{description}, from=3-3, to=4-5]
				\arrow["G\phi\circ1"', from=1-1, to=3-1]
				\arrow[shorten <=20pt, shorten >=20pt, Rightarrow, dashed, from=0, to=3]
				\arrow["{\beta_\phi}"{xshift=0.1cm}, shorten <=20pt, shorten >=20pt, Rightarrow, from=1, to=4]
				\arrow["{\Lambda_{f'}}"{xshift=0.1cm,yshift=-0.1cm}, shorten <=30pt, shorten >=30pt, Rightarrow, from=3, to=2]
		\end{tikzcd}}
	\end{center}
	Pasting the invertible 3-cells $({\Lambda^o_f})^{-1}$ and $\eta^\beta_f$ and using \Cref{rmk:mates-over-adj-equiv-calculations} on the left hand side, the above equation becomes the equation below.
	\begin{center}
		\adjustbox{scale=0.825}{
			\begin{tikzcd}[ampersand replacement=\&,column sep=scriptsize]
				{\alpha_yFf} \&\&\&\& \\
				\& {} \\
				{Gf\alpha_x} \\
				\&\& {\alpha_YFf} \&\& {\beta_yFf} \\
				{Gf'\alpha_x} \\
				\&\& {\alpha_yFf'} \&\& {\beta_yFf'}
				\arrow["{\alpha^o_f}"', from=1-1, to=3-1]
				\arrow[curve={height=-24pt}, equals, from=1-1, to=4-3]
				\arrow[""{name=0, anchor=center, inner sep=0}, "{\alpha_f}"{description}, from=3-1, to=4-3]
				\arrow["{G\phi\circ1}"', from=3-1, to=5-1]
				\arrow[""{name=1, anchor=center, inner sep=0}, "{\Lambda_y\circ1}", from=4-3, to=4-5]
				\arrow["{1\circ F\phi}"{description}, from=4-3, to=6-3]
				\arrow["{1\circ F\phi}", from=4-5, to=6-5]
				\arrow[""{name=2, anchor=center, inner sep=0}, "{\alpha_{f'}}"', from=5-1, to=6-3]
				\arrow[""{name=3, anchor=center, inner sep=0}, "{\Lambda_y\circ1}"', from=6-3, to=6-5]
				\arrow["{\eta^\alpha_f}"{pos=0.3}, between={0.2}{0.5}, Rightarrow, from=2-2, to=0]
				\arrow["{\alpha_\phi}", between={0.4}{0.6}, Rightarrow, from=0, to=2]
				\arrow[between={0.4}{0.6}, Rightarrow, dashed, from=1, to=3]
		\end{tikzcd}}
		=
		\adjustbox{scale=0.825}{
			\begin{tikzcd}[ampersand replacement=\&,column sep=scriptsize]
				{\alpha_yFf} \&\& {\beta_yFf} \&\& \\
				\&\&\& {} \\
				{Gf\alpha_x} \&\& {Gf\beta_x} \\
				\&\&\&\& {\beta_yFf} \\
				{Gf'\alpha_x} \&\& {Gf'\beta_x} \\
				\&\& {\alpha_yFf'} \&\& {\beta_yFf'}
				\arrow[""{name=0, anchor=center, inner sep=0}, "{\Lambda_y\circ 1}", from=1-1, to=1-3]
				\arrow["{\alpha^o_f}"', from=1-1, to=3-1]
				\arrow["{\beta^o_f}"{description}, from=1-3, to=3-3]
				\arrow[curve={height=-24pt}, equals, from=1-3, to=4-5]
				\arrow[""{name=1, anchor=center, inner sep=0}, "{1\circ\Lambda_x}", from=3-1, to=3-3]
				\arrow["{G\phi\circ1}"', from=3-1, to=5-1]
				\arrow[""{name=2, anchor=center, inner sep=0}, "{\beta_f}", from=3-3, to=4-5]
				\arrow["{G\phi\circ1}"{description}, from=3-3, to=5-3]
				\arrow["{1\circ F\phi}", from=4-5, to=6-5]
				\arrow[""{name=3, anchor=center, inner sep=0}, "{1\circ\Lambda_x}", from=5-1, to=5-3]
				\arrow["{\alpha_{f'}}"', from=5-1, to=6-3]
				\arrow[""{name=4, anchor=center, inner sep=0}, "{\beta_{f'}}"{description}, from=5-3, to=6-5]
				\arrow[""{name=5, anchor=center, inner sep=0}, "{\Lambda_y\circ1}"', from=6-3, to=6-5]
				\arrow["{{\Lambda^o_f}^{-1}}", between={0.4}{0.6}, Rightarrow, from=0, to=1]
				\arrow["{\eta^\beta_f}"{pos=0.3}, between={0.2}{0.5}, Rightarrow, from=2-4, to=2]
				\arrow[between={0.4}{0.6}, Rightarrow, dashed, from=1, to=3]
				\arrow["{\beta_\phi}", between={0.4}{0.6}, Rightarrow, from=2, to=4]
				\arrow["{\Lambda_{f'}}", between={0.4}{0.6}, Rightarrow, from=3, to=5]
		\end{tikzcd}}
	\end{center}
Again using  \Cref{rmk:mates-over-adj-equiv-calculations} (on the right hand side this time) and pasting the invertible 3-cells $\epsilon^\alpha_{f'}$ and ${\Lambda^o_{f'}}^{-1}$, the above equation implies the equation below.
	\begin{center}
		\adjustbox{scale=0.825}{
			\begin{tikzcd}[ampersand replacement=\&,column sep=scriptsize]
				{\alpha_yFf} \&\&\&\& \\
				\& {} \\
				{Gf\alpha_x} \\
				\&\& {\alpha_yFf} \&\& {\beta_yFf} \\
				{Gf'\alpha_x} \\
				\&\& {\alpha_yFf'} \&\& {\beta_yFf'} \\
				\& {} \\
				\&\& {Gf'\alpha_x} \&\& {Gf'\beta_x}
				\arrow["{\alpha^o_f}"', from=1-1, to=3-1]
				\arrow[curve={height=-24pt}, equals, from=1-1, to=4-3]
				\arrow[""{name=0, anchor=center, inner sep=0}, "{\alpha_f}"{description}, from=3-1, to=4-3]
				\arrow["{G\phi\circ1}"', from=3-1, to=5-1]
				\arrow[""{name=1, anchor=center, inner sep=0}, "{\Lambda_y\circ1}", from=4-3, to=4-5]
				\arrow["{1\circ F\phi}"{description}, from=4-3, to=6-3]
				\arrow["{1\circ F\phi}", from=4-5, to=6-5]
				\arrow[""{name=2, anchor=center, inner sep=0}, "{\alpha_{f'}}"{description}, from=5-1, to=6-3]
				\arrow[curve={height=24pt}, equals, from=5-1, to=8-3]
				\arrow[""{name=3, anchor=center, inner sep=0}, "{\Lambda_y\circ1}"{description}, from=6-3, to=6-5]
				\arrow["{\alpha^o_{f'}}"{description}, from=6-3, to=8-3]
				\arrow["{\beta^o_{f'}}", from=6-5, to=8-5]
				\arrow[""{name=4, anchor=center, inner sep=0}, "{1\circ\Lambda_x}"', from=8-3, to=8-5]
				\arrow["{\eta^\alpha_f}"{pos=0.3}, between={0.2}{0.5}, Rightarrow, from=2-2, to=0]
				\arrow["{\alpha_\phi}", between={0.4}{0.6}, Rightarrow, from=0, to=2]
				\arrow[between={0.4}{0.6}, Rightarrow, dashed, from=1, to=3]
				\arrow["{\epsilon^\alpha_{f'}}"{pos=0.7}, between={0.5}{0.8}, Rightarrow, from=2, to=7-2]
				\arrow["{{\Lambda^o_f}^{-1}}", between={0.4}{0.6}, Rightarrow, from=3, to=4]
			\end{tikzcd}	
		}
		=
		\adjustbox{scale=0.825}{
			\begin{tikzcd}[ampersand replacement=\&,column sep=scriptsize]
				{\alpha_yFf} \&\& {\beta_yFf} \&\& \\
				\&\&\& {} \\
				{Gf\alpha_x} \&\& {Gf\beta_x} \\
				\&\&\&\& {\beta_yFf} \\
				{Gf'\alpha_x} \&\& {Gf'\beta_x} \\
				\&\&\&\& {\beta_yFf'} \\
				\&\&\& {} \\
				\&\&\&\& {Gf'\beta_x}
				\arrow[""{name=0, anchor=center, inner sep=0}, "{\Lambda_y\circ 1}", from=1-1, to=1-3]
				\arrow["{\alpha^o_f}"', from=1-1, to=3-1]
				\arrow["{\beta^o_f}"{description}, from=1-3, to=3-3]
				\arrow[curve={height=-24pt}, equals, from=1-3, to=4-5]
				\arrow[""{name=1, anchor=center, inner sep=0}, "{1\circ\Lambda_x}", from=3-1, to=3-3]
				\arrow["{G\phi\circ1}"', from=3-1, to=5-1]
				\arrow[""{name=2, anchor=center, inner sep=0}, "{\beta_f}", from=3-3, to=4-5]
				\arrow["{G\phi\circ1}"{description}, from=3-3, to=5-3]
				\arrow["{1\circ F\phi}", from=4-5, to=6-5]
				\arrow[""{name=3, anchor=center, inner sep=0}, "{1\circ\Lambda_x}", from=5-1, to=5-3]
				\arrow[""{name=4, anchor=center, inner sep=0}, "{\beta_{f'}}"{description}, from=5-3, to=6-5]
				\arrow[curve={height=24pt}, equals, from=5-3, to=8-5]
				\arrow["{\beta^o_{f'}}", from=6-5, to=8-5]
				\arrow["{{\Lambda^o_f}^{-1}}", between={0.4}{0.6}, Rightarrow, from=0, to=1]
				\arrow["{\eta^\beta_f}"{pos=0.3}, between={0.2}{0.5}, Rightarrow, from=2-4, to=2]
				\arrow[between={0.4}{0.6}, Rightarrow, dashed, from=1, to=3]
				\arrow["{\beta_\phi}", between={0.4}{0.6}, Rightarrow, from=2, to=4]
				\arrow["{\epsilon^\beta_{f'}}"{pos=0.7}, between={0.5}{0.8}, Rightarrow, from=4, to=7-4]
			\end{tikzcd}	
		}
	\end{center}
Finally, we can rewrite the equation above as the equation below, using the definition of $\alpha^o_\phi$ and $\beta^o_\phi$. 
	\begin{center}
		\adjustbox{scale=0.85}{
			\begin{tikzcd}[ampersand replacement=\&,column sep=scriptsize]
				{\alpha_yFf} \&\& {\beta_yFf} \&\& \\
				\&\& {\alpha_yFf'} \&\& {\beta_yFf'} \\
				{Gf\alpha_x} \\
				\&\& {Gf'\alpha_x} \&\& {Gf'\beta_x}
				\arrow[""{name=0, anchor=center, inner sep=0}, "{\Lambda_y\circ1}", from=1-1, to=1-3]
				\arrow[""{name=1, anchor=center, inner sep=0}, "{1\circ F\phi}"{description}, from=1-1, to=2-3]
				\arrow["{\alpha^o_f}"', from=1-1, to=3-1]
				\arrow["{1\circ F\phi}", from=1-3, to=2-5]
				\arrow[""{name=2, anchor=center, inner sep=0}, "{\Lambda_y\circ1}"{description}, from=2-3, to=2-5]
				\arrow["{\alpha^o_{f'}}"{description}, from=2-3, to=4-3]
				\arrow["{\beta^o_{f'}}", from=2-5, to=4-5]
				\arrow[""{name=3, anchor=center, inner sep=0}, "{G\phi\circ1}"', from=3-1, to=4-3]
				\arrow[""{name=4, anchor=center, inner sep=0}, "{1\circ\Lambda_x}"', from=4-3, to=4-5]
				\arrow[between={0.4}{0.6}, Rightarrow, dashed, from=0, to=2]
				\arrow["{\alpha^o_\phi}", between={0.4}{0.6}, Rightarrow, from=1, to=3]
				\arrow["{{\Lambda^o_{f'}}^{-1}}", between={0.4}{0.6}, Rightarrow, from=2, to=4]
			\end{tikzcd}
		}
		=
		\adjustbox{scale=0.825}{
			\begin{tikzcd}[ampersand replacement=\&,column sep=scriptsize]
				{\alpha_yFf} \&\& {\beta_yFf} \&\& \\
				\&\&\&\& {\beta_yFf'} \\
				{Gf\alpha_x} \&\& {Gf\beta_x} \\
				\&\& {Gf'\alpha_x} \&\& {Gf'\beta_x}
				\arrow[""{name=0, anchor=center, inner sep=0}, "{\Lambda_y\circ 1}", from=1-1, to=1-3]
				\arrow["{\alpha^o_f}"', from=1-1, to=3-1]
				\arrow[""{name=1, anchor=center, inner sep=0}, "{1\circ F\phi}", from=1-3, to=2-5]
				\arrow["{\beta^o_f}"{description}, from=1-3, to=3-3]
				\arrow["{\beta^o_{f'}}", from=2-5, to=4-5]
				\arrow[""{name=2, anchor=center, inner sep=0}, "{1\circ\Lambda_x}", from=3-1, to=3-3]
				\arrow["{G\phi\circ1}"', from=3-1, to=4-3]
				\arrow[""{name=3, anchor=center, inner sep=0}, "{G\phi\circ1}"{description}, from=3-3, to=4-5]
				\arrow[""{name=4, anchor=center, inner sep=0}, "{1\circ\Lambda_x}", from=4-3, to=4-5]
				\arrow["{{\Lambda^o_f}^{-1}}", between={0.4}{0.6}, Rightarrow, from=0, to=2]
				\arrow["{\beta^o_\phi}", between={0.4}{0.6}, Rightarrow, from=1, to=3]
				\arrow[between={0.4}{0.6}, Rightarrow, dashed, from=2, to=4]
			\end{tikzcd}
		}
	\end{center}
	
This equation is equivalent to the sought-for axiom for $\Lambda^o$ (pasting both sides of the equation with ${\Lambda^o_f}^{-1}$, ${\Lambda^o_{f'}}^{-1}$ and two interchange isomorphisms). 

\end{proof}


\begin{defn}\label{def:unb-psd-mod}
	Let $\alpha,\beta\colon F\rightrightarrows G$ be unbiased pseudo-transformations between pseudomaps $F,G\colon\ca\rightrightarrows \cb$. 
	An \emph{unbiased pseudo-modification} $\Gamma\colon \alpha\to\beta$ consists of:
	\begin{itemize}
	\item for any object $x\in\ca$, a 2-cell $\Gamma_x\colon\alpha_x\to\beta_x$;
	\item for any 1-cell $f\colon x\to y\in\ca$, invertible 3-cells $\Gamma_f^l$ and $\Gamma_f^o$ making $(\Gamma_x,\Gamma^l_f)\colon \alpha^l\to\beta^l$ and $(\Gamma_x,\Gamma^o_f)\colon \alpha^o\to\beta^o$ lax and oplax modifications. 
	\end{itemize}
Moreover, we require that $\Gamma_f^l$ and $\Gamma_f^o$ are mates of each other (see point (3) of \Cref{prop:equ-psd-mod} for more details).\footnote{Note here that again, since $\Gamma_f^l$ and $\Gamma_f^o$ are mates along adjoint equivalences, it suffices for one of them to be invertible to guarantee that the other one will be as well.}
\end{defn}

\begin{notation*}
	Given an unbiased pseudo-modification $\Lambda$, we write $\Lambda^l$ and $\Lambda^o$ for its lax and oplax modifications. 
\end{notation*}

Finally, we consider 3-cells in $\psd(\ca,\cb)$, i.e. pseudo perturbations. 
In this case, there won't be extra structure to define, only two different equivalent axioms. 

\begin{prop}\label{prop:equ-psd-pert}
	Consider unbiased pseudo-modifications $\Lambda,\Gamma\colon\alpha\to\beta$ and a family of 3-cells $\theta_x\colon\Lambda_x\to\Gamma_x$. Then $\theta$ is a perturbation $\Lambda^l\to\Gamma^l$ in $\lax(\ca,\cb)$ if and only if it is a perturbation $\Lambda^o\to\Gamma^o$ in $\oplax(\ca,\cb)$. 
\end{prop}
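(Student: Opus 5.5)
Both conditions are single equations between pastings of 3-cells, one for each 1-cell $f\colon x\to y$ of $\ca$. The lax perturbation axiom (\cite[Appendix~A.4]{BourkeLobbia}) says
\[
\Gamma^l_f\ast\bigl(1\circ\theta_x\bigr)\;=\;\bigl(\theta_y\circ1\bigr)\ast\Lambda^l_f
\]
as 3-cells from $\beta^l_f\cdot(1\circ\Lambda_x)$ to $(\Gamma_y\circ1)\cdot\alpha^l_f$, with whiskerings by $\alpha^l_f,\beta^l_f$ understood. The oplax axiom (\Cref{ax:oplax-pert}) is the analogous equation with $\Lambda^o_f,\Gamma^o_f$ and the cells $\alpha^o_f,\beta^o_f$. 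The plan is to show that, for each fixed $f$, these two equations are equivalent. We do this by transporting one into the other along the mate correspondence of \Cref{prop:equ-psd-mod}(3), which expresses $\Lambda^o_f$ and $\Gamma^o_f$ in terms of $\Lambda^l_f$ and $\Gamma^l_f$.

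\textbf{Lax implies oplax.} Substitute the mate formula into both sides of the oplax equation for $f$. Each side becomes a pasting of $(\eta^\alpha_f)^{-1}$, then $\Lambda^l_f$ or $\Gamma^l_f$, then $(\epsilon^\beta_f)^{-1}$, with the 3-cell $\theta_y\circ1$ or $1\circ\theta_x$ placed on the outer boundary. The unit and counit cells act on the $\alpha^o_f/\alpha^l_f$ and $\beta^l_f/\beta^o_f$ parts of the boundary. The perturbation cells act on the $\Lambda_y\circ1$ and $1\circ\Lambda_x$ parts. These pieces are disjoint, so by the interchange law for 3-cells in the hom 2-categories (strict 2-category axioms in $\cb(Fx,Gy)$), $1\circ\theta_x$ can be slid past $(\epsilon^\beta_f)^{-1}$. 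Likewise $\theta_y\circ1$ can be slid past $(\eta^\alpha_f)^{-1}$. After these slides both sides share the same outer unit/counit pastings, and in the middle sit exactly the two sides of the lax equation for $f$, whiskered. Applying the lax axiom therefore proves the oplax one.

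\textbf{Oplax implies lax.} For the converse, note that the mate correspondence is a bijection. The whiskering-by-unit/counit operation $X\mapsto(\epsilon^\beta_f)^{-1}\ast X\ast(\eta^\alpha_f)^{-1}$ is invertible by the triangle identities: its inverse pastes with $\epsilon^\alpha_f$ and $\eta^\beta_f$, as in the equations of \Cref{rmk:mates-over-adj-equiv-calculations}. Hence equality of the transported 3-cells implies equality of the originals, and the oplax axiom gives back the lax one. Alternatively, one can run the same argument with the roles exchanged, expressing $\Lambda^l_f$ as a mate of $\Lambda^o_f$ along the un-reversed adjoint equivalences.

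\textbf{Main obstacle.} The only delicate point is bookkeeping. One must check that the oplax perturbation axiom of \Cref{ax:oplax-pert} is oriented so that $\theta$ appears exactly where the mate substitution puts it. One must also check that no Gray interchange 3-cells intervene. They do not, since all the 1-cell whiskerings involved are of the form $1\circ\theta_x$ and $\theta_y\circ1$ applied to fixed 2-cells, so only the strict 2-categorical interchange in the hom is used.
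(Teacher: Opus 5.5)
Your proposal is correct and is essentially the paper's proof: the paper also substitutes the mate formulas of \Cref{prop:equ-psd-mod}(3) for $\Lambda^o_f$ and $\Gamma^o_f$ into the oplax perturbation axiom. With the perturbation cells slid into the middle by strict interchange in the hom 2-category, this becomes the lax axiom pasted with $(\eta^\alpha_f)^{-1}$ and $(\epsilon^\beta_f)^{-1}$, and the paper then dismisses the converse as analogous. Your injectivity-of-mates argument for the converse makes that precise; strictly, the inverse transport pastes with $(\epsilon^\alpha_f)^{-1}$ and $(\eta^\beta_f)^{-1}$ (the unit and counit of the reversed adjunctions) rather than with $\epsilon^\alpha_f$ and $\eta^\beta_f$, but this does not affect the argument.
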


\begin{proof}
	Let us start with $\theta$ a perturbation in $\lax(\ca,\cb)$. 
	We need to prove that $\theta$ is also a perturbation in $\oplax(\ca,\cb)$, i.e. the axiom in \Cref{ax:oplax-pert} holds. 
	Substituting the explicit definitions of $\Lambda^o_f$ and $\Gamma^o_f$ from \Cref{def:unb-psd-mod}, the equation above can be written as the equation below, which is true because we assumed $\theta\in\lax(\ca,\cb)$. 
	\begin{center}
		\begin{tikzcd}[ampersand replacement=\&,column sep=scriptsize,row sep=2.25em]
			{\alpha_yFf} \&\& {Gf\alpha_x} \&\& {Gf\beta_x} \&\& \\
			\&\& {\alpha_yFf} \&\& {\beta_yFf} \&\& {Gf\beta_x}
			\arrow[""{name=0, anchor=center, inner sep=0}, "{\alpha^o_f}", from=1-1, to=1-3]
			\arrow[""{name=1, anchor=center, inner sep=0}, equals, from=1-1, to=2-3]
			\arrow[""{name=2, anchor=center, inner sep=0}, "{1\circ \Lambda_x}", curve={height=-12pt}, from=1-3, to=1-5]
			\arrow["{\alpha^l_f}"{description}, from=1-3, to=2-3]
			\arrow["{\beta^l_f}"{description}, from=1-5, to=2-5]
			\arrow[""{name=3, anchor=center, inner sep=0}, equals, from=1-5, to=2-7]
			\arrow[""{name=4, anchor=center, inner sep=0}, "{\Lambda_y\circ1}"{description}, curve={height=-12pt}, from=2-3, to=2-5]
			\arrow[""{name=5, anchor=center, inner sep=0}, "{\Gamma_y\circ1}"', curve={height=12pt}, from=2-3, to=2-5]
			\arrow[""{name=6, anchor=center, inner sep=0}, "{\beta^o_f}"', from=2-5, to=2-7]
			\arrow["{({\eta_f^\alpha})^{-1}}", between={0.3}{0.7}, Rightarrow, from=0, to=1]
			\arrow["{\Lambda_f}", between={0.4}{0.6}, Rightarrow, from=2, to=4]
			\arrow["{({\epsilon_f^{\beta}})^{-1}}"', between={0.3}{0.7}, Rightarrow, from=3, to=6]
			\arrow["{\theta_y\circ1}", shift right=2, between={0.3}{0.7}, Rightarrow, from=4, to=5]
		\end{tikzcd} = \\ \vspace{0.2cm}
		\begin{tikzcd}[ampersand replacement=\&,column sep=scriptsize,row sep=2.25em]
			{\alpha_yFf} \&\& {Gf\alpha_x} \&\& {Gf\beta_x} \&\& \\
			\&\& {\alpha_yFf} \&\& {\beta_yFf} \&\& {Gf\beta_x}
			\arrow[""{name=0, anchor=center, inner sep=0}, "{\alpha^o_f}", from=1-1, to=1-3]
			\arrow[""{name=1, anchor=center, inner sep=0}, equals, from=1-1, to=2-3]
			\arrow[""{name=2, anchor=center, inner sep=0}, "{1\circ \Lambda_x}", curve={height=-12pt}, from=1-3, to=1-5]
			\arrow[""{name=3, anchor=center, inner sep=0}, "{1\circ\Gamma_x}"{description}, curve={height=12pt}, from=1-3, to=1-5]
			\arrow["{\alpha^l_f}"{description}, from=1-3, to=2-3]
			\arrow["{\beta^l_f}"{description}, from=1-5, to=2-5]
			\arrow[""{name=4, anchor=center, inner sep=0}, equals, from=1-5, to=2-7]
			\arrow[""{name=5, anchor=center, inner sep=0}, "{\Gamma_y\circ1}"', curve={height=12pt}, from=2-3, to=2-5]
			\arrow[""{name=6, anchor=center, inner sep=0}, "{\beta^o_f}"', from=2-5, to=2-7]
			\arrow["{({\eta_f^\alpha})^{-1}}", between={0.3}{0.7}, Rightarrow, from=0, to=1]
			\arrow["{1\circ\theta_x}", shift right=2, between={0.3}{0.7}, Rightarrow, from=2, to=3]
			\arrow["{\Gamma_f}", between={0.4}{0.6}, Rightarrow, from=3, to=5]
			\arrow["({{\epsilon_f^{\beta}})^{-1}}"', between={0.3}{0.7}, Rightarrow, from=4, to=6]
		\end{tikzcd}
	\end{center}
	Analogously one can prove that if $\theta \colon \Lambda^o\to\Gamma^o$ is a perturbation, then it is a perturbation $\Lambda^l\to\Gamma^l$.
\end{proof}

\begin{defn}\label{def:unb-psd-prt}
	Let $\Lambda,\Gamma\colon\alpha\to\beta$ be unbiased pseudo-modifications.  An \emph{unbiased perturbation} $\theta \colon \Lambda \to \Gamma$ consists of a family of 3-cells $\theta_x\colon\Lambda_x\to\Gamma_x$ forming a perturbation $\Lambda^l\to\Gamma^l$ in $\lax(\ca,\cb)$ and a perturbation $\Lambda^o\to\Gamma^o$ in $\oplax(\ca,\cb)$.
\end{defn}
 
\subsection{The Gray category of unbiased pseudo-transformations}
\label{sec:gray-struct-en-psd} 

Let $\psden(\ca,\cb)$ be the $3$-graph (a.k.a. $3$-dimensional globular set) of Gray-categories, pseudomaps, unbiased pseudo-transformations (\Cref{def:unb-psd-transf}), unbiased pseudo-modifications (\Cref{def:unb-psd-mod}) and unbiased perturbations (\Cref{def:unb-psd-prt}).  
There is an evident forgetful morphism of $3$-graphs $U^p_\nb \colon \psden(\ca,\cb) \to \psden(\ca,\cb)$.

\begin{prop}
$U^p_\nb \colon \psden(\ca,\cb) \to \psd(\ca,\cb)$ is an isomorphism of $3$-graphs.
\end{prop}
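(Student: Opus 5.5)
We check that $U^p_\nb$ is bijective on $0$-, $1$-, $2$- and $3$-cells separately. Since a morphism of $3$-graphs commutes with sources and targets, this suffices, provided each bijection is over the already-established bijections on lower cells. First recall what $U^p_\nb$ does. On objects (pseudomaps) it is the identity. On an unbiased pseudo-transformation $\alpha=(\alpha^l,\alpha^o,\aexi^\alpha_f)$ it returns the lax transformation $\alpha^l$ together with the adjoint equivalences $\aexi^\alpha_f=(\alpha^o_f\dashv\alpha^l_f)$. On an unbiased pseudo-modification $\Lambda$ it returns the lax modification $\Lambda^l=(\Lambda_x,\Lambda^l_f)$. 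On an unbiased perturbation it returns the underlying family $\theta_x$. These land in $\psd(\ca,\cb)$: the adjoint equivalences are normalised by $\aexi^\alpha_{1_x}=1_\aeq$; $\alpha^l_\phi$ is invertible by \Cref{def:unb-psd-transf}; $\Lambda^l_f$ is invertible by \Cref{def:unb-psd-mod}; and $\theta$ is a lax perturbation by \Cref{def:unb-psd-prt}.

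On $1$-cells, fix pseudomaps $F,G$. A pseudo-transformation $F\to G$ in $\psd(\ca,\cb)$ is exactly the data of item (1) of \Cref{prop:equ-psd-transf}, together with a choice of adjoint equivalences $\aexi^\alpha_f$ with $\aexi^\alpha_{1_x}=1_\aeq$ and with each $\alpha^l_\phi$ invertible. For fixed $(\alpha_x,\aexi^\alpha_f)$, the bijection $u_1\colon(3)\to(1)$ of \Cref{prop:equ-psd-transf} is precisely $U^p_\nb$ restricted to unbiased transformations with that underlying data. By \Cref{rmk:inv-transf}, invertibility of $\alpha^l_\phi$ is equivalent to that of $\alpha^o_\phi$, so the invertibility conditions on the two sides match. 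Taking the union over all choices of $(\alpha_x,\aexi^\alpha_f)$ gives a bijection on $1$-cells.

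On $2$-cells, fix unbiased $\alpha,\beta$, which by the previous step correspond bijectively to their images. A pseudo-modification between the images is a family $\Lambda_x$ with invertible $\Lambda^l_f$ forming a lax modification, which is item (1) of \Cref{prop:equ-psd-mod}. An unbiased pseudo-modification is item (3), and $U^p_\nb$ is the forgetful map $u_1$, so \Cref{prop:equ-psd-mod} yields a bijection. The footnote to \Cref{def:unb-psd-mod} shows that invertibility is automatic on both sides. On $3$-cells, \Cref{prop:equ-psd-pert} says that a lax perturbation $\Lambda^l\to\Gamma^l$ is automatically an oplax perturbation $\Lambda^o\to\Gamma^o$. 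Hence the unbiased perturbations and the pseudo-perturbations between corresponding modifications are the same families $\theta_x$, and $U^p_\nb$ is the identity on them.

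The main obstacle is only bookkeeping, since all the substantive work lies in \Cref{prop:equ-psd-transf}, \Cref{prop:equ-psd-mod} and \Cref{prop:equ-psd-pert}. One must make sure that the adjoint-equivalence data (the $\alpha^o_f$ together with $\eta_f,\epsilon_f$) is part of the data of a $1$-cell on both sides, so that the bijection of \Cref{prop:equ-psd-transf} applies fibrewise over it. One must also check that sources and targets are preserved, which holds because the lax parts are retained unchanged.
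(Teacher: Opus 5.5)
Your proposal is correct and follows the paper's proof, which simply combines \Cref{prop:equ-psd-transf}, \Cref{prop:equ-psd-mod} and \Cref{prop:equ-psd-pert} dimension by dimension. You have only made explicit the fibrewise bookkeeping that the paper leaves implicit: fixing $(\alpha_x,\aexi^\alpha_f)$, then the source and target, and noting that both sides impose the same invertibility condition on $\alpha^l_\phi$, so \Cref{rmk:inv-transf} is not actually needed.
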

\begin{proof}
This combines Propositions \ref{prop:equ-psd-transf}, \ref{prop:equ-psd-mod}, and \ref{prop:equ-psd-pert}. 
\end{proof}

Transporting the structure of a Gray-category along the isomorphism $U^p_\nb$, we immediately obtain:

\begin{cor}\label{unbiased}
$\psden(\ca,\cb)$ admits the unique structure of a Gray-category such that 
$$U^p_\nb \colon \psden(\ca,\cb) \to \psd(\ca,\cb)$$ 
is an isomorphism in $\GCat$.
\end{cor}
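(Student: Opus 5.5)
The plan is to transport structure along the isomorphism of $3$-graphs established in the preceding proposition. Write $U = U^p_\nb\colon \psden(\ca,\cb)\to\psd(\ca,\cb)$ and $V$ for its inverse morphism of $3$-graphs. A Gray-category structure on a $3$-graph consists of extra operations: composition of $1$-cells, identities, the hom $2$-category compositions, whiskerings and interchange $3$-cells. All of these are specified by functions between the sets of cells of the underlying $3$-graph, or between fibred products of such sets over sources and targets. So I would simply define each operation on $\psden(\ca,\cb)$ by conjugation. For example, composition of unbiased pseudo-transformations would be $\beta\circ\alpha := V(U\beta\circ U\alpha)$. Identities, the vertical and horizontal compositions in hom-$2$-categories, the whiskering $2$-functors and the interchange maps $\psi:\phi := V(U\psi : U\phi)$ are defined in exactly the same way. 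Because $U$ and $V$ are mutually inverse and commute with source and target maps, these operations are well typed: they send cells with matching boundaries to cells with the correct boundary.

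Next, I would check the Gray-category axioms for $\psden(\ca,\cb)$. Each axiom is an equation between composites of the structural operations. Applying $U$ to both sides turns it into the corresponding equation in $\psd(\ca,\cb)$, since by construction $U$ sends each defined operation to the matching operation in $\psd(\ca,\cb)$. That equation holds because $\psd(\ca,\cb)$ is a Gray-category, by \cite{BourkeLobbia}. Since $U$ is injective on cells of every dimension, the original equation holds as well. The same observation shows that $U$ strictly preserves all the structure, so it is a Gray-functor. Its inverse $V$ is then also a Gray-functor, and $U$ is an isomorphism in $\GCat$.

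For uniqueness, suppose $\psden(\ca,\cb)$ carries some Gray-category structure for which $U$ is a Gray-functor. Then, for instance, $U(\beta\ast\alpha)=U\beta\circ U\alpha$ for its composition $\ast$, so injectivity of $U$ forces $\beta\ast\alpha = V(U\beta\circ U\alpha)$. The same argument applies to every other operation, so the structure must coincide with the transported one.

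There is no real obstacle in this argument. The only point requiring care is to make explicit that "Gray-category structure" means operations on the underlying $3$-graph, so that transport along a $3$-graph isomorphism is legitimate. All the substantive work lies in the previous proposition, which combines Propositions \ref{prop:equ-psd-transf}, \ref{prop:equ-psd-mod} and \ref{prop:equ-psd-pert}. If one later wants explicit formulas for the composites in $\psden(\ca,\cb)$, for instance that the oplax part of a composite is the composite of the oplax parts, that would be a separate computation using mates. It is not needed for this corollary.
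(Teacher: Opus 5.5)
Your proposal is correct and follows the paper's argument exactly. The paper also obtains the corollary by transporting the Gray-category structure of $\psd(\ca,\cb)$ along the $3$-graph isomorphism $U^p_\nb$. You spell out what the paper leaves implicit: the operations are defined by conjugation, and both the axioms and uniqueness follow from $U^p_\nb$ being bijective on cells.
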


In elementary terms, this means that the various compositions in $\psden(\ca,\cb)$ combine composition in $\lax(\ca,\cb)$ with composition of adjoint equivalences. Indeed, postcomposing with the forgetful Gray-functor $U^l\colon\psd(\ca,\cb)\to\lax(\ca,\cb)$ we obtain another forgetful Gray-functor $U^l_\nb:=U^l\circ U^p_\nb\colon\psden(\ca,\cb)\to\lax(\ca,\cb)$.  Now we would also like to show that the Gray-category structure on $\psden(\ca,\cb)$ also enhances composition in $\oplax(\ca,\cb)$.  To this end, observe that there is also an evident forgetful morphism of $3$-graphs $U^o_\nb \colon \psden(\ca,\cb) \to \oplax(\ca,\cb)$, forgetting the lax part of unbiased $n$-transfors.  

\begin{prop}\label{prop:forg-fct-psden-oplax}
$U^o_\nb \colon \psden(\ca,\cb) \to \oplax(\ca,\cb)$ is a $Gray$-functor.
\end{prop}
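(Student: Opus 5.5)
The approach is to use the fact, from \Cref{prop:equ-psd-transf}, \Cref{prop:equ-psd-mod} and \Cref{prop:equ-psd-pert}, that the oplax part of an unbiased $n$-transfor is \emph{uniquely determined} by its lax part and its adjoint equivalences, via mates. The Gray-category structure on $\psden(\ca,\cb)$ is transported from $\psd(\ca,\cb)$, so each composite in $\psden(\ca,\cb)$ is built from two pieces: its lax part is the composite in $\lax(\ca,\cb)$, and its adjoint equivalences are composites or whiskerings of adjoint equivalences. Its oplax part is then forced to be the corresponding mate. So for every piece of structure, namely identities, composition of $1$-cells, whiskerings of $2$- and $3$-cells by $1$-cells, vertical composites of $2$- and $3$-cells, and interchange $3$-cells, it suffices to show one thing. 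The corresponding operation in $\oplax(\ca,\cb)$, applied to the oplax parts, must produce the mate of the lax composite with respect to the composite adjoint equivalences. Everything then reduces to compatibility of mates with pasting (the double-functoriality of mates of \cite[Proposition~2.2]{KellyStreet:rew-el-2cats}), together with the interchange axioms.

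I would proceed through the cells in order of dimension.

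\textbf{Identities.} For the identity transformation, all adjoint equivalences are $1_\aeq$ and all components are trivial, so the oplax part is the identity oplax transformation.

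\textbf{Whiskering a transformation by pseudomaps.} For $H\alpha$ and $\alpha K$, the adjoint equivalences are $H\aexi^\alpha_{f}$ (followed by composition with $H^2$) and $\aexi^\alpha_{Kf}$. Since $2$-functors preserve adjoint equivalences and mates, the oplax components agree.

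\textbf{Composition of transformations.} For $\beta\alpha$, the lax $2$-cell component is the pasting of $\beta^l_f\circ1$ and $1\circ\alpha^l_f$, and the adjoint equivalence is the composite $(\aexi^\beta_f\circ1)\cdot(1\circ\aexi^\alpha_f)$. The mate of $(\beta\alpha)^l_\phi$ splits as the pasting of the mates $\beta^o_\phi$ and $\alpha^o_\phi$, once the units and counits are slid past the interchange cells. That sliding is exactly \Cref{lemma:interchange-property}.

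\textbf{Modifications and perturbations.} For modifications, the same method applies to whiskering with transformations and to vertical composition. Here $\Lambda^o_f$ is the inverse of $\Lambda_f\flat$, so I would use the equations of \Cref{rmk:mates-over-adj-equiv-calculations} to paste units and counits through. For perturbations, the underlying data $\theta_x$ is the same in both structures, and the operations on it are computed componentwise in $\cb$. The interchange $3$-cells of $\lax(\ca,\cb)$ and $\oplax(\ca,\cb)$ are both given at each object by interchanges in $\cb$, so they coincide.

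I expect the main obstacle to be verifying that the composite $\beta\alpha$ has the correct composition component. Concretely, I need the mate of $((\beta\alpha)^l)^2_{f_2,f_1}$ to equal $((\beta\alpha)^o)_2^{f_2,f_1}$ as defined in $\oplax(\ca,\cb)$. This is a large pasting involving four adjoint equivalences and several interchange cells. I would first expand both sides using the formulas in point (3) of \Cref{prop:equ-psd-transf}. Then I would cancel the unit/counit pairs belonging to the same adjoint equivalence by the triangle identities, and use \Cref{lemma:interchange-property} to move the remaining units past interchange cells, in the same style as the calculation in \Cref{app:big-diagram}.

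A possible shortcut is to exploit the duality $(-)^{\pm}$ of \Cref{plusminus}. Under that duality, lax and oplax are swapped and adjoint equivalences are reversed, so the "oplax side" computations become instances of the already-established lax ones. I would try this first to halve the work, and fall back on the direct pasting if the bookkeeping of the $\sharp/\flat$ operations proves awkward.
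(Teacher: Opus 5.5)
Your proposal is correct and follows essentially the paper's route: because the Gray-structure on $\psden(\ca,\cb)$ is transported along $U^p_\nb$, it suffices to check that performing each operation in $\psd(\ca,\cb)$ and in $\oplax(\ca,\cb)$ yields a genuine unbiased $n$-transfor, i.e.\ that the mate conditions hold; this follows from functoriality of mates \cite[Proposition~2.2]{KellyStreet:rew-el-2cats}, \Cref{lemma:interchange-property} and the triangle identities. The paper writes out explicitly only the whiskering of pseudo-modifications by pseudo-transformations, handles the binary component of $\beta\alpha$ by functoriality of mates, and does not need two of your items: whiskering by pseudomaps ($H\alpha$, $\alpha K$) is not part of the Gray-structure of $\psd(\ca,\cb)$ (it belongs to \Cref{naturality}), and the $\phi$-component of $\beta\alpha$ involves no interchange cells (the two squares paste directly in the hom $2$-category $\cb(Fx,Hy)$), so \Cref{lemma:interchange-property} is relevant only for the binary component you rightly flag.
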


\begin{proof}
By the uniqueness of the Gray-categorical structure on $\psden(\ca,\cb)$ in \Cref{unbiased}, we just need to prove that computing each operation  both in  $\psd(\ca,\cb)$ and in $\oplax(\ca,\cb)$ gives a $n$-transfor in $\psden(\ca,\cb)$. 
%
%
%
%
%

\begin{itemize}
  \item \emph{Vertical composition of unbiased pseudotransformations.} 
   Given $\alpha=(\alpha^l,\alpha^o)\colon F\to G$ and $\beta=(\beta^l,\beta^o)\colon G \to H$ unbiased pseudotransformations, we need to show that $(\beta^l\cdot\alpha^l,\beta^o\cdot\alpha^o)$ together with the adjoint equivalence below is an unbiased pseudotransformation. 
	\[\begin{tikzcd}[ampersand replacement=\&]
		{Hf\beta_x\alpha_x} \& {\beta_yGf\alpha_x} \& {\beta_y\alpha_yFf}
		\arrow[""{name=0, anchor=center, inner sep=0}, "{{\beta_f}^l\circ1}", curve={height=-12pt}, from=1-1, to=1-2]
		\arrow[""{name=1, anchor=center, inner sep=0}, "{{\beta_f}^o\circ1}", curve={height=-12pt}, from=1-2, to=1-1]
		\arrow[""{name=2, anchor=center, inner sep=0}, "{1\circ{\alpha_f}^l}", curve={height=-12pt}, from=1-2, to=1-3]
		\arrow[""{name=3, anchor=center, inner sep=0}, "{1\circ{\alpha_f}^o}", curve={height=-12pt}, from=1-3, to=1-2]
		\arrow["\top"{description}, draw=none, from=1, to=0]
		\arrow["\top"{description}, draw=none, from=3, to=2]
	\end{tikzcd}\]
By definition of a pseudotransformation, we must show that $(\beta^o\cdot\alpha^o)_\phi$ is invertible and that the components of $\beta^l\cdot\alpha^l$ and $\beta^o\cdot\alpha^o$ at 2-cells and composable 1-cells are mates.
	\begin{itemize}
		\item Now the component of $(\beta^o\cdot\alpha^o)_\phi$ is the pasting
\[\begin{tikzcd}[ampersand replacement=\&]
	{Hf\beta_x\alpha_x} \&\& {\beta_yGf\alpha_x} \&\& {\beta_y\alpha_yFf} \\
	{Hg\beta_x\alpha_x} \&\& {\beta_yGg\alpha_x} \&\& {\beta_y\alpha_yFg}
	\arrow["{H\phi \circ 1 \circ 1}"', from=1-1, to=2-1]
	\arrow[""{name=0, anchor=center, inner sep=0}, "{\beta_f^o \circ 1}"', from=1-3, to=1-1]
	\arrow["{1 \circ G\phi \circ 1}"{description}, from=1-3, to=2-3]
	\arrow[""{name=1, anchor=center, inner sep=0}, "{1 \circ \alpha^o_f}"', from=1-5, to=1-3]
	\arrow["{1\circ 1 \circ F\phi}", from=1-5, to=2-5]
	\arrow[""{name=2, anchor=center, inner sep=0}, "{\beta^o_g \circ 1}", from=2-3, to=2-1]
	\arrow[""{name=3, anchor=center, inner sep=0}, "{1 \circ \alpha^o_g}", from=2-5, to=2-3]
	\arrow["{\beta^o_\phi \circ 1}"', between={0.2}{0.8}, Rightarrow, nfold, from=0, to=2]
	\arrow["{1 \circ \alpha^o_\phi}", between={0.2}{0.8}, Rightarrow, nfold, from=1, to=3]
\end{tikzcd}\]
of the components of $\alpha^o_\phi$ and $\beta^o_\phi$ as depicted, whilst component of $(\beta^l\cdot\alpha^l)_\phi$ is likewise obtained by pasting (see \cite[Appendix B.1]{BourkeLobbia}).  In particular, $(\beta^o\cdot\alpha^o)_\phi$ is invertible as a pasting of invertible $3$-cells.  Furthermore that $(\beta^l\cdot\alpha^l)_\phi$ and $(\beta^o\cdot\alpha^o)_\phi$ are mates, as required, follows from the functoriality of the mates correspondence  \cite[Proposition~2.2]{KellyStreet:rew-el-2cats} (i.e. the pasting of mates is the mate of the pasting).  The components of $\beta^l\cdot\alpha^l$ and $\beta^o\cdot\alpha^o$ at a composable pair of $1$-cells are likewise obtained by pasting (again see \cite[Appendix B.1]{BourkeLobbia} for the lax case) and therefore the required mate condition again follows from the functoriality of the mates correspondence.
\end{itemize}

\item \emph{Vertical composition of pseudo-modifications.} Let $\Lambda=(\Lambda^l,\Lambda^o)\colon\alpha\to\beta$ and $\Gamma=(\Gamma^l,\Gamma^o)\colon\beta\to\gamma$ be unbiased pseudo-modifications. 
	Then, we want to show that $(\Gamma^l\ast\Lambda^l,\Gamma^o\ast\Lambda^o)$ is again an unbiased pseudo-modification.
	For any 1-cell $f$, $(\Gamma^o\ast\Lambda^o)_f$ is given by the pasting
\[\begin{tikzcd}[ampersand replacement=\&]
	{\alpha_yFf} \&\& {\beta_yFf} \&\& {\gamma_yFf} \\
	{Gf\alpha_x} \&\& {Gf\beta_x} \&\& {Gf\gamma_x}
	\arrow["{\Lambda^o_y \circ 1}", from=1-1, to=1-3]
	\arrow[""{name=0, anchor=center, inner sep=0}, "{\alpha^o_f}"', from=1-1, to=2-1]
	\arrow["{\Gamma^o_y \circ 1}", from=1-3, to=1-5]
	\arrow[""{name=1, anchor=center, inner sep=0}, "{\beta^o_f}"', from=1-3, to=2-3]
	\arrow[""{name=2, anchor=center, inner sep=0}, "{\gamma^o_f}", from=1-5, to=2-5]
	\arrow["{1 \circ \Lambda^o_x}"', from=2-1, to=2-3]
	\arrow["{1 \circ \Gamma^o_x}"', from=2-3, to=2-5]
	\arrow["{\Lambda^o_f}", between={0.3}{0.7}, Rightarrow, nfold, from=0, to=1]
	\arrow["{\Gamma^o_f}", between={0.3}{0.7}, Rightarrow, nfold, from=1, to=2]
\end{tikzcd}\]
Accordingly it is invertible since $\Lambda^o_f$ and $\Gamma^o_f$ are by assumption.  Furthermore, since $(\Gamma^o\ast\Lambda^o)_f$ is obtained as the analogous pasting (see Appendix B.2.1 of \cite{BourkeLobbia}), it follows that $(\Gamma^l\ast\Lambda^l)_f$ and $(\Gamma^o\ast\Lambda^o)_f$ are again mates by the functoriality of the mates correspondence \cite[Proposition~2.2]{KellyStreet:rew-el-2cats}, as required.

\item \emph{Whiskering of unbiased pseudo-modifications by unbiased pseudo-transformations.} Let $\alpha,\alpha'\colon F\to G$ and $\beta,\beta'\colon G\to H$ be unbiased pseudo-transformations, and $\Lambda\colon\alpha\to\alpha'$ and $\Gamma\colon \beta\to\beta'$ unbiased pseudo-modifications. 
	We must show that $(\Gamma^l\cdot\alpha^l,\Gamma^o\cdot\alpha^o)$ and $(\beta^l\cdot\Lambda^l,\beta^o\cdot\Lambda^o)$ are also unbiased pseudo-modifications. 
	We give the explicit calculations only for the left whiskering, since the right one is completely analogous.
	Let us consider $(\Gamma^l\cdot\alpha^l,\Gamma^o\cdot\alpha^o)$, where $\Gamma^l\cdot\alpha^l$ and $\Gamma^o\cdot\alpha^o$ are defined in $\lax(\ca,\cb)[F,G]$ and $\oplax(\ca,\cb)[F,G]$, respectively. 
	\begin{itemize}
		\item  Recalling the definitions of $(\Gamma^l\cdot\alpha^l)_f$ and $(\Gamma^o\cdot\alpha^o)_f$ (see below), it is clear that they are invertible, since they are pastings of invertible 3-cells by assumption.
		\begin{center}
			$(\Gamma^l\cdot\alpha^l)_f:=$ 
			\scalebox{0.7}{
				\begin{tikzcd}[ampersand replacement=\&,column sep=small]
					{Hf\beta_x\alpha_x} \&\& {Hf\beta'_x\alpha_x} \\
					{\beta_y Gf\alpha_x} \&\& {\beta'_y Gf\alpha_x} \\
					{\beta_y\alpha_y Ff} \&\& {\beta'_y\alpha_y Ff}
					\arrow[""{name=0, anchor=center, inner sep=0}, "{1\circ\Gamma_x\circ1}", from=1-1, to=1-3]
					\arrow["{\beta^l_f\circ1}"', from=1-1, to=2-1]
					\arrow["{(\beta')^l_f\circ1}", from=1-3, to=2-3]
					\arrow[""{name=1, anchor=center, inner sep=0}, "{\Gamma_y\circ1}"', from=2-1, to=2-3]
					\arrow["{1\circ\alpha^l_f}"', from=2-1, to=3-1]
					\arrow["{1\circ\alpha^l_f}", from=2-3, to=3-3]
					\arrow[""{name=2, anchor=center, inner sep=0}, "{\Gamma_y\circ1}"', from=3-1, to=3-3]
					\arrow["{\Gamma^l_f\circ1}", between={0.3}{0.7}, Rightarrow, from=0, to=1]
					\arrow[between={0.4}{0.8}, Rightarrow, dashed, from=1, to=2]
				\end{tikzcd} 
			}
			\hspace{0.5cm} $(\Gamma^o\cdot\alpha^o)_f:=$
			\scalebox{0.7}{
				\begin{tikzcd}[ampersand replacement=\&,column sep=small]
					{\beta_y\alpha_y Ff} \&\& {\beta_y Gf\alpha_x} \&\& {Hf\beta_x\alpha_x} \\
					{\beta'_y\alpha_y Ff} \&\& {\beta'_y Gf\alpha_x} \&\& {Hf\beta'_x\alpha_x}
					\arrow[""{name=0, anchor=center, inner sep=0}, "{1\circ\alpha^o_f}", from=1-1, to=1-3]
					\arrow["{\Gamma_y\circ1}"', from=1-1, to=2-1]
					\arrow[""{name=1, anchor=center, inner sep=0}, "{\beta^o_f\circ1}", from=1-3, to=1-5]
					\arrow["{\Gamma_y\circ1}"', from=1-3, to=2-3]
					\arrow["{1\circ\Gamma_x\circ1}", from=1-5, to=2-5]
					\arrow[""{name=2, anchor=center, inner sep=0}, "{1\circ\alpha^o_f}"', from=2-1, to=2-3]
					\arrow[""{name=3, anchor=center, inner sep=0}, "{(\beta')^o_f\circ1}"', from=2-3, to=2-5]
					\arrow[between={0.3}{0.7}, Rightarrow, dashed, from=0, to=2]
					\arrow["{\Gamma^o_f\circ1}", between={0.3}{0.7}, Rightarrow, from=1, to=3]
				\end{tikzcd}
			}
		\end{center}
		\item We need to prove that the 3-cells in the previous point are mates of each other. 
		We start considering the mate of $(\Gamma^l\cdot\alpha^l)_f$ and then use (in order): \Cref{lemma:interchange-property} for $({\eta_f^{\alpha}})^{-1}$ and a triangle equation; the property of $\Gamma_f^o$ to be the mate of $\Gamma_f^l$ (since $\Gamma$ is an unbiased pseudo-modification).
		\begin{center}
			\scalebox{0.8}{
				\begin{tikzcd}[ampersand replacement=\&,column sep=small]
					{\beta_y\alpha_y Ff} \&\& {\beta_y Gf\alpha_x} \&\& {Hf\beta_x\alpha_x} \&\& {Hf\beta'_x\alpha_x} \&\& {} \&\& \\
					\&\&\& {} \& {\beta_y Gf\alpha_x} \&\& {\beta'_y Gf\alpha_x} \& {} \\
					\&\& {} \&\& {\beta_y\alpha_y Ff} \&\& {\beta'_y\alpha_y Ff} \&\& {\beta'_y Gf\alpha_x} \&\& {Hf\beta'_x\alpha_x}
					\arrow["{1\circ\alpha^o_f}", from=1-1, to=1-3]
					\arrow[curve={height=12pt}, equals, from=1-1, to=3-5]
					\arrow[""{name=0, anchor=center, inner sep=0}, "{\beta^o_f\circ1}", from=1-3, to=1-5]
					\arrow[curve={height=12pt}, equals, from=1-3, to=2-5]
					\arrow["{1\circ{\eta^\alpha_f}^{-1}}"{pos=0.3}, shift right=3, between={0.2}{0.4}, Rightarrow, from=1-3, to=3-3]
					\arrow[""{name=1, anchor=center, inner sep=0}, "{1\circ\Gamma_x\circ1}", from=1-5, to=1-7]
					\arrow["{\beta^l_f\circ1}", from=1-5, to=2-5]
					\arrow["{(\beta')^l_f\circ1}", from=1-7, to=2-7]
					\arrow[curve={height=-12pt}, equals, from=1-7, to=3-11]
					\arrow["{{\epsilon_f^{\beta'}}^{-1}}"'{pos=0.6}, shift left=4, between={0.5}{0.7}, Rightarrow, from=1-9, to=3-9]
					\arrow[""{name=2, anchor=center, inner sep=0}, "{\Gamma_y\circ1}"', from=2-5, to=2-7]
					\arrow["{1\circ\alpha^l_f}"', from=2-5, to=3-5]
					\arrow["{1\circ\alpha^l_f}"', from=2-7, to=3-7]
					\arrow[curve={height=-12pt}, equals, from=2-7, to=3-9]
					\arrow[""{name=3, anchor=center, inner sep=0}, "{\Gamma_y\circ1}"', from=3-5, to=3-7]
					\arrow[""{name=4, anchor=center, inner sep=0}, "{1\circ\alpha^o_f}"', from=3-7, to=3-9]
					\arrow["{(\beta')^o_f\circ1}"', from=3-9, to=3-11]
					\arrow["{{\eta^\beta_f}^{-1}\circ1}"{pos=0.4}, shift right=2, between={0.2}{0.6}, Rightarrow, from=0, to=2-4]
					\arrow["{\Gamma^l_f\circ1}", between={0.3}{0.7}, Rightarrow, from=1, to=2]
					\arrow[between={0.4}{0.8}, Rightarrow, dashed, from=2, to=3]
					\arrow["{1\circ{\epsilon_f^\alpha}^{-1}}"', shift left=3, between={0.3}{0.7}, Rightarrow, from=2-8, to=4]
				\end{tikzcd}
			} \\
			$=$
			\scalebox{0.8}{
				\begin{tikzcd}[ampersand replacement=\&]
					{\beta_y\alpha_y Ff} \& {\beta_y Gf\alpha_x} \&\& {Hf\beta_x\alpha_x} \&\& {Hf\beta'_x\alpha_x} \& {} \& \\
					\&\& {} \& {\beta_y Gf\alpha_x} \& {} \\
					{\beta'_y\alpha_y Ff} \&\& {} \&\&\& {\beta'_y Gf\alpha_x} \&\& {Hf\beta'_x\alpha_x}
					\arrow["{1\circ\alpha^o_f}", from=1-1, to=1-2]
					\arrow["{\Gamma_y\circ1}"', from=1-1, to=3-1]
					\arrow[""{name=0, anchor=center, inner sep=0}, "{\beta^o_f\circ1}", from=1-2, to=1-4]
					\arrow[curve={height=12pt}, equals, from=1-2, to=2-4]
					\arrow[""{name=1, anchor=center, inner sep=0}, "{1\circ\Gamma_x\circ1}", from=1-4, to=1-6]
					\arrow["{\beta^l_f\circ1}", from=1-4, to=2-4]
					\arrow["{(\beta')^l_f\circ1}", from=1-6, to=3-6]
					\arrow[curve={height=-12pt}, equals, from=1-6, to=3-8]
					\arrow[between={0.1}{0.9}, Rightarrow, dashed, from=2-3, to=3-3]
					\arrow[""{name=2, anchor=center, inner sep=0}, "{\Gamma_y\circ1}"{description}, from=2-4, to=3-6]
					\arrow["{1\circ\alpha^o_f}"', from=3-1, to=3-6]
					\arrow[""{name=3, anchor=center, inner sep=0}, "{(\beta')^o_f\circ1}"', from=3-6, to=3-8]
					\arrow["{{\eta^\beta_f}^{-1}\circ1}"{pos=0.4}, shift right=2, between={0.2}{0.6}, Rightarrow, from=0, to=2-3]
					\arrow["{\Gamma^l_f\circ1}", between={0.3}{0.7}, Rightarrow, from=1, to=2]
					\arrow["{{\epsilon_f^{\beta'}}^{-1}}"'{pos=0.6}, shift left=4, between={0.5}{0.7}, Rightarrow, from=1-7, to=3]
				\end{tikzcd}
			} \\
			$=$
			\scalebox{0.8}{
				\begin{tikzcd}[ampersand replacement=\&,column sep=small]
					{\beta_y\alpha_y Ff} \&\& {\beta_y Gf\alpha_x} \&\& {Hf\beta_x\alpha_x} \\
					{\beta'_y\alpha_y Ff} \&\& {\beta'_y Gf\alpha_x} \&\& {Hf\beta'_x\alpha_x}
					\arrow[""{name=0, anchor=center, inner sep=0}, "{1\circ\alpha^o_f}", from=1-1, to=1-3]
					\arrow["{\Gamma_y\circ1}"', from=1-1, to=2-1]
					\arrow[""{name=1, anchor=center, inner sep=0}, "{\beta^o_f\circ1}", from=1-3, to=1-5]
					\arrow["{\Gamma_y\circ1}"', from=1-3, to=2-3]
					\arrow["{1\circ\Gamma_x\circ1}", from=1-5, to=2-5]
					\arrow[""{name=2, anchor=center, inner sep=0}, "{1\circ\alpha^o_f}"', from=2-1, to=2-3]
					\arrow[""{name=3, anchor=center, inner sep=0}, "{(\beta')^o_f\circ1}"', from=2-3, to=2-5]
					\arrow[between={0.3}{0.7}, Rightarrow, dashed, from=0, to=2]
					\arrow["{\Gamma^o_f\circ1}", between={0.3}{0.7}, Rightarrow, from=1, to=3]
				\end{tikzcd}
			}
		\end{center}
	\end{itemize}
	

  \item \emph{Perturbations and interchange.} 
Unbiased pseudo perturbations in $\psden(\ca,\cb)$ are simply families of $3$-cells $\Theta_x$ of $\cb$, equally giving a perturbation between their lax or oplax parts.  Since all compositions involving perturbations in $\psden(\ca,\cb)$ and likewise in $\oplax(\ca,\cb)$ are simply computed componentwise in $\cb$, it follows immediately that they are preserved by $U^o_\nb$.  Similarly the interchange cells are preserved by $U^o_\nb$ since they are simply computed componentwise in $\cb$.
\end{itemize}
\end{proof}

Furthermore, recall from Section 6.2 of \cite{BourkeLobbia} that $\psd(-,-) \colon \GCat_p^{op} \times \GCat_p \to \GCat_p$ is a functor.  Therefore transporting the functor structure along the componentwise isomorphisms $U^p_\nb \colon \psden(\ca,\cb) \to \psd(\ca,\cb)$ we immediately obtain:

\begin{cor}\label{uniquenatural}
The Gray-categories $\psden(\ca,\cb)$ form the object assignment of a unique functor $\psden(-,-)\colon\GCat_p^{op} \times \GCat_p \to \GCat_p$ such that the $U^p_\nb \colon \psden(\ca,\cb) \to \psd(\ca,\cb)$ are natural in $\ca$ and $\cb$.
\end{cor}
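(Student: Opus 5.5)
The plan is to transport structure, exactly as in \Cref{unbiased}. Recall from Section 6.2 of \cite{BourkeLobbia} that we have the functor $\psd(-,-)\colon\GCat_p^{op}\times\GCat_p\to\GCat_p$. For each pair $(\ca,\cb)$, \Cref{unbiased} gives an isomorphism of Gray-categories $U^p_\nb\colon\psden(\ca,\cb)\to\psd(\ca,\cb)$. In particular each $U^p_\nb$ is a Gray-functor, so it is a pseudomap with identity $2$-cells $F^2$, and hence an invertible morphism of $\GCat_p$ whose inverse is again a Gray-functor.

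\textbf{Existence.} Given pseudomaps $H\colon\ca'\to\ca$ and $K\colon\cb\to\cb'$, define
\[
\psden(H,K):=(U^p_\nb)^{-1}\circ\psd(H,K)\circ U^p_\nb\colon\psden(\ca,\cb)\to\psden(\ca',\cb'),
\]
where the first $U^p_\nb$ is the one at $(\ca,\cb)$ and the inverse is the one at $(\ca',\cb')$. This is a composite of morphisms of $\GCat_p$, so it lies in $\GCat_p$; this uses that pseudomaps compose, which holds because $\GCat_p$ is a category. Functoriality follows by cancelling the middle factors: identities go to $U^{-1}\,1\,U=1$, and
\[
\psden(HH',K'K)=U^{-1}\psd(H',K')\,U\;U^{-1}\psd(H,K)\,U=\psden(H',K')\circ\psden(H,K).
\]
Naturality of $U^p_\nb$ is then the defining equation $U^p_\nb\circ\psden(H,K)=\psd(H,K)\circ U^p_\nb$.

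\textbf{Uniqueness.} Suppose $\psden'(-,-)$ is another such functor with the same object assignment, where each $\psden(\ca,\cb)$ carries the Gray-structure of \Cref{unbiased}. Naturality forces $U^p_\nb\circ\psden'(H,K)=\psd(H,K)\circ U^p_\nb$. Since $U^p_\nb$ is invertible, hence monic, this determines $\psden'(H,K)$ uniquely as the formula above.

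There is no real obstacle in this argument. The only point needing care is to confirm that the isomorphism $U^p_\nb$ of \Cref{unbiased} is an isomorphism in $\GCat_p$, and not merely in $\GCat$. This holds because $\GCat\subseteq\GCat_p$ as the pseudomaps with trivial $F^2$, and the inverse of a Gray-functor is a Gray-functor.
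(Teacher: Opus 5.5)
Your proposal is correct and is the same transport-of-structure argument the paper uses: the paper transports the functor structure of $\psd(-,-)\colon\GCat_p^{op}\times\GCat_p\to\GCat_p$ along the componentwise isomorphisms $U^p_\nb\colon\psden(\ca,\cb)\to\psd(\ca,\cb)$ and regards the result as immediate. You have simply written out the routine checks of functoriality, naturality and uniqueness, together with the observation that $U^p_\nb$ is invertible in $\GCat_p$.
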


\begin{prop}\label{naturality}
The components $U^o_\nb \colon \psden(\ca,\cb) \to \oplax(\ca,\cb)$ are natural in $\ca$ and $\cb$.  
\end{prop}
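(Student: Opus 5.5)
Naturality here means that for pseudomaps $P\colon\ca'\to\ca$ and $Q\colon\cb\to\cb'$, the square formed by $\psden(P,Q)\colon\psden(\ca,\cb)\to\psden(\ca',\cb')$, the components $U^o_\nb$, and $\oplax(P,Q)\colon\oplax(\ca,\cb)\to\oplax(\ca',\cb')$ commutes. Since $\psden(P,Q)$ is \emph{defined} by transport along $U^p_\nb$ (\Cref{uniquenatural}), the plan is to first make its action explicit on unbiased $n$-transfors. Then we check that forgetting the lax part gives exactly the action of $\oplax(P,Q)$ from \cite[Section~6.2]{BourkeLobbia}. Because the functor acts separately in each variable, it suffices to treat precomposition with $P$ (taking $Q=1$) and postcomposition with $Q$ (taking $P=1$) in turn, and to check on objects, 1-, 2- and 3-cells. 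The objects (pseudomaps) and the 3-cells (perturbations, families of 3-cells reindexed or postcomposed componentwise) are immediate.

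For an unbiased pseudo-transformation $\alpha=(\alpha^l,\alpha^o,\aexi^\alpha_f)$, $\psden(P,Q)$ sends $\alpha$ to the unique unbiased pseudo-transformation whose lax part is $\lax(P,Q)(\alpha^l)$, i.e.\ $\psd(P,Q)$ applied to $\alpha^l$. Explicitly, its adjoint equivalences are obtained as follows. In the $P$ case they are $\aexi^\alpha_{Pf}$, conjugated with the invertible 2-cells $G^2,F^2$ and $P^2$ appearing in the formula for whiskering a transformation by a pseudomap. In the $Q$ case they are $Q\aexi^\alpha_f$, composed with the invertible 1-cells $Q^2$ (adjoint equivalences). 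By \Cref{prop:equ-psd-transf}, the oplax part of the image is then forced to be the mate of the lax part along these adjoint equivalences. So the key step is to show that $\oplax(P,Q)(\alpha^o)$ is itself that mate of $\lax(P,Q)(\alpha^l)$. Its 1-cell components agree by construction. Its components at 2-cells and at composable pairs are pastings of components of $\alpha^o$ with the structure cells $P^2,Q^2$ (or images under the locally 2-functorial $Q$). The lax versions are the analogous pastings of components of $\alpha^l$. By functoriality of the mates correspondence \cite[Proposition~2.2]{KellyStreet:rew-el-2cats}, and the fact that a 2-functor $Q_{x,y}$ preserves mates, the pasting of mates is the mate of the pasting. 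This is exactly the argument used for vertical composition in the proof of \Cref{prop:forg-fct-psden-oplax}.

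The same method handles unbiased pseudo-modifications using \Cref{prop:equ-psd-mod}: $\oplax(P,Q)(\Lambda^o)_f$ is a pasting of $\Lambda^o$-components with structure cells and interchange isomorphisms. To see that it is the mate of the corresponding lax pasting, we use \Cref{lemma:interchange-property} to slide units and counits past the interchanges, as in the whiskering case of \Cref{prop:forg-fct-psden-oplax}.

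The main obstacle I expect is the composable-pair component in the $Q$ case. Here $\oplax(1,Q)(\alpha^o)$ involves $Q^2$ together with the $(-)\sharp/(-)\flat$ conventions relating lax and oplax composition constraints (cf.\ \Cref{plusminus}), so the relevant adjoint equivalences mix genuine adjoint equivalences with invertible 1-cells. I would first try to reduce this to the $co$-duality: by \Cref{plusminus} the oplax construction is the lax one for $\ca^{co},\cb^{co}$. Naturality of $(-)^+$, combined with the already-established naturality of $U^l$, should then let us deduce the oplax statement formally and avoid a large pasting-diagram computation. If that fails, I would fall back on an explicit check using the equations of \Cref{rmk:mates-over-adj-equiv-calculations} and triangle identities, in the style of \Cref{app:big-diagram}.
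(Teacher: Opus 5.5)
Your proposal is correct and follows essentially the paper's route: by uniqueness of the transported functor structure on $\psden(-,-)$, it suffices to check that the whiskerings computed in $\lax$ and $\oplax$, together with the induced adjoint equivalences, again satisfy the mate conditions, which follows from functoriality of mates for precomposition and from $Q$ being locally a 2-functor (with everything conjugated by the invertible $Q^2$) for postcomposition. Two small corrections: for precomposition the adjoint equivalences are just $\aexi^\alpha_{Pf}$ with no conjugation (the cells $P^2,F^2,G^2$ enter only the composable-pair components, where one triangle identity for $\aexi^\alpha_{Pf_2Pf_1}$ suffices, and no interchange cells arise for modifications), and the $(-)^{+}$ shortcut would be circular, since relating $U^o_\nb$ to $U^l_\nb$ on the $co$ side needs a duality $\psden(\ca,\cb)\cong\psden(\ca^{co},\cb^{co})^{co}$ whose naturality is equivalent to the statement itself---but your direct mate argument already covers the composable-pair case for $Q$, so no shortcut is needed.
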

\begin{proof}
Both $\psden(-,-)$ and $\oplax(-,-)$ act by whiskering $n$-transfors by pseudomaps.   Just like in \Cref{prop:forg-fct-psden-oplax}, by uniqueness of the functorial structure on $\psden(-,-)$ in \Cref{uniquenatural}, it suffices to show that computing left/right whiskering as for $\psd(-,-)$ and $\oplax(-,-)$ results in unbiased pseudo $n$-transfors. 
%
%
%
%
%
\begin{itemize}
	\item \emph{Unbiased pseudo-transformations.} Let 	$\alpha=(\alpha^l,\alpha^o,\aexi^\alpha_{-})\colon F\to G \in \psden(\ca,\cb)$ be an unbiased pseudo-transformation and let $P\colon\ca'\to\ca$ and $R\colon\cb\to\cb'$ be pseudomaps.  Then, we need to show that $(\alpha^l\circ P,\alpha^o\circ P,\aexi^\alpha_{P-})\colon FP\to GP$ and $(R\circ\alpha^l,R\circ\alpha^o,R\circ\aexi^\alpha_{-})\colon RF\to RG$ are unbiased pseudo-transformations, where $R\circ\aexi^\alpha_{-}$ is the adjoint equivalences below.\footnote{See \cite[Section 6.1]{BourkeLobbiaCorrected}, where these are defined in the actions of $\psd(P,\cb)$ and $\psd(\ca,R)$.}
	\[\begin{tikzcd}[ampersand replacement=\&]
		{RGfR\alpha_x} \& {R(Gf\alpha_x)} \&\& {R(\alpha_yFf)} \& {R\alpha_yRFf}
		\arrow[""{name=0, anchor=center, inner sep=0}, "{R^2}", curve={height=-12pt}, from=1-1, to=1-2]
		\arrow[""{name=1, anchor=center, inner sep=0}, "{({R^2})^{-1}}", curve={height=-12pt}, from=1-2, to=1-1]
		\arrow[""{name=2, anchor=center, inner sep=0}, "{R(\alpha^l_f)}", curve={height=-12pt}, from=1-2, to=1-4]
		\arrow[""{name=3, anchor=center, inner sep=0}, "{R({\alpha^o_f})}", curve={height=-12pt}, from=1-4, to=1-2]
		\arrow[""{name=4, anchor=center, inner sep=0}, "{({R^2})^{-1}}", curve={height=-12pt}, from=1-4, to=1-5]
		\arrow[""{name=5, anchor=center, inner sep=0}, "{R^2}", curve={height=-12pt}, from=1-5, to=1-4]
		\arrow["\top"{description}, draw=none, from=1, to=0]
		\arrow["\top"{description}, draw=none, from=3, to=2]
		\arrow["\top"{description}, draw=none, from=5, to=4]
	\end{tikzcd}\]
	We need to check that the components of $\alpha\circ P$ and $R\circ\alpha$ at 2-cells $\phi$ and at composable 1-cells $f_1,f_2$ (in $\ca'$ and $\ca$ respectively), are mates. 
	\begin{itemize}
		\item For $\alpha\circ P$, we have $(\alpha^l\circ P)_\phi := \alpha^l_{P\phi}$ and $(\alpha^o\circ P)_\phi := \alpha^o_{P\phi}$ and so the mate condition holds as since $\alpha\in\psden(\ca,\cb)$.    The component $(\alpha^l\circ P)^2_{f_2,f_1}$ is the pasting of $(\alpha^l)^2_{Pf_2,Pf_1}$ and $\alpha^l_{P^2_{f_2,f_1}}$, whilst $(\alpha^o\circ P)_2^{f_2,f_1}$ is the pasting of $(\alpha^o)^2_{Pf_2,Pf_1}$ and $\alpha^o_{P^2_{f_2,f_1}}$ (see \cite[Appendix C]{BourkeLobbia} for the lax case).
%
		Since these are mates of each other (because $\alpha\in\psden(\ca,\cb)$), then to conclude that $\alpha^l\circ P$ and $\alpha^o\circ P$ are mates it suffices to use one triangle identity of the adjunction $\aexi^\alpha_{Pf_2Pf_1}$. 
		\item For $R\circ\alpha$ we use that the components of both $R\circ\alpha^l$ and $R\circ\alpha^o$ are defined as images under $R$ of the corresponding components whiskered by instances of $R^2_{-.-}$ or its inverse (see \cite[Appendix C]{BourkeLobbia} for the lax case). 
		Then, we can conclude that the required mate equations hold because they hold for $\alpha$ and because  $R$ is locally a 2-functor. 
	\end{itemize}
	
	\item \emph{Unbiased pseudo-modifications.} For $n=2,3$, let $\Lambda=(\Lambda^l,\Lambda^o)\in\psden(\ca,\cb)$ be an unbiased pseudo-modification, and $P\colon\ca'\to\ca$ and $R\colon\cb\to\cb'$ be pseudomaps. 
	We need to show that $(\Lambda^l\circ P,\Lambda^o\circ P)\in\psden(\ca',\cb)$ and $(R\circ\Lambda^l,R\circ\Lambda^o)\in\psden(\ca,\cb')$ are unbiased pseudo-transformations.
	\begin{itemize}
		\item For $\Lambda\circ P$ it is enough to note that all the components of $\Lambda^l\circ P$ and $\Lambda^o\circ P$ are components of $\Lambda^l$ and $\Lambda^o$ at a $0/1$-cell in the image of $P$.  See \cite[Appendix C]{BourkeLobbia} for the lax case.  As such, they satisfy the required mate condition as a special case of the mate condition for $\Lambda^l$ and $\Lambda^o$.  
		\item For $R\circ\Lambda$ we recall that all the components of both $R\circ\Lambda^l$ and $R\circ\Lambda^o$ are defined as images of the corresponding components whiskered by instances of $R^2_{-.-}$ or its inverse (see \cite[Appendix C]{BourkeLobbia} for the lax case). 
		Then, we can conclude that the equality hold because they hold for $\Lambda$ and $R$ is locally a 2-functor. 
\end{itemize}
	\item \emph{Unbiased pseudo-perturbations.} This case is trivial since there is no mate condition to verify.
\end{itemize}

\end{proof}

\black

\black 

\section{Background on symmetric skew monoidal categories, skew multicategories etc}\label{sect:prelim-skew-brd}

In this section, we review the necessary background to prove that the flexible skew structure $(\GCat,\otimes_p,\mathbf{1})$ is symmetric. 
First, we recall the notions of symmetric skew monoidal categories and symmetric skew multicategories \cite{BourkeLack:symmetric}. 
We will also need the results of \cite{Lob:short-multi}, which show that, in our setting, it suffices to construct three isomorphisms and verify a small number of equations in order to obtain a symmetric skew multicategory. 
We then recall the main definitions of the skew multicategory $\GCatmp$ corresponding to the flexible skew monoidal structure under the equivalence \cite[Theorem~6.1]{BourkeLack:skew}. 
Finally, we adapt the compact definition of pseudo-binary map using the unbiased notions introduced in \Cref{sect:unbiased}.

\subsection{Symmetric skew monoidal categories}\label{sec:brd-skew-multi-mon}

Skew monoidal categories were introduced independently in \cite{Altenkirch} and \cite{Szlachanyi2012Skew}.   A (left) skew monoidal category is a category $\cc$ together with a functor $\otimes\colon\cc\times \cc \to \cc$, a unit object $i \in \cc$, and natural families $\alpha \colon(a \otimes b) \otimes c \to a \otimes (b \otimes c)$, $l \colon i \otimes a \to a$ and $r \colon a \to a \otimes i$ satisfying five axioms described in \cite{Szlachanyi2012Skew}.  A monoidal category is precisely a skew monoidal category in which of the constraints $\alpha, l$ and $r$ are invertible.


Whilst a braiding on a monoidal category consists of a natural isomorphism $a\otimes b\cong b\otimes a$ satisfying two equations, a braiding for a skew monoidal category consists of an invertible natural transformation $s_{x,a,b}\colon(x\otimes a)\otimes b\to (x\otimes b)\otimes a$
satisfying four equations (see \cite[Definition~2.2]{BourkeLack:symmetric} for the details). 
As in the classical case, a braiding on a skew monoidal category is a symmetry when $s$ is the inverse of itself, i.e. if $s_{x,b,z}s_{x,a,b}=1_{(x\otimes a)\otimes b}$. 

\subsection {Symmetric skew monoidal categories versus symmetric skew multicategories}
Just as monoidal categories correspond to certain multicategories (the representable ones \cite{Hermida2000Representable}), skew monoidal categories correspond to certain \emph{skew multicategories} (the left representable ones) \cite{BourkeLack:skew}.  

Skew multicategories, introduced in \cite{BourkeLack:skew} differ from classical multicategories in that they have two kinds of multimap (tight and loose) together with a function that views tight multimaps as loose ones, all satisfying some axioms.   The concept simplifies significantly when these functions are subset inclusions, which is the case of interest here. 
In that case (which we hence assume) a skew multicategory $\mc$ consists of a multicategory $\mc^l$, whose multimaps we call \emph{loose}, together with a subset $$\mc^t_n(a_1,\ldots,a_n;b) \subseteq \mc^l_n(a_1,\ldots,a_n;b)$$ for any $n >0$ of \emph{tight} $n$-ary multimaps.   Furthermore the unary identities are required to be tight and certain substitutions respect tightness\begin{footnote}{Although the reader won't need to know these conditions here, they state that $f \circ_i g$ is tight if $f$ and $g$ are tight and $i=1$ or $f$ is tight and $i >1$.}\end{footnote}. See \cite[2.3]{BourkeLobbia} or \cite[Definition~4.2]{BourkeLack:skew} for further details on skew multicategories.

A braiding (or symmetry) for a skew multicategory $\mc$ is a simply a braiding (or symmetry) on the the multicategory $\mc^l$ of loose multimaps satisfying an additional condition concerning the preservation of tight multimaps.  In the case of a symmetry, this can be easily stated: the actions $\sigma.- \colon \mc^l_n(a_1,\ldots,a_n;b) \to \mc^l_n(a_{\sigma1},\ldots,a_{\sigma n};b)$ of elements of the symmetric group $\sigma \in S_n$ satisfying $\sigma 1 = 1$ preserve tight multimaps.

In fact braidings (or symmetries) on a skew monoidal category correspond bijectively to braidings (or symmetries) on the associated skew multicategory.  This was proved in \cite[Theorem~5.9]{BourkeLack:symmetric}.

%
%

\subsection{Short braidings}
In this paper our goal is to describe a symmetry on the flexible skew monoidal structure $(\GCat,\otimes_p,1)$ or, equivalently, by  \cite[Theorem~5.9]{BourkeLack:symmetric} a symmetry on its associated skew multicategory $\GCatmp$.  Now in \cite{BourkeLobbiaCorrected} we described $\GCatmp$ in elementary terms for $n$-ary multimaps up to dimension $4$.  And in fact the second-named author showed in \cite{Lob:short-multi} that symmetries on such left representable skew multicategories can be recognised by their actions on multimaps up to dimension $4$.  This approach will be the key to our establishing the symmetry on $(\GCat,\otimes_p,1)$ and we now describe it in detail.

%
%
%
%
%
Essentially, \cite[Theorem~5.22]{Lob:short-multi} shows that in order to define a braiding (or symmetry) on a left representable skew multicategory $\mc$ it is enough to define a \emph{short braiding}, which consists of natural isomorphisms
\begin{equation}\label{short-brd-data}
\renewcommand{\arraystretch}{1.25}
	\begin{tabular}{>{$}c<{$} >{$}c<{$} >{$}c<{$} >{$}c<{$}}
		\beta^3_2\colon &
		\mc_3^t(a_1,a_2,a_3;b) &
		\to &
		\mc_3^t(a_1,a_3,a_2;b) \\
		\beta^4_2\colon &
		\mc_4^t(a_1,a_2,a_3,a_4;b) &
		\to &
		\mc_4^t(a_1,a_3,a_2,a_4;b) \\
		\beta^4_3\colon &
		\mc_4^t(a_1,a_2,a_3,a_4;b) &
		\to &
		\mc_4^t(a_1,a_2,a_4,a_3;b)
	\end{tabular}
\end{equation}
satisfying the following six axioms
\begin{itemize}
	\item for any tight 4-ary map $h\in\mc_4^l(a_1,a_2,a_3,a_4;b)$, 
	\begin{equation}\label{ax:sh-braid-4-ary}
		\beta^4_2\beta^4_3\beta^4_2(h)=\beta^4_3\beta^4_2\beta^4_3(h)
	\end{equation}
	
	\item for any tight binary map $g\colon b_1,b_2\to c$ and tight ternary map $f\colon a_1,a_2,a_3\to b_i$,
	\begin{minipage}{0.5\textwidth}
		\begin{equation}\label{ax:sh-braid-3-in-2-first}
			g\circ_1\beta^3_2(f)=\beta^4_2(g\circ_1f)
		\end{equation}
	\end{minipage} 
	\begin{minipage}{0.5\textwidth}
		\begin{equation}\label{ax:sh-braid-3-in-2-secd}
			g\circ_2\beta^3_2(f)=\beta^4_3(g\circ_2f)
		\end{equation}
	\end{minipage}\vspace{0.2cm}
	
	\item for any tight ternary map $g\colon b_1,b_2,b_3\to c$ and tight binary map $f\colon a_1,a_2\to$~$b_i$,
	\begin{minipage}{0.5\textwidth}
		\begin{equation}\label{ax:sh-braid-2-in-3-first}
			\beta^4_3(g\circ_1f)=\beta^3_2(g)\circ_1f
		\end{equation}
	\end{minipage}
	\begin{minipage}{0.5\textwidth}
		\begin{equation}\label{ax:sh-braid-2-in-3-secd}
			\beta^4_2\beta^4_3(g\circ_2f)=\beta^3_2(g)\circ_3f
		\end{equation}
	\end{minipage}
	\begin{equation}\label{ax:sh-braid-2-in-3-thrd}
		\beta^4_3\beta^4_2(g\circ_3f)=\beta^3_2(g)\circ_2f
	\end{equation}
\end{itemize}
The short braiding is called a \textbf{short symmetry} if, moreover, $\beta^3_2\beta^3_2=1$.  For further details on the low-dimensional approach to skew multicategories, we refer the reader to \cite{Lob:short-multi}.

\red


\black 

\subsection{The Closed Skew Multicategory $\GCatmp$}\label{sec:multi-psd}

Now, we review the definition of the left representable skew multicategory $\GCatmp$ corresponding to the flexible skew monoidal structure on $\GCat$.
Since we will only need the structure up to 4-ary multimaps, we will recall these definitions.
In \cite{BourkeLobbia}, pseudo multimaps were constructed starting from lax ones, which form the skew multicategory $\GCatm$. 
For this reason let us first recall the data involved in these lax multimaps.
In particular, for binary maps we use the presentation called \emph{the compact definition} in \cite[Section~4.5]{BourkeLobbia}.
\begin{itemize}
	\item Nullary lax maps $a\colon\diamond\to\ca$ are just objects $a\in\ca$.
	\item A (loose) lax unary map $F\colon\ca\to\cb$ is a pseudomap (see \Cref{sec:psd-transf}).  The tight lax unary maps are the Gray-functors. 
	\item A (loose) lax binary map $F\colon\ca,\cb\to\cc$ consists of
	\begin{enumerate}
		\item[(1.1)] at an object $a\in\catA$, a pseudomap $F^a\colon\catB\to\catC$,
		\item[(1.2)] at an object $b\in\catB$, a pseudomap $F_b\colon\catA\to\catC$,
		\item[(2.1)] at a 1-cell $A\colon a\to a'\in\catA$, a lax transformation $F^A\colon F^a\to F^{a'}$,
		\item[(2.2)] at a 1-cell $B\colon b\to b'\in\catB$, an oplax transformation $F_B\colon F_b\to F_{b'}$, 
	\end{enumerate}
	satisfying the axioms listed in \cite[Section~4.5]{BourkeLobbia} or \Cref{sec:psd-bin-cmpt} below for full details.  
	
One of the axioms required is that, on objects, we have $F^a(b)=F_b(a)=:F(a,b)$.  Another is that at $1$-cells $A \colon a \to a^{\prime}$ and $B \colon b \to b^{\prime}$, we have $F^A(B) = F_B(A) := A_B$, so that their common value is a $2$-cell as below
\[\begin{tikzcd}
	{F(a,b)} & {F(a',b)} \\
	{F(a,b')} & {F(a',b')}
	\arrow[""{name=0, anchor=center, inner sep=0}, "{F_b(A)}", from=1-1, to=1-2]
	\arrow["{F^a(B)}"', from=1-1, to=2-1]
	\arrow["{F^{a'}(B)}", from=1-2, to=2-2]
	\arrow[""{name=1, anchor=center, inner sep=0}, "{F_{b'}(A)}"', from=2-1, to=2-2]
	\arrow["{A_B}"', between={0.2}{0.8}, Rightarrow, from=0, to=1]
\end{tikzcd}\]
This is a central piece of structure for the lax bimap $F$.

Lax binary maps are called tight when each $F_b$ is a Gray-functor and each component at composable 1-cells $(F_B)^{A_2,A_1}_2$ of the oplax transformation $F_B$ is the identity.  
	
 \item A (loose) lax ternary map $F\colon\ca,\cb,\cc\to\cd$ consists of 
	\begin{enumerate}[(1)]
		\item at an object $a\in\catA$ a loose binary map $F^a\colon\catB,\catC\to\catD$, at $b\in\catB$ a loose binary map $F\midscript{b}\colon\catA,\catC\to\catD$ and at $c\in\catC$ a loose binary map $F_c\colon\catA,\catB\to\catD$ such that
		\begin{center} 
			$F^a(b,-)=F\ts?{b}(a,-)=:F^a_b\quad F^a(-,c)=F_c(a,-)=:F^a_c$ \\ 
			$F\midscript{b}(-,c)=F_c(-,b)=:F^b_c$,
		\end{center}
		
		\item at 1-cells $A\colon a\to a'$ in $\ca$, $B\colon b\to b'$ in $\cb$ and $C\colon c\to c'$ in $\cc$, a 3-cell 
		\begin{center}
			\begin{tikzpicture}
				\node (T) at (0,4) {$F^{a'}_{b'}(C)F^{a'}_{c}(B)F^{b}_{c}(A)$};
				\node (N) at (-2,2) {$F^{a'}_{c'}(B)F^{a'}_{b}(C)F^{b}_{c}(A)$};
				\node (T') at (5,4) {$F^{a'}_{b'}(C)F^{b'}_{c}(A)F^{a}_{c}(B)$};
				\node (TS0Y) at (7,2) {$F^{b'}_{c'}(A)F^{a}_{b'}(C)F^{a}_{c}(B)$};
				\node (ST') at (0,0) {$F^{a'}_{c'}(B)F^{b}_{c'}(A)F^{a}_{b}(C)$};
				\node (TS0') at (5,0) {$F^{b'}_{c'}(A)F^{a}_{c'}(B)F^{a}_{b}(C)$,};
				
				\path[->] 
				(ST') edge node[scale=.7] (y) [below] {${A_B}^{c'}\circ 1$} (TS0')
				(T) edge node[scale=.7] [left, xshift=-0.2cm] {${B_C}^{a'}\circ 1$} (N)
				edge node[scale=.7] (x) [above] {$1\circ {A_B}^{c}$} (T')
				(N) edge node[scale=.7] [left, xshift=-0.2cm] {$1\circ {A_C}^{b}$} (ST')
				(T') edge node[scale=.7] [right, xshift=0.2cm] {${A_C}^{b'}\circ1$} (TS0Y)
				(TS0Y) edge node[scale=.7] [right, xshift=0.2cm] {$1\circ {B_C}^{a}$} (TS0');
				\draw[-{Implies},double distance=1.5pt,shorten >=45pt,shorten <=45pt] (x) to node[scale=.7] [right, xshift=0.1cm] {$(A\mid B\mid C)$} (y);
			\end{tikzpicture}
		\end{center}
	\end{enumerate}
	called the incubator, satisfying the nine axioms described in \cite[Appendix~D.2]{BourkeLobbia}. 
	We say that the lax ternary map $F$ is tight if $F\midscript{b}$ and $F_c$ are tight lax binary maps.
	\item A (loose) lax 4-ary map $F\colon\ca,\cb,\cc,\cd\rightarrow\ce$ consists of:
	\begin{itemize}
		\item at $a\in\ca$, a lax ternary map $F^a\colon\cb,\ca,\cd\to\ce$;
		\item at $b\in\cb$, a lax ternary map $F\midscript{b}\colon\ca,\cc,\cd\to\ce$;
		\item at $c\in\ca$, a lax ternary map $F\midscript{c}\colon\ca,\cb,\cd\to\ce$;
		\item at $d\in\cd$, a lax ternary map $F_d\colon\ca,\cb,\cc\to\ce$;
	\end{itemize}
	compatible with each other (i.e. satisfying the axioms described in \cite[Section~4.17]{BourkeLobbia}) and satisfying the mecon axiom (see \cite[Appendix~D.3]{BourkeLobbia}). 
	A lax 4-ary map $F$ is called tight when all $F\midscript{b},\,F\midscript{c}$ and $F_d$ are tight lax ternary maps. 
	\end{itemize}

Now we are ready to review the definitions regarding the skew multicategory $\GCatmp$.
Objects, nullary and unary maps are the same as in $\GCatm$. 
Then, 2-, 3-, and 4-ary maps are defined as follows. 
\begin{itemize}
	\item A pseudo binary map $\mathbf F\colon\ca,\cb\to\cc$ consists of a lax binary map $F\colon\ca,\cb\to\cc$ equipped with adjoint equivalences $\aexi^A_B:=({A_B}^o\dashv A_B)$ \black 
	\[\begin{tikzcd}[ampersand replacement=\&]
		{F^{a'}_{B}F^{A}_{b}} \&\& {F^{A}_{b'}F^{a}_{B}}
		\arrow[""{name=0, anchor=center, inner sep=0}, "{A_B}", curve={height=-12pt}, from=1-1, to=1-3]
		\arrow[""{name=1, anchor=center, inner sep=0}, "{{A^o_B}}", curve={height=-12pt}, from=1-3, to=1-1]
		\arrow["\top"{description}, draw=none, from=1, to=0]
	\end{tikzcd}\]
	in $\cc(F^a_b,F^{a'}_{b'})$ such that:
	\begin{enumerate}[(i)]
		\item given objects $a\in\ca$ and $b\in\cb$, the adjoint equivalences $\aexi^A_{1_b}$ and $\aexi^{1_a}_B$ are identities;;
		\item given 1-cells $A\in\ca$ and $B\in\cb$ and 2-cells $\alpha\in\ca$ and $\beta\in\cb$, the $3$-cells $\alpha_B$ and $A_\beta$ in $\cc$ are invertible. 
	\end{enumerate}

	\item A pseudo ternary map $\mathbf F\colon\ca,\cb,\cc\to\cd$ consists of a lax ternary map $\mathbf F\colon\ca,\cb,\cc\to\cd$  with invertible incubators $(A\mid B\mid C)$ together with pseudo-binary maps $\mathbf{F}^a$, $\mathbf{F}\midscript{b}$ and $\mathbf{F}_{c}$ whose underlying lax binary maps are ${F}^a$, ${F}\midscript{b}$ and ${F}_{c}$
	\item A pseudo 4-ary map $\mathbf{F}\colon\ca,\cb,\cc,\cd\to\ce$ consists of a lax 4-ary map $F\colon\ca,\cb,\cc,\cd\to\ce$ together with pseudo-ternary maps $\mathbf{F}^a$, $\mathbf{F}\midscript{b}$, $\mathbf{F}\midscript{c}$ and $\mathbf{F}_d$ compatible with each other (in the same sense as a lax 4-ary map, see \cite[Section 6.3]{BourkeLobbiaCorrected}) 
	and whose underlying lax ternary maps are $F^a$, $F\midscript{b}$, $F\midscript{c}$ and $F_d$.\black 
\end{itemize} 

A pseudo binary map $\mathbf{F}$ is tight if its underlying lax binary map $F$ is, and the following equation of adjoint equivalences holds
$$\aexi^{A_2A_1}_B=(1\circ\aexi^{A_1}_B)\cdot(\aexi^{A_2}_B\circ1).$$ 
A pseudo ternary map $\mathbf{F}$ is tight just when $\mathbf{F}\midscript{b}$ and $\mathbf{F}_{c}$ are tight pseudo binary maps.  Similarly, a pseudo ternary map $\mathbf{F}$ is tight just when $\mathbf{F}\midscript{b}$, $\mathbf{F}\midscript{c}$ and $\mathbf{F}_d$ are tight ternary maps.  (\emph{Note that for the definitions of tight pseudo multimaps, we emphasise that we are using the corrected definition of \cite[Section 6.3]{BourkeLobbiaCorrected}, in which the equation concerning the equality of adjoint equivalences was added.})

Finally, we recall in \Cref{Tab:binbin} the substitution of binary maps into binary maps.
\begin{table}[ht]
	\caption{Explicit substitutions of binary into binary}
	\begin{center}
					\begin{tabular}{cc:c c c c c c c c}
					$\mathbf{G} \circ_1 \mathbf{F}$ &&
					$\mathbf{G} \circ_1 \mathbf{F}^a$ && $\mathbf{G} \circ_1 \mathbf{F}_b$ && $\mathbf{G}_c \circ_1 \mathbf{F}$ && $\textnormal{Incubator in Appendix}~\ref{app:inc-sub-b-in-b-1st}$ \\
					$\mathbf{G} \circ_2 \mathbf{F}$ &&
					$\mathbf{G}^a \circ_1 \mathbf{F}$ && $\mathbf{G} \circ_2 \mathbf{F}^b$ && $\mathbf{G} \circ_2 \mathbf{F}_c$ && $\textnormal{Incubator in Appendix}~\ref{app:inc-sub-b-in-b-2nd}$ \\
				\end{tabular}
		\end{center}
	\label{Tab:binbin}
\end{table}

Next we recall those properties we will need about the left closed structure on $\GCatmp$, whose internal hom is $\psd(\ca,\cb)$.

Before introducing the left closedness bijections, note that given an object $a \in \ca$, there is an evaluation Gray-functor $ev_a\colon\psd(\ca,\cb)\to\cb$: $ev_a$ sends a pseudomap $F$ to its value $Fa$, a pseudo-transformation $\alpha$ to its component $\alpha_a$, a pseudo-modification $\Gamma$ to the 2-cell $\Gamma_a$, and a pseudo-perturbation $\theta$ to the 3-cell $\theta_a$.

\begin{lemma}[Theorem~6.2 of \cite{BourkeLobbiaCorrected}]
	\label{lem:left-closedness-psd}
There are natural bijections
\[
\begin{aligned}
\lambda_2\colon\GCatmp_1^l(\ca;\psd(\cb,\cx))
  &\cong \GCatmp_2^l(\ca,\cb;\cx),
  \\
\lambda_3\colon\GCatmp_2^l(\,\ca,\cb;\psd(\cc,\cx)\,)
  &\cong \GCatmp_3^l(\ca,\cb,\cc;\cx) \quad\text{and} \\
\lambda_4\colon\GCatmp_3^l(\,\ca,\cb,\cc;\psd(\cd,\cx)\,)
  &\cong \GCatmp_4^l(\ca,\cb,\cc,\cd;\cx) \footnotemark
\end{aligned}
\]
\footnotetext{Notice that in \cite{BourkeLobbiaCorrected}
these isomorphisms were denoted by $\lambda_n$ (for $n=2,3$)
to distinguish them from the lax versions.
We don't do it here since we only consider the pseudo version.}
which restrict to tight maps on both sides. 
Moreover, for $F\colon\ca\to\psd(\cb,\cx)$, $G\colon\ca,\cb\to\psd(\cc,\cx)$ and $\mathbf{H}\colon\ca,\cb,\cc,\cd\to\cx$, they satisfy the following properties by construction:
\begin{enumerate}[(i)]
	\item $(\lambda_2\mathbf{F})^{a}=\mathbf{F}(a)$ and $(\lambda_2\mathbf{F})_b=ev_b\circ \mathbf{F}$;
	
	\item $(\lambda_3\mathbf{G})^{a}=\lambda_2(\mathbf{G}^{a})$, $(\lambda_3\mathbf{G})\midscript{b}=\lambda_2(\mathbf{G}_{b})$ and $(\lambda_3\mathbf{G})_c=ev_c\circ\mathbf{G}$;
	
	\item the incubator $(A\mid B\mid C)$ of $\lambda_3\mathbf{G}$ is given as the component of the modification $A_B\in\psd(\cc,\cd)$ at a 1-cell $C$;\footnote{Here, $A_B$ is the adjoint equivalence of $\mathbf{G}$ as a pseudo binary map, hence an adjoint equivalence in $\psd(\cc,\cd)[G^a_b,G^{a'}_{b'}]$. See Proposition~4.6 and Theorem~6.2 of \cite{BourkeLobbiaCorrected} for more details.}
	
	\item $(\lambda_4\mathbf{H})^{a}=\lambda_2(\mathbf{H}^{a})$, $(\lambda_4\mathbf{H})\midscript{b}=\lambda_3(\mathbf{H}\midscript{b})$, $(\lambda_4\mathbf{H})\midscript{c}=\lambda_3(\mathbf{H}_{c})$ and $(\lambda_4\mathbf{H})_d=ev_d\circ\mathbf{H}$.
\end{enumerate}
\end{lemma}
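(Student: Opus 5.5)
The lemma is cited from \cite{BourkeLobbiaCorrected}, so the plan is to recall how the bijections $\lambda_n$ are built there and check properties (i)--(iv) against those constructions. The statement has three parts: existence of natural bijections, restriction to tight maps, and the explicit descriptions (i)--(iv). I would build $\lambda_2,\lambda_3,\lambda_4$ in that order. Each stage reuses the previous one, since a pseudo $n$-ary map is determined by its family of pseudo $(n-1)$-ary restrictions plus compatibility data.

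For $\lambda_2$, start with a pseudomap $\mathbf F\colon\ca\to\psd(\cb,\cx)$ and set $(\lambda_2\mathbf F)^a:=\mathbf F(a)$ and $(\lambda_2\mathbf F)_b:=ev_b\circ\mathbf F$.
\begin{itemize}
\item A 1-cell $A$ is sent to the pseudo-transformation $\mathbf F(A)$. Its underlying lax transformation gives the lax transformation $F^A$ of the compact definition.
\item The component of $\mathbf F(A)$ at $B$ is $A_B$. The adjoint equivalence attached to it in the pseudo-transformation gives $\aexi^A_B$.
\item The oplax transformation $F_B$ has components $A^o_B$ at $A$, and its 3-cell data are the mates of the lax data. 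This is exactly where \Cref{prop:equ-psd-transf} enters.
\item The binary axioms follow from the pseudomap axioms for $\mathbf F$ and the Gray-structure of $\psd(\cb,\cx)$.
\end{itemize}
Conversely, a pseudo bimap gives, at each $a$, the pseudomap $F^a$, and at each $A$ the pseudo-transformation $(F^A,\aexi^A_-)$. The composition constraints of $\mathbf F$ are read off from $(F_B)^{A_2,A_1}_2$. For tightness, $\mathbf F$ being a Gray-functor says these constraints are identities and adjoint equivalences compose strictly. That matches the corrected tightness condition $\aexi^{A_2A_1}_B=(1\circ\aexi^{A_1}_B)\cdot(\aexi^{A_2}_B\circ1)$. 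Naturality is by inspection, using \Cref{uniquenatural}.

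For $\lambda_3$, take $\mathbf G\colon\ca,\cb\to\psd(\cc,\cx)$ and define the restrictions as in (ii): $(\lambda_3\mathbf G)^a:=\lambda_2(\mathbf G^a)$, $(\lambda_3\mathbf G)\midscript{b}:=\lambda_2(\mathbf G_b)$ and $(\lambda_3\mathbf G)_c:=ev_c\circ\mathbf G$. The overlap conditions hold because $ev_c$ commutes with the $\lambda_2$ construction. The incubator $(A\mid B\mid C)$ is the $C$-component of the pseudo-modification $A_B$ in $\psd(\cc,\cx)$. Its source and target are forced by the lax modification shape, and it is invertible because $A_B$ is a pseudo-modification.

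The main obstacle is matching the nine incubator axioms of \cite[Appendix~D.2]{BourkeLobbia} with the modification axioms of $A_B$ together with the binary axioms of $\mathbf G$. These are long diagram chases. I would organise them by sorting the axioms into three groups:
\begin{itemize}
\item degeneracies, which come from $\aexi^{1}_B=1_\aeq$;
\item compatibility with composition in $\cc$, which comes from the modification axioms;
\item compatibility with composition and 2-cells in $\ca$ and $\cb$, which comes from the binary axioms and the Gray-structure of $\psd$.
\end{itemize}

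For $\lambda_4$ the situation is simpler. A 4-ary map is exactly a compatible family of ternary maps satisfying the mecon axiom. So I would set the restrictions as in (iv), reduce compatibility to (i)--(iii), and derive the mecon axiom from the interchange cells in $\psd(\cd,\cx)$. Tightness on both sides reduces to tightness of the restrictions, which is handled by the previous stages.
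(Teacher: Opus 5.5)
The paper gives no proof of this lemma. It imports the result from \cite[Theorem~6.2]{BourkeLobbiaCorrected} and reads (i)--(iv) off that construction, so your reconstruction goes beyond what the paper needs. Within it, however, the $\lambda_2$ step is wrong as written. In a pseudo binary map in the sense of the lemma, the oplax transformation $F_B\colon F_b\to F_{b'}$ must satisfy $(F_B)_A=(F^A)_B=A_B$. Indeed, a component at $A$ of an oplax transformation $F_b\to F_{b'}$ has shape $F^{a'}_BF^A_b\to F^A_{b'}F^a_B$, which is the shape of $A_B$. The cell $A^o_B$ goes the other way, so your $F_B$ is ill-typed. Its $3$-cell data are not mates either: \eqref{eq:compct-data-mod} forces $F_B(\alpha)=(F^\alpha)_B=\mathbf F(\alpha)_B$ and $(F_B)_2^{A_2,A_1}=(F^{A_2,A_1})^B=(\mathbf F^2_{A_2,A_1})_B$. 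So $(\lambda_2\mathbf F)_B$ amounts to $\mathbf F$ whiskered with the evaluation oplax transformation $ev_b\Rightarrow ev_{b'}$, whose components at a pseudo-transformation $\theta$ and at a pseudo-modification $\Lambda$ are $\theta_B$ and $\Lambda_B$. No mates and no use of \Cref{prop:equ-psd-transf} are involved. What you describe is the lax part $(F_B)^l$ of the unbiased extension $\mathbf F_B$ of \Cref{def:unb-bin-maps}. That is derived data, not part of an element of $\GCatmp^l_2(\ca,\cb;\cx)$, and putting it in place of $F_B$ breaks the identities $(F_B)_A=A_B$ and $F_B(\alpha)=(F^\alpha)_B$ that a lax binary map requires.

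There are further gaps. Tightness for $\lambda_3$ is not addressed, and it does not follow formally from the restrictions. Tightness of $\mathbf G\colon\ca,\cb\to\psd(\cc,\cx)$ includes the equation $\aexi^{A_2A_1}_B=(1\circ\aexi^{A_1}_B)\cdot(\aexi^{A_2}_B\circ1)$ of adjoint equivalences in $\psd(\cc,\cx)$. The right adjoints there are modifications whose components at $1$-cells $C$ are incubators. Tightness of $\lambda_3\mathbf G$, by contrast, only concerns $\lambda_2(\mathbf G_b)$ and $ev_c\circ\mathbf G$. To recover the equation you need the incubator axiom for composites $A_2A_1$ (with the first-variable composition constraints now identities), together with \Cref{prop:loc-adj-psd-char}.

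For $\lambda_4$, the mecon axiom does not come from ``the interchange cells in $\psd(\cd,\cx)$''. It is what the perturbation axiom for the incubator $(A\mid B\mid C)$ of $\mathbf H$ says at a $1$-cell $D$, where this incubator is a $3$-cell of $\psd(\cd,\cx)$. Interchange cells of $\cx$ enter only through the formulas for $D$-components of whiskered modifications.

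Finally, you inherited a typo in (iv). There $\mathbf H\colon\ca,\cb,\cc\to\psd(\cd,\cx)$, and $(\lambda_4\mathbf H)^a=\lambda_3(\mathbf H^a)$ because $\mathbf H^a$ is binary; this is how it is used in \Cref{sec:4-braidings}.
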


\begin{rmk}
	\label{rmk:subst-in-terms-of-lambdas}
In Theorem~6.2 of \cite{BourkeLobbiaCorrected} the definition of substitution of binary into the first variable of a binary map was defined in terms using the left closedness isomorphisms of \Cref{lem:left-closedness-psd}.  Namely, for any loose pseudo binary maps $G\colon \cx,\cc\to\cd$ and $F\colon\ca,\cb\to\cx$, $G\circ_1F:=\lambda_3(\lambda_2^{-1}G\circ F)$. 
\end{rmk}
\black

\subsection{Unbiased pseudo binary maps}
In this section we describe the notion of an \emph{unbiased} pseudo binary map.  
This is equivalent to the definition of a pseudo binary map, as described above, but obviously symmetric. 
As such, it will be useful in constructing the binary braiding in \Cref{sec:bin-braiding} below.

\subsubsection{Pseudo binary maps in compact form}
\label{sec:psd-bin-cmpt}
We will give the definition of an \emph{unbiased} pseudo binary map in a \emph{compact form}, building on the compact definition of lax binary maps in given \cite[Section~4.5]{BourkeLobbiaCorrected}.  

Recall from \Cref{sec:multi-psd} that a loose lax binary map $F\colon\catA,\catB\rightarrow\catC$ \emph{in compact form} consists of:
\begin{enumerate}
	\item[(1.1)] at an object $a\in\catA$, a pseudomap $F^a\colon\catB\to\catC$,
	\item[(1.2)] at an object $b\in\catB$, a pseudomap $F_b\colon\catA\to\catC$,
\end{enumerate}
such that $F^a(b)=F_b(a)$, 
together with:
\begin{enumerate}
	\item[(2.1)] at a 1-cell $A\colon a\to a'\in\catA$, a lax transformation $F^A\colon F^a\to F^{a'}$,
	\item[(2.2)] at a 1-cell $B\colon b\to b'\in\catB$, an oplax transformation $F_B\colon F_b\to F_{b'}$, 
\end{enumerate}
such that 
\begin{center}
	$F^A(b)=F_b(A)$, $F_B(a)=F^a(B)$, \\
	$(F^A)_B=(F_B)_A=:A_B$.
\end{center}
Moreover we require the data
\begin{equation}
	\label{eq:compct-data-mod}
	\begin{aligned}
		(F^\alpha)_b      &:= F_b(\alpha), &
		(F_\beta)^a       &:= F^a(\beta), &
		(F^{A_2,A_1})^b  &:= (F_b)^{A_2,A_1}, &
		(F_{B_2,B_1})^a  &:= (F^a)_{B_2,B_1}, \\
		(F^\alpha)_B      &:= F_B(\alpha), &
		(F_\beta)^A       &:= (F^A)_\beta, &
		(F^{A_2,A_1})^B  &:= (F_B)^{A_2,A_1}, &
		(F_{B_2,B_1})^A  &:= (F^A)_{B_2,B_1}.
	\end{aligned}
\end{equation}
to form two modifications 
\begin{center}
	$F^\alpha\colon F^A\to F^{A'}$ in $\lax(\cb,\cc)$, \\
	$F_\beta\colon F_B\to F_{B'}$ in $\oplax(\ca,\cc)$, 
\end{center}
and two invertible modifications
\begin{center}
	$F^{A_2,A_1}\colon F^{A_2}\cdot F^{A_1}\to F^{A_2A_1}$ in $\lax(\cb,\cc)$, \\
	$F_{B_2,B_1}\colon F_{B_2}\cdot F_{B_1}\to F_{B_2B_1}$ in $\oplax(\ca,\cc)$.
\end{center} 

A loose pseudo binary map in compact form is specified by the above together with
$\aexi^A_B:=({A_B}^o\dashv A_B)$ such that:
 \begin{enumerate}[(i)]
 	\item given objects $a\in\ca$ and $b\in\cb$, the adjoint equivalences $\aexi^A_{1_b}$ and $\aexi^{1_a}_B$ are identities;
 	\item  the modifications $F^\alpha$ and $F_\beta$ have invertible components at $1$-cells.
 \end{enumerate}
  
 \subsubsection{Unbiased pseudo binary maps}\label{sec:compact-enh-psd-bin}
 
 We give the definition in compact form.
 
\label{sec:compact-enh-psd-bin}
  \begin{defn}
 	\label{def:unb-bin-maps}
	Let $\ca,\cb$ and $\cc$ be Gray-categories. An unbiased loose pseudo binary map $\textbf{F}\colon\ca,\cb\to\cc$ consists of:
\begin{enumerate}
	\item[(1.1)] for any object $a\in\catA$, a pseudomap $F^a\colon\catB\to\catC$,
	\item[(1.2)] for any object $b\in\catB$, a pseudomap $F_b\colon\catA\to\catC$,
\end{enumerate}
such that $F^a(b)=F_b(a)$, 
together with
\begin{enumerate}
	\item[(2.1)$_{nb}$] for any 1-cell $A\colon a\to a'\in\catA$, an unbiased pseudo-transformation $$\mathbf{F}^A=(F^A,(F^A)^o,\aexi^A_{-})\colon F^a\to F^{a'},$$
	\item[(2.2)$_{nb}$] at a 1-cell $B\colon b\to b'\in\catB$, an unbiased pseudo-transformation $$\mathbf{F}_B=((F_B)^l,F_B,\aexi^B_{-})\colon F_b\to F_{b'},$$ 
\end{enumerate}
such that 
\begin{align*}
	F^A(b)&=F_b(A)  &F_B(a)&=F^a(B)  	&(F^A)_B=(F_B)_A=:A_B\\
	& &  \aexi^A_B &={\aexi^B_A}^\rev &
\end{align*}
Moreover, we require the data as in \eqref{eq:compct-data-mod}\footnote{By \Cref{prop:equ-psd-mod}, the data specified is enough to determine an unbiased pseudo-modification.}
to form two unbiased pseudo-modifications 
\[
\begin{alignedat}{2}
	\mathbf{F}^\alpha
	&\colon (F^\alpha,(F^\alpha)^o)\colon F^A \to F^{A'}
	&& \text{in } \psd_{nb}(\cb,\cc), \\
	\mathbf{F}_\beta
	&\colon ((F_\beta)^l,F_\beta)\colon F_B \to F_{B'}
	&& \text{in } \psd_{nb}(\ca,\cc).
\end{alignedat}
\]
and two invertible unbiased pseudo-modifications
\[
\begin{alignedat}{2}
	F^{A_2,A_1}
	&\colon F^{A_2}\cdot F^{A_1}\to F^{A_2A_1}
	&& \text{in } \psden(\cb,\cc), \\
	F_{B_2,B_1}
	&\colon F_{B_2}\cdot F_{B_1}\to F_{B_2B_1}
	&& \text{in } \psden(\cb,\cc).
\end{alignedat}
\]

\end{defn}

Let $(\GCatmp_{nb})_2^l(\ca,\cb;\cc)$ denote the set of unbiased loose pseudo binary maps.  There is an evident forgetful function $U \colon (\GCatmp_{nb})_2^l(\ca,\cb;\cc)\to\GCatmp_2^l(\ca,\cb;\cc)$ sending $\mathbf F$ to the pseudo binary map with components $(F^a,F_b,F^A,F_B,\aexi^A_B)$.


\begin{prop}\label{prop:compact-unbiased-psd-binary}
	The forgetful function $U \colon (\GCatmp_{nb})_2^l(\ca,\cb;\cc)\to\GCatmp_2^l(\ca,\cb;\cc)$ is a bijection.
\end{prop}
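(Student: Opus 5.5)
We construct an explicit inverse $V$ to $U$, by applying the unbiasing results of \Cref{sect:unbiased} one dimension at a time.

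Let $\mathbf F=(F^a,F_b,F^A,F_B,\aexi^A_B)$ be a loose pseudo binary map in compact form. We build the two unbiased pseudo-transformations as follows.
\begin{itemize}
\item For a $1$-cell $A\colon a\to a'$, the lax transformation $F^A\colon F^a\to F^{a'}$ comes with adjoint equivalences $\aexi^A_B=(A^o_B\dashv A_B)$ for each $B$. These are identities at $B=1_b$ by condition (i), and $F^A_\beta=A_\beta$ is invertible by condition (ii). So \Cref{prop:equ-psd-transf} $(1)\Rightarrow(3)$ gives a unique oplax transformation $(F^A)^o$ with components $A^o_B$ such that $\mathbf F^A=(F^A,(F^A)^o,\aexi^A_-)$ is an unbiased pseudo-transformation.
\item Dually, $F_B$ is oplax with components $(F_B)_A=A_B$. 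Using the reversed adjoint equivalences $(\aexi^A_B)^\rev$, whose lax part is $A^o_B$, \Cref{prop:equ-psd-transf} $(2)\Rightarrow(3)$ gives a unique lax transformation $(F_B)^l$ with $\mathbf F_B$ unbiased.
\end{itemize}
The required equation $\aexi^A_B=(\aexi^B_A)^\rev$ then holds by construction. At objects, $\aexi^A_{1_b}$ and $\aexi^{1_a}_B$ are the identity adjoint equivalence, which is exactly the hypothesis needed.

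Next we extend the modifications. The data \eqref{eq:compct-data-mod} give a lax modification $F^\alpha\colon F^A\to F^{A'}$ whose components at $1$-cells are invertible by (ii). By \Cref{prop:equ-psd-mod} $(1)\Rightarrow(3)$, it extends uniquely to an unbiased pseudo-modification $\mathbf F^\alpha$ between the $\mathbf F^A$. The same applies to $F^{A_2,A_1}$, whose components are invertible. One point needs care: its domain must be the composite $\mathbf F^{A_2}\cdot\mathbf F^{A_1}$ computed in $\psden(\cb,\cc)$. This holds because, by \Cref{unbiased}, $U^p_\nb$ is a Gray-functor, so the lax part of that composite is $F^{A_2}\cdot F^{A_1}$. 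Dually, \Cref{prop:equ-psd-mod} $(2)\Rightarrow(3)$ extends $F_\beta$ and $F_{B_2,B_1}$ to unbiased pseudo-modifications, using \Cref{prop:forg-fct-psden-oplax}: $U^o_\nb$ is a Gray-functor, so oplax composites agree with the oplax parts of unbiased composites. Together these define $V(\mathbf F)$.

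Finally we check the two composites. $UV=1$ because $V$ only adds data. $VU=1$ because every added datum is uniquely determined by the bijections of Propositions \ref{prop:equ-psd-transf} and \ref{prop:equ-psd-mod}.

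The main obstacle is the bookkeeping in the $B$-direction. The oplax $F_B$ has to be paired with the correct adjoint equivalences, namely the reverse ones, so that its lax partner has components $A^o_B$. We must also ensure the extension is carried out in $\psden(\ca,\cc)$, so that the oplax compositions there match those in $\oplax(\ca,\cc)$. This is exactly what \Cref{prop:forg-fct-psden-oplax} supplies, via the uniqueness in \Cref{unbiased}. We also need to confirm that \Cref{prop:equ-psd-transf} applies with the roles of lax and oplax swapped, which follows from the duality $\oplax(\ca,\cc)\cong\lax(\ca^{co},\cc^{co})^{co}$ of \Cref{plusminus}.
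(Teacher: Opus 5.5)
Your proposal is correct and follows essentially the same route as the paper: you extend $F^A$ and $F_B$ to unbiased pseudo-transformations via \Cref{prop:equ-psd-transf}, using the reverse adjoint equivalences in the $B$-direction, then extend the modification data via \Cref{prop:equ-psd-mod}, and deduce bijectivity from the uniqueness in those two propositions. Your explicit check that the domains $\mathbf F^{A_2}\cdot\mathbf F^{A_1}$ and $\mathbf F_{B_2}\cdot\mathbf F_{B_1}$ have the correct lax and oplax parts (via the Gray-functors $U^l_\nb$ and $U^o_\nb$ of \Cref{prop:forg-fct-psden-oplax}) is a detail the paper leaves implicit.
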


\begin{proof}
We must describe the inverse to $U$.  Consider a pseudo binary map $F$ with data $(F^a,F_b,F^A,F_B,\aexi^A_B)$.  Thanks to the adjoint equivalences $\aexi^A_B$, by \Cref{prop:equ-psd-transf}, the lax and oplax transformations $F^A$ and $F_B$ admit unique extensions to unbiased pseudo-transformations $\mathbf{F}^A=(F^A,(F^A)^o,\aexi^A_{-})\colon F^a\to F^{a'}$ and $\mathbf{F}_B=((F_B)^l,F_B,\aexi^B_{-})\colon F_b\to F_{b'}$.  Since the construction starting from an oplax transformation takes the left adjoint of the adjoint equivalence as its lax transformation component, we obtain the equality $\aexi^A_B ={\aexi^B_A}^\rev$.  To show that this specifies an unbiased pseudo binary map, it remains to check that the data in \Cref{def:unb-bin-maps} for the unbiased pseudo-modifications do indeed specify unbiased pseudo-modifications.  But by \Cref{prop:equ-psd-mod}, it is enough to check that their lax/oplax parts form modifications, and this is assumed in the (compact) definition of a pseudo binary map (see \Cref{sec:psd-bin-cmpt}).  
Hence $U$ is surjective and indeed injective since, again by \Cref{prop:equ-psd-transf}, an unbiased pseudo-transformation is determined by its underlying lax/oplax transformation equipped with the adjoint equivalences $\aexi^A_B$.
\end{proof}

Now the sets $\psd_2^l(\ca,\cb;\cc)$ form the action on objects of a functor $\psd_2^l(-,-;-) \colon (\GCat^2_p)^{op} \times \GCat_p \to \Set$
by \cite[Section 6.3]{BourkeLobbiaCorrected}.  Therefore transporting the functor structure along the bijections in \Cref{prop:compact-unbiased-psd-binary}, we immediately obtain:

\begin{cor}
	\label{cor:enhanched-fct-psd_2}
	Let $\ca,\cb$ and $\cc$ be Gray-categories. 
	The sets $(\GCatmp_{nb})_2^l(\ca,\cb;\cc)$ admit the unique structure of a functor $(\GCatmp_{nb})_2^l(-,-;-) \colon (\GCat^2_p)^{op} \times \GCat_p \to \Set$ such that the forgetful bijections $U \colon (\GCatmp_{nb})_2^l(\ca,\cb;\cc)\to\GCatmp_2^l(\ca,\cb;\cc)$ in \Cref{prop:compact-unbiased-psd-binary} form the components of a natural isomorphism.

	Moreover, using the compact definition of \Cref{prop:compact-unbiased-psd-binary}, we can describe the functoriality of $(\GCatmp_{nb})_2^l(-,-;-)$ following \Cref{tab:compact-sub-psd}.
	\begin{table}[ht]
		\caption{Substitutions in compact form}
		\label{tab:compact-sub-psd}
		\centering
		\begin{tabular}{c:c c}
			$\textbf{F}\circ_1P$ &
			$(\textbf{F}\circ_1P)^a=F^{Pa}$ &
			$(\textbf{F}\circ_1P)_b=F_b\circ P$ \\
			&
			$(\textbf{F}\circ_1P)^A=\textbf{F}^{PA}$ &
			$(\textbf{F}\circ_1P)_B=\textbf{F}_B\circ P$ \\[0.25cm]
			
			$\textbf{F}\circ_2Q$ &
			$(\textbf{F}\circ_2Q)^a=F^a\circ Q$ &
			$(\textbf{F}\circ_2Q)_b=F_{Qb}$ \\
			&
			$(\textbf{F}\circ_2Q)^A=\textbf{F}^A\circ Q$ &
			$(\textbf{F}\circ_2Q)_B=\textbf{F}_{QB}$ \\[0.25cm]
			
			$G\circ_1 \textbf{F}$ &
			$(G\circ_1 \textbf{F})^a=G\circ F^a$ &
			$(G\circ \textbf{F})_b=G\circ F_b$ \\
			&
			$(G\circ_1 \textbf{F})^A=G\circ \textbf{F}^A$ &
			$(G\circ \textbf{F})_B=G \circ \textbf{F}_B$ \\
		\end{tabular}
	\end{table}
\end{cor}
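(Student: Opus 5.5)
The first part of \Cref{cor:enhanched-fct-psd_2} is pure transport of structure, so I would dispose of it first. By \cite[Section 6.3]{BourkeLobbiaCorrected}, $\GCatmp_2^l(-,-;-)$ is a functor $(\GCat^2_p)^{op}\times\GCat_p\to\Set$, and by \Cref{prop:compact-unbiased-psd-binary} each component $U$ is a bijection. I then define the action of a triple $(P,Q,G)$ on $(\GCatmp_{nb})_2^l(\ca,\cb;\cc)$ as $U^{-1}\circ\GCatmp_2^l(P,Q;G)\circ U$. Functoriality follows immediately from that of $\GCatmp_2^l$, and $U$ becomes natural by construction. Uniqueness holds because naturality of an invertible $U$ forces exactly this formula. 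I would treat $\circ_1 P$, $\circ_2 Q$ and $G\circ_1-$ separately, since these generate the functor.

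The substantive part is the explicit description in \Cref{tab:compact-sub-psd}. My plan is to show, for each of the three operations, that the data listed in the table defines an unbiased loose pseudo binary map $\mathbf{H}$ whose image $U\mathbf{H}$ is the biased substitution from \cite{BourkeLobbiaCorrected}. Injectivity of $U$ then identifies $\mathbf{H}$ with the transported value.

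The first step is to check that $U$ of the table data agrees with the known biased substitutions in compact form. The underlying pseudomaps, the lax transformation $(\mathbf{F}\circ_1P)^A=F^{PA}$ and the oplax transformation $F_B\circ P$ are exactly the compact description of $F\circ_1 P$, and similarly for the other two cases. For the adjoint equivalences, $\aexi^{PA}_B$ is taken unchanged in the first case and $\aexi^{A}_{QB}$ in the second. In the case $G\circ_1-$, I would use $G\circ\aexi^A_B$ conjugated by the invertible $2$-cells $G^2$, as in the proof of \Cref{naturality}.

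The second step is to check that the table data is genuinely an unbiased pseudo binary map. Here I would invoke \Cref{naturality} together with \Cref{uniquenatural}. These say that whiskering by pseudomaps, computed separately on the lax and oplax parts, sends unbiased pseudo $n$-transfors to unbiased ones and is compatible with $U^p_\nb$ and $U^o_\nb$. Consequently $\mathbf{F}_B\circ P$, $\mathbf{F}^A\circ Q$ and $G\circ\mathbf{F}^A$ are unbiased pseudo-transformations, and the modifications $\mathbf{F}^\alpha$, $\mathbf{F}_\beta$, $F^{A_2,A_1}$ and $F_{B_2,B_1}$ are carried to unbiased pseudo-modifications by whiskering.

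I expect the main obstacle to be the compatibility condition $\aexi^A_B={\aexi^B_A}^{\rev}$ in the $G\circ_1-$ case. On one side the adjoint equivalence arises from $G\circ\mathbf{F}^A$, and on the other from $G\circ\mathbf{F}_B$. The composite conjugation by $G^2$ and $(G^2)^{-1}$ reverses correctly because reversing a composite of adjoint equivalences composes the reverses in the opposite order, and because $(G^2)^{-1}\dashv G^2$ is the reverse of $G^2\dashv(G^2)^{-1}$. The remaining equations, such as $F^A(b)=F_b(A)$ and $(F^A)_B=(F_B)_A$, then reduce to the corresponding equations for $\mathbf{F}$.
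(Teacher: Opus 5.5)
Your proposal is correct and follows essentially the same route as the paper. You transport the functor structure along the bijections $U$, then identify the action using the lax compact substitutions of \cite{BourkeLobbiaCorrected}, the adjoint equivalences inherited from $\psd(-,-)$, and the naturality of $U^p_\nb$ and $U^o_\nb$ (\Cref{uniquenatural} and \Cref{naturality}); your injectivity argument and your check that $G\circ\aexi^B_A=(G\circ\aexi^A_B)^{\rev}$ just make explicit what the paper's proof leaves implicit.
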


\begin{proof}
It remains to prove only that the descriptions in $\Cref{tab:compact-sub-psd}$ capture the action of $(\GCatmp_{nb})_2^l(-,-;-)$.   Since the forgetful mapping $U^l \colon \GCatmp_2^l(\ca,\cb;\cc) \to \GCatm_2^l(\ca,\cb;\cc)$ is natural in each variable by construction in \cite[Section 6.3]{BourkeLobbiaCorrected},  the action of $\psd_2^l(-,-;-)$ at $\mathbf F \colon \ca, \cb \to \cc$ enhances that of $\lax_2^l(-,-;-)$.  By Table 3 in \cite[Section 4.6]{BourkeLobbiaCorrected}, this means that it acts by whiskering the lax transformations $F^A$ and oplax transformations $F_B$ by the given pseudomap $P$, $Q$ or $G$.  Likewise, the natural isomorphism $\lambda_2 \colon \GCat_p(\ca,\psd(\cb,\cc)) \cong \GCatmp_2^l(\ca,\cb;\cc)$ of \cite[Section 6.3]{BourkeLobbiaCorrected} ensures that $\psd_2^l(-,-;-)$ inherits its action on the adjoint equivalence $\aexi^A_B$ from $\psd(-,-)$ (described in \cite[Section 6.2]{BourkeLobbiaCorrected}).  Explicitly, the adjoint equivalences for $\mathbf F \circ_1 P$, $\mathbf F \circ_2 Q$ and $G \circ_1 \mathbf F$ are $\aexi^{PA}_B$, $\aexi^A_{QB}$ and 

\[\begin{tikzcd}[ampersand replacement=\&]
	{GF^{a'}_{B}GF^{A}_{b}} \&\& {G(F^{a'}_{B}F^{A}_{b})} \&\& {G(F^{A}_{b'}F^{a}_{B})} \&\& {GF^{A}_{b'}GF^{a}_{B}}
	\arrow[""{name=0, anchor=center, inner sep=0}, "{G^2}", curve={height=-12pt}, from=1-1, to=1-3]
	\arrow[""{name=1, anchor=center, inner sep=0}, "{(G^2)^{-1}}", curve={height=-12pt}, from=1-3, to=1-1]
	\arrow[""{name=2, anchor=center, inner sep=0}, "{G(A_B)}", curve={height=-12pt}, from=1-3, to=1-5]
	\arrow[""{name=3, anchor=center, inner sep=0}, "{G(A_B^o)}", curve={height=-12pt}, from=1-5, to=1-3]
	\arrow[""{name=4, anchor=center, inner sep=0}, "{(G^2)^{-1}}", curve={height=-12pt}, from=1-5, to=1-7]
	\arrow[""{name=5, anchor=center, inner sep=0}, "{G^2}", curve={height=-12pt}, from=1-7, to=1-5]
	\arrow["\dashv"{anchor=center, rotate=90}, draw=none, from=1, to=0]
	\arrow["\top"{description}, draw=none, from=3, to=2]
	\arrow["\dashv"{anchor=center, rotate=90}, draw=none, from=5, to=4]
\end{tikzcd}\]
respectively.  

That $(\GCatmp_{nb})_2^l(-,-;-)$ acts as in $\Cref{tab:compact-sub-psd}$ now follows from naturality of the mappings $U^p_\nb \colon \psden(\ca,\cb) \to \psd(\ca,\cb)$ and $U^o_\nb \colon \psden(\ca,\cb) \to \oplax(\ca,\cb)$ as shown in \Cref{uniquenatural} and \Cref{naturality} respectively.
\end{proof}

\section{Constructing the braidings}\label{sect:braiding}

\subsection{Binary braiding}\label{sec:bin-braiding}

We now construct the binary symmetry.   First, we prove a symmetry result (\Cref{prop:unb-symmetry}) for unbiased loose pseudo binary maps (\Cref{def:unb-bin-maps}).   Then, we use \Cref{prop:compact-unbiased-psd-binary} to obtain the binary symmetry for $\GCatmp$ (\Cref{prop:beta_2-def}).

\begin{prop}\label{prop:unb-symmetry}
	There is a natural bijection $\beta^2_{nb}\colon(\GCatmp_{nb})_2^l(\ca,\cb;\cc)\cong(\GCatmp_{nb})_2^l(\cb,\ca;\cc)$.
\end{prop}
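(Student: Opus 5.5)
The plan is to define $\beta^2_{nb}$ by simply swapping the roles of the two variables in \Cref{def:unb-bin-maps}. Given an unbiased loose pseudo binary map $\mathbf F\colon\ca,\cb\to\cc$ with data $(F^a,F_b,\mathbf F^A,\mathbf F_B,\mathbf F^\alpha,\mathbf F_\beta,F^{A_2,A_1},F_{B_2,B_1})$, we set $\mathbf G:=\beta^2_{nb}(\mathbf F)\colon\cb,\ca\to\cc$ by
\[
G^b:=F_b,\quad G_a:=F^a,\quad \mathbf G^B:=\mathbf F_B,\quad \mathbf G_A:=\mathbf F^A,
\]
\[
\mathbf G^\beta:=\mathbf F_\beta,\quad \mathbf G_\alpha:=\mathbf F^\alpha,\quad G^{B_2,B_1}:=F_{B_2,B_1},\quad G_{A_2,A_1}:=F^{A_2,A_1}.
\]
The object equations $G^b(a)=F_b(a)=F^a(b)=G_a(b)$ and the equations $G^B(a)=G_a(B)$ and $G_b(A)=G^A(b)$ are immediate from the corresponding ones for $\mathbf F$. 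Because the definition is symmetric, it does not matter which part (lax or oplax) of an unbiased pseudo-transformation is considered the ``primary'' one. The one point needing care is the common $2$-cell $B_A$ and its adjoint equivalence. Here I take the oplax-part convention seriously: for $\mathbf G$, the lax transformation $G^B$ is the lax part $(F_B)^l$ of $\mathbf F_B$, and its component at $A$ is the right adjoint of $\aexi^B_A=(\aexi^A_B)^{\rev}$, namely $A^o_B$. So I set
\[
B_A:=A^o_B,\qquad \aexi^B_A(\mathbf G):=(\aexi^A_B)^{\rev}.
\]
The compatibility $\aexi^B_A(\mathbf G)=\aexi^A_B(\mathbf G)^{\rev}$ then holds tautologically, since ${\rev}$ is an involution.

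Next I check that the remaining data have the right type. $\mathbf G^\beta=\mathbf F_\beta$ is an unbiased pseudo-modification in $\psden(\ca,\cc)$, which is exactly what is needed for the first variable $\cb$. Invertibility of $G^{B_2,B_1}$ transfers directly. The unit conditions $\aexi^{1}_{-}=1_\aeq$ are preserved by reversal. The components listed in \eqref{eq:compct-data-mod} for $\mathbf G$ are, after the swap, literally the components of the corresponding data of $\mathbf F$, with the lax and oplax parts exchanged. This exchange is harmless because by \Cref{prop:equ-psd-transf,prop:equ-psd-mod} the unbiased structures are symmetric. Concretely, the lax/oplax mate relation for a pair $(\alpha^l,\alpha^o)$ with respect to $\aexi$ is the same statement as the oplax/lax relation with respect to $\aexi^{\rev}$. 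The relations in \Cref{prop:equ-psd-mod}(3) use $\aexi^{\rev}$, which is consistent with this. Since $\beta^2_{nb}\beta^2_{nb}=1$ by construction, $\beta^2_{nb}$ is a bijection.

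Finally comes naturality in $(\GCat_p^2)^{op}\times\GCat_p$. I will use the explicit description of the functorial action given in \Cref{tab:compact-sub-psd} (\Cref{cor:enhanched-fct-psd_2}). Precomposition with $P$ in the first variable acts as $F^{Pa}$ and $\mathbf F_B\circ P$, which matches precomposition in the second variable of the swapped map, and symmetrically for $Q$. Postcomposition with $G$ acts uniformly on both families by $G\circ-$. For the adjoint equivalences, the action is by $\aexi^{PA}_B$, $\aexi^A_{QB}$, and the conjugate of $G(\aexi^A_B)$ by $G^2$. Each of these commutes with ${\rev}$: a $2$-functor preserves reversal, and the composite of adjoint equivalences reverses correctly. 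Therefore the naturality squares commute.

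I expect the main obstacle to be this last verification, together with the careful bookkeeping of which side is the left adjoint. In particular, I would need to confirm that ${\rev}$ commutes with the composite of adjoint equivalences $(G^2)^{-1}\cdot G(\aexi)\cdot G^2$, including the order of composition. I would check this directly from the composite formula in the Notation.
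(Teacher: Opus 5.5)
Your proposal is correct and follows essentially the same route as the paper. You swap $(F^a,F_b,\mathbf F^A,\mathbf F_B)$, observe that \Cref{def:unb-bin-maps} is symmetric (the common $2$-cell becomes $A^o_B$ and the adjoint equivalence becomes $(\aexi^A_B)^{\rev}$), and read off naturality from \Cref{tab:compact-sub-psd}. Your final check that $\rev$ commutes with the conjugated adjoint equivalence $G\circ\aexi^A_B$ is harmless but unnecessary: the table gives the action directly on the unbiased transformations $\mathbf F^A$ and $\mathbf F_B$, which carry their own adjoint equivalences, so naturality already holds componentwise on the nose.
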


\black 

\begin{proof}
	This follows directly from the compact definition of unbiased loose pseudo binary maps in \Cref{def:unb-bin-maps}, which is symmetric.  
	Explicitly, given an unbiased loose pseudo binary map $\mathbf{F}\colon\ca,\cb\to\cc$, $\beta^2\mathbf{F}\colon\cb,\ca\to\cc$ is defined by the data:
	\begin{itemize}
		\item for any object $b\in\cb$, $(\beta^2\mathbf{F})^b:=F_b\colon\ca\to\cc$;
		\item for any object $a\in\ca$, $(\beta^2\mathbf{F})_a:=F^a\colon\ca\to\cc$; 
		\item for any 1-cell $B\colon b\to b'\in\cb$; $(\beta^2\mathbf{F})^B:=\mathbf{F}_B$;
		\item for any 1-cell $A\colon a\to a'$, $(\beta^2\mathbf{F})_A:=\mathbf{F}
		^A$.
	\end{itemize}
	 
	 The data above satisfy the axioms of \Cref{def:unb-bin-maps} because $\mathbf{F}$ does, indeed:\footnote{Notice that in \eqref{def:unb-bin-maps} one consider the lax part of the pseudo-transformation in the first variable and the oplax one of the pseudo-transformation in the second variable. Hence, $\beta^2$ switches them.}
	\begin{align*}
		(F_B)^l(a)=
		&F_B(a)=F^a(B)=(\beta^2\mathbf{F})_a(B) 
		&
		(F^A)^o(b)=
		&F^A(b)=F_b(A)=(\beta^2\mathbf{F})^b(A) \\	
		(F^A)^o_B=
		&A_B^o=((F^A)^o)_B
		& 
	  	\aexi^B_A=
	  	&{\aexi^A_B}^{\rev} 
	\end{align*} 
The axioms for $\beta^2\mathbf{F}$ to be an unbiased pseudo binary map coincide with those for $\mathbf{F}$.
	
	Naturality is straightforward from the description of functoriality of $(\GCatmp_{nb})_2^l(-,-;-)$ in \Cref{cor:enhanched-fct-psd_2} and the fact that operations in $\psden(\ca,\cb)$ are as in $\psd(\ca,\cb)$ and $\oplax(\ca,\cb)$ (see  \Cref{uniquenatural} and \Cref{naturality}).
	For instance, let $P\colon\ca'\to\ca$ be a pseudomap: 
	\begin{itemize}
		\item $\beta^2(\mathbf{F}\circ_1P)\colon\cb,\ca'\to\cc$ has components at $b,B\in\cb$ and $a,A'\in\ca$ 
		\begin{center}
			$(\beta^2(\mathbf{F}\circ_1P))^b=(\mathbf{F}\circ_1P)_b=F_b\circ P$, \hspace{0.5cm} $(\beta^2(\mathbf{F}\circ_1P))_a=(\mathbf{F}\circ_1P)^a=F^{Pa}$\\
			\vspace{0.1cm}
			$(\beta^2(\mathbf{F}\circ_1P))^B=(\mathbf{F}\circ_1P)_B=\mathbf{F}_B\circ P$,\hspace{0.5cm} $(\beta^2(\mathbf{F}\circ_1P))_A=(\mathbf{F}\circ_1P)^A=\mathbf{F}^{PA}$;
		\end{center}
		
		\item  $(\beta^2\mathbf{F})\circ_2P\colon\cb,\ca'\to\cc$ has components at $b,B\in\cb$ and $a,A'\in\ca$ 
		\begin{center}
			$((\beta^2\mathbf{F})\circ_2P)^b=(\beta^2\mathbf{F})^b\circ P=F_b\circ P$, \hspace{0.5cm} $((\beta^2\mathbf{F})\circ_2P)_a=(\beta^2\mathbf{F})_{Pa}=F^{Pa}$ \\
			$((\beta^2\mathbf{F})\circ_2P)^B=(\beta^2\mathbf{F})^B\circ P=\mathbf{F}_B\circ P$, \hspace{0.5cm} $((\beta^2\mathbf{F})\circ_2P)_A=(\beta^2\mathbf{F})_{PA}=\mathbf{F}^{PA}$. 
		\end{center}
	\end{itemize}
	
\end{proof}

\begin{cor}\label{prop:beta_2-def}
	There is a natural bijection $\beta^2\colon\GCatmp_2^l(\ca,\cb;\cc)\cong\GCatmp_2^l(\cb,\ca;\cc)$.
\end{cor}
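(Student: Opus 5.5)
The plan is to obtain $\beta^2$ by conjugating the unbiased symmetry of \Cref{prop:unb-symmetry} with the forgetful bijections of \Cref{prop:compact-unbiased-psd-binary}. Concretely, define
\[
\beta^2 := U_{\cb,\ca;\cc}\circ\beta^2_{nb}\circ U_{\ca,\cb;\cc}^{-1}\colon \GCatmp_2^l(\ca,\cb;\cc)\to\GCatmp_2^l(\cb,\ca;\cc).
\]
It is a bijection because it is a composite of three bijections. Its inverse is given by the same construction with the roles of $\ca$ and $\cb$ exchanged, since $\beta^2_{nb}$ is evidently involutive: swapping the two families of data twice returns the original data.

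For naturality, recall that \Cref{cor:enhanched-fct-psd_2} makes $(\GCatmp_{nb})_2^l(-,-;-)$ a functor in such a way that the maps $U$ form a natural isomorphism to $\GCatmp_2^l(-,-;-)$. Naturality of $\beta^2$ therefore reduces to naturality of $\beta^2_{nb}$, which is asserted in \Cref{prop:unb-symmetry}. A composite of natural transformations is natural, provided the variance is tracked correctly. Precomposition in the first variable of the source corresponds to precomposition in the second variable of the target, and vice versa. Postcomposition with a pseudomap $G$ corresponds to postcomposition on both sides.

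Explicitly, for a pseudo binary map $\mathbf{F}$ the image $\beta^2\mathbf{F}$ has the following data:
\begin{itemize}
\item $(\beta^2\mathbf{F})^b=F_b$ and $(\beta^2\mathbf{F})_a=F^a$;
\item the lax transformation $(\beta^2\mathbf{F})^B=(F_B)^l$, which is the mate extension of the oplax $F_B$ provided by \Cref{prop:equ-psd-transf};
\item the oplax transformation $(\beta^2\mathbf{F})_A=(F^A)^o$;
\item the adjoint equivalences $\aexi^B_A=(\aexi^A_B)^{\rev}$.
\end{itemize}

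The only point requiring care is naturality with respect to postcomposition $G\circ_1-$. There the adjoint equivalence of $G\circ_1\mathbf{F}$ involves $G^2$ and its inverse, so one must check that reversing commutes with conjugating by $G^2$. This follows from the explicit description in \Cref{cor:enhanched-fct-psd_2}, because $(G^2)^{\pm1}$ are inverse isomorphisms regarded as adjoint equivalences and reversal is compatible with composition of adjoint equivalences. I expect this routine check to be the main, if modest, obstacle. Everything else is formal transport of structure along the bijections $U$.
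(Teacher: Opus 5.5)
Your proposal is correct and is essentially the paper's proof: it defines $\beta^2$ as the composite $U\circ\beta^2_{nb}\circ U^{-1}$ and inherits naturality from \Cref{prop:compact-unbiased-psd-binary} (via \Cref{cor:enhanched-fct-psd_2}) and \Cref{prop:unb-symmetry}. Your explicit description of $\beta^2\mathbf{F}$ agrees with \Cref{rmk:beta2action}, and the postcomposition check you flag is already part of the naturality of $\beta^2_{nb}$ in \Cref{prop:unb-symmetry}, so it need not be redone here.
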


\begin{proof}
	This is a direct consequence of \Cref{prop:unb-symmetry}, precisely the required natural bijection is obtain through the following chain of natural bijections.
	\begin{align*}
	\GCatmp_2^l(\ca,\cb;\cc) 
	&\cong(\GCatmp_{nb})_2^l(\ca,\cb;\cc) 
	& (\text{\Cref{prop:compact-unbiased-psd-binary}}) & \\
	&\cong(\GCatmp_{nb})_2^l(\cb,\ca;\cc) 
	& (\text{\Cref{prop:unb-symmetry}}) & \\
	&\cong\GCatmp_2^l(\cb,\ca;\cc) 
	& (\text{\Cref{prop:compact-unbiased-psd-binary}}) &\qedhere
	\end{align*}
\end{proof}
\black

\begin{rmk}\label{rmk:beta2action}
Tracing through the bijection of \Cref{prop:compact-unbiased-psd-binary}, we obtain the following description of $\beta^2$.  Given a loose pseudo binary map $\mathbf{F}\colon\ca,\cb\to\cc$,  the loose pseudo binary map $\beta^2\mathbf{F}\colon\cb,\ca\to\cc$ has underlying lax binary map defined by $(F_b,F^a,(F_B)^l,(F^A)^o,(F_\beta)^l,(F^\alpha)^o)$ and specified adjoint equivalence $({\aexi^A_B})^{\rev}$.
%
%
\end{rmk}

\begin{cor}\label{cor:beta-2-self-inv}
	The inverse of $\beta^2$ is $\beta^2$ itself. 
\end{cor}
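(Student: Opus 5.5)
Since $\beta^2$ is defined in \Cref{prop:beta_2-def} as the composite $U\circ\beta^2_{nb}\circ U^{-1}$, where $U$ is the bijection of \Cref{prop:compact-unbiased-psd-binary}, it suffices to show that $\beta^2_{nb}\colon(\GCatmp_{nb})_2^l(\cb,\ca;\cc)\to(\GCatmp_{nb})_2^l(\ca,\cb;\cc)$ composed with $\beta^2_{nb}\colon(\GCatmp_{nb})_2^l(\ca,\cb;\cc)\to(\GCatmp_{nb})_2^l(\cb,\ca;\cc)$ is the identity. Indeed, then
\[
\beta^2\beta^2 = U\beta^2_{nb}U^{-1}U\beta^2_{nb}U^{-1} = U\beta^2_{nb}\beta^2_{nb}U^{-1} = 1 .
\]

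To verify $\beta^2_{nb}\beta^2_{nb}=1$, I will trace through the explicit description in the proof of \Cref{prop:unb-symmetry}. For an unbiased pseudo binary map $\mathbf F$, the data of $\beta^2_{nb}\mathbf F$ are obtained simply by swapping the roles of the two variables:
\[
(\beta^2\mathbf F)^b=F_b,\quad (\beta^2\mathbf F)_a=F^a,\quad (\beta^2\mathbf F)^B=\mathbf F_B,\quad (\beta^2\mathbf F)_A=\mathbf F^A .
\]
Applying the construction twice therefore returns $F^a$, $F_b$, $\mathbf F^A$ and $\mathbf F_B$ on the nose. The unbiased pseudo-transformations $\mathbf F^A$ and $\mathbf F_B$ themselves are untouched; only their labels change. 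Their adjoint equivalences $\aexi^A_{-}$ and $\aexi^B_{-}$ are therefore unchanged as well. The pseudo-modification data $\mathbf F^\alpha$, $\mathbf F_\beta$, $F^{A_2,A_1}$ and $F_{B_2,B_1}$ are also just relabelled and so return unchanged.

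The only point requiring a small check is the compatibility constraint $\aexi^A_B=(\aexi^B_A)^{\rev}$ and the resulting adjoint equivalence assigned to the swapped map. After one swap it is $(\aexi^A_B)^{\rev}$; after the second swap it is $((\aexi^A_B)^{\rev})^{\rev}$. This equals $\aexi^A_B$, since reversal sends $(\epsilon,f\dashv g,\eta)$ to $(\eta^{-1},g\dashv f,\epsilon^{-1})$, and applying it twice gives back $(\epsilon,f\dashv g,\eta)$.

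As a cross-check, the same argument can be run with the biased description of \Cref{rmk:beta2action}. There the map $\beta^2\beta^2\mathbf F$ has underlying data
\[
\bigl(F^a,\,F_b,\,((F^A)^o)^l,\,((F_B)^l)^o\bigr)
\]
together with $((\aexi^A_B)^{\rev})^{\rev}$. The components $((F^A)^o)^l$ and $((F_B)^l)^o$ agree with $F^A$ and $F_B$ by the uniqueness statement in \Cref{prop:equ-psd-transf}: passing from lax to oplax data via mates and back is the identity. I expect no real obstacle; the only subtlety is keeping the bookkeeping of which part (lax or oplax) is recorded, and that is exactly what the unbiased formulation eliminates.
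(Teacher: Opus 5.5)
Your proof is correct, but your main argument is organised differently from the paper's. You conjugate by the bijection $U$ of \Cref{prop:compact-unbiased-psd-binary}, so involutivity reduces to the fact that $\beta^2_{nb}$ is a pure relabelling and hence satisfies $\beta^2_{nb}\beta^2_{nb}=1$ on the nose.

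The paper argues directly from the biased description in \Cref{rmk:beta2action}. It uses two facts: taking mates back and forth returns the original cell, and reversing an adjoint equivalence twice gives the original one. That is exactly your cross-check.

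Your route makes the argument purely formal. The mate bookkeeping is absorbed into $U^{-1}U=1$, that is, into the uniqueness parts of \Cref{prop:equ-psd-transf} and \Cref{prop:equ-psd-mod}. The paper's route is more direct but requires tracking which adjoint equivalence governs each mate at each stage.

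In your unbiased argument the double reversal is not actually an extra check. The biased adjoint equivalence of $U\beta^2_{nb}\beta^2_{nb}\mathbf F$ is read off from the transformation $\mathbf F^A$, which is literally unchanged. The identity $((\aexi^A_B)^{\rev})^{\rev}=\aexi^A_B$ only becomes relevant in the biased picture.
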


\begin{proof}
This follows from the description of $\beta^2$ in \Cref{rmk:beta2action} using that taking mates (back and forth) produces the original cell and since the reverse of the reverse adjoint equivalence is the original.
\end{proof}

\subsection{Duality for Pseudo Multimaps}\label{sec:duality}

In order to describe the higher braidings it will be useful to first obtain duality bijections for pseudo multimaps.  These enhance the duality bijections for lax multimaps established in \cite{BourkeLobbia}, which now collect in the following summary proposition.

\begin{prop}[Propositions~4.4,~4.7,~4.11 of \cite{BourkeLobbia}]\label{prop:duality-lax}
For $n\in\lbrace2,3,4\rbrace$, and Gray-categories $\ca_1,\ldots,\ca_n,\cb$, there is a bijection 
$$d^n \colon \GCatm^l_n(\ca_1,\ldots,\ca_n;\cb)\to\GCatm^l_n(\ca_n^\co,\ldots,\ca_1^\co;\cb^\co)$$
natural in each variable.  Furthermore, the $d^n$'s are self-inverse in the evident sense.
\end{prop}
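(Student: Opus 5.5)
The plan is to construct $d^n$ explicitly and recursively in the arity $n$, using the natural isomorphisms $(-)^{+}\colon\oplax(\ca,\cb)\cong\lax(\ca^{co},\cb^{co})^{co}$ and $(-)^{-}\colon\lax(\ca,\cb)\cong\oplax(\ca^{co},\cb^{co})^{co}$ of \Cref{plusminus}. Throughout I use two facts: $(-)^{co}$ is an involution on Gray-categories, and it sends pseudomaps to pseudomaps with $F^{co}$ having the same components.

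\emph{Binary case.} Take a lax binary map $F\colon\ca,\cb\to\cc$ in compact form, with data $(F^a,F_b,F^A,F_B)$. I would define $d^2F\colon\cb^{co},\ca^{co}\to\cc^{co}$ by
\[(d^2F)^b:=(F_b)^{co},\quad (d^2F)_a:=(F^a)^{co},\quad (d^2F)^B:=(F_B)^{+},\quad (d^2F)_A:=(F^A)^{-}.\]
Roles swap because the order of the variables is reversed, while $(-)^{\pm}$ turns the oplax transformation in the second variable into a lax one in the new first variable. On the remaining compact data \eqref{eq:compct-data-mod}, I would apply $(-)^{\pm}$ to the modifications $F^\alpha$, $F_\beta$, $F^{A_2,A_1}$, $F_{B_2,B_1}$. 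The isomorphisms $(-)^{\pm}$ are Gray-functors up to the $co$, so they carry (invertible) modifications to (invertible) modifications in the correct direction.

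The compatibility conditions to check are $F^a(b)=F_b(a)$, $F^A(b)=F_b(A)$ and $(F^A)_B=(F_B)_A$. These persist because $(-)^{\pm}$ leave object and $1$-cell components unchanged and keep the $2$-cell components $A_B$ (only reoriented by $co$). The $\sharp/\flat$ mates occur only in the components at composable pairs, and these match on both sides by the definition of $(-)^{\pm}$. Self-inverseness follows from $(-)^{+}$ and $(-)^{-}$ being mutually inverse and $co$ being involutive. Naturality in each variable follows from the naturality of $(-)^{\pm}$ in $\ca$ and $\cb$ together with the description of whiskering in compact form.

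\emph{Ternary and 4-ary cases.} A lax ternary map is determined by its three binary restrictions together with the incubators. I would set
\[(d^3F)^c:=d^2(F_c),\qquad (d^3F)\midscript{b}:=d^2(F\midscript{b}),\qquad (d^3F)_a:=d^2(F^a).\]
The three equalities of restricted pseudomaps then follow from the binary case. The new incubator $(C\mid B\mid A)$ is taken to be $(A\mid B\mid C)$ itself, read in $\cd^{co}$. Its hexagonal boundary, with the order of the three variables reversed and the $2$-cells reversed, is exactly the hexagon of the original incubator traversed from the other side. The nine incubator axioms of \cite[Appendix~D.2]{BourkeLobbia} should then be permuted into one another.

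Similarly, $d^4F$ would have components $d^3$ of the four ternary restrictions, taken in reversed order. The compatibility axioms of \cite[Section~4.17]{BourkeLobbia} and the mecon axiom must be checked to be sent to their counterparts. Self-inverseness and naturality are inherited componentwise from the lower arities.

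\emph{Main obstacle.} I expect the hardest part to be verifying that the incubator axioms and the mecon axiom are exchanged under the reversal. This is pure bookkeeping, but the interchange cells also flip direction under $co$, so one must track where interchange maps are replaced by their inverses. My approach would be to list each axiom as an equation of pastings and match it to its dual by reflecting the diagram, treating the binary-level components built with $(-)\sharp/(-)\flat$ as already handled by the binary case.
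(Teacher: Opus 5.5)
Your construction is the one the paper recalls from \cite{BourkeLobbia} right after the statement: $d^2F$ has components $(F_b)^{co}$, $(F^a)^{co}$, $(F_B)^{+}$ and $(F^A)^{-}$; $d^3F$ has binary components $d^2(F_c)$, $d^2(F\midscript{b})$, $d^2(F^a)$ and incubator $(C\mid B\mid A):=(A\mid B\mid C)$; and $d^4F$ is assembled from $d^3$ of the four ternary components, so what remains is the axiom bookkeeping you describe. Two small corrections: $(F^{co})^2_{f_2,f_1}$ is $(F^2_{f_2,f_1})^{-1}$ rather than $F^2_{f_2,f_1}$, which is why the $\sharp/\flat$ mates appear; and because of the outer $(-)^{co}$, the invertible modifications $F^{A_2,A_1}$ and $F_{B_2,B_1}$ come out pointing the wrong way under $(-)^{-}$ and $(-)^{+}$ respectively, so you must use their inverses.
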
 

Since we will need their explicit description, let us recall these dualities in detail.  They make use of the $(-)^{+}$ and $(-)^{-}$ operations described in \Cref{plusminus} (with full details in \cite[Section~3.5]{BourkeLobbia}).
\begin{itemize}
	\item Given a lax bimap $F\colon\ca,\cb\to\cc$,  the lax bimap $d^2F\colon\cb^\co,\ca^\co\to\cc^\co$, in compact form (\Cref{sec:psd-bin-cmpt}, has:
	 \begin{itemize}
	 	\item pseudomaps $(d^2F)^b = (F_b)^{co} \colon \ca^{co} \to \cc^{co}$ and  $d^2F_{a} = (F^a)^{co}\colon\cb^{co} \to \cc^{co}$ for $b \in \cb^{co}, a \in \ca^{co}$;
	 	\item  lax transformation $(d^2F)^{B} = (F_B)^{+}\colon(F_b)^{co} \to (F_{b'})^{co}$ and oplax transformation $(d^2F)_{A} = ({F^A})^{-}\colon(F^a)^{co} \to (F^{a'})^{co}$ for $B \colon b \to b' \in \cb^{co}, A\colon a \to a' \in \ca^{co}$.
	 \end{itemize}
	 \item Given a lax ternary map $F\colon\ca,\cb,\cc\to\ce$, then $(d^3 F) \colon \cc^{co},\cb^{co},\ca^{co} \to \cd^{co}$ has associated bimaps $d^2(F_c)$, $d^2(F\midscript{b})$ and $d^2(F^a)$, and incubator $d^3 F(C|B|A) = (A|B|C)$.
	 \item Given a lax 4-ary map $F\colon\ca,\cb,\cc,\cd\rightarrow\ce$, $d^4 F \colon  \cd^{co},\cc^{co},\cb^{co},\ca^{co} \to \cd^{co}$ has associated ternary maps $d^3(F_d), d^3(F\midscript{c}), d^3(F\midscript{b})$ and $d^3(F^a)$.
\end{itemize}
In the next proposition, we show that these dualities lift to $\GCatmp$.

\begin{prop}
	\label{prop:dualities-psd}
For $n\in\lbrace2,3,4\rbrace$, and Gray-categories $\ca_1,\ldots,\ca_n,\cb$, the duality bijections of \Cref{prop:duality-lax} lift to duality bijections
$$d^n \colon \GCatmp^l_n(\ca_1,\ldots,\ca_n;\cb)\to\GCatmp^l_n(\ca_n^\co,\ldots,\ca_1^\co;\cb^\co)$$
natural in each variable.  
%
Furthermore, the $d^n$'s are self-inverse in the evident sense.
\end{prop}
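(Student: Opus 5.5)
The strategy is to define $d^n$ on pseudo multimaps by applying the lax duality $d^n$ of \Cref{prop:duality-lax} to the underlying lax multimap, and then to specify the extra pseudo data. This extra data consists of the adjoint equivalences $\aexi^A_B$ in the binary case, and of invertibility conditions. We then check that the result is again a pseudo multimap. Naturality and self-inverseness will be inherited from the lax level, together with the elementary fact that reversing an adjoint equivalence twice returns the original.

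\textbf{Binary case.} Let $\mathbf F\colon\ca,\cb\to\cc$ be a loose pseudo binary map. The lax bimap $d^2F$ has $(d^2F)^B=(F_B)^+$ and $(d^2F)_A=(F^A)^-$. Its square component at $B,A$ is $((F_B)^+)_A=(F_B)_A=A_B$, now regarded as a 2-cell of $\cc^{co}$ in the opposite direction. In $\cc^{co}$ the adjunction ${A^o_B}\dashv A_B$ becomes $A_B\dashv A^o_B$ with unit and counit swapped. We therefore take as adjoint equivalence for $d^2\mathbf F$ the image of $\aexi^A_B$ in $\cc^{co}$, which amounts to $(\aexi^A_B)^{\rev}$ read in the $co$-dual, so that its right adjoint is the component $A_B$.

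We then verify the two conditions. Condition (i), identities at identity 1-cells, is immediate. Condition (ii), invertibility of the 3-cells $\alpha_B$ and $A_\beta$, holds because $(-)^{\pm}$ leave the components at 2-cells unchanged. The $(-)\sharp,(-)\flat$ operations only alter the 2-dimensional composition components, and they do so by pasting with the invertible $F^2$/$G^2$.

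It is cleaner to work with unbiased maps. By \Cref{prop:compact-unbiased-psd-binary} it suffices to produce an unbiased pseudo binary map. The key observation is that the $co$-duality carries an unbiased pseudo-transformation $(\alpha^l,\alpha^o,\aexi)$ in $\psden(\ca,\cb)$ to one in $\psden(\ca^{co},\cb^{co})$, because $(-)^+$ and $(-)^-$ interchange lax and oplax transformations and the mate conditions of \Cref{prop:equ-psd-transf} are self-dual up to reversing the adjoint equivalence. This is the step I expect to be the main obstacle. One must check that the composition component obtained via $\sharp$ in \Cref{plusminus} agrees with the mate prescribed in \Cref{prop:equ-psd-transf}(3) for the dual data. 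I would handle this by appealing to the uniqueness part of \Cref{prop:equ-psd-transf}: the lax part of the dual transformation determines its oplax part. It then suffices to check that the dual of $\alpha^o$, namely $(\alpha^o)^+$, and the dual of $\alpha^l$, namely $(\alpha^l)^-$, are mates, which reduces to the triangle identities and the equations of \Cref{rmk:mates-over-adj-equiv-calculations}. Modifications are handled in the same way using \Cref{prop:equ-psd-mod}.

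\textbf{Ternary and 4-ary cases.} For $n=3$, define $d^3\mathbf F$ by applying the lax $d^3$ and taking the associated pseudo binary maps to be $d^2\mathbf F_c$, $d^2\mathbf F\midscript{b}$ and $d^2\mathbf F^a$. The incubators are unchanged, so they remain invertible. The compatibility between the binary maps and the lax ternary data holds because $d^2$ lifts the lax $d^2$ compatibly. The case $n=4$ is identical one level up: the compatibility of the four pseudo-ternary maps follows from that at the lax level.

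\textbf{Naturality and self-inverseness.} Naturality follows from naturality of the lax $d^n$, together with the fact that the action on adjoint equivalences (\Cref{cor:enhanched-fct-psd_2}) commutes with taking $co$-duals and $\rev$. Self-inverseness follows from the lax statement, from $(\aexi^{\rev})^{\rev}=\aexi$, and from $((-)^{co})^{co}=1$.
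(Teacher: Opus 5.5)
Your proposal is correct and is essentially the paper's proof: apply the lax $d^n$, equip $d^2\mathbf F$ with $(\aexi^A_B)^{\rev}$ read in $\cc^{co}$, and check the identity and invertibility conditions via $(F_B)^{+}(\alpha)=F_B(\alpha)$ and $(F^A)^{-}(\beta)=F^A(\beta)$; for $n=3,4$ you reuse the lifted $d^2$ (resp.\ $d^3$) with the incubators unchanged. Note that your phrase ``which amounts to'' conflates two different adjoint equivalences: the plain $co$-image of $\aexi^A_B$ has right adjoint $A^o_B$, while $(\aexi^A_B)^{\rev}$ read in $\cc^{co}$ has right adjoint the square component $A_B$, which is the one you want. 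Also, the unbiased detour you flag as the main obstacle is unnecessary, because the biased definition of a loose pseudo binary map imposes no mate condition relating $\aexi^A_B$ to the lax data, so your first paragraph already completes the binary case (that compatibility of $(-)^{\pm}$ with mates is only what one would need for something like \Cref{prop:interaction-d-and-beta-2}).
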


\begin{proof}
	Firstly, we must lift the duality $d^2$ for lax bimaps described above to the setting of pseudo bimaps.  Now a pseudo bimap $\mathbf F = (F,\aexi^{A}_{B})$ consists of a lax binary map $F\colon\ca,\cb\to\cc$ together with adjoint equivalences $\aexi^{A}_{B} = (A_B)^{o} \dashv A_B$ such that
	\begin{itemize}
		\item $\aexi^{A}_{B}$ is the identity adjoint equivalence whenever $A$ or $B$ is an identity $1$-cell;
		\item each $3$-cell $F^{A}(\beta)$ and $F_B(\alpha)$ is invertible where $\alpha$ and $\beta$ are $2$-cells of $\ca$ and $\cb$ respectively. 
	\end{itemize}
Therefore given a pseudo binary map $\mathbf F$, we must enhance $d^2F$ as in \Cref{prop:duality-lax} with such a structure.   Now by definition $(d^2F^{B})_A = (F_B)^{+}(A) = A_B$.  However applying $(-)^{co}$ to $(A_B)^{o} \dashv A_B$ produces an adjoint equivalence $(A_B) \dashv (A_B)^{o}$ in $\cc^{co}$ since it interchanges the left and right adjoints; thus we must take the reverse adjoint equivalence $(\aexi^A_B)^{\rev} =( A_B)^{o} \dashv A_B$ to obtain the correct form.  Accordingly we set $d^2 \mathbf F = (d^2 F, (\aexi^A_B)^{\rev})$.  Certainly $(\aexi^A_B)^{\rev}$ is an identity adjoint equivalence just when $\aexi^A_B$ is -- in particular, whenever $A$ or $B$ is an identity $1$-cell.  Furthermore we have $(F_B)^{+}(\alpha) = F_B(\alpha)$ and $(F^A)^{-}(\beta) = F^A(\beta)$ which are invertible by assumption.  Therefore $d^2 \mathbf F \colon \cb^{\co},\ca^{\co} \to \ca^{\co}$ is a pseudo binary map.   This completes the definition of the function $d^2_{\ca,\cb,\cc} \colon \GCatmp(\ca,\cb;\cc) \to \GCatmp(\cb^{co},\ca^{co};\cc^{co})$ which evidently has inverse $d^2_{\cb^{co},\ca^{co},\cc^{co}}$ since reversing an adjoint equivalence twice produces the original.  Naturality of $d^2_{\ca,\cb,\cc} \colon \GCatm(\ca,\cb;\cc) \to \GCatm(\cb^{co},\ca^{co};\cc^{co})$ in $\cc$ follows from the fact the pseudomaps are locally $2$-functors and so preserve adjoint equivalences in hom $2$-categories whilst naturality in the pre-composition variables $\ca$ and $\cb$ is trivial.
	
	
	To lift the ternary duality $d^3$ 
	recall that a ternary pseudomap $\mathbf F$ consists of a lax ternary map together with enhancements of its associated system of lax bimaps $F^a$, $F\midscript{b}$ and $F_c$ to pseudo bimaps $\mathbf F^a$, $\mathbf{F} \midscript{b}$ and $\mathbf{F}_c$ such that the incubator $3$-cell $(A|B|C)$ is invertible.

Now the lax ternary map $(d^3 F) \colon \cc^{co},\cb^{co},\ca^{co} \to \cd^{co}$ has associated bimaps $d^2(F_c)$, $d^2(F\midscript{b})$ and $d^2(F^a)$, each of which lift to $d^2(\mathbf{F}_c)$, $d^2(\mathbf{F}\midscript{b})$ and $d^2(\mathbf{F}^a)$ using the binary duality established above.  Furthermore, the incubator of $d^3 F$ is exactly $(A|B|C)$, which is therefore invertible too, so that setting $d^3\mathbf F = (d^2(\mathbf{F}_c), d^2(\mathbf{F}\midscript{b}), d^2(\mathbf{F}^a))$ defines a pseudo ternary map. 
	The resulting function $d^3_{\ca,\cb,\cc,\cd} \colon \GCatmp(\ca,\cb,\cc;\cd) \to \GCatmp(\cc^{co},\cb^{co},\ca^{co};\cd^{co})$ has inverse $d^3_{\cc^{co},\cb^{co},\ca^{co},\cd^{co}}$ and its naturality follows from naturality of $d^3$ in the lax case and of $d^2$ in the pseudo case.
	
	The case of lifting the $4$-ary duality is similar but easier.  Indeed, a pseudo $4$-ary pseudomap is simply a lax $4$-ary map equipped with extensions of its lax $3$-ary maps to pseudo-$3$-ary maps satisfying the standard compatibility conditions.  As such, the lifted duality $d^4$ combines the duality $d^4$ in the lax setting of \Cref{prop:duality-lax} with the duality $d^3$ in the pseudo-setting described above.
\end{proof}


\begin{prop}\label{prop:interaction-d-and-beta-2}
	The symmetry $\beta^2$ commutes with $d^2$, i.e. the following diagram is commutative. 
	\[\begin{tikzcd}[ampersand replacement=\&]
		{\GCatmp(\ca,\cb;\cc)} \&\& {\GCatmp(\cb^{co},\ca^{co};\cc^{co})} \\
		{\GCatmp(\cb,\ca;\cc)} \&\& {\GCatmp(\ca^{co},\cb^{co};\cc^{co})}
		\arrow["{d^2_{\ca,\cb,\cc}}", from=1-1, to=1-3]
		\arrow["{\beta^2_{\ca,\cb}}"', from=1-1, to=2-1]
		\arrow["{\beta^2_{\cb^\co,\ca^\co}}", from=1-3, to=2-3]
		\arrow["{d^2_{\cb,\ca,\cc}}"', from=2-1, to=2-3]
	\end{tikzcd}\]
\end{prop}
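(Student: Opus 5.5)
Since pseudo binary maps are determined by their compact data $(F^a,F_b,F^A,F_B,F^\alpha,F_\beta,\aexi^A_B)$, I will verify commutativity by comparing the compact data of $d^2\beta^2\mathbf F$ and $\beta^2 d^2\mathbf F$ for a loose pseudo binary map $\mathbf F\colon\ca,\cb\to\cc$. Both are pseudo binary maps $\ca^{co},\cb^{co}\to\cc^{co}$. I will use the explicit description of $\beta^2$ in \Cref{rmk:beta2action} and of $d^2$ in the proof of \Cref{prop:dualities-psd}.

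First I compute the two composites. Going around the top, $d^2\mathbf F\colon\cb^{co},\ca^{co}\to\cc^{co}$ has components
\[
(d^2F)^b=(F_b)^{co},\quad (d^2F)_a=(F^a)^{co},\quad (d^2F)^B=(F_B)^+,\quad (d^2F)_A=(F^A)^-,
\]
with adjoint equivalence $(\aexi^A_B)^{\rev}$. Then $\beta^2$ turns the pseudomap components into $(F^a)^{co}$ at $a$ and $(F_b)^{co}$ at $b$. The lax part at $A$ becomes the lax transformation attached to the oplax $(F^A)^-$ through the adjoint equivalences, and the oplax part at $B$ is obtained from $(F_B)^+$ in the same way. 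The adjoint equivalence is reversed once more, giving $\aexi^A_B$. Going around the bottom, $\beta^2\mathbf F$ has lax part $(F_B)^l$ at $B$, oplax part $(F^A)^o$ at $A$, and equivalence $(\aexi^A_B)^{\rev}$. Applying $d^2$ then gives lax part $((F^A)^o)^+$ at $A$, oplax part $((F_B)^l)^-$ at $B$, and equivalence $\aexi^A_B$. The pseudomap components and the adjoint equivalences therefore agree immediately.

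The remaining and main step is to show that $((F^A)^o)^+$ equals the lax mate of $(F^A)^-$ (relative to the equivalences now living in $\cc^{co}$), and dually for $B$. I would argue this at the level of unbiased pseudo-transformations. I expect to show that $(-)^+$ and $(-)^-$ of \Cref{plusminus} lift to an isomorphism $\psden(\ca,\cb)\cong\psden(\ca^{co},\cb^{co})^{co}$. This isomorphism sends $(\alpha^l,\alpha^o,\aexi)$ to $((\alpha^o)^+,(\alpha^l)^-,\aexi^{\rev})$.

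To check this, the $1$-cell data match by definition. The components at $2$-cells are unchanged by $\pm$, and mates along $\aexi$ in $\cc$ are the same as mates along $\aexi^{\rev}$ in $\cc^{co}$, so these agree. The components at composable pairs need more care. Here $(\cdot)^\pm$ modifies them by $\sharp$ or $\flat$ with respect to the invertible 1-cells $F^2,G^2$, and the unbiased condition takes mates with respect to the composite equivalences. Both operations are instances of the mates correspondence, and by functoriality of mates \cite[Proposition~2.2]{KellyStreet:rew-el-2cats} they commute. I would then use the uniqueness in \Cref{prop:equ-psd-transf}: an unbiased transformation is determined by either its lax or its oplax part, so it suffices to check that $((F^A)^o)^+$ and $(F^A)^-$ are mates. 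This mate verification is the main obstacle. I would try to carry it out by a direct pasting argument using the equations of \Cref{rmk:mates-over-adj-equiv-calculations}.

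Finally, the modification components $F^\alpha$ and $F_\beta$ are handled in the same way using \Cref{prop:equ-psd-mod}: they are determined by their lax (or oplax) parts, and on modification components $(\cdot)^\pm$ is essentially the identity. This completes the comparison of compact data, so the square commutes.
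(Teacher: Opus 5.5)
Your unwinding is the same as the paper's, which simply asserts that the square commutes by the definitions of $d^2$ and $\beta^2$. It correctly reduces the claim to $((F^A)^o)^+=((F^A)^-)^l$ and dually $((F_B)^+)^o=((F_B)^l)^-$. At objects, $1$-cells and $2$-cells these agree essentially for the reason you give.

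One small correction: in your lift the adjoint equivalence should be $\aexi$ itself, not $\aexi^{\rev}$. Read in $\cc^{co}$, $\aexi$ already exhibits $\alpha^l_f$ as left adjoint of $\alpha^o_f$, and only then does the lift of $\mathbf F_B$ carry the $(\aexi^A_B)^{\rev}$ that $d^2$ prescribes.

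The genuine gap is the component at composable pairs. You leave it as ``the main obstacle'', and the reason you offer, functoriality of mates, does not settle it. The operations $\sharp$ and $\flat$ are mates of \emph{inverses}, so the two sides end up being mates taken along different adjunctions.

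Concretely, in the hom $2$-category write $\sigma=(F^A)^2_{B_2,B_1}\colon q\cdot u\Rightarrow v\cdot p$, where:
$u=(A_{B_2}\circ1)\cdot(1\circ A_{B_1})$ and $v=A_{B_2B_1}$;
$p=(F^{a'})^2_{B_2,B_1}\circ1$ and $q=1\circ(F^a)^2_{B_2,B_1}$;
$u^o,v^o$ are the left adjoints from the (composite) adjoint equivalences, with units $\eta_u,\eta_v$ and counits $\epsilon_u,\epsilon_v$.

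Then $((F^A)^o)^+_{B_2,B_1}=p^{-1}\cdot\tau^{-1}\cdot q^{-1}$. Here $\tau=((F^A)^o)_2^{B_2,B_1}$ is $\sigma$ pasted with $\eta_u,\epsilon_v$, so $\tau^{-1}$ is $\sigma^{-1}$ pasted with $\epsilon_v^{-1},\eta_u^{-1}$. By contrast, $((F^A)^-)^l_{B_2,B_1}$ is $\sigma\flat=q^{-1}\cdot\sigma^{-1}\cdot p^{-1}$ pasted with $\eta_v$ and $\epsilon_u$.

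Set $k=q\cdot u\cdot p^{-1}$, with left adjoint $k^o=p\cdot u^o\cdot q^{-1}$ and conjugated unit and counit, and set $\mu=\sigma^{-1}\cdot p^{-1}\colon v\Rightarrow k$. The injective operation $p\cdot(-)\cdot q$ turns the two sides into $\nu'\cdot q$ and $\nu\cdot q$. Here $\nu'=(v^o\cdot\eta_k^{-1})\ast(v^o\cdot\mu\cdot k^o)\ast(\epsilon_v^{-1}\cdot k^o)$ and $\nu=(\epsilon_k\cdot v^o)\ast(k^o\cdot\mu\cdot v^o)\ast(k^o\cdot\eta_v)$. These are the transposes of the single invertible cell $\mu$ along the reversed and along the original adjoint equivalences, respectively.

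The missing ingredient is the lemma $\nu=\nu'$, which holds for adjoint equivalences. The mate identity $\epsilon_v\ast(\nu\cdot v)=\epsilon_k\ast(k^o\cdot\mu)$ follows from a triangle identity. Combining it with interchange and invertibility of $\mu$ gives $(\nu\cdot k)\ast\epsilon_k^{-1}=(v^o\cdot\mu)\ast\epsilon_v^{-1}$. Since $k$ is an equivalence, this equation characterises $\nu'$. With this lemma, and its dual for $B$, your comparison is complete.
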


\begin{proof}
	This follows directly by the definitions of $d^2$ and $\beta^2$. 
\end{proof}

\begin{rmk}\label{rmk:d-vs-beta-2}
	Since $d^2$ is self-inverse, from \Cref{prop:interaction-d-and-beta-2} it follows that $d^2\beta^2 d^2=\beta^2$.  
\end{rmk}

%
%

\subsection{Definition of the higher braidings}\label{sec:higher-braidings}

We are now ready to define all of the higher braidings using $\beta^2_1$, the dualities $d^2,d^3$ and $d^4$, and the left closedness isomorphisms $\lambda_2$ and $\lambda_3$ of \Cref{lem:left-closedness-psd}. \black   
Notice that, technically, we only need the restrictions of $\beta^3_2$, $\beta^4_2$ and $\beta^4_2$ to tight multimaps, i.e. the data of a short braiding as in \eqref{short-brd-data}.
However, in order to define these data, it will be useful to define all higher braidings up to 4-ary multimaps. 
Here we give a concise definition of each braiding, while in \Cref{sect:axioms} we give an explicit description of each.
 From their explicit descriptions, it will be clear that $\beta^3_2$, $\beta^4_2$ and $\beta^4_2$ restrict to tight multimaps (Remarks~\ref{prop:beta-3-2-restr-to-tight} and~\ref{prop:beta-4-2_3-restr-to-tight}), as required for a short braiding. \black 

\begin{enumerate}
	\item We obtain $\beta^3_1$ as the composite below using left closedness, $\beta^2_1$ and left closedness again. 
	$$\GCatmp_3^l(\ca,\cb,\cc;\cd) \stackrel{\lambda_3^{-1}}{\cong} \GCatmp_2^l(\ca,\cb;\psd(\cc,\cd)) \stackrel{\beta^2}{\cong} \GCatmp_2^l(\cb,\ca;\psd(\cc,\cd)) \stackrel{\lambda_3}{\cong} \GCatmp_3^l(\cb,\ca,\cc;\cd)$$ 
	\item We obtain $\beta^3_2$ as the composite below, using duality, $\beta^3_1$ and duality again.
	$$\GCatmp_3^l(\ca,\cb,\cc;\cd) \stackrel{d^3}{\cong} \GCatmp_3^l(\cc^{co},\cb^{co},\ca^{co};\cd^{co}) \stackrel{\beta^3_1}{\cong} \GCatmp_3^l(\cb^{co},\cc^{co},\ca^{co};\cd^{co}) \stackrel{d^3}{\cong} \GCatmp_3^l(\ca,\cc,\cb;\cd)$$ 
	\item We obtain $\beta^4_1$ using left closedness, $\beta^3_1$ and left closedness. 
		$$\GCatmp_4^l(\ca,\cb,\cc,\cd;\ce)\stackrel{\lambda_4^{-1}}{\cong}\GCatmp_3^l(\ca,\cb,\cc;\psd(\cd,\ce))\stackrel{\beta^3_1}{\cong}\GCatmp_3^l(\cb,\ca,\cc;\psd(\cd,\ce))\stackrel{\lambda_4}{\cong}\GCatmp_4(\cb,\ca,\cc,\cd;\ce)$$
	\item We obtain $\beta^4_2$ using left closedness, $\beta^3_2$ and left closedness. 
	$$\GCatmp_4^l(\ca,\cb,\cc,\cd;\ce)\stackrel{\lambda_4^{-1}}{\cong}\GCatmp_3^l(\ca,\cb,\cc;\psd(\cd,\ce))
	\stackrel{\beta^3_2}{\cong}\GCatmp_3^l(\ca,\cc,\cb;\psd(\cd,\ce))\stackrel{\lambda_4}{\cong}\GCatmp_4(\ca,\cc,\cb,\cd;\ce)$$
	\item We obtain $\beta^4_3$ using duality, $\beta^4_1$ and duality.
	\begin{align*}
		\GCatmp_4^l(\ca,\cb,\cc,\cd;\ce)\stackrel{d^4}{\cong}\GCatmp_4^l(\cd^\co,\cc^\co,\cb^\co,\ca^\co;\ce^\co)
		&\stackrel{\beta^4_1}{\cong}\GCatmp_4^l(\cc^\co,\cd^\co,\cb^\co,\ca^\co;\ce^\co) \\
		&\stackrel{d^4}{\cong}\GCatmp_4^l(\ca,\cb,\cd,\cc;\ce)
	\end{align*}
\end{enumerate}

In the next result we show how, using $\beta^3_2$ and left closedness, we obtain another important isomorphism, which will be quite useful in  \cite{BourkeLobbia2}. 

\begin{cor}\label{cor:biclosedness-iso}
	We have a natural isomorphism $\Upsilon\colon\psd(\ca,\psd(\cb,\cc)) \cong \psd(\cb,\psd(\ca,\cc))$. 
\end{cor}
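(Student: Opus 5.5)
We will obtain $\Upsilon$ by a Yoneda argument in the category $\GCat_p$ of Gray-categories and pseudomaps, combining the left closedness bijections of \Cref{lem:left-closedness-psd} with the braiding $\beta^3_2$ constructed above. Concretely, for any Gray-category $\cx$ we will write down the chain of bijections
\begin{align*}
\GCat_p(\cx,\psd(\ca,\psd(\cb,\cc)))
&\cong \GCatmp_2^l(\cx,\ca;\psd(\cb,\cc))
&& (\lambda_2) \\
&\cong \GCatmp_3^l(\cx,\ca,\cb;\cc)
&& (\lambda_3) \\
&\cong \GCatmp_3^l(\cx,\cb,\ca;\cc)
&& (\beta^3_2) \\
&\cong \GCatmp_2^l(\cx,\cb;\psd(\ca,\cc))
&& (\lambda_3^{-1}) \\
&\cong \GCat_p(\cx,\psd(\cb,\psd(\ca,\cc)))
&& (\lambda_2^{-1}).
\end{align*}
Here we use that $\GCatmp_1^l$ consists exactly of the pseudomaps.

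The key step is naturality of this composite in $\cx$ with respect to pseudomaps $P\colon\cx'\to\cx$, i.e.\ precomposition in the first variable. The bijections $\lambda_2,\lambda_3$ are natural by \Cref{lem:left-closedness-psd}. For $\beta^3_2$, we unwind its definition as $d^3\beta^3_1 d^3$. Here $d^3$ is natural in each variable by \Cref{prop:dualities-psd}, turning precomposition in the first variable into precomposition in the third. So we need naturality of $\beta^3_1=\lambda_3\beta^2\lambda_3^{-1}$ in its third variable. This reduces to naturality of $\beta^2$ in its codomain variable, applied to $\psd(P^{co},\cc^{co})$ (a pseudomap between internal homs), together with naturality of $\lambda_3$ in that variable. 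Naturality of $\beta^2$ in the codomain is part of \Cref{prop:beta_2-def}, which came from \Cref{cor:enhanched-fct-psd_2} and \Cref{prop:unb-symmetry}. We may need care that $\lambda_3$ is natural in $\cc$ in the appropriate sense for precomposition by pseudomaps; we expect this to follow from its construction in \cite{BourkeLobbiaCorrected}.

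With naturality in $\cx$ established, the Yoneda lemma in $\GCat_p$ gives an isomorphism $\Upsilon\colon\psd(\ca,\psd(\cb,\cc))\cong\psd(\cb,\psd(\ca,\cc))$ in $\GCat_p$, obtained by taking $\cx=\psd(\ca,\psd(\cb,\cc))$ and the identity. Naturality in $\ca,\cb,\cc$ follows likewise from the naturality of each bijection in the remaining variables.

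It remains to see that $\Upsilon$ is a Gray-functor and not merely a pseudomap. We expect this to be the main obstacle. We will argue that each bijection in the chain restricts to tight maps when $\cx$-maps are tight. The $\lambda_n$ do so by \Cref{lem:left-closedness-psd}. For $\beta^3_2$, this is its restriction to tight ternary maps, which we verify from its explicit description (as promised in \Cref{sect:axioms}): it fixes the first variable, so $F\midscript{b},F_c$ tight implies the corresponding components of $\beta^3_2F$ are tight. Hence the identity Gray-functor is sent to a Gray-functor, and similarly for its inverse, which is built from $(\beta^3_2)^{-1}$. Therefore $\Upsilon$ is an isomorphism in $\GCat$.
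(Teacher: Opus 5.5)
Your proposal is correct and follows the paper's proof: the same chain $\lambda_2$, $\lambda_3$, $\beta^3_2$, $\lambda_3^{-1}$, $\lambda_2^{-1}$ of natural bijections, followed by the Yoneda lemma. Your additional checks fill in details the paper leaves implicit: naturality in $\cx$, obtained by unwinding $\beta^3_2=d^3\beta^3_1d^3$, and tightness of each step, which shows that $\Upsilon$ and its inverse are Gray-functors.
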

\begin{proof}
	This follows by Yoneda, using left closedness and $\beta^3_2$ as shown below.
\[\begin{tikzcd}[ampersand replacement=\&]
	{\psd(\cx,\,\psd(\ca,\psd(\cb,\cc))\,)} \& {\GCatmp_2^l(\cx,\ca;\,\psd(\cb,\cc)\,)} \& {\GCatmp_3^l(\cx,\ca,\cb;\cc)} \\
	{\psd(\cx,\,\psd(\cb,\psd(\ca,\cc))\,)} \& {\GCatmp_2^l(\cx,\cb;\,\psd(\ca,\cc)\,)} \& {\GCatmp_3^l(\cx,\cb,\ca;\cc)}
	\arrow["{\lambda_2}", from=1-1, to=1-2]
	\arrow["\cong"', dashed, from=1-1, to=2-1]
	\arrow["{\lambda_3}", from=1-2, to=1-3]
	\arrow["{\beta^3_2}", from=1-3, to=2-3]
	\arrow["{{\lambda_2}^{-1}}", from=2-2, to=2-1]
	\arrow["{{\lambda_3}^{-1}}", from=2-3, to=2-2]
\end{tikzcd}\]
\end{proof}


\begin{rmk}
	We can give a more explicit description of the action of the biclosedness isomorphism in \Cref{cor:biclosedness-iso}.
	First, let us notice that on pseudomaps $F\colon\ca\to\psd(\cb,\cc)$ the action of the biclosedness isomorphism is 
	$\Upsilon(F)=\lambda_2^{-1}\beta^2\lambda_2(F)$. 
	
	Therefore, using the properties of $\lambda_2$ and $\lambda_3$ (\Cref{lem:left-closedness-psd}), and the definition of $\beta^2$ (\Cref{prop:beta_2-def}), $\Upsilon(F)$ is defined as:
	\begin{enumerate}
		\item for a 0-cell $b\in\cb$, $\Upsilon(F)(b)=(\lambda_2^{-1}\beta^2\lambda_2(F))(b)=(\beta^2\lambda_2(F))^b=(\lambda_2F)_b$;

		\item for $n$-cells $x\in\cb$, with $n=1,2,3$, $\Upsilon(F)(x)=(\lambda_2^{-1}\beta^2\lambda_2(F))(x)=(\beta^2\lambda_2(F))^x=(\lambda_2F)_x^l$, i.e. the pseudo $n$-transfor (in lax form) corresponding to the oplax $n$-transfor $(\lambda_2F)_x$ through, for each $n=1,2,3$ respectively, Propositions \ref{prop:equ-psd-transf}, \ref{prop:equ-psd-mod} and \ref{prop:equ-psd-pert};

		\item for composable 1-cells $b\xrightarrow{B_1}b'\xrightarrow{B_2}b''$, the invertible pseudo modifcation $(\Upsilon F)^2_{B_1,B_2}$ is defined as
		\begin{itemize}
			\item at 0-cells $a\in\ca$, $((\Upsilon F)^2_{B_1,B_2})_a=(\lambda_2F)^a_{B_2,B_1}=(F(a))^2_{B_2,B_1}$;
			\item at a 1-cell $A\colon a\to a'\in\ca$, $((\Upsilon F)^2_{B_1,B_2})_A$ is the mate of $((\lambda_2F)_{B_1,B_2})^A=(F(A))^2_{B_1,B_2}$.
		\end{itemize}
	\end{enumerate}
\end{rmk}
\black 

%

\section{Verifying the axioms for a symmetric skew monoidal category}\label{sect:axioms}

The main result, \Cref{maintheorem}, of this section establish that the flexible skew monoidal structure $(\GCat,\otimes_p,1)$ is symmetric skew monoidal.  This will be done by proving that the braidings $\beta^3_2$, $\beta^4_2$, and $\beta^4_3$ described above form a short braiding on $\GCatmp$ in the sense described in \Cref{sec:brd-skew-multi-mon}.

%
%
%
%
%
%

\subsection{Explicit description of the ternary braidings}\label{sec:ternary-braidings}

In this section, we calculate explicit descriptions for the two ternary braidings $\beta_1^3$ and $\beta_2^3$, summarised in Tables~\ref{tab:beta-3-1} and~\ref{tab:beta-3-2}.   

These descriptions will be crucial for establishing the short braiding.  To this end, let $F\colon\ca,\cb,\cc\to\cd$ be a pseudo-ternary map.
\begin{itemize}
	\item The binary map components of $\beta^3_1F$ are $(\beta^3_1F)^b =F\midscript{b}$, $(\beta^3_1F)\midscript{a}=F^a$ and $(\beta^3_1F)_c=\beta^2(F_c)$. 
	We give the calculation only for the first component since the other two follow in a similar way using the definitions of $\lambda_3$, $\lambda_2$ and $\beta^2$. 
\begin{align*}
(\beta^3_1F)^b
    &= [\lambda_3\beta^2{\lambda_3}^{-1}F]^b
    & \text{(definition of $\beta^3_1$)} \\
    &= \lambda_2[(\beta^2{\lambda_3}^{-1}F)^b]
    & \text{(\Cref{lem:left-closedness-psd}.(ii))} \\
    &= \lambda_2[({\lambda_3}^{-1}F)_b]
    & \text{(definition of $\beta^2$, see Prop.\ref{prop:beta_2-def})} \\
    &= \lambda_2[{\lambda_2}^{-1}(F_b)]
    & \text{(\Cref{lem:left-closedness-psd}.(ii))} \\
    &= F\midscript{b}
    & 
\end{align*}

	
To calculate the incubator  for $\beta^3_1F$, firstly observe that by the description of ${\lambda_3}$ in \Cref{lem:left-closedness-psd}.(iii), the incubator $(B \mid A \mid C)$ for $\beta^3_1F$ is the component at $C$ of the right adjoint modification $B_A \in \psd(\cc,\cd)$ for the pseudo binary map $(\beta^2{\lambda_3}^{-1}F)\colon\cb,\ca\to\psd(\cc,\cd)$.  Now by the description of $\beta^2$ in \Cref{rmk:beta2action}, $\beta^2$ takes the reverse adjoint equivalence, so that this is in the fact the left adjoint modification $(A_B)^o$ of the adjoint equivalence $\Theta^A_B$ for the pseudo binary map ${\lambda_3}^{-1}F$.  Now the left adjoint component $(A_B)^o$ at $C$ is determined as the mate of the inverse of the right adjoint $A_B$ at $C$ --- see \eqref{eq:loca-adj-psd-final} where we establish this for a general adjoint equivalence in $\psd(\cc,\cd)$.   Now by definition of $\lambda_3$, the value of the modification $A_B$ at $C$ is the incubator $(A\mid B\mid C)$.  Therefore we obtain the following formula for the incubator $(B \mid A \mid C)$ of $\beta^3_1F$:

%
%
	%
	\begin{equation}
		\label{eq:incubator-beta-3-1}
		\scalebox{0.8}{
			\begin{tikzcd}[ampersand replacement=\&,column sep=small,row sep=scriptsize]
				{F^{a'}_{b'}(C)F^{b'}_{c}(A)F^{a}_{c}(B)} \&\& {F^{a'}_{b'}(C)F^{a'}_{c}(B)F^{b}_{c}(A)} \&\& {F^{a'}_{c'}(B)F^{a'}_{b}(C)F^{b}_{c}(A)} \&\& \\
				\& {} \\
				\&\& {F^{a'}_{b'}(C)F^{b'}_{c}(A)F^{a}_{c}(B)} \&\& {F^{a'}_{c'}(B)F^{b}_{c'}(A)F^{a}_{b}(C)} \\
				\&\&\&\&\& {} \\
				\&\& {F^{b'}_{c'}(A)F^{a}_{b'}(C)F^{a}_{c}(B)} \&\& {F^{b'}_{c'}(A)F^{a}_{c'}(B)F^{a}_{b}(C)} \&\& {F^{a'}_{c'}(B)F^{b}_{c'}(A)F^{a}_{b}(C)}
				\arrow[""{name=0, anchor=center, inner sep=0}, "{1\circ{(A_B^o)}^c}", from=1-1, to=1-3]
				\arrow[curve={height=12pt}, equals, from=1-1, to=3-3]
				\arrow[""{name=1, anchor=center, inner sep=0}, "{{B_C}^{a'}\circ1}", from=1-3, to=1-5]
				\arrow["{1\circ {A_B}^c}"{description}, from=1-3, to=3-3]
				\arrow["{1\circ {A_C}^b}", from=1-5, to=3-5]
				\arrow["{{A_C}^{b'}\circ1}"', from=3-3, to=5-3]
				\arrow["{{A_B}^{c'}\circ1}"{description}, from=3-5, to=5-5]
				\arrow[curve={height=-12pt}, equals, from=3-5, to=5-7]
				\arrow[""{name=2, anchor=center, inner sep=0}, "{1\circ {B_C}^{a}}"', from=5-3, to=5-5]
				\arrow[""{name=3, anchor=center, inner sep=0}, "{{(A_B^o)}^{c'}\circ1}"', from=5-5, to=5-7]
				\arrow["{1\circ\eta_{A,B,c}^{-1}}"{pos=0.7}, between={0.4}{0.9}, Rightarrow, from=0, to=2-2]
				\arrow["{(A\mid B\mid C)^{-1}}"{pos=0.3}, shift right=3, between={0.2}{0.4}, Rightarrow, from=1, to=2]
				\arrow["{\epsilon_{A,B,c'}^{-1}\circ1}"'{pos=0.2}, between={0.1}{0.6}, Rightarrow, from=4-6, to=3]
			\end{tikzcd}
		}
	\end{equation}

\item	The binary maps defining $\beta^3_2F$ are $(\beta^3_2F)^a=\beta^2(F^a)$, $(\beta^3_2F)\midscript{c}=F_c$, and $(\beta^3_2F)_b=F\midscript{b}$.
	We give the calculation of the first component since the other two follow similarly by definitions of $d^3$, $\beta^3_1$, and Propositions~\ref{prop:interaction-d-and-beta-2} and~\ref{prop:dualities-psd}. 
\begin{align*}
(\beta^3_2F)^a
    &= [d^3\beta^3_1d^3(F)]^a
    & \text{(definition of $\beta^3_2$)} \\
    &= d^2[(\beta^3_1d^3F)_a]
    & \text{(definition or $d^3$, see \Cref{prop:dualities-psd})} \\
    &= d^2[\beta^2(d^3F)^a]
    & \text{(description of $\beta^3_1$, see \Cref{tab:beta-3-1})} \\
    &= d^2\beta^2d^2(F^a)
    & \text{(definition of $d^3$)} \\
    &= \beta^2(F^a)
    & \text{(\Cref{rmk:d-vs-beta-2})}
\end{align*}

	For the incubator, we start by noticing that $d^3$ twice reverses the direction of 2-cells, so it goes back to the starting point.
	Therefore, the incubator $(A\mid C\mid B)$ for $\beta^3_2F$ is obtained in the same way as $\beta^3_1$ but taking the mate along $(B_C)^o \dashv B_C$ \black as shown below.
	
	\begin{equation}
		\label{eq:incubator-beta-3-2}
		\scalebox{0.9}{
			\begin{tikzcd}[ampersand replacement=\&]
				{F^{a'}_{c'}(B)F^{a'}_{b}(C)F^{b}_{c}(A)} \& {} \& \\
				{F^{a'}_{b'}(C)F^{a'}_{c}(B)F^{b}_{c}(A)} \&\& {F^{a'}_{c'}(B)F^{a'}_{b}(C)F^{b}_{c}(A)} \\
				{F^{a'}_{b'}(C)F^{b'}_{c}(A)F^{a}_{c}(B)} \&\& {F^{a'}_{c'}(B)F^{b}_{c'}(A)F^{a}_{b}(C)} \\
				{F^{b'}_{c'}(A)F^{a}_{b'}(C)F^{a}_{c}(B)} \&\& {F^{b'}_{c'}(A)F^{a}_{c'}(B)F^{a}_{b}(C)} \\
				\& {} \& {F^{b'}_{c'}(A)F^{a}_{b'}(C)F^{a}_{c}(B)}
				\arrow["{{(B_C^o)}^{a'}\circ1}"', from=1-1, to=2-1]
				\arrow[curve={height=-12pt}, equals, from=1-1, to=2-3]
				\arrow[""{name=0, anchor=center, inner sep=0}, "{{B_C}^{a'}\circ1}"{description}, from=2-1, to=2-3]
				\arrow["{1\circ {A_B}^c}"', from=2-1, to=3-1]
				\arrow["{1\circ {A_C}^b}", from=2-3, to=3-3]
				\arrow["{{A_C}^{b'}\circ1}"', from=3-1, to=4-1]
				\arrow["{{A_B}^{c'}\circ1}", from=3-3, to=4-3]
				\arrow[""{name=1, anchor=center, inner sep=0}, "{1\circ {B_C}^{a}}"{description}, from=4-1, to=4-3]
				\arrow[curve={height=12pt}, equals, from=4-1, to=5-3]
				\arrow["{1\circ (B_C^o)^{a}}", from=4-3, to=5-3]
				\arrow["{\eta_{a',B,C}\circ1}"'{pos=0.4}, between={0.2}{0.6}, Rightarrow, from=1-2, to=0]
				\arrow["{(A\mid B\mid C)^{-1}}"{pos=0.4}, shift right=5, between={0.2}{0.5}, Rightarrow, from=0, to=1]
				\arrow["{1\circ\epsilon_{a,B,C}}"{pos=0.6}, between={0.3}{0.7}, Rightarrow, from=1, to=5-2]
			\end{tikzcd}
		}
	\end{equation}

\end{itemize}

\begin{table}
	\begin{minipage}{.5\linewidth}
		\caption{$\beta^3_1$}
		\label{tab:beta-3-1}
		\centering
		\renewcommand\arraystretch{1.25}
		\begin{tabular}{| c : c |}
			\hline
			at $b\in\cb$ & $F\midscript{b}$\\
			at $a\in\ca$ & $F^a$\\
			at $c\in\cc$ & $\beta^2F_c$\\
			incubator & \eqref{eq:incubator-beta-3-1}\\
			\hline
		\end{tabular}
		
	\end{minipage}%
	\begin{minipage}{.5\linewidth}
		\centering
		\caption{$\beta^3_2$}
		\label{tab:beta-3-2}
		\renewcommand\arraystretch{1.25}
		\begin{tabular}{| c : c |}
			\hline
			at $a\in\ca$ & $\beta^2F^a$\\
			at $c\in\cc$ & $F_c$\\
			at $b\in\cb$ & $F\midscript{b}$\\
			incubator & \eqref{eq:incubator-beta-3-2}\\
			\hline
		\end{tabular}
	\end{minipage} 
\end{table}

For a short braiding, we shall need the following.
\begin{prop}\label{prop:beta-3-2-restr-to-tight}
If $F$ is a tight pseudo-ternary map, then so is $\beta^3_2 F$.  In particular, the ternary braidings restrict to bijections $\beta^3_2\colon\GCatmp_3^t(\ca,\cb,\cc;\cd)\cong\GCatmp_3^t(\ca,\cc,\cb;\cd).$
\end{prop}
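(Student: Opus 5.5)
The plan is to read off tightness of $\beta^3_2 F$ directly from the explicit description in \Cref{tab:beta-3-2}. By definition, a pseudo ternary map $\mathbf G\colon\ca,\cc,\cb\to\cd$ is tight exactly when its middle and last binary components $\mathbf G\midscript{c}$ and $\mathbf G_b$ are tight pseudo binary maps; the first component $\mathbf G^a$ and the incubator impose no tightness condition. For $\mathbf G=\beta^3_2F$ the table gives $(\beta^3_2F)\midscript{c}=F_c$ and $(\beta^3_2F)_b=F\midscript{b}$. So the claim reduces to checking that these identifications hold as pseudo binary maps, not merely as lax ones, including the adjoint equivalences $\aexi^A_C$ and $\aexi^A_B$. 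Once that is in place, tightness of $F$, which says that $\mathbf F\midscript{b}$ and $\mathbf F_c$ are tight, gives tightness of $\beta^3_2F$ at once.

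The key step is therefore to confirm the middle and last rows of \Cref{tab:beta-3-2} at the level of pseudo bimaps, by tracing through the definition $\beta^3_2=d^3\beta^3_1d^3$. I would argue as follows.
\begin{itemize}
\item By \Cref{prop:dualities-psd}, $d^3\mathbf F$ has components $d^2\mathbf F_c$, $d^2\mathbf F\midscript{b}$ and $d^2\mathbf F^a$.
\item By \Cref{tab:beta-3-1}, $\beta^3_1$ swaps the first two components and applies $\beta^2$ to the last.
\item Applying $d^3$ again and using that $d^2$ is self-inverse, the components $\mathbf F_c$ and $\mathbf F\midscript{b}$ come back unchanged in the middle and last positions, in the order $(\beta^3_2F)\midscript{c}=\mathbf F_c$ and $(\beta^3_2F)_b=\mathbf F\midscript{b}$.
\end{itemize}
The pseudo structure (the adjoint equivalences) is carried along by \Cref{prop:dualities-psd}: each $d^2$ reverses the adjoint equivalence, so doing it twice restores the original. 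The only $\beta^2$ in this computation lands on the first component, $d^2\beta^2d^2(\mathbf F^a)=\beta^2(\mathbf F^a)$ by \Cref{rmk:d-vs-beta-2}, and that component is irrelevant for tightness.

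The one point needing care is to check that the identifications $(\beta^3_1\mathbf G)^b=\mathbf G\midscript{b}$ and $(\beta^3_1\mathbf G)\midscript{a}=\mathbf G^a$ in \Cref{tab:beta-3-1} hold with their adjoint equivalences, not just for the underlying lax bimaps. This follows from the proof given for \Cref{tab:beta-3-1}, which computes through $\lambda_2$, $\lambda_3$ and the clause $(\beta^2\mathbf H)^b=\mathbf H_b$ as unbiased pseudo-transformations, so the data $\aexi$ are transported unchanged.

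Finally, since $\beta^3_2$ is a bijection on loose maps, being a composite of bijections, with inverse $\beta^3_2$ again (by \Cref{cor:beta-2-self-inv} and the self-inverse properties of $d^3$ and $\lambda_3$), the same argument applied to the inverse shows that the restriction to tight maps is a bijection onto $\GCatmp_3^t(\ca,\cc,\cb;\cd)$.
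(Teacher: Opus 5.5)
Your proposal is correct and is the paper's argument: tightness of $\beta^3_2F$ is read off from the rows $(\beta^3_2F)\midscript{c}=F_c$ and $(\beta^3_2F)_b=F\midscript{b}$ of \Cref{tab:beta-3-2}, and your extra checks (that these identifications hold as pseudo bimaps with their adjoint equivalences, and that the inverse, again $\beta^3_2$, also preserves tightness so the restriction is a bijection) make explicit what the paper leaves implicit. One small wording fix: $\lambda_3$ is not self-inverse; rather $\beta^3_1\beta^3_1=\lambda_3\beta^2\beta^2\lambda_3^{-1}=1$ because $\lambda_3^{-1}\lambda_3$ cancels and $\beta^2$ is self-inverse.
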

\begin{proof}
By definition, $F$ is a tight pseudo ternary map just when $F\midscript{b}$ and $F_c$ are tight pseudo binary.  Now by \ref{tab:beta-3-2}, we have $(\beta^3_2F)\midscript{c}=F_c$ and  $(\beta^3_2F)_b=F\midscript{b}$.  Therefore if $F$ is tight, so is $\beta^3_2F$.
\end{proof}

For a short braiding, we also need the following equation.

\begin{prop}
	\label{prop:beta-3-self-inverses}
	The isomorphisms $\beta^3_1$ and $\beta^3_2$ are self inverses, i.e. $\beta^3_1\beta^3_1=1$ and $\beta^3_2\beta^3_2=1$.
\end{prop}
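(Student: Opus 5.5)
The plan is to deduce both identities formally from the definitions of the ternary braidings and the self-inverse property of $\beta^2$ (\Cref{cor:beta-2-self-inv}) and of the dualities (\Cref{prop:dualities-psd}). This avoids any recomputation of incubators.

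For $\beta^3_1$, recall that $\beta^3_1=\lambda_3\,\beta^2\,\lambda_3^{-1}$. Here the middle $\beta^2$ is the binary braiding $\GCatmp_2^l(\ca,\cb;\psd(\cc,\cd))\to\GCatmp_2^l(\cb,\ca;\psd(\cc,\cd))$, and the second application uses the instance with $\ca,\cb$ swapped. Then
\[
\beta^3_1\beta^3_1=\lambda_3\,\beta^2\,\lambda_3^{-1}\lambda_3\,\beta^2\,\lambda_3^{-1}=\lambda_3\,\beta^2\beta^2\,\lambda_3^{-1}=\lambda_3\lambda_3^{-1}=1,
\]
using \Cref{cor:beta-2-self-inv}. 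The only point to check is that the two instances of $\lambda_3$ meet correctly: the codomain of the first $\lambda_3$ is $\GCatmp_3^l(\cb,\ca,\cc;\cd)$, and $\lambda_3^{-1}$ on this set is exactly the inverse of that same bijection. This holds because $\lambda_3$ is a family of bijections indexed by the objects, by \Cref{lem:left-closedness-psd}.

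For $\beta^3_2=d^3\beta^3_1 d^3$ the argument is analogous. We have
\[
\beta^3_2\beta^3_2=d^3\beta^3_1d^3d^3\beta^3_1d^3=d^3\beta^3_1\beta^3_1d^3=d^3d^3=1,
\]
using that $d^3_{\cc^{co},\cb^{co},\ca^{co},\cd^{co}}$ is inverse to $d^3_{\ca,\cb,\cc,\cd}$ and that $(\ca^{co})^{co}=\ca$. The middle step uses the already established $\beta^3_1\beta^3_1=1$ at the Gray-categories $\cc^{co},\cb^{co},\ca^{co},\cd^{co}$. Here one checks the indexing: after the first $\beta^3_2$ we are in $\GCatmp_3^l(\ca,\cc,\cb;\cd)$. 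Applying $d^3$ there lands in $\GCatmp_3^l(\cb^{co},\cc^{co},\ca^{co};\cd^{co})$, which is precisely the codomain of the first $\beta^3_1$, so the inner $d^3$'s cancel.

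The main (mild) obstacle is this bookkeeping of which instance of each bijection is used, making sure the "evident sense" of self-inverseness of $d^3$ matches the composite. If a direct check were preferred instead, one could verify the identity via Table~\ref{tab:beta-3-2}: the binary components return to the originals by \Cref{cor:beta-2-self-inv}. Applying \eqref{eq:incubator-beta-3-2} twice also recovers $(A\mid B\mid C)$, because taking mates along $(B_C)^o\dashv B_C$ and then along its reverse cancels by the triangle identities. The formal argument above, however, makes this unnecessary.
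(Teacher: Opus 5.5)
Your argument is correct, but it does not follow the paper's route. The paper proves the identities by direct computation. It first checks that the binary components of $\beta^3_1\beta^3_1F$ and $\beta^3_2\beta^3_2F$ agree with those of $F$, using Tables~\ref{tab:beta-3-1} and~\ref{tab:beta-3-2} and $\beta^2\beta^2=1$. It then applies the explicit incubator formulas \eqref{eq:incubator-beta-3-1} and \eqref{eq:incubator-beta-3-2} twice. The second application uses the unit and counit of the reversed adjoint equivalence $\aexi^B_A=(\aexi^A_B)^{\rev}$ (resp.\ $\aexi^C_B=(\aexi^B_C)^{\rev}$), and the paper cancels the resulting pasting back to $(A\mid B\mid C)$ by the triangle identities.

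You instead note that $\beta^3_1=\lambda_3\beta^2\lambda_3^{-1}$ and $\beta^3_2=d^3\beta^3_1d^3$ are conjugates of involutions by families of bijections, hence involutions. The only inputs are \Cref{cor:beta-2-self-inv} (applied with codomain $\psd(\cc,\cd)$) and the self-inverseness of $d^3$ from \Cref{prop:dualities-psd}, and your index bookkeeping for both composites is right.

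Your route is shorter and more robust. It does not depend on the explicit incubator formulas, so it needs neither the description of left adjoints in $\psd(\cc,\cd)$ from Appendix~\ref{app:loc-adj-psd} nor the remark that $d^3$ reverses $2$-cells twice. It also shows that $\beta^3_2\beta^3_2=1$ is simply $\beta^3_1\beta^3_1=1$ at $\cc^\co,\cb^\co,\ca^\co,\cd^\co$. The paper's computation buys a consistency check of the explicit descriptions in Tables~\ref{tab:beta-3-1} and~\ref{tab:beta-3-2}, which is the form in which the ternary braidings are manipulated later (e.g.\ in \Cref{prop:symm-3,prop:first-3-braid-eq}). Your fallback sketch is essentially that computation.
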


\begin{proof}
Let $F\colon\ca,\cb,\cc\to\cd$ be a pseudo ternary map.  The binary map components of $\beta^3_1\beta^3_1F$ and $\beta^3_2\beta^3_2F$ coincide with those for $F$ by the descriptions of $\beta^3_1$ and $\beta^3_2$ in Table \Cref{tab:beta-3-1,tab:beta-3-2} combined with the equation $\beta^2\beta^2=1$ (\Cref{cor:beta-2-self-inv}).

It remains to check that they have the same incubators.  Firstly we consider $\beta^3_1\beta^3_1F$, which, applying \eqref{eq:incubator-beta-3-1} twice, has incubator given by the pasting below. 
In fact, when applying $\beta^3_1$ for the second time, the inverses of the unit and counits which we have to use are those of $\aexi^B_A=({\aexi^A_B})^{\rev}$, hence $\epsilon_{A,B,c}$ and $\eta_{A,B,c'}$ respectively.
\begin{center}
	
\adjustbox{width=\textwidth}{
\begin{tikzcd}[ampersand replacement=\&]
	{F^{a'}_{b'}(C)F^{a'}_{c}(B)F^{b}_{c}(A)} \&\& {F^{a'}_{b'}(C)F^{b'}_{c}(A)F^{a}_{c}(B)} \& {} \&\&\& \\
	\\
	\& {} \& {F^{a'}_{b'}(C)F^{a'}_{c}(B)F^{b}_{c}(A)} \&\& {F^{a'}_{b'}(C)F^{b'}_{c}(A)F^{a}_{c}(B)} \\
	\&\& {F^{a'}_{c'}(B)F^{a'}_{b}(C)F^{b}_{c}(A)} \&\& {F^{b'}_{c'}(A)F^{a}_{b'}(C)F^{a}_{c}(B)} \\
	\&\& {F^{a'}_{c'}(B)F^{b}_{c'}(A)F^{a}_{b}(C)} \&\& {F^{b'}_{c'}(A)F^{a}_{c'}(B)F^{a}_{b}(C)} \& {} \\
	\\
	\&\&\& {} \& {F^{a'}_{c'}(B)F^{b}_{c'}(A)F^{a}_{b}(C)} \&\& {F^{b'}_{c'}(A)F^{a}_{c'}(B)F^{a}_{b}(C)}
	\arrow[""{name=0, anchor=center, inner sep=0}, "{1\circ {A_B}^c}", from=1-1, to=1-3]
	\arrow[curve={height=12pt}, equals, from=1-1, to=3-3]
	\arrow["{1\circ{(A_B^o)}^c}"{description}, from=1-3, to=3-3]
	\arrow[curve={height=-12pt}, equals, from=1-3, to=3-5]
	\arrow[""{name=1, anchor=center, inner sep=0}, "{1\circ {A_B}^c}"{description}, from=3-3, to=3-5]
	\arrow["{{B_C}^{a'}\circ1}"', from=3-3, to=4-3]
	\arrow["{{A_C}^{b'}\circ1}", from=3-5, to=4-5]
	\arrow["{1\circ {A_C}^b}"', from=4-3, to=5-3]
	\arrow["{1\circ {B_C}^{a}}", from=4-5, to=5-5]
	\arrow[""{name=2, anchor=center, inner sep=0}, "{{A_B}^{c'}\circ1}"{description}, from=5-3, to=5-5]
	\arrow[curve={height=12pt}, equals, from=5-3, to=7-5]
	\arrow["{{(A_B^o)}^{c'}\circ1}"{description}, from=5-5, to=7-5]
	\arrow[curve={height=-12pt}, equals, from=5-5, to=7-7]
	\arrow[""{name=3, anchor=center, inner sep=0}, "{{A_B}^{c'}\circ1}"', from=7-5, to=7-7]
	\arrow["{1\circ\epsilon_{A,B,c}}"{pos=0.3}, between={0.2}{0.5}, Rightarrow, from=0, to=3-2]
	\arrow["{1\circ\eta_{A,B,c}}"'{pos=0.6}, between={0.4}{0.8}, Rightarrow, from=1-4, to=1]
	\arrow["{(A\mid B\mid C)}"{description}, between={0.3}{0.7}, Rightarrow, from=1, to=2]
	\arrow["{\epsilon_{A,B,c'}\circ1}"{pos=0.4}, between={0.3}{0.5}, Rightarrow, from=2, to=7-4]
	\arrow["{\eta_{A,B,c'}\circ1}"'{pos=0.6}, between={0.5}{0.8}, Rightarrow, from=5-6, to=3]
\end{tikzcd}
	}
\end{center}
Using the triangle equalities for the adjunctions $(A^o_B)^c\dashv{A_B}^c$ and $(A^o_B)^{c'}\dashv{A_B}^{c'}$, we see that the pasting above is indeed equal to $(A\mid B\mid C)$. 
This ends the proof that $\beta^3_1\beta^3_1=1$.

Let us now consider $\beta^3_2\beta^3_2$. 
Also here we need to recall that $\aexi^C_B=({\aexi^B_C})^{\rev}$, and so the incubator of $\beta^3_2\beta^3_2 F$ is the pasting below.
\begin{center}
\adjustbox{width=\textwidth}{
\begin{tikzcd}[ampersand replacement=\&,sep=scriptsize]
	{F^{a'}_{b'}(C)F^{a'}_{c}(B)F^{b}_{c}(A)} \& {} \&\&\&\&\& \\
	{F^{a'}_{c'}(B)F^{a'}_{b}(C)F^{b}_{c}(A)} \&\& {F^{a'}_{b'}(C)F^{a'}_{c}(B)F^{b}_{c}(A)} \&\& {F^{a'}_{b'}(C)F^{b'}_{c}(A)F^{a}_{c}(B)} \\
	\& {} \& {F^{a'}_{c'}(B)F^{a'}_{b}(C)F^{b}_{c}(A)} \&\& {F^{b'}_{c'}(A)F^{a}_{b'}(C)F^{a}_{c}(B)} \& {} \\
	\&\& {F^{a'}_{c'}(B)F^{b}_{c'}(A)F^{a}_{b}(C)} \&\& {F^{b'}_{c'}(A)F^{a}_{c'}(B)F^{a}_{b}(C)} \&\& {F^{b'}_{c'}(A)F^{a}_{b'}(C)F^{a}_{c}(B)} \\
	\&\&\&\&\& {} \& {F^{b'}_{c'}(A)F^{a}_{c'}(B)F^{a}_{b}(C)}
	\arrow["{{B_C}^{a'}\circ1}"', from=1-1, to=2-1]
	\arrow[curve={height=-18pt}, equals, from=1-1, to=2-3]
	\arrow[""{name=0, anchor=center, inner sep=0}, "{{(B_C^o)}^{a'}\circ1}"{description}, from=2-1, to=2-3]
	\arrow[curve={height=12pt}, equals, from=2-1, to=3-3]
	\arrow[""{name=1, anchor=center, inner sep=0}, "{1\circ {A_B}^c}", from=2-3, to=2-5]
	\arrow["{{B_C}^{a'}\circ1}", from=2-3, to=3-3]
	\arrow["{{A_C}^{b'}\circ1}", from=2-5, to=3-5]
	\arrow["{1\circ {A_C}^b}"', from=3-3, to=4-3]
	\arrow["{1\circ {B_C}^{a}}"', from=3-5, to=4-5]
	\arrow[curve={height=-12pt}, equals, from=3-5, to=4-7]
	\arrow[""{name=2, anchor=center, inner sep=0}, "{{A_B}^{c'}\circ1}"', from=4-3, to=4-5]
	\arrow[""{name=3, anchor=center, inner sep=0}, "{1\circ (B_C^o)^{a}}"{description}, from=4-5, to=4-7]
	\arrow[curve={height=12pt}, equals, from=4-5, to=5-7]
	\arrow["{1\circ {B_C}^{a}}", from=4-7, to=5-7]
	\arrow["{\eta_{a',B,C}\circ1}"'{pos=0.3}, between={0.1}{0.5}, Rightarrow, from=1-2, to=0]
	\arrow["{\epsilon_{a',B,C}\circ1}", between={0.3}{0.7}, Rightarrow, from=0, to=3-2]
	\arrow["{(A\mid B\mid C)}"{description}, between={0.3}{0.7}, Rightarrow, from=1, to=2]
	\arrow["{1\circ\eta_{a,B,C}}"'{pos=0.4}, between={0.2}{0.6}, Rightarrow, from=3-6, to=3]
	\arrow["{1\circ\epsilon_{a,B,C}}", between={0.3}{0.7}, Rightarrow, from=3, to=5-6]
\end{tikzcd}
	}
\end{center}
	Using the triangle equalities for the adjunctions $(B^o_C)^{a'}\dashv{B_C}^{a'}$ and $(B^o_C)^{a}\dashv{B_C}^{a}$, we see that the pasting above is indeed equal to $(A\mid B\mid C)$. 
	This ends the proof that $\beta^3_2\beta^3_2=1$.
\end{proof}

\subsection{Further useful equations about the ternary braidings}
The following further equations about the ternary braidings will be crucial in the following section when we verify the short braiding.

\begin{prop}
	\label{prop:symm-3}
	For any loose pseudo ternary map $G\colon\ca,\cb,\cc\to\cd$, 
	$$\beta^3_1\beta^3_2\beta^3_1(G)=\beta^3_2\beta^3_1\beta^3_2(G).$$ 
\end{prop}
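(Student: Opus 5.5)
We prove the equation by comparing the two sides component by component. A pseudo ternary map is determined by its three associated pseudo binary maps together with its incubator, so it suffices to show that $\beta^3_1\beta^3_2\beta^3_1(G)$ and $\beta^3_2\beta^3_1\beta^3_2(G)$ (both maps $\cc,\cb,\ca\to\cd$) have the same binary components and the same incubator.

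\emph{Binary components.} Write $G^a$, $G\midscript{b}$ and $G_c$ for the binary components of $G$, and use Tables~\ref{tab:beta-3-1} and~\ref{tab:beta-3-2}. Recall that $\beta^3_1$ sends components (first, middle, last) $(X,Y,Z)$ to $(Y,X,\beta^2 Z)$, and $\beta^3_2$ sends them to $(\beta^2 X, Z, Y)$.

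For the left-hand side:
\begin{itemize}
\item $\beta^3_1 G$ has components $(G\midscript{b}, G^a, \beta^2G_c)$;
\item $\beta^3_2\beta^3_1 G$ has components $(\beta^2 G\midscript{b}, \beta^2 G_c, G^a)$;
\item $\beta^3_1\beta^3_2\beta^3_1 G$ has components $(\beta^2G_c, \beta^2G\midscript{b}, \beta^2G^a)$.
\end{itemize}

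For the right-hand side:
\begin{itemize}
\item $\beta^3_2 G$ has components $(\beta^2G^a, G_c, G\midscript{b})$;
\item $\beta^3_1\beta^3_2 G$ has components $(G_c, \beta^2G^a, \beta^2G\midscript{b})$;
\item $\beta^3_2\beta^3_1\beta^3_2 G$ has components $(\beta^2G_c, \beta^2G\midscript{b}, \beta^2 G^a)$.
\end{itemize}

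So the binary components already agree. The same bookkeeping records which adjoint equivalence is attached to each pair of variables at each stage. Every application of $\beta^2$ reverses an adjoint equivalence, by \Cref{rmk:beta2action}, so we obtain $\aexi^A_B$, $\aexi^A_C$, $\aexi^B_C$ or their reverses.

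\emph{Incubators.} Each application of $\beta^3_1$ or $\beta^3_2$ replaces the current incubator by the mate of its inverse, as in \eqref{eq:incubator-beta-3-1} and \eqref{eq:incubator-beta-3-2}. For $\beta^3_1$ the mate is taken along the adjoint equivalence of the first two variables, whiskered at the two ends $c,c'$. For $\beta^3_2$ it is taken along that of the last two variables, whiskered at $a',a$. Each side performs three such steps, each inverting, so both incubators are a mate of $(A\mid B\mid C)^{-1}$.

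On the left the equivalences used are, in order, those for $(A,B)$, then $(A,C)$ (now the last two variables of $\cb,\ca,\cc$), then $(B,C)$. On the right they are $(B,C)$, then $(A,C)$, then $(A,B)$. In each case the three consecutive equivalences are exactly the three pairs of parallel edges of the incubator hexagon, one pair per step. The plan is therefore to write both incubators as a single pasting: $(A\mid B\mid C)^{-1}$ surrounded by units on one side and counits on the other, for all three pairs $(A_B)^{c}/(A_B)^{c'}$, $(A_C)^{b}/(A_C)^{b'}$ and $(B_C)^{a}/(B_C)^{a'}$. This uses functoriality of the mates correspondence \cite[Proposition~2.2]{KellyStreet:rew-el-2cats}, which says that iterated mates are mates along composite adjunctions. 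We then check that both sides give the same ``total mate'' along the hexagon with all edges reversed.

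\emph{Main obstacle.} The delicate point is the middle step. There the relevant adjoint equivalence is whiskered by a 1-cell image ($F^{b'}_{c'}(A)$ etc.) that was itself produced by a previous step. Its unit and counit then sit next to the previously inserted units and counits, and these must be slid past each other using interchange cells. Here I would invoke \Cref{lemma:interchange-property} to move $1\circ\eta$ and $\epsilon\circ1$ across the dashed interchange cells. I would use the equations of \Cref{rmk:mates-over-adj-equiv-calculations} to cancel adjacent inverse pairs, together with the triangle identities and the fact that the reversed adjoint equivalences have units and counits $\epsilon^{-1},\eta^{-1}$. I expect that after these rewrites both pastings reduce to the same explicit diagram, namely $(A\mid B\mid C)^{-1}$ conjugated by $\eta$'s along the top three edges and $\epsilon$'s along the bottom three. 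That would finish the proof.
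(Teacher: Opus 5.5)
Your treatment of the binary components is correct and is the paper's argument; your bookkeeping is more complete than the single component the paper writes out. Your overall plan for the incubators is also what the paper does: each braiding rotates the hexagon and transposes one pair of opposite edges, each pair is used exactly once on each side, and the result is a total mate along the hexagon with all edges reversed.

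However, the step you single out as the main obstacle does not arise. The formulas \eqref{eq:incubator-beta-3-1} and \eqref{eq:incubator-beta-3-2} contain no interchange cells. The six vertices of the hexagon, which are $1$-cells $G(a,b,c)\to G(a',b',c')$, are literally the same at every stage. The edges are always the original whiskered $2$-cells such as $1\circ(A_B)^c$, or their left adjoints, so nothing is ``produced by a previous step''. All units and counits are whiskered $3$-cells in the single hom $2$-category $\cd(G(a,b,c),G(a',b',c'))$. Iterating is therefore ordinary strict pasting there, and \Cref{lemma:interchange-property} plays no role.

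What does need an argument is a different point, which your sketch leaves at ``I expect''. Tracking the inversions, the left-hand side is the inverse of $(A\mid B\mid C)$ pasted with:
\begin{itemize}
\item the units of $1\circ(A_B)^c$, $(A_C)^{b'}\circ1$ and $1\circ(B_C)^a$;
\item the counits of $(B_C)^{a'}\circ1$, $1\circ(A_C)^b$ and $(A_B)^{c'}\circ1$.
\end{itemize}
This is the $3$-cell displayed in the paper's proof. The right-hand side is $(A\mid B\mid C)^{-1}$ pasted with the units of the second triple of edges and the counits of the first triple.

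Written as pastings of $(A\mid B\mid C)^{-1}$, these are genuinely different diagrams. The left side transposes the first three edges using $\eta^{-1}$ and the last three using $\epsilon^{-1}$. The right side transposes the first three using $\epsilon$ and the last three using $\eta$. They agree because, along adjoint equivalences, the mate of an inverse is the inverse of the mate. This follows from the triangle identities, i.e.\ functoriality of mates \cite[Proposition~2.2]{KellyStreet:rew-el-2cats}.

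That observation, not an interchange argument, is what closes the proof. It is also what justifies the paper's terse claim that the right-hand side yields ``definitionally'' the same $3$-cell.
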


\begin{proof}
Using the explicit descriptions of $\beta^3_1$ and $\beta^3_2$ (\Cref{tab:beta-3-1,tab:beta-3-2}) one can check (with a one line equation for each component) that $\beta^3_1\beta^3_2\beta^3_1(G)$ and $\beta^3_2\beta^3_1\beta^3_2(G)$ have the same pseudo binary maps components. For instance, at $a\in\ca$, 
\begin{center}
$[\beta^3_1\beta^3_2\beta^3_1(G)]_a
=\beta^2(\,[\beta^3_2\beta^3_1(G)]_a\,)
=\beta^2(\,[\beta^3_1(G)]\midscript{a}\,)
=\beta^2(G^a)$\\ \vspace{0.2cm}
and \hspace{0.2cm} $[\beta^3_2\beta^3_1\beta^3_2(G)]_a
=[\beta^3_1\beta^3_2(G)]\midscript{a}
=[\beta^3_2(G)]^a
=\beta^2(G^a)$.
\end{center}

For the incubator, we start with calculating the one for $\beta^3_1\beta^3_2\beta^3_1F$. 
First we recall that the incubator $(B\mid A\mid C)$ of $\beta^3_1F$ is defined in \eqref{eq:incubator-beta-3-1}, then we can use \eqref{eq:incubator-beta-3-2} to see that the incubator of $\beta^3_2\beta^3_1F$ is the 3-cell below.
\begin{equation}
	\label{diag:incub-beta-2-beta-1-of-F}
\scalebox{0.9}{
\begin{tikzcd}[ampersand replacement=\&,column sep=scriptsize,row sep=2.25em]
	{F^{b'}_{c'}(A)F^{a}_{b'}(C)F^{a}_{c}(B)} \&\& {} \&\& \\
	{F^{a'}_{b'}(C)F^{b'}_{c}(A)F^{a}_{c}(B)} \& {} \\
	{F^{a'}_{b'}(C)F^{a'}_{c}(B)F^{b}_{c}(A)} \&\& {F^{a'}_{b'}(C)F^{b'}_{c}(A)F^{a}_{c}(B)} \&\& {F^{b'}_{c'}(A)F^{a}_{b'}(C)F^{a}_{c}(B)} \\
	{F^{a'}_{c'}(B)F^{a'}_{b}(C)F^{b}_{c}(A)} \&\& {F^{a'}_{c'}(B)F^{b}_{c'}(A)F^{a}_{b}(C)} \&\& {F^{b'}_{c'}(A)F^{a}_{c'}(B)F^{a}_{b}(C)} \\
	\&\&\& {} \& {F^{a'}_{c'}(B)F^{b}_{c'}(A)F^{a}_{b}(C)} \\
	\&\& {} \&\& {F^{a'}_{c'}(B)F^{a'}_{b}(C)F^{b}_{c}(A)}
	\arrow["{{(A_C^o)}^{b'}\circ1}"', from=1-1, to=2-1]
	\arrow[curve={height=-18pt}, equals, from=1-1, to=3-5]
	\arrow["{\eta_{A,b',C}\circ1}"'{pos=0.6}, shift left=5, between={0.6}{0.8}, Rightarrow, from=1-3, to=3-3]
	\arrow["{1\circ{(A_B^o)}^c}"', from=2-1, to=3-1]
	\arrow[curve={height=-18pt}, equals, from=2-1, to=3-3]
	\arrow[""{name=0, anchor=center, inner sep=0}, "{1\circ {A_B}^c}"{description}, from=3-1, to=3-3]
	\arrow["{{B_C}^{a'}\circ1}"', from=3-1, to=4-1]
	\arrow["{{A_C}^{b'}\circ1}", from=3-3, to=3-5]
	\arrow["{(A\mid B\mid C)}", between={0.3}{0.7}, Rightarrow, from=3-3, to=4-3]
	\arrow["{1\circ {B_C}^{a}}", from=3-5, to=4-5]
	\arrow["{1\circ {A_C}^b}"', from=4-1, to=4-3]
	\arrow[curve={height=18pt}, equals, from=4-1, to=6-5]
	\arrow[""{name=1, anchor=center, inner sep=0}, "{{A_B}^{c'}\circ1}"{description}, from=4-3, to=4-5]
	\arrow[curve={height=18pt}, equals, from=4-3, to=5-5]
	\arrow["{1\circ\epsilon_{A,b,C}}"{pos=0.3}, shift right=5, between={0.2}{0.4}, Rightarrow, from=4-3, to=6-3]
	\arrow["{{(A_B^o)}^{c'}\circ1}", from=4-5, to=5-5]
	\arrow["{1\circ {(A_C^o)}^b}", from=5-5, to=6-5]
	\arrow["{1\circ\eta_{A,B,c}}"'{pos=0.4}, between={0.2}{0.6}, Rightarrow, from=2-2, to=0]
	\arrow["{\epsilon_{A,B,c'}\circ1}", between={0.3}{0.7}, Rightarrow, from=1, to=5-4]
\end{tikzcd}
}
\end{equation}
Then, using the description of the incubator of $\beta^3_1$, we see that the incubator of $\beta^3_1\beta^3_2\beta^3_1F$ is the inverse of the 3-cell below. 
\begin{center}
\adjustbox{width=\textwidth}{
	\begin{tikzcd}[ampersand replacement=\&,sep=scriptsize]
		\&\& {F^{b'}_{c'}(A)F^{a}_{c'}(B)F^{a}_{b}(C)} \&\&\& \\
		{F^{b'}_{c'}(A)F^{a}_{b'}(C)F^{a}_{c}(B)} \&\& {} \&\& {} \\
		{F^{a'}_{b'}(C)F^{b'}_{c}(A)F^{a}_{c}(B)} \& {} \&\& {} \\
		{F^{a'}_{b'}(C)F^{a'}_{c}(B)F^{b}_{c}(A)} \&\& {F^{a'}_{b'}(C)F^{b'}_{c}(A)F^{a}_{c}(B)} \&\& {F^{b'}_{c'}(A)F^{a}_{b'}(C)F^{a}_{c}(B)} \\
		\&\& {F^{a'}_{c'}(B)F^{a'}_{b}(C)F^{b}_{c}(A)} \&\& {F^{a'}_{c'}(B)F^{b}_{c'}(A)F^{a}_{b}(C)} \& {F^{b'}_{c'}(A)F^{a}_{c'}(B)F^{a}_{b}(C)} \\
		\&\&\&\&\& {F^{a'}_{c'}(B)F^{b}_{c'}(A)F^{a}_{b}(C)} \\
		\&\& {} \&\& {} \& {F^{a'}_{c'}(B)F^{a'}_{b}(C)F^{b}_{c}(A)} \\
		\&\&\&\& {F^{a'}_{b'}(C)F^{a'}_{c}(B)F^{b}_{c}(A)}
		\arrow["{1\circ {(B_C^o)}^{a}}"', from=1-3, to=2-1]
		\arrow[curve={height=-30pt}, equals, from=1-3, to=5-6]
		\arrow["{{(A_C^o)}^{b'}\circ1}"', from=2-1, to=3-1]
		\arrow[curve={height=-18pt}, equals, from=2-1, to=4-5]
		\arrow["{\eta_{A,b',C}\circ1}"'{pos=0.7}, shift left=5, between={0.6}{0.9}, Rightarrow, from=2-3, to=4-3]
		\arrow["{1\circ\eta_{a,B,C}}"{pos=0.6}, between={0.4}{0.8}, Rightarrow, from=2-5, to=3-4]
		\arrow["{1\circ{(A_B^o)}^c}"', from=3-1, to=4-1]
		\arrow[curve={height=-18pt}, equals, from=3-1, to=4-3]
		\arrow[""{name=0, anchor=center, inner sep=0}, "{1\circ {A_B}^c}"{description}, from=4-1, to=4-3]
		\arrow["{{B_C}^{a'}\circ1}"', from=4-1, to=5-3]
		\arrow[curve={height=30pt}, equals, from=4-1, to=8-5]
		\arrow["{{A_C}^{b'}\circ1}", from=4-3, to=4-5]
		\arrow["{(A\mid B\mid C)}"'{pos=0.3}, between={0.3}{0.7}, Rightarrow, from=4-3, to=5-5]
		\arrow["{1\circ {B_C}^{a}}", from=4-5, to=5-6]
		\arrow["{1\circ {A_C}^b}"', from=5-3, to=5-5]
		\arrow["{\epsilon_{a',B,C}\circ1}"{pos=0.4}, between={0.2}{0.6}, Rightarrow, from=5-3, to=7-3]
		\arrow[curve={height=18pt}, equals, from=5-3, to=7-6]
		\arrow[""{name=1, anchor=center, inner sep=0}, "{{A_B}^{c'}\circ1}"{description}, from=5-5, to=5-6]
		\arrow[""{name=2, anchor=center, inner sep=0}, curve={height=18pt}, equals, from=5-5, to=6-6]
		\arrow["{1\circ\epsilon_{A,b,C}}"'{pos=0.3}, shift right=5, between={0.2}{0.4}, Rightarrow, from=5-5, to=7-5]
		\arrow["{{(A_B^o)}^{c'}\circ1}", from=5-6, to=6-6]
		\arrow["{1\circ {(A_C^o)}^b}", from=6-6, to=7-6]
		\arrow["{{(B_C^o)}^{a'}\circ1}"{pos=0.3}, from=7-6, to=8-5]
		\arrow["{1\circ\eta_{A,B,c}}"'{pos=0.4}, between={0.2}{0.6}, Rightarrow, from=3-2, to=0]
		\arrow["{\epsilon_{A,B,c'}\circ1}"{pos=0.3}, between={0.3}{0.7}, Rightarrow, from=1, to=2]
	\end{tikzcd}
}

\end{center}
Similarly, one can check directly from the descriptions of $\beta^3_2$ and $\beta^3_1$ that also the incubator of $\beta^3_2\beta^3_1\beta^3_2F$ is definitionally the 3-cell above. 
\end{proof}

\begin{prop}
\label{prop:first-3-braid-eq}
For any pair of loose binary maps $G\colon \ca,\cx\to\cd$ and $F\colon\cb,\cc\to\cx$,  $$\beta^2G\circ_1F=\beta^3_2\beta^3_1(G\circ_2F).$$
\end{prop}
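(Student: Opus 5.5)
The plan is to compare the two pseudo ternary maps $\cb,\cc,\ca\to\cd$ component by component. First I check that their three associated pseudo binary maps agree. Then I check that their incubators agree.

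For the binary components I will use \Cref{Tab:binbin} for substitution of binary into binary, together with the explicit descriptions of $\beta^3_1$ and $\beta^3_2$ in \Cref{tab:beta-3-1,tab:beta-3-2}.

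On the left-hand side, $G\circ_2F\colon\ca,\cb,\cc\to\cd$ has components
$(G\circ_2F)^a=G^a\circ_1F$, $(G\circ_2F)\midscript{b}=G\circ_2F^b$ and $(G\circ_2F)_c=G\circ_2F_c$.
Write $H=\beta^3_1(G\circ_2F)\colon \cb,\ca,\cc\to\cd$. Then \Cref{tab:beta-3-1} gives
$H^b=G\circ_2F^b$, $H\midscript{a}=G^a\circ_1F$ and $H_c=\beta^2(G\circ_2F_c)$.
Applying \Cref{tab:beta-3-2} to $H$ gives
$(\beta^3_2H)^b=\beta^2(G\circ_2F^b)$, $(\beta^3_2H)\midscript{c}=\beta^2(G\circ_2F_c)$ and $(\beta^3_2H)_a=G^a\circ_1F$.

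On the right-hand side, \Cref{Tab:binbin} gives
$(\beta^2G\circ_1F)^b=\beta^2G\circ_1F^b$, $(\beta^2G\circ_1F)\midscript{c}=\beta^2G\circ_1F_c$ and $(\beta^2G\circ_1F)_a=(\beta^2G)_a\circ_1F=G^a\circ_1F$.
The first two agree with the left-hand side by naturality of $\beta^2$ (\Cref{prop:unb-symmetry}, \Cref{prop:beta_2-def}) with respect to unary substitution. Concretely, $\beta^2(G\circ_2P)=\beta^2G\circ_1P$ for a pseudomap $P$, applied with $P=F^b$ and $P=F_c$. The third agrees on the nose.

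For the incubators I will start from the incubator of $G\circ_2F$ given in Appendix~\ref{app:inc-sub-b-in-b-2nd}. I apply \eqref{eq:incubator-beta-3-1} to it, and then \eqref{eq:incubator-beta-3-2} to the result. This produces a pasting of the original incubator of $G\circ_2F$ with units and counits of the following adjoint equivalences:
\begin{itemize}
\item the pseudo binary structure $\aexi$ of $G^a\circ_1F$, which is built from $\aexi^A_{X}$ for $G$ whiskered by $F$;
\item that of $G\circ_2F_c$ or $G\circ_2F^b$, which is built from $G$ applied to $\aexi$ of $F$, together with the $G^2$ cells.
\end{itemize}

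On the right-hand side, the incubator of $\beta^2G\circ_1F$ is given in Appendix~\ref{app:inc-sub-b-in-b-1st}. It uses the $\sharp$/$\flat$ mate constructions of \Cref{rmk:mates-over-adj-equiv-calculations}. By \Cref{rmk:beta2action}, the relevant adjoint equivalences for $\beta^2G$ are the reverses $(\aexi^A_X)^{\rev}$. So its 2-cells are the left adjoints $(A_X)^o$, and the lax-transformation data of $\beta^2G$ are mates of those of $G$.

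I would then expand both sides into explicit pastings and match them. The tools are the equations for $\gamma\sharp$ and $\lambda\flat$ in \Cref{rmk:mates-over-adj-equiv-calculations}, the triangle identities to cancel unit/counit pairs, \Cref{lemma:interchange-property} to slide units and counits past interchange cells, and the fact that $G$ is locally a 2-functor, so it preserves the triangle identities for $G(\aexi)$.

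I expect this incubator comparison to be the main obstacle. The difficulty is bookkeeping: one must identify which of the mates on the left-hand side, coming from the reversed adjunctions of $\beta^3_1$ and $\beta^3_2$, correspond to the $\sharp$-mates appearing in the appendix formula on the right-hand side.

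My first attempt would be to rewrite the right-hand incubator entirely in terms of $G$'s lax data by unfolding the mates defining $\beta^2G$. After that, the remaining discrepancy should reduce to two cancelling pairs $\eta,\epsilon$, one from each reversed adjoint equivalence, which the triangle equalities remove.
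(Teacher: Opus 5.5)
Your route is the paper's, and your check of the three binary components is correct (you only swap which side you call left and right). The incubator part, however, rests on a wrong description of what expanding $\beta^3_2\beta^3_1(G\circ_2F)$ produces.

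By \eqref{eq:incubator-beta-3-1}, $\beta^3_1$ takes mates only along the adjoint equivalences of the \emph{last} binary component, $(G\circ_2F)_c=G\circ_2F_c$. By \eqref{eq:incubator-beta-3-2}, $\beta^3_2$ takes mates only along those of the \emph{first} component $H^b=G\circ_2F^b$ of $H=\beta^3_1(G\circ_2F)$. By \Cref{tab:compact-sub-psd} (precomposing in the second variable gives $\aexi^A_{QB}$), these are $G$'s own adjoint equivalences $\aexi^A_{F^B_c}$, $\aexi^A_{F^B_{c'}}$, $\aexi^A_{F^b_C}$, $\aexi^A_{F^{b'}_C}$. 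The cells introduced are $1\circ\eta^A_{F^B_c}$, $\epsilon^A_{F^B_{c'}}\circ1$, $\eta^A_{F^{b'}_C}\circ1$ and $1\circ\epsilon^A_{F^b_C}$.

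Nothing of the form ``$G$ applied to $\aexi$ of $F$'' occurs. The structure of $G^a\circ_1F$ (namely $G^a(\aexi^B_C)$ conjugated by the structure cells of $G^a$) plays no role, since that component is carried along unchanged.

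This matters. The comparison works because both incubators are pastings of $G$'s cells with units and counits of the single family $\aexi^A_X$, for $X$ a $1$-cell of $\cx$ in the image of $F$. On the $\beta^2G\circ_1F$ side it is the oplax data $\overline{G}_A=(G^A)^o$ of $\overline{G}=\beta^2G$ that enters Appendix~\ref{app:inc-sub-b-in-b-1st}, not its lax data; these are the mates of $G^A$ along the same adjoint equivalences. With your list, one side would carry unit and counit cells with no counterpart on the other.

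Your predicted endgame is also off. The four cells above do not cancel. The two units are matched against the units that appear when you unfold the mate $\overline{G}^{F^{b'}_C,F^B_c}_A$ in the incubator of $\beta^2G\circ_1F$. The reduction that works is as follows.
\begin{enumerate}
\item Paste both incubators with $\overline{G}^{F^B_{c'},F^b_C}_A$, and use the $\sharp$-identity of \Cref{rmk:mates-over-adj-equiv-calculations} to eliminate $(\overline{G}^{F^B_{c'},F^b_C}_A)\sharp$.
\item Unfold $\overline{G}^{F^{b'}_C,F^B_c}_A$ and $\overline{(B_C)_A}$ as mates of $G^A_{F^{b'}_C,F^B_c}$ and $A_{(B_C)}$. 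By a triangle identity, cancel the counit/unit pair of the adjoint equivalence $\aexi^A_{F^{b'}_CF^B_c}$ at the composite $1$-cell. The upper parts of the two sides now agree.
\item Show that the remaining lower part of the $\beta^3_2\beta^3_1(G\circ_2F)$ side collapses to $\epsilon^A_{F^B_{c'}F^b_C}$. This lower part consists of $(G^A_{F^B_{c'},F^b_C})\flat$ from Appendix~\ref{app:inc-sub-b-in-b-2nd}, the counits $1\circ\epsilon^A_{F^b_C}$ and $\epsilon^A_{F^B_{c'}}\circ1$, and $\overline{G}^{F^B_{c'},F^b_C}_A$. Unfold $\overline{G}^{F^B_{c'},F^b_C}_A$ as a mate, then apply two triangle identities and the $\flat$-identity of \Cref{rmk:mates-over-adj-equiv-calculations}.
\end{enumerate}
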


\begin{proof}
We need to check that $\beta^2G\circ_1F$ and $\beta^3_2\beta^3_1(G\circ_2F)$ have the same binary maps components and the same incubator. 
The binary maps component can be directly checked 
using the descriptions (and naturality) of $\beta^2$, $\beta^3_1$, and $\beta^3_2$ respectively in \Cref{prop:beta_2-def}, and Tables \ref{tab:beta-3-1} and \ref{tab:beta-3-2}, together with the explicit descriptions of substitutions $\circ_1$ and $\circ_2$ (see \Cref{tab:compact-sub-psd}). 
For instance, at $b\in\cb$:
\begin{align*}
	(\beta^2G\circ_1F)^b
	&= \beta^2G\circ F^b=\beta^2(G\circ F\midscript{b});\\
	[\beta^3_2\beta^3_1(G\circ_2F)]^b
	& = \beta^2 [\beta^3_1(G\circ_2F)]^b
	=\beta^2 [(G\circ_2F)\midscript{b}]
	=\beta^2(G\circ F^b).
\end{align*}
Now let us look at the incubators at 1-cells $B\colon b\to b'\in\cb$, $C\colon c\to c'\in\cc$ and $A\colon a\to a'\in\ca$.
We first consider $\beta^2G\circ_1 F$ and use the notation $\overline{G}:=\beta^2G$ in order to make the diagram below more readable. 
Using the description of the incubator for substitution $\circ_1$ of binary into binary (see \cite[Appendix E.1]{BourkeLobbia}), we see that the incubator of $\beta^2G\circ_1F$ is the pasting below.
\begin{center}
\scalebox{0.9}{
	\begin{tikzcd}[ampersand replacement=\&]
		{\overline{G}^{F^{b'}_{c'}}_{A}\overline{G}_{a}^{F^{b'}_C}\overline{G}^{F^B_{c}}_{a}} \&\& {\overline{G}^{F^{b'}_{c'}}_{A}\overline{G}_{a}^{F^{b'}_CF^B_{c}}} \&\& {\overline{G}^{F^{b'}_{c'}}_{A}\overline{G}_{a}^{F^B_{c'}F^{b}_C}} \&\& {\overline{G}^{F^{b'}_{c'}}_{A}\overline{G}_{a}^{F^B_{c'}}\overline{G}_{a}^{F^{b}_C}} \\
		{\overline{G}_{a'}^{F^{b'}_C}\overline{G}^{F^{b}_{c'}}_{A}\overline{G}^{F^B_{c}}_{a}} \&\&\&\&\&\& {\overline{G}_{a'}^{F^B_{c'}}\overline{G}^{F^{b}_{c'}}_{A}\overline{G}_{a}^{F^{b}_C}} \\
		{\overline{G}_{a'}^{F^{b'}_C}\overline{G}^{F^B_{c}}_{a'}\overline{G}^{F^{b}_{c}}_{A}} \&\& {\overline{G}_{a'}^{F^{b'}_CF^B_{c}}\overline{G}^{F^{b}_{c}}_{A}} \&\& {\overline{G}_{a'}^{F^B_{c'}F^{b}_C}\overline{G}^{F^{b}_{c}}_{A}} \&\& {\overline{G}_{a'}^{F^B_{c'}}\overline{G}_{a'}^{F^{b}_C}\overline{G}^{F^{b}_{c}}_{A}}
		\arrow[""{name=0, anchor=center, inner sep=0}, "{1\circ \overline{G}^{F^{b'}_C,F^B_{c}}_{a}}", from=1-1, to=1-3]
		\arrow["{\overline{({F_C^{b'})}_A}\circ1}"', from=1-1, to=2-1]
		\arrow[""{name=1, anchor=center, inner sep=0}, "{1\circ \overline{G}_a(A_B)}", from=1-3, to=1-5]
		\arrow["{\overline{{(F^{b'}_CF^B_{c})}_A}}"{description}, from=1-3, to=3-3]
		\arrow[""{name=2, anchor=center, inner sep=0}, "{1\circ (\overline{G}^{F^{B}_c,F^b_{C}}_{a})^{-1}}", from=1-5, to=1-7]
		\arrow["{\overline{(F^B_{c'}F^{b'}_C)_A}}"{description}, from=1-5, to=3-5]
		\arrow["{\overline{(F^B_{c'})_A}\circ1}", from=1-7, to=2-7]
		\arrow["{1\circ \overline{({F_c^B})_A}}"', from=2-1, to=3-1]
		\arrow["{1\circ \overline{(F^b_C)_A}}", from=2-7, to=3-7]
		\arrow[""{name=3, anchor=center, inner sep=0}, "{\overline{G}^{F^{b'}_C,F^B_{c}}_{a'}\circ1}"', from=3-1, to=3-3]
		\arrow[""{name=4, anchor=center, inner sep=0}, "{\overline{G}_{a'}(B_C)\circ1}"', from=3-3, to=3-5]
		\arrow[""{name=5, anchor=center, inner sep=0}, "{(\overline{G}^{F^{B}_c,F^b_{C}}_{a'})^{-1}\circ1}"', from=3-5, to=3-7]
		\arrow["{\overline{G}^{F^{b'}_C,F^B_{c}}_{A}}"{description}, between={0.3}{0.7}, Rightarrow, from=0, to=3]
		\arrow["{\overline{(B_C)_A}}"{description}, between={0.3}{0.7}, Rightarrow, from=1, to=4]
		\arrow["{(\overline{G}_A^{F^B_{c'},F^{b}_C})\sharp}"{description}, between={0.3}{0.7}, Rightarrow, from=2, to=5]
	\end{tikzcd}
}
=
\scalebox{0.9}{
	\begin{tikzcd}[ampersand replacement=\&]
		{G_{F^{b'}_{c'}}^{A}G^{a}_{F^{b'}_C}G_{F^B_{c}}^{a}} \&\& {G_{F^{b'}_{c'}}^{A}G^{a}_{F^{b'}_CF^B_{c}}} \&\& {G_{F^{b'}_{c'}}^{A}G^{a}_{F^B_{c'}F^{b}_C}} \&\& {G_{F^{b'}_{c'}}^{A}G^{a}_{F^B_{c'}}G^{a}_{F^{b}_C}} \\
		{G^{a'}_{F^{b'}_C}G_{F^{b}_{c'}}^{A}{G}_{F^B_{c}}^{a}} \&\&\&\&\&\& {G^{a'}_{F^B_{c'}}G_{F^{b}_{c'}}^{A}G^{a}_{F^{b}_C}} \\
		{G^{a'}_{F^{b'}_C}G_{F^B_{c}}^{a'}G_{F^{b}_{c}}^{A}} \&\& {G^{a'}_{F^{b'}_CF^B_{c}}G_{F^{b}_{c}}^{A}} \&\& {G^{a'}_{F^B_{c'}F^{b}_C}G_{F^{b}_{c}}^{A}} \&\& {G^{a'}_{F^B_{c'}}G^{a'}_{F^{b}_C}G_{F^{b}_{c}}^{A}}
		\arrow[""{name=0, anchor=center, inner sep=0}, "{1\circ G_{F^{b'}_C,F^B_{c}}^{a}}", from=1-1, to=1-3]
		\arrow["{A^o_{F_C^{b'}}\circ1}"', from=1-1, to=2-1]
		\arrow[""{name=1, anchor=center, inner sep=0}, "{1\circ G^a(A_B)}", from=1-3, to=1-5]
		\arrow["{A^o_{(F^{b'}_CF^B_{c})}}"{description}, from=1-3, to=3-3]
		\arrow[""{name=2, anchor=center, inner sep=0}, "{1\circ (G_{F^{B}_c,F^b_{C}}^{a})^{-1}}", from=1-5, to=1-7]
		\arrow["{A^o_{(F^B_{c'}F^{b'}_C)}}"{description}, from=1-5, to=3-5]
		\arrow["{A^o_{F^B_{c'}}\circ1}", from=1-7, to=2-7]
		\arrow["{1\circ A^o_{F_c^B}}"', from=2-1, to=3-1]
		\arrow["{1\circ A^o_{F^b_C}}", from=2-7, to=3-7]
		\arrow[""{name=3, anchor=center, inner sep=0}, "{G_{F^{b'}_C,F^B_{c}}^{a'}\circ1}"', from=3-1, to=3-3]
		\arrow[""{name=4, anchor=center, inner sep=0}, "{G^{a'}(B_C)\circ1}"', from=3-3, to=3-5]
		\arrow[""{name=5, anchor=center, inner sep=0}, "{(G_{F^{B}_c,F^b_{C}}^{a'})^{-1}\circ1}"', from=3-5, to=3-7]
		\arrow["{\overline{G}^{F^{b'}_C,F^B_{c}}_{A}}"{description}, between={0.3}{0.7}, Rightarrow, from=0, to=3]
		\arrow["{\overline{(B_C)_A}}"{description}, between={0.3}{0.7}, Rightarrow, from=1, to=4]
		\arrow["{(\overline{G}_A^{F^B_{c'},F^{b}_C})\sharp}"{description}, between={0.3}{0.7}, Rightarrow, from=2, to=5]
	\end{tikzcd}
}
\end{center}
We recall that, by definition, $\overline{G}^{F^{b'}_C,F^B_{c}}_{A}$ and $\overline{(B_C)_A}$ are mates of ${G}_{F^{b'}_C,F^B_{c}}^{A}$ and $A_{(B_C)}$, respectively, and $(\overline{G}_A^{F^B_{c'},F^{b}_C})\sharp$ is given by the construction described in \Cref{rmk:mates-over-adj-equiv-calculations} on $\overline{G}_A^{F^B_{c'},F^{b}_C}$, with respect to the invertible 2-cells $1\circ G_{F^{B}_c,F^b_{C}}^{a}$ and $G_{F^{B}_c,F^b_{C}}^{a'}\circ1$ seen as adjoint equivalences. 
On the other hand, following the first calculation in the proof of \Cref{prop:symm-3} (see \eqref{diag:incub-beta-2-beta-1-of-F}) and the description of the incubator of $G\circ_2F$ (see Appendix~\ref{app:inc-sub-b-in-b-1st}), we see that the incubator of $\beta^3_2\beta^3_1(G\circ F)$ is given by the following pasting diagram.
\begin{center}
	\scalebox{0.8}{
		\begin{tikzcd}[ampersand replacement=\&,column sep=scriptsize]
			{G_{F^{b'}_{c'}}^{A}G^{a}_{F^{b'}_C}G_{F^B_{c}}^{a}} \&\& {} \&\& \\
			{G^{a'}_{F^{b'}_C}G_{F^{b}_{c'}}^{A}{G}_{F^B_{c}}^{a}} \& {} \\
			{G^{a'}_{F^{b'}_C}G_{F^B_{c}}^{a'}G_{F^{b}_{c}}^{A}} \&\& {G^{a'}_{F^{b'}_C}G_{F^{b}_{c'}}^{A}{G}_{F^B_{c}}^{a}} \&\& {G_{F^{b'}_{c'}}^{A}G^{a}_{F^{b'}_C}G_{F^B_{c}}^{a}} \\
			{G^{a'}_{F^{b'}_CF^B_{c}}G_{F^{b}_{c}}^{A}} \&\&\&\& {G_{F^{b'}_{c'}}^{A}G^{a}_{F^{b'}_CF^B_{c}}} \\
			{G^{a'}_{F^B_{c'}F^{b}_C}G_{F^{b}_{c}}^{A}} \&\&\&\& {G_{F^{b'}_{c'}}^{A}G^{a}_{F^B_{c'}F^{b}_C}} \\
			{G^{a'}_{F^B_{c'}}G^{a'}_{F^{b}_C}G_{F^{b}_{c}}^{A}} \&\& {G^{a'}_{F^B_{c'}}G_{F^{b}_{c'}}^{A}G^{a}_{F^{b}_C}} \&\& {G_{F^{b'}_{c'}}^{A}G^{a}_{F^B_{c'}}G^{a}_{F^{b}_C}} \\
			\&\&\& {} \& {G^{a'}_{F^B_{c'}}G_{F^{b}_{c'}}^{A}G^{a}_{F^{b}_C}} \\
			\&\& {} \&\& {G^{a'}_{F^B_{c'}}G^{a'}_{F^{b}_C}G_{F^{b}_{c}}^{A}}
			\arrow["{A^o_{F_C^{b'}}\circ1}"', from=1-1, to=2-1]
			\arrow[curve={height=-18pt}, equals, from=1-1, to=3-5]
			\arrow["{\eta^A_{F_C^{b'}}\circ1}"'{pos=0.6}, shift left=5, between={0.6}{0.8}, Rightarrow, from=1-3, to=3-3]
			\arrow["{1\circ A^o_{F_c^B}}"', from=2-1, to=3-1]
			\arrow[curve={height=-18pt}, equals, from=2-1, to=3-3]
			\arrow[""{name=0, anchor=center, inner sep=0}, "{1\circ A_{F_c^B}}"{description}, from=3-1, to=3-3]
			\arrow["{G_{F^{b'}_C,F^B_{c}}^{a'}\circ1}"', from=3-1, to=4-1]
			\arrow["{A_{F_C^{b'}}\circ1}", from=3-3, to=3-5]
			\arrow["{1\circ G_{F^{b'}_C,F^B_{c}}^{a}}", from=3-5, to=4-5]
			\arrow[""{name=1, anchor=center, inner sep=0}, "{A_{(F^{b'}_CF^B_{c})}}"{description}, from=4-1, to=4-5]
			\arrow["{G^{a'}(B_C)\circ1}"', from=4-1, to=5-1]
			\arrow["{1\circ G^a(A_B)}", from=4-5, to=5-5]
			\arrow[""{name=2, anchor=center, inner sep=0}, "{A_{(F^B_{c'}F^{b}_C)}}"{description}, from=5-1, to=5-5]
			\arrow["{(G_{F^{B}_c,F^b_{C}}^{a'})^{-1}\circ1}"', from=5-1, to=6-1]
			\arrow["{1\circ (G_{F^{B}_c,F^b_{C}}^{a})^{-1}}", from=5-5, to=6-5]
			\arrow["{1\circ A_{F^b_C}}"', from=6-1, to=6-3]
			\arrow[curve={height=18pt}, equals, from=6-1, to=8-5]
			\arrow[""{name=3, anchor=center, inner sep=0}, "{A_{F^B_{c'}}\circ1}"{description}, from=6-3, to=6-5]
			\arrow[curve={height=18pt}, equals, from=6-3, to=7-5]
			\arrow["{1\circ\epsilon^A_{F^b_C}}"{pos=0.3}, shift right=5, between={0.2}{0.4}, Rightarrow, from=6-3, to=8-3]
			\arrow["{A^o_{F^B_{c'}}\circ1}", from=6-5, to=7-5]
			\arrow["{1\circ A^o_{F^b_C}}", from=7-5, to=8-5]
			\arrow["{1\circ\eta^A_{F_c^B}}"'{pos=0.4}, between={0.2}{0.6}, Rightarrow, from=2-2, to=0]
			\arrow["{G^A_{F^{b'}_C,F^B_{c}}}"{pos=0.4}, between={0.2}{0.6}, Rightarrow, from=3-3, to=1]
			\arrow["{A_{(B_C)}}", between={0.3}{0.7}, Rightarrow, from=1, to=2]
			\arrow["{(G^A_{F^B_{c'},F^{b}_C})\flat}"{pos=0.6}, between={0.4}{0.8}, Rightarrow, from=2, to=6-3]
			\arrow["{\epsilon^A_{F^B_{c'}}\circ1}", between={0.3}{0.7}, Rightarrow, from=3, to=7-4]
		\end{tikzcd}
	}
\end{center}
The equality we need to prove is equivalent to showing that pasting on both sides with $\overline{G}^{F^B_{c'},F^b_{C}}_A$ produces the same composite. 
This, using \Cref{rmk:mates-over-adj-equiv-calculations} for $(\overline{G}^{F^B_{c'},F^b_{C}}_A)\sharp$, reduced to prove the following equality. 
\begin{center}
	(LHS):=
		\scalebox{0.9}{
			\begin{tikzcd}[ampersand replacement=\&]
				{G_{F^{b'}_{c'}}^{A}G^{a}_{F^{b'}_C}G_{F^B_{c}}^{a}} \&\& {G_{F^{b'}_{c'}}^{A}G^{a}_{F^{b'}_CF^B_{c}}} \&\& {G_{F^{b'}_{c'}}^{A}G^{a}_{F^B_{c'}F^{b}_C}} \\
				{G^{a'}_{F^{b'}_C}G_{F^{b}_{c'}}^{A}{G}_{F^B_{c}}^{a}} \\
				{G^{a'}_{F^{b'}_C}G_{F^B_{c}}^{a'}G_{F^{b}_{c}}^{A}} \&\& {G^{a'}_{F^{b'}_CF^B_{c}}G_{F^{b}_{c}}^{A}} \&\& {G^{a'}_{F^B_{c'}F^{b}_C}G_{F^{b}_{c}}^{A}}
				\arrow[""{name=0, anchor=center, inner sep=0}, "{1\circ G_{F^{b'}_C,F^B_{c}}^{a}}", from=1-1, to=1-3]
				\arrow["{A^o_{F_C^{b'}}\circ1}"', from=1-1, to=2-1]
				\arrow[""{name=1, anchor=center, inner sep=0}, "{1\circ G^a(A_B)}", from=1-3, to=1-5]
				\arrow["{A^o_{(F^{b'}_CF^B_{c})}}"{description}, from=1-3, to=3-3]
				\arrow["{A^o_{(F^B_{c'}F^{b'}_C)}}"{description}, from=1-5, to=3-5]
				\arrow["{1\circ A^o_{F_c^B}}"', from=2-1, to=3-1]
				\arrow[""{name=2, anchor=center, inner sep=0}, "{G_{F^{b'}_C,F^B_{c}}^{a'}\circ1}"', from=3-1, to=3-3]
				\arrow[""{name=3, anchor=center, inner sep=0}, "{G^{a'}(B_C)\circ1}"', from=3-3, to=3-5]
				\arrow["{\overline{G}^{F^{b'}_C,F^B_{c}}_{A}}"{description}, between={0.3}{0.7}, Rightarrow, from=0, to=2]
				\arrow["{\overline{(B_C)_A}}"{description}, between={0.3}{0.7}, Rightarrow, from=1, to=3]
			\end{tikzcd}
		}
		= \\
\adjustbox{width=\textwidth}{
		\begin{tikzcd}[ampersand replacement=\&,sep=scriptsize]
			{G_{F^{b'}_{c'}}^{A}G^{a}_{F^{b'}_C}G_{F^B_{c}}^{a}} \&\& {} \&\&\&\& \\
			{G^{a'}_{F^{b'}_C}G_{F^{b}_{c'}}^{A}{G}_{F^B_{c}}^{a}} \& {} \\
			{G^{a'}_{F^{b'}_C}G_{F^B_{c}}^{a'}G_{F^{b}_{c}}^{A}} \&\& {G^{a'}_{F^{b'}_C}G_{F^{b}_{c'}}^{A}{G}_{F^B_{c}}^{a}} \&\& {G_{F^{b'}_{c'}}^{A}G^{a}_{F^{b'}_C}G_{F^B_{c}}^{a}} \\
			{G^{a'}_{F^{b'}_CF^B_{c}}G_{F^{b}_{c}}^{A}} \&\&\&\& {G_{F^{b'}_{c'}}^{A}G^{a}_{F^{b'}_CF^B_{c}}} \\
			{G^{a'}_{F^B_{c'}F^{b}_C}G_{F^{b}_{c}}^{A}} \&\&\&\& {G_{F^{b'}_{c'}}^{A}G^{a}_{F^B_{c'}F^{b}_C}} \\
			{G^{a'}_{F^B_{c'}}G^{a'}_{F^{b}_C}G_{F^{b}_{c}}^{A}} \&\& {G^{a'}_{F^B_{c'}}G_{F^{b}_{c'}}^{A}G^{a}_{F^{b}_C}} \&\& {G_{F^{b'}_{c'}}^{A}G^{a}_{F^B_{c'}}G^{a}_{F^{b}_C}} \&\& {G_{F^{b'}_{c'}}^{A}G^{a}_{F^B_{c'}F^{b}_C}} \\
			\&\&\& {} \& {G^{a'}_{F^B_{c'}}G_{F^{b}_{c'}}^{A}G^{a}_{F^{b}_C}} \\
			\&\& {} \&\& {G^{a'}_{F^B_{c'}}G^{a'}_{F^{b}_C}G_{F^{b}_{c}}^{A}} \&\& {G^{a'}_{F^B_{c'}F^{b}_C}G_{F^{b}_{c}}^{A}}
			\arrow["{A^o_{F_C^{b'}}\circ1}"', from=1-1, to=2-1]
			\arrow[curve={height=-18pt}, equals, from=1-1, to=3-5]
			\arrow["{\eta^A_{F_C^{b'}}\circ1}"'{pos=0.6}, shift left=5, between={0.6}{0.8}, Rightarrow, from=1-3, to=3-3]
			\arrow["{1\circ A^o_{F_c^B}}"', from=2-1, to=3-1]
			\arrow[curve={height=-18pt}, equals, from=2-1, to=3-3]
			\arrow[""{name=0, anchor=center, inner sep=0}, "{1\circ A_{F_c^B}}"{description}, from=3-1, to=3-3]
			\arrow["{G_{F^{b'}_C,F^B_{c}}^{a'}\circ1}"', from=3-1, to=4-1]
			\arrow["{A_{F_C^{b'}}\circ1}", from=3-3, to=3-5]
			\arrow["{1\circ G_{F^{b'}_C,F^B_{c}}^{a}}", from=3-5, to=4-5]
			\arrow[""{name=1, anchor=center, inner sep=0}, "{A_{(F^{b'}_CF^B_{c})}}"{description}, from=4-1, to=4-5]
			\arrow["{G^{a'}(B_C)\circ1}"', from=4-1, to=5-1]
			\arrow["{1\circ G^a(A_B)}", from=4-5, to=5-5]
			\arrow[""{name=2, anchor=center, inner sep=0}, "{A_{(F^B_{c'}F^{b}_C)}}"{description}, from=5-1, to=5-5]
			\arrow["{(G_{F^{B}_c,F^b_{C}}^{a'})^{-1}\circ1}"', from=5-1, to=6-1]
			\arrow["{1\circ (G_{F^{B}_c,F^b_{C}}^{a})^{-1}}", from=5-5, to=6-5]
			\arrow["{1\circ A_{F^b_C}}"', from=6-1, to=6-3]
			\arrow[curve={height=18pt}, equals, from=6-1, to=8-5]
			\arrow[""{name=3, anchor=center, inner sep=0}, "{A_{F^B_{c'}}\circ1}"{description}, from=6-3, to=6-5]
			\arrow[curve={height=18pt}, equals, from=6-3, to=7-5]
			\arrow["{1\circ\epsilon^A_{F^b_C}}"{pos=0.3}, shift right=5, between={0.2}{0.4}, Rightarrow, from=6-3, to=8-3]
			\arrow[""{name=4, anchor=center, inner sep=0}, "{1\circ G_{F^{B}_c,F^b_{C}}^{a}}"', from=6-5, to=6-7]
			\arrow["{A^o_{F^B_{c'}}\circ1}", from=6-5, to=7-5]
			\arrow["{A^o_{(F^B_{c'}F^{b'}_C)}}", from=6-7, to=8-7]
			\arrow["{1\circ A^o_{F^b_C}}", from=7-5, to=8-5]
			\arrow[""{name=5, anchor=center, inner sep=0}, "{G_{F^{B}_c,F^b_{C}}^{a'}\circ1}"', from=8-5, to=8-7]
			\arrow["{1\circ\eta^A_{F_c^B}}"'{pos=0.4}, between={0.2}{0.6}, Rightarrow, from=2-2, to=0]
			\arrow["{G^A_{F^{b'}_C,F^B_{c}}}"{pos=0.4}, between={0.2}{0.6}, Rightarrow, from=3-3, to=1]
			\arrow["{A_{(B_C)}}", between={0.3}{0.7}, Rightarrow, from=1, to=2]
			\arrow["{(G^A_{F^B_{c'},F^{b}_C})\flat}"{pos=0.6}, between={0.4}{0.8}, Rightarrow, from=2, to=6-3]
			\arrow["{\epsilon^A_{F^B_{c'}}\circ1}", between={0.3}{0.7}, Rightarrow, from=3, to=7-4]
			\arrow["{\overline{G}_A^{F^B_{c'},F^{b}_C}}", between={0.4}{0.6}, Rightarrow, from=4, to=5]
		\end{tikzcd}		
		=:(RHS)	}
	\end{center}
				
First, we consider the left hand side and use the definitions of $\overline{G}^{F^{b'}_C,F^B_{c}}_{A}$ and $\overline{(B_C)_A}$ as mates, and one triangle equality, to obtain the following equalities. 
\begin{center}
(LHS)=
\scalebox{0.8}{
\begin{tikzcd}[ampersand replacement=\&,column sep=scriptsize,row sep=2.25em]
{G_{F^{b'}_{c'}}^{A}G^{a}_{F^{b'}_C}G_{F^B_{c}}^{a}} \&\& {} \&\&\&\& \\
{G^{a'}_{F^{b'}_C}G_{F^{b}_{c'}}^{A}{G}_{F^B_{c}}^{a}} \& {} \\
{G^{a'}_{F^{b'}_C}G_{F^B_{c}}^{a'}G_{F^{b}_{c}}^{A}} \&\& {G^{a'}_{F^{b'}_C}G_{F^{b}_{c'}}^{A}{G}_{F^B_{c}}^{a}} \&\& {G_{F^{b'}_{c'}}^{A}G^{a}_{F^{b'}_C}G_{F^B_{c}}^{a}} \\
{G^{a'}_{F^{b'}_CF^B_{c}}G_{F^{b}_{c}}^{A}} \&\&\&\& {G_{F^{b'}_{c'}}^{A}G^{a}_{F^{b'}_CF^B_{c}}} \& {} \\
\&\& {} \&\& {G^{a'}_{F^{b'}_CF^B_{c}}G_{F^{b}_{c}}^{A}} \&\& {G_{F^{b'}_{c'}}^{A}G^{a}_{F^{b'}_CF^B_{c}}} \\
\&\&\&\& {G^{a'}_{F^B_{c'}F^{b}_C}G_{F^{b}_{c}}^{A}} \&\& {G_{F^{b'}_{c'}}^{A}G^{a}_{F^B_{c'}F^{b}_C}} \\
\&\&\&\&\& {} \& {G^{a'}_{F^B_{c'}F^{b}_C}G_{F^{b}_{c}}^{A}}
\arrow["{A^o_{F_C^{b'}}\circ1}"', from=1-1, to=2-1]
\arrow[curve={height=-18pt}, equals, from=1-1, to=3-5]
\arrow["{\eta^A_{F_C^{b'}}\circ1}"'{pos=0.6}, shift left=5, between={0.6}{0.8}, Rightarrow, from=1-3, to=3-3]
\arrow["{1\circ A^o_{F_c^B}}"', from=2-1, to=3-1]
\arrow[curve={height=-18pt}, equals, from=2-1, to=3-3]
\arrow[""{name=0, anchor=center, inner sep=0}, "{1\circ A_{F_c^B}}"{description}, from=3-1, to=3-3]
\arrow["{G_{F^{b'}_C,F^B_{c}}^{a'}\circ1}"', from=3-1, to=4-1]
\arrow["{A_{F_C^{b'}}\circ1}", from=3-3, to=3-5]
\arrow["{1\circ G_{F^{b'}_C,F^B_{c}}^{a}}", from=3-5, to=4-5]
\arrow[""{name=1, anchor=center, inner sep=0}, "{A_{(F^{b'}_CF^B_{c})}}"{description}, from=4-1, to=4-5]
\arrow[curve={height=12pt}, equals, from=4-1, to=5-5]
\arrow["{A_{(F^{b'}_CF^B_{c})}^o}"{description}, from=4-5, to=5-5]
\arrow[curve={height=-12pt}, equals, from=4-5, to=5-7]
\arrow[""{name=2, anchor=center, inner sep=0}, "{A_{(F^{b'}_CF^B_{c})}}"{description}, from=5-5, to=5-7]
\arrow["{G^{a'}(B_C)\circ1}"', from=5-5, to=6-5]
\arrow["{1\circ G^a(A_B)}", from=5-7, to=6-7]
\arrow[""{name=3, anchor=center, inner sep=0}, "{A_{(F^B_{c'}F^{b}_C)}}"{description}, from=6-5, to=6-7]
\arrow[curve={height=12pt}, equals, from=6-5, to=7-7]
\arrow["{A_{(F^B_{c'}F^{b}_C)}^o}", from=6-7, to=7-7]
\arrow["{1\circ\eta^A_{F_c^B}}"'{pos=0.4}, between={0.2}{0.6}, Rightarrow, from=2-2, to=0]
\arrow["{G^A_{F^{b'}_C,F^B_{c}}}"{pos=0.2}, between={0.2}{0.6}, Rightarrow, from=3-3, to=1]
\arrow["{\epsilon^A_{(F^{b'}_CF^B_{c})}}"{pos=0.6}, between={0.3}{0.6}, Rightarrow, from=1, to=5-3]
\arrow["{\eta^A_{(F^{b'}_CF^B_{c})}}"'{pos=0.7}, shift left=2, between={0.4}{0.7}, Rightarrow, from=4-6, to=2]
\arrow["{A_{(B_C)}}", between={0.3}{0.7}, Rightarrow, from=2, to=3]
\arrow["{\epsilon^A_{(F^B_{c'}F^{b}_C)}}"{pos=0.6}, shift right=2, between={0.2}{0.6}, Rightarrow, from=3, to=7-6]
\end{tikzcd}
} \\
=	
\scalebox{0.9}{
\begin{tikzcd}[ampersand replacement=\&,sep=scriptsize]
{G_{F^{b'}_{c'}}^{A}G^{a}_{F^{b'}_C}G_{F^B_{c}}^{a}} \&\& {} \&\& \\
{G^{a'}_{F^{b'}_C}G_{F^{b}_{c'}}^{A}{G}_{F^B_{c}}^{a}} \& {} \\
{G^{a'}_{F^{b'}_C}G_{F^B_{c}}^{a'}G_{F^{b}_{c}}^{A}} \&\& {G^{a'}_{F^{b'}_C}G_{F^{b}_{c'}}^{A}{G}_{F^B_{c}}^{a}} \&\& {G_{F^{b'}_{c'}}^{A}G^{a}_{F^{b'}_C}G_{F^B_{c}}^{a}} \\
{G^{a'}_{F^{b'}_CF^B_{c}}G_{F^{b}_{c}}^{A}} \&\&\&\& {G_{F^{b'}_{c'}}^{A}G^{a}_{F^{b'}_CF^B_{c}}} \\
{G^{a'}_{F^B_{c'}F^{b}_C}G_{F^{b}_{c}}^{A}} \&\&\&\& {G_{F^{b'}_{c'}}^{A}G^{a}_{F^B_{c'}F^{b}_C}} \\
\&\& {} \&\& {G^{a'}_{F^B_{c'}F^{b}_C}G_{F^{b}_{c}}^{A}}
\arrow["{A^o_{F_C^{b'}}\circ1}"', from=1-1, to=2-1]
\arrow[curve={height=-18pt}, equals, from=1-1, to=3-5]
\arrow["{\eta^A_{F_C^{b'}}\circ1}"'{pos=0.6}, shift left=5, between={0.6}{0.8}, Rightarrow, from=1-3, to=3-3]
\arrow["{1\circ A^o_{F_c^B}}"', from=2-1, to=3-1]
\arrow[curve={height=-18pt}, equals, from=2-1, to=3-3]
\arrow[""{name=0, anchor=center, inner sep=0}, "{1\circ A_{F_c^B}}"{description}, from=3-1, to=3-3]
\arrow["{G_{F^{b'}_C,F^B_{c}}^{a'}\circ1}"', from=3-1, to=4-1]
\arrow["{A_{F_C^{b'}}\circ1}", from=3-3, to=3-5]
\arrow["{1\circ G_{F^{b'}_C,F^B_{c}}^{a}}", from=3-5, to=4-5]
\arrow[""{name=1, anchor=center, inner sep=0}, "{A_{(F^{b'}_CF^B_{c})}}"{description}, from=4-1, to=4-5]
\arrow["{G^{a'}(B_C)\circ1}"', from=4-1, to=5-1]
\arrow["{1\circ G^a(A_B)}", from=4-5, to=5-5]
\arrow[""{name=2, anchor=center, inner sep=0}, "{A_{(F^B_{c'}F^{b}_C)}}"{description}, from=5-1, to=5-5]
\arrow[curve={height=12pt}, equals, from=5-1, to=6-5]
\arrow["{A_{(F^B_{c'}F^{b}_C)}^o}", from=5-5, to=6-5]
\arrow["{1\circ\eta^A_{F_c^B}}"'{pos=0.4}, between={0.2}{0.6}, Rightarrow, from=2-2, to=0]
\arrow["{G^A_{F^{b'}_C,F^B_{c}}}"{pos=0.1}, between={0.1}{0.5}, Rightarrow, from=3-3, to=1]
\arrow["{A_{(B_C)}}", between={0.3}{0.7}, Rightarrow, from=1, to=2]
\arrow["{\epsilon^A_{(F^B_{c'}F^{b}_C)}}"{pos=0.7}, between={0.3}{0.6}, Rightarrow, from=2, to=6-3]
\end{tikzcd}
}
\end{center}

Hence to prove the desired equality, it suffices to prove that the bottom part of the right hand side (RHS) is equal to $\epsilon^A_{F^B_{c'}F^b_C}$. 
We show this using, in order: definition of $\overline{G}_A^{F^B_{c'},F^{b}_C}$; two triangle identities; \Cref{rmk:mates-over-adj-equiv-calculations} for $(G^A_{F^B_{c'},F^{b}_C})\flat$. 
\begin{center}
\adjustbox{width=\textwidth}{
\begin{tikzcd}[ampersand replacement=\&]
	\&\& {} \&\&\&\& \\
	\& {} \\
	\\
	\\
	{G^{a'}_{F^B_{c'}F^{b}_C}G_{F^{b}_{c}}^{A}} \&\&\&\& {G_{F^{b'}_{c'}}^{A}G^{a}_{F^B_{c'}F^{b}_C}} \\
	{G^{a'}_{F^B_{c'}}G^{a'}_{F^{b}_C}G_{F^{b}_{c}}^{A}} \&\& {G^{a'}_{F^B_{c'}}G_{F^{b}_{c'}}^{A}G^{a}_{F^{b}_C}} \&\& {G_{F^{b'}_{c'}}^{A}G^{a}_{F^B_{c'}}G^{a}_{F^{b}_C}} \&\& {G_{F^{b'}_{c'}}^{A}G^{a}_{F^B_{c'}F^{b}_C}} \\
	\&\&\& {} \& {G^{a'}_{F^B_{c'}}G_{F^{b}_{c'}}^{A}G^{a}_{F^{b}_C}} \\
	\&\& {} \&\& {G^{a'}_{F^B_{c'}}G^{a'}_{F^{b}_C}G_{F^{b}_{c}}^{A}} \&\& {G^{a'}_{F^B_{c'}F^{b}_C}G_{F^{b}_{c}}^{A}}
	\arrow[""{name=0, anchor=center, inner sep=0}, "{A_{(F^B_{c'}F^{b}_C)}}", from=5-1, to=5-5]
	\arrow["{(G_{F^{B}_c,F^b_{C}}^{a'})^{-1}\circ1}"', from=5-1, to=6-1]
	\arrow["{1\circ (G_{F^{B}_c,F^b_{C}}^{a})^{-1}}", from=5-5, to=6-5]
	\arrow["{1\circ A_{F^b_C}}"', from=6-1, to=6-3]
	\arrow[curve={height=18pt}, equals, from=6-1, to=8-5]
	\arrow[""{name=1, anchor=center, inner sep=0}, "{A_{F^B_{c'}}\circ1}"{description}, from=6-3, to=6-5]
	\arrow[curve={height=18pt}, equals, from=6-3, to=7-5]
	\arrow["{1\circ\epsilon^A_{F^b_C}}"{pos=0.3}, shift right=5, between={0.2}{0.4}, Rightarrow, from=6-3, to=8-3]
	\arrow[""{name=2, anchor=center, inner sep=0}, "{1\circ G_{F^{B}_c,F^b_{C}}^{a}}"', from=6-5, to=6-7]
	\arrow["{A^o_{F^B_{c'}}\circ1}", from=6-5, to=7-5]
	\arrow["{A^o_{(F^B_{c'}F^{b'}_C)}}", from=6-7, to=8-7]
	\arrow["{1\circ A^o_{F^b_C}}", from=7-5, to=8-5]
	\arrow[""{name=3, anchor=center, inner sep=0}, "{G_{F^{B}_c,F^b_{C}}^{a'}\circ1}"', from=8-5, to=8-7]
	\arrow["{(G^A_{F^B_{c'},F^{b}_C})\flat}"{pos=0.6}, between={0.4}{0.8}, Rightarrow, from=0, to=6-3]
	\arrow["{\epsilon^A_{F^B_{c'}}\circ1}", between={0.3}{0.7}, Rightarrow, from=1, to=7-4]
	\arrow["{\overline{G}_A^{F^B_{c'},F^{b}_C}}", between={0.4}{0.6}, Rightarrow, from=2, to=3]
\end{tikzcd}
=} \\
\adjustbox{width=\textwidth}{
	\begin{tikzcd}[ampersand replacement=\&]
		\&\& {} \&\&\&\&\&\& \\
		\& {} \\
		\\
		\\
		{G^{a'}_{F^B_{c'}F^{b}_C}G_{F^{b}_{c}}^{A}} \&\&\&\& {G_{F^{b'}_{c'}}^{A}G^{a}_{F^B_{c'}F^{b}_C}} \\
		{G^{a'}_{F^B_{c'}}G^{a'}_{F^{b}_C}G_{F^{b}_{c}}^{A}} \&\& {G^{a'}_{F^B_{c'}}G_{F^{b}_{c'}}^{A}G^{a}_{F^{b}_C}} \&\& {G_{F^{b'}_{c'}}^{A}G^{a}_{F^B_{c'}}G^{a}_{F^{b}_C}} \\
		\&\&\& {} \& {G^{a'}_{F^B_{c'}}G_{F^{b}_{c'}}^{A}G^{a}_{F^{b}_C}} \& {} \\
		\&\& {} \&\& {G^{a'}_{F^B_{c'}}G^{a'}_{F^{b}_C}G_{F^{b}_{c}}^{A}} \&\& {G^{a'}_{F^B_{c'}}G_{F^{b}_{c'}}^{A}G^{a}_{F^{b}_C}} \&\& {G_{F^{b'}_{c'}}^{A}G^{a}_{F^B_{c'}}G^{a}_{F^{b}_C}} \\
		\&\&\&\& {G^{a'}_{F^B_{c'}F^{b}_C}G_{F^{b}_{c}}^{A}} \&\&\&\& {G_{F^{b'}_{c'}}^{A}G^{a}_{F^B_{c'}F^{b}_C}} \\
		\&\&\&\&\&\& {} \&\& {G^{a'}_{F^B_{c'}F^{b}_C}G_{F^{b}_{c}}^{A}}
		\arrow[""{name=0, anchor=center, inner sep=0}, "{A_{(F^B_{c'}F^{b}_C)}}", from=5-1, to=5-5]
		\arrow["{(G_{F^{B}_c,F^b_{C}}^{a'})^{-1}\circ1}"', from=5-1, to=6-1]
		\arrow["{1\circ (G_{F^{B}_c,F^b_{C}}^{a})^{-1}}", from=5-5, to=6-5]
		\arrow["{1\circ A_{F^b_C}}"', from=6-1, to=6-3]
		\arrow[curve={height=18pt}, equals, from=6-1, to=8-5]
		\arrow[""{name=1, anchor=center, inner sep=0}, "{A_{F^B_{c'}}\circ1}"{description}, from=6-3, to=6-5]
		\arrow[curve={height=18pt}, equals, from=6-3, to=7-5]
		\arrow["{1\circ\epsilon^A_{F^b_C}}"{pos=0.3}, shift right=5, between={0.2}{0.4}, Rightarrow, from=6-3, to=8-3]
		\arrow["{A^o_{F^B_{c'}}\circ1}", from=6-5, to=7-5]
		\arrow[""{name=2, anchor=center, inner sep=0}, curve={height=-12pt}, equals, from=6-5, to=8-9]
		\arrow["{1\circ A^o_{F^b_C}}", from=7-5, to=8-5]
		\arrow[curve={height=-12pt}, equals, from=7-5, to=8-7]
		\arrow[""{name=3, anchor=center, inner sep=0}, from=8-5, to=8-7]
		\arrow["{G_{F^{B}_c,F^b_{C}}^{a'}\circ1}"', from=8-5, to=9-5]
		\arrow[from=8-7, to=8-9]
		\arrow["{1\circ G_{F^{B}_c,F^b_{C}}^{a}}", from=8-9, to=9-9]
		\arrow[""{name=4, anchor=center, inner sep=0}, "{A_{(F^B_{c'}F^{b'}_C)}}"{description}, from=9-5, to=9-9]
		\arrow[curve={height=12pt}, equals, from=9-5, to=10-9]
		\arrow["{A^o_{(F^B_{c'}F^{b'}_C)}}", from=9-9, to=10-9]
		\arrow["{(G^A_{F^B_{c'},F^{b}_C})\flat}"{pos=0.6}, between={0.4}{0.8}, Rightarrow, from=0, to=6-3]
		\arrow["{\epsilon^A_{F^B_{c'}}\circ1}", between={0.3}{0.7}, Rightarrow, from=1, to=7-4]
		\arrow["{\eta^A_{F^B_{c'}}\circ1}"', shift left=3, between={0.4}{0.6}, Rightarrow, from=2, to=8-7]
		\arrow["{1\circ\eta^A_{F^b_C}}"', shift left=4, between={0.4}{0.7}, Rightarrow, from=7-6, to=3]
		\arrow["{\overline{G}_A^{F^B_{c'},F^{b}_C}}"{pos=0}, between={0.1}{0.6}, Rightarrow, from=8-7, to=4]
		\arrow["{\epsilon^A_{(F^B_{c'}F^{b'}_C)}}"{pos=0.7}, between={0.3}{0.6}, Rightarrow, from=4, to=10-7]
	\end{tikzcd}
}  \\
=
	\scalebox{0.9}{
		\begin{tikzcd}[ampersand replacement=\&,sep=scriptsize]
			\&\& {} \&\&\&\& \\
			\& {} \\
			\\
			\\
			{G^{a'}_{F^B_{c'}F^{b}_C}G_{F^{b}_{c}}^{A}} \&\&\&\& {G_{F^{b'}_{c'}}^{A}G^{a}_{F^B_{c'}F^{b}_C}} \\
			{G^{a'}_{F^B_{c'}}G^{a'}_{F^{b}_C}G_{F^{b}_{c}}^{A}} \&\& {G^{a'}_{F^B_{c'}}G_{F^{b}_{c'}}^{A}G^{a}_{F^{b}_C}} \&\& {G_{F^{b'}_{c'}}^{A}G^{a}_{F^B_{c'}}G^{a}_{F^{b}_C}} \&\& {=\epsilon^A_{(F^B_{c'}F^{b'}_C)}} \\
			{G^{a'}_{F^B_{c'}F^{b}_C}G_{F^{b}_{c}}^{A}} \&\&\&\& {G_{F^{b'}_{c'}}^{A}G^{a}_{F^B_{c'}F^{b}_C}} \\
			\&\& {} \&\& {G^{a'}_{F^B_{c'}F^{b}_C}G_{F^{b}_{c}}^{A}}
			\arrow[""{name=0, anchor=center, inner sep=0}, "{A_{(F^B_{c'}F^{b}_C)}}", from=5-1, to=5-5]
			\arrow["{(G_{F^{B}_c,F^b_{C}}^{a'})^{-1}\circ1}"', from=5-1, to=6-1]
			\arrow["{1\circ (G_{F^{B}_c,F^b_{C}}^{a})^{-1}}", from=5-5, to=6-5]
			\arrow["{1\circ A_{F^b_C}}"', from=6-1, to=6-3]
			\arrow["{G_{F^{B}_c,F^b_{C}}^{a'}\circ1}"', from=6-1, to=7-1]
			\arrow["{A_{F^B_{c'}}\circ1}"{description}, from=6-3, to=6-5]
			\arrow["{1\circ G_{F^{B}_c,F^b_{C}}^{a}}", from=6-5, to=7-5]
			\arrow[""{name=1, anchor=center, inner sep=0}, "{A_{(F^B_{c'}F^{b'}_C)}}"{description}, from=7-1, to=7-5]
			\arrow[curve={height=12pt}, equals, from=7-1, to=8-5]
			\arrow["{A^o_{(F^B_{c'}F^{b'}_C)}}", from=7-5, to=8-5]
			\arrow["{(G^A_{F^B_{c'},F^{b}_C})\flat}"{pos=1}, between={0.4}{0.8}, Rightarrow, from=0, to=6-3]
			\arrow["{\overline{G}_A^{F^B_{c'},F^{b}_C}}"{pos=0}, between={0.2}{0.6}, Rightarrow, from=6-3, to=1]
			\arrow["{\epsilon^A_{(F^B_{c'}F^{b'}_C)}}"{pos=0.7}, between={0.3}{0.6}, Rightarrow, from=1, to=8-3]
		\end{tikzcd}
	}
\end{center}
This concludes the proof that the incubator of $\beta^2G\circ_1F$ is the same as the one of $\beta^3_2\beta^3_1(G\circ_2F)$, and therefore that these pseudo ternary maps are equal. 
			
\end{proof}

\begin{rmk}
\label{rmk:def-right-closedness}
In order to establish a number more equations relating the ternary braidings and substitution of binary into binary, it will be useful to first define the  2- and 3-ary right closedness isomorphisms.  This is done using left closedness (\Cref{lem:left-closedness-psd}), $\beta^2$ (for $\rho_2$) and $\beta^3_i$ for $\rho_3$.  The definitions are as follows.
\begin{align*}
	\rho_2:= & \psd(\cb;\psd(\ca,\cc))\stackrel{\lambda_2}{\cong}\GCatmp_2^l(\cb,\ca;\cc)\stackrel{\beta^2}{\cong}\GCatmp_2(\ca,\cb;\cc) \\
	\rho_3:=&\GCatmp_2^l(\cb,\cc;\psd(\ca,\cd))\stackrel{\lambda_3}{\cong}\GCatmp_3^l(\cb,\cc,\ca;\cd)\stackrel{\beta^3_2}{\cong}\GCatmp_3^l(\cb,\ca,\cc;\cd)\stackrel{\beta^3_1}{\cong}\GCatmp_3^l(\ca,\cb,\cc;\cd)
\end{align*} 
Using \Cref{cor:beta-2-self-inv} and \Cref{prop:beta-3-self-inverses}, we can describe the inverses of $\rho_2$ and $\rho_3$ as below.
\begin{center}
	$\rho_2^{-1}=\lambda_2^{-1}\beta^2$ \hspace{0.5cm}
	$\rho_3^{-1}=\lambda_3^{-1}\beta^3_2\beta^3_1$
\end{center}
\end{rmk}

\begin{cor}
	\label{cor:rho-preserves-subst}
	The right closedness isomorphism $\rho_3$ respects substitution, i.e. 
	$$\rho_3^{-1}(G\circ_2F)=\rho_2^{-1}G\circ F.$$
\end{cor}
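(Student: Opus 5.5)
The plan is to reduce the statement to \Cref{prop:first-3-braid-eq}, using the explicit descriptions of $\rho_2^{-1}$ and $\rho_3^{-1}$ from \Cref{rmk:def-right-closedness} together with \Cref{rmk:subst-in-terms-of-lambdas}. Here $G\colon\cb,\cc\to\psd(\ca,\cd)$ and $F$ is unary, or more generally a loose binary map substituted into the second variable. The types should be read so that the substitution $G\circ_2F$ makes sense.

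First unwind the left hand side. By \Cref{rmk:def-right-closedness}, $\rho_3^{-1}(G\circ_2F)=\lambda_3^{-1}\beta^3_2\beta^3_1(G\circ_2F)$. By \Cref{prop:first-3-braid-eq}, applied to the loose binary maps $G$ and $F$, we have $\beta^3_2\beta^3_1(G\circ_2F)=\beta^2G\circ_1F$. So the left hand side equals $\lambda_3^{-1}(\beta^2G\circ_1F)$.

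Next apply \Cref{rmk:subst-in-terms-of-lambdas}, which says $\beta^2G\circ_1F=\lambda_3(\lambda_2^{-1}(\beta^2G)\circ F)$. Cancelling $\lambda_3^{-1}\lambda_3$ leaves $\lambda_2^{-1}(\beta^2G)\circ F$. By \Cref{rmk:def-right-closedness} this is exactly $\rho_2^{-1}G\circ F$, which is the right hand side.

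Essentially all the content sits in \Cref{prop:first-3-braid-eq}. The main point requiring care is bookkeeping: checking that the variable positions and the domains of $\lambda_2,\lambda_3$ match. In particular, \Cref{prop:first-3-braid-eq} must be applied with the roles of the Gray-categories arranged so that $\beta^3_2\beta^3_1$ moves the last variable to the front, consistently with the definition of $\rho_3$. If the indexing in \Cref{rmk:subst-in-terms-of-lambdas} is only stated for $\circ_1$ into binary maps, I would also note that by naturality of $\lambda_3$ (\Cref{lem:left-closedness-psd}) the same formula holds with $\psd(\ca,\cd)$ as codomain, which is the case needed here.
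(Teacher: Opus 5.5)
Your argument is correct and is exactly the paper's proof: unwind $\rho_3^{-1}=\lambda_3^{-1}\beta^3_2\beta^3_1$ via \Cref{rmk:def-right-closedness}, apply \Cref{prop:first-3-braid-eq}, rewrite $\beta^2G\circ_1F=\lambda_3(\lambda_2^{-1}\beta^2G\circ F)$ by \Cref{rmk:subst-in-terms-of-lambdas}, and recognise $\lambda_2^{-1}\beta^2=\rho_2^{-1}$. The only blemish is your opening type description. Here $G\colon\ca,\cx\to\cd$ and $F\colon\cb,\cc\to\cx$ are both binary; it is the two sides of the equation that lie in $\GCatmp_2^l(\cb,\cc;\psd(\ca,\cd))$. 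Also, \Cref{rmk:subst-in-terms-of-lambdas} applies directly to $\beta^2G\colon\cx,\ca\to\cd$ and $F$, so no extra naturality argument is needed.
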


\begin{proof}
	We use (in order): the definition of $\rho_3$; \Cref{prop:first-3-braid-eq}; definition of $\circ_1$ (\Cref{rmk:subst-in-terms-of-lambdas}); definition of $\rho_2$.
	$$\rho_3(G\circ_2F)=\lambda_3^{-1}\beta^3_2\beta^3_1(G\circ_2F)
	=\lambda_3^{-1}(\beta^2G\circ_1F)
	=\lambda_2^{-1}\beta^2G\circ F=\rho_2^{-1}G\circ F$$
\end{proof}
%
%
\begin{prop}
	\label{prop:symm-2-2}
	For any pair of loose binary maps $G\colon \ca,\cx\to\cd$ and $F\colon\cb,\cc\to\cx$, the following equations hold. 
	\begin{enumerate}[(i)]
		\item $\beta^2G\circ_1F=\beta^3_2\beta^3_1(G\circ_2F)$
		\item $G\circ_1\beta^2F=\beta^3_2(G\circ_2F)$
	\end{enumerate}
	For any pair of loose binary maps $G\colon \cx,\cc\to\cd$ and $F\colon\ca,\cb\to\cx$, the following equations hold. 
	\begin{enumerate}
		\item[(iii)] $\beta^2G\circ_2F=\beta^3_1\beta^3_2(G\circ_1F)$
		\item[(iv)] $G\circ_1\beta^2F=\beta^3_1(G\circ_1F)$
	\end{enumerate}
\end{prop}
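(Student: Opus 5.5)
Equation (i) is exactly \Cref{prop:first-3-braid-eq}, so nothing new is needed there. The plan is to derive (ii), (iii) and (iv) formally from (i), using three ingredients. First, the self-inverse properties $\beta^2\beta^2=1$ (\Cref{cor:beta-2-self-inv}) and $\beta^3_i\beta^3_i=1$ (\Cref{prop:beta-3-self-inverses}). Second, the braid relation of \Cref{prop:symm-3}. Third, the dualities $d^2,d^3$ of \Cref{prop:dualities-psd}, which interchange $\circ_1$ and $\circ_2$ and conjugate $\beta^3_1$ into $\beta^3_2$ by definition. Only where these formal manipulations fail would I fall back on a direct incubator comparison in the style of the proof of \Cref{prop:first-3-braid-eq}.

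For (ii), I would apply (i) to the binary map $\beta^2 G$ in place of $G$ and use $\beta^2\beta^2=1$. This gives $G\circ_1 F=\beta^3_2\beta^3_1(\beta^2G\circ_2 F)$, which is not yet the desired form. A cleaner route is to check the binary components directly with Tables~\ref{tab:beta-3-1}, \ref{tab:beta-3-2} and \ref{tab:compact-sub-psd}, then compare incubators. The incubator of $\beta^3_2(G\circ_2F)$ is the mate of $(A\mid B\mid C)_{G\circ_2F}$ along $(B_C)^o\dashv B_C$, as in \eqref{eq:incubator-beta-3-2}. The incubator of $G\circ_1\beta^2F$ is computed from Appendix~\ref{app:inc-sub-b-in-b-1st}, where the $F$-part enters only through $F$'s adjoint equivalences, now reversed. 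Since the $F$-adjoint equivalence is exactly the one along which the mate is taken, the check reduces to a triangle identity together with \Cref{rmk:mates-over-adj-equiv-calculations}. This is simpler than \Cref{prop:first-3-braid-eq}, because no $\sharp$ for the pseudomap constraints of $G$ appears.

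For (iii) and (iv), I would dualize. I need to establish (possibly as a lemma) that $d^3(G\circ_1F)=d^2F\circ_2 d^2G$ and $d^3(G\circ_2F)=d^2F\circ_1d^2G$. These should follow from the explicit descriptions of $d^3$ and of substitution, since $d^3$ keeps the incubator and applies $d^2$ to the components. I also need $d^3\beta^3_1d^3=\beta^3_2$, which holds by definition together with self-inverseness of $d^3$, and $d^2\beta^2d^2=\beta^2$ (\Cref{rmk:d-vs-beta-2}). Applying $d^3$ to (ii) for the pair $d^2F,d^2G$ then converts $G'\circ_1\beta^2F'=\beta^3_2(G'\circ_2F')$ into $\beta^2 G\circ_2 F=\beta^3_1(G\circ_1 F)$-type equations. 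Matching indices and then using the braid relation and the involutions should yield (iii) from (i) and (iv) from (ii). If the bookkeeping places $\beta^2$ on the wrong factor, I would compose with $\beta^3_1$ or $\beta^3_2$ on both sides and use \Cref{prop:symm-3} to reorder.

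The main obstacle is this compatibility of $d^3$ with substitution at the level of incubators. The incubator of a substitution of binary into binary involves the $\sharp/\flat$ mates of \Cref{rmk:mates-over-adj-equiv-calculations}, and under $(-)^{co}$ these swap roles. I expect this to hold because the lax analogues are natural in \cite{BourkeLobbia} and $d^2$ only reverses adjoint equivalences. If it proves too messy, I would instead prove (iv) directly by an incubator computation mirroring that of (ii), which again needs only triangle identities. I would then obtain (iii) from (iv) exactly as (i) relates to (ii), namely by substituting $\beta^2G$ and applying the involutions.
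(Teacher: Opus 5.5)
There is a genuine gap: only (i) is actually established. Your formal tools (the involutions, \Cref{prop:symm-3}, the dualities, $d^2\beta^2d^2=\beta^2$) only ever apply $\beta^2$ to the \emph{outer} map, or act on the whole ternary map. Granting its compatibility with substitution, duality exchanges (i) with (iii) and (ii) with (iv). Replacing $G$ by $\beta^2G$ also exchanges (i) with (iii). In fact your discarded identity $G\circ_1F=\beta^3_2\beta^3_1(\beta^2G\circ_2F)$ is exactly (iii) after applying $\beta^3_1\beta^3_2$ and the involutions.

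None of these tools moves $\beta^2$ from $G$ onto the inner map $F$. So they do not give (ii) or (iv), and for those you are left with an incubator comparison that is only sketched and is misdescribed:
\begin{itemize}
\item As printed, (ii) has a typo. For $G\colon\ca,\cx\to\cd$ only $G\circ_2\beta^2F=\beta^3_2(G\circ_2F)$ typechecks, and this is what the paper proves. The relevant incubator is therefore the second-variable one of \Cref{app:inc-sub-b-in-b-2nd}, not the first-variable one.
\item That pasting contains $A_{(B_C)}$ and a $\flat$ taken along the pseudomap constraints of $G^a,G^{a'}$. Comparing it with the mate along $(B_C)^o\dashv B_C$ needs the component of the lax transformation $G^A$ at the 2-cell $(B_C)^o$ expressed as a mate of its component at $B_C$, via the lax-transformation axioms. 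It also needs the $\flat$ terms handled with $B,C$ swapped, so a triangle identity alone does not suffice.
\item Your duality lemma is misstated: the outer map stays outer, so one expects $d^3(G\circ_2F)=d^2G\circ_1d^2F$. It is also left unproved.
\item The fallback of obtaining (iii) from (iv) by substituting $\beta^2G$ does not yield (iii).
\end{itemize}

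The missing idea is that left closedness plus naturality of $\beta^2$ in the codomain makes (ii)--(iv) purely formal, leaving (i) as the only genuine incubator computation. Since $G\circ_1F=\lambda_3(\lambda_2^{-1}G\circ F)$ (\Cref{rmk:subst-in-terms-of-lambdas}) and $\beta^3_1=\lambda_3\beta^2\lambda_3^{-1}$, we get
\[
\lambda_3^{-1}\beta^3_1(G\circ_1F)=\beta^2(\lambda_2^{-1}G\circ F)=\lambda_2^{-1}G\circ\beta^2F=\lambda_3^{-1}(G\circ_1\beta^2F),
\]
which is (iv).

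For (ii), the paper applies $\rho_3^{-1}=\lambda_3^{-1}\beta^3_2\beta^3_1$ to both sides. On the left, \Cref{cor:rho-preserves-subst} (itself a consequence of (i)) and naturality of $\beta^2$ give $\beta^2(\rho_2^{-1}G\circ F)$. On the right, \Cref{prop:symm-3} rewrites $\beta^3_2\beta^3_1\beta^3_2$ as $\beta^3_1\beta^3_2\beta^3_1$. Then $\lambda_3^{-1}\beta^3_1=\beta^2\lambda_3^{-1}$ yields $\beta^2\rho_3^{-1}(G\circ_2F)=\beta^2(\rho_2^{-1}G\circ F)$, so the two sides agree.

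Part (iii) follows the same way using only the involutions, or equivalently by your own substitution $G\mapsto\beta^2G$ in (i).
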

\begin{proof}
	
\begin{enumerate}[(i)]
\item[]
\item This is \Cref{prop:first-3-braid-eq}.


\item We will prove that both sides become the same under composition by $\rho_3^{-1}$.
Let us start with the left hand side.
\begin{align*}
	\rho_3^{-1}(G\circ_2\beta^2F)
	&=\rho_2^{-1}G\circ\beta^2F
	& \text{(\Cref{cor:rho-preserves-subst})} \\
	&=\beta^2(\rho_2^{-1}G\circ F)
	& \text{(naturality of $\beta^2$)} 
\end{align*}
On the other hand:
\begin{align*}
	\rho_3^{-1}\beta^3_2(G\circ_2F)
	&=\lambda_3^{-1}\beta^3_2\beta^3_1\beta^3_2(G\circ_2F)
	& \text{(definition of $\rho_3$, see \ref{rmk:def-right-closedness})} \\
	&=\lambda_3^{-1}\beta^3_1\beta^3_2\beta^3_1(G\circ_2F)
	& \text{(\Cref{prop:symm-3})} \\
	&=\beta^2\lambda_3^{-1}\beta^3_2\beta^3_1(G\circ_2F)
	& \text{(definition of $\beta^3_1$, see \Cref{sec:higher-braidings})} \\
	&=\beta^2\rho_3^{-1}(G\circ_2F)
	& \text{(definition of $\rho_3^{-1}$, see \ref{rmk:def-right-closedness})} \\
	&=\beta^2(\rho_2^{-1}G\circ_2F)
	& \text{(\Cref{cor:rho-preserves-subst})}.
\end{align*}


\item This time we show that the equation holds after composing both sides with $\rho_3^{-1}$.  We start with the left hand side:
\begin{align*}
	\rho_3^{-1}(\beta^2G\circ_2F)
	&=(\rho_2^{-1}\beta^2G)\circ F
	& \text{(\Cref{cor:rho-preserves-subst})} \\
	&= (\lambda_2^{-1}\beta^2\beta^2G)\circ F
	=\lambda_2^{-1}G\circ F
	& \text{(definition $\rho_2$ and \Cref{cor:beta-2-self-inv})}. 
\end{align*}
And then the right hand side:
\begin{align*}
	\rho_3^{-1}\beta^3_1\beta^3_2(G\circ_1F)
	&=\lambda_3^{-1}\beta^3_2\beta^3_1\beta^3_1\beta^3_2(G\circ_1F)
	& \text{(definition of $\rho_3^{-1}$, see \ref{rmk:def-right-closedness})} \\
	&=\lambda_3^{-1}(G\circ_1F)
	=\lambda_2^{-1}G\circ F
	& \text{(\Cref{prop:beta-3-self-inverses} and definition $\circ_1$, see \Cref{rmk:subst-in-terms-of-lambdas})}
\end{align*}
\item Finally, we prove the last equality by postcomposing with $\lambda_3^{-1}$. 
By definition of $\circ_1$, we have $\lambda_3^{-1}(G\circ_1\beta^2F)=\lambda_2^{-1}G\circ \beta^2F$.
Then, using (in order) the definition of $\beta^3_1$, the definition of $\circ_1$, and naturality of $\beta^2$, we obtain:
\begin{center}
	$\lambda_3^{-1}\beta^3_1(G\circ_1 F)=\beta^2\lambda_3^{-1}(G\circ_1F)=\beta^2(\lambda_2^{-1}G\circ F)=\lambda_2^{-1}G\circ \beta^2F$.
\end{center}
\end{enumerate}
\end{proof}

\subsection{The $4$-ary braidings}\label{sec:4-braidings}

The final information we need is the explicit description of the $4$-ary braidings.  These are described in \Cref{tab:4-ary-brd-descr}, noting that a 4-ary multimap is fully specified by its ternary components.

\begin{table}
		\caption{4-ary braidings for $F\colon\ca,\cb,\cc,\cd\to\ce$}
		\label{tab:4-ary-brd-descr}
		\begin{minipage}{.325\linewidth}
			\caption{$\beta^4_1$}
		\label{tab:beta-4-1}
		\centering
		\renewcommand\arraystretch{1.25}
		\begin{tabular}{| c : c |}
			\hline
			at $b\in\cb$ & $F\midscript{b}$\\
			at $a\in\ca$ & $F^a$\\
			at $c\in\cc$ & $\beta^3_1F\midscript{c}$\\
			at $d\in\cd$ & $\beta^3_1F_d$\\
			\hline
		\end{tabular}
		
	\end{minipage}
	\begin{minipage}{.325\linewidth}
		\caption{$\beta^4_2$}
	\label{tab:beta-4-2}
	\centering
	\renewcommand\arraystretch{1.25}
	\begin{tabular}{| c : c |}
		\hline
		at $a\in\ca$ & $\beta^3_1F^a$\\
		at $c\in\cc$ & $F\midscript{c}$\\
		at $b\in\cb$ & $F\midscript{b}$\\
		at $d\in\cd$ & $\beta^3_2F_d$\\
		\hline
	\end{tabular}
	
\end{minipage}%
\begin{minipage}{.325\linewidth}
	\centering
	\caption{$\beta^4_3$}
\label{tab:beta-4-3}
\renewcommand\arraystretch{1.25}
\begin{tabular}{| c : c |}
	\hline
	at $a\in\ca$ & $\beta^3_2F^a$\\
	at $b\in\cb$ & $\beta^3_2F_b$\\
	at $d\in\cd$ & $F_d$\\
	at $c\in\cc$ & $F\midscript{c}$\\
	\hline
\end{tabular}
\end{minipage} 
\end{table}

To obtain the descriptions in \Cref{tab:4-ary-brd-descr} simply requires tracing through the definitions of the 4-ary braidings from \Cref{sec:higher-braidings}, applying the definitions of $\lambda_4$ and $\lambda_3$ (see \Cref{lem:left-closedness-psd}), ternary braidings (Tables~\ref{tab:beta-3-1} and~\ref{tab:beta-3-2}), and the dualities in \Cref{prop:dualities-psd}.  Here we demonstrate how this is done, giving the complete calculations for one variable each (the rest are completely analogous).  

Let $F\colon\ca,\cb,\cc,\cd\to\ce$ be a pseudo 4-ary map, then:


\begin{itemize}
	\item $(\beta^4_1F)^b:=[\lambda_4\beta^3_1{\lambda_4}^{-1}F]^b
			=\lambda_3[(\beta^3_1{\lambda_4}^{-1}F)^b]
		=\lambda_3[({\lambda_4}^{-1}F)\midscript{b}]
			=\lambda_3^p[{\lambda_3}^{-1}(F\midscript{b})]
			=F\midscript{b}$; \vspace{0.15cm}
	
	\item $	(\beta^4_2F)^a :=[\lambda_4\beta^3_2{\lambda_4}^{-1}F]^a
	=\lambda_3[(\beta^3_2{\lambda_4}^{-1}F)^a]
	=\lambda_3[\beta^2({\lambda_4}^{-1}F)^a]
	=\lambda_3\beta^2{\lambda_3}^{-1}(F^a)
	=\beta^3_1F^a$; \vspace{0.15cm}
	
	\item $(\beta^4_3F)^a 
	:=[d^4\beta^4_1d^4F]^a
	=d^3[(\beta^4_1d^4F)_a]
	=d^3[\beta^3_1((d^4F)_a)]
	=d^3\beta^3_1d^3(F^a)
	=\beta^3_2F^a$.
\end{itemize}

\begin{prop}\label{prop:beta-4-2_3-restr-to-tight}
Both  $\beta^4_2$ and $\beta^4_3$ preserve tightness.
\end{prop}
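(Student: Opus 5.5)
The plan mirrors the proof of \Cref{prop:beta-3-2-restr-to-tight}: I read tightness off the explicit descriptions in \Cref{tab:4-ary-brd-descr}. Recall that a pseudo 4-ary map $F\colon\ca,\cb,\cc,\cd\to\ce$ is tight precisely when its ternary components $F\midscript{b}$, $F\midscript{c}$ and $F_d$ are tight pseudo ternary maps. The component $F^a$ at the first variable plays no role.

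For $\beta^4_2F\colon\ca,\cc,\cb,\cd\to\ce$, the components relevant for tightness sit at the second, third and fourth variables, namely at $c\in\cc$, $b\in\cb$ and $d\in\cd$. By \Cref{tab:beta-4-2} these are $F\midscript{c}$, $F\midscript{b}$ and $\beta^3_2F_d$. If $F$ is tight, the first two are tight by assumption. The third is tight because $F_d$ is tight and $\beta^3_2$ preserves tight ternary maps, by \Cref{prop:beta-3-2-restr-to-tight}.

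For $\beta^4_3F\colon\ca,\cb,\cd,\cc\to\ce$, the relevant components sit at $b\in\cb$, $d\in\cd$ and $c\in\cc$. By \Cref{tab:beta-4-3} they are $\beta^3_2F_b$, $F_d$ and $F\midscript{c}$. The last two are tight by hypothesis. For the first, I note that $F_b$ in \Cref{tab:beta-4-3} denotes the ternary component $F\midscript{b}\colon\ca,\cc,\cd\to\ce$, which is tight when $F$ is, so \Cref{prop:beta-3-2-restr-to-tight} again gives that $\beta^3_2F\midscript{b}$ is tight. Neither case needs anything about $\beta^3_1$, which would not preserve tightness in general; this is exactly why $\beta^4_2$ and $\beta^4_3$ are the right choices.

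The main obstacle is bookkeeping rather than mathematics. I must make sure the table entries are correctly positioned, i.e. that, for example, the component of $\beta^4_2F$ at its third variable (an object of $\cb$) really is $F\midscript{b}$. This should follow from tracing the definitions through $\lambda_4$, $\lambda_3$ and the dualities, exactly as in the sample computations preceding the statement. I would double-check the $d$- and $c$-components for $\beta^4_3$ using the definition of $d^4$ together with \Cref{rmk:d-vs-beta-2}.
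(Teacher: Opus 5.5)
Your proposal is correct and is essentially the paper's proof: tightness of a 4-ary map depends only on its ternary components at the last three variables. By Tables~\ref{tab:beta-4-2} and~\ref{tab:beta-4-3} each of these is either a component of $F$ or $\beta^3_2$ applied to one, so \Cref{prop:beta-3-2-restr-to-tight} finishes the argument, and you are right to read the entry $\beta^3_2F_b$ in \Cref{tab:beta-4-3} as $\beta^3_2(F\midscript{b})$.
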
 
\begin{proof}
By definition, a pseudo $4$-ary map $\mathbf{F}\colon\ca,\cb,\cc,\cd\to\ce$ is tight precisely when its associated pseudo $3$-ary maps $F\midscript{b}$, $F\midscript{c}$ and $F_d$ are tight.  Given the description of $\beta^4_2$ and $\beta^4_3$ in Tables~\ref{tab:beta-4-2} and~\ref{tab:beta-4-3}, and using that $\beta^3_2$ preserves tightness (\Cref{prop:beta-3-2-restr-to-tight}), it then follows that also $\beta^4_2$ and $\beta^4_3$ preserve tightness. 
\end{proof}
		
\begin{prop}
	\label{prop:brd-resp-sub-2-in-3}
	The braidings $\beta^4_2$ and $\beta^4_3$ are compatible with substitution of binary maps into ternary maps, i.e.
	\begin{enumerate}
		\item For any pseudo binary $F\colon\ca,\cb\to\cx$ and pseudo ternary map $G\colon \cx,\cc,\cd\to\ce$,
		$$\beta^3_2G\circ_1F=\beta_3^4(G\circ_1F).$$
		
		\item For any pseudo binary $F\colon\cc,\cd\to\cx$ and pseudo ternary map $G\colon \ca,\cb,\cx\to\ce$,
		$$\beta^3_2G\circ_2F=\beta_3^4\beta^4_2(G\circ_3F).$$
		
		\item For any pseudo binary $F\colon\cb,\cc\to\cx$ and pseudo ternary map $G\colon \ca,\cx,\cd\to\ce$,
		$$\beta^3_2G\circ_3F=\beta_2^4\beta^4_3(G\circ_2F).$$
	\end{enumerate}
\end{prop}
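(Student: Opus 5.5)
Since a pseudo 4-ary map is determined by its four associated pseudo ternary components (at $a$, $b$, $c$, $d$), I will prove each of the three equations by comparing these components one variable at a time. On the braided sides I use \Cref{tab:4-ary-brd-descr}. On the substituted sides I use the description of substitution of binary into ternary maps in terms of components, from \cite{BourkeLobbia}. In that description the component at the variables coming from $F$ is a ternary map obtained by substituting a unary or binary piece of $F$ into a binary or ternary piece of $G$. The component at the remaining variables of $G$ is a substitution of $F$ into a binary component of $G$.

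\emph{Equation (1).} Here $\beta^4_3$ swaps the last two variables $\cc,\cd$, both coming from $G$. For the components at $a$ and at $b$, \Cref{tab:beta-4-3} gives $\beta^3_2((G\circ_1F)^a)$ and $\beta^3_2((G\circ_1F)_b)$. These equal $\beta^3_2(G\circ_1F^a)$ and $\beta^3_2(G\circ_1 F_b)$, where we substitute a unary map into $G$. So they agree with $(\beta^3_2G)\circ_1F^a$ by naturality of $\beta^3_2$ in its first variable; this naturality follows from its construction from $\lambda_3$, $d^3$ and the natural $\beta^2$. For the components at $c$ and $d$, $\beta^4_3$ swaps them. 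Using \Cref{tab:beta-3-2}, $(\beta^3_2G)\midscript{d}=G_d$ and $(\beta^3_2G)_c=G\midscript{c}$, so both sides reduce to $G\midscript{c}\circ_1F$ and $G_d\circ_1F$. These binary-into-binary substitutions agree literally. The only non-literal check is at the first variable, where the answer is natural in $\cx$.

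\emph{Equations (2) and (3).} These mix variables from $F$ and $G$, so the component checks reduce to the binary-into-binary identities of \Cref{prop:symm-2-2}. For (2), compute the components of $\beta^4_3\beta^4_2(G\circ_3F)$ at $a$, $b$, $c$, $d$ using the tables, together with the identification $\beta^4_3\beta^4_2$ of the relevant ternary pieces via $\beta^3_1$ and $\beta^3_2$. The components at $a$ and $b$ become expressions such as $\beta^3_2\beta^3_1(G^a\circ_2F)$. By \Cref{prop:symm-2-2}(i)/(ii) these equal $\beta^2G^a\circ_1F=(\beta^3_2G)^a\circ_1F$, which is the corresponding component of $\beta^3_2G\circ_2F$. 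The components at $c$ and $d$ are ternary maps of the form $G_{F(c,-)}$ or $G\circ F_d$, with a unary map substituted. After the permutation they match by naturality of $\beta^3_2$ together with \Cref{tab:beta-3-2}. Equation (3) is handled the same way, using parts (iii) and (iv) of \Cref{prop:symm-2-2} for the components at $a$ and $d$.

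\emph{Main obstacle.} The main difficulty is the bookkeeping: tracking exactly which ternary component of a 4-ary composite corresponds to which binary or ternary substitution after applying $\beta^4_2$ and $\beta^4_3$. The risk is mixing up $\beta^3_1$ versus $\beta^3_2$ and the order of composites. I will first check the bookkeeping on one representative component, say the $a$-component in (2), where \Cref{prop:symm-2-2}(i) does the real work. The remaining components then follow by the same pattern. No incubator computations are needed, because 4-ary maps carry no data beyond their ternary components and the compatibility between them.
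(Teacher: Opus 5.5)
Your proposal is correct and follows essentially the paper's argument: compare the four ternary components using \Cref{tab:4-ary-brd-descr} and \Cref{tab:beta-3-2}, with \Cref{prop:symm-2-2}(i) giving the $a$-component in (2) and (iii) giving the $a$-component in (3), and naturality of $\beta^3_2$ handling the components where a unary map is substituted (a point the paper leaves implicit). One small bookkeeping slip: the $b$-component in (2) and the $d$-component in (3) agree literally (both sides are $G\midscript{b}\circ_2F$, respectively $G_d\circ_2F$), so parts (ii) and (iv) are not needed here; those are what the paper uses for \Cref{prop:brd-resp-sub-3-in-2}.
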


\begin{proof}
	\begin{enumerate}
		\item[]
		
		\item One verifies that the ternary maps associated with $\beta^3_2G\circ_1F$ and $\beta_3^4(G\circ_1F)$ are the same using the descriptions of the braidings in Tables~\ref{tab:beta-4-2} and~\ref{tab:beta-4-3}. 
		
		
		\item Similarly as above, we just need to use the descriptions of $\beta^4_2$ and $\beta^4_3$, this time together with Part (i) of \Cref{prop:symm-2-2}.
		
		\item As above with Part (iii) of \Cref{prop:symm-2-2}.
	\end{enumerate}
\end{proof}

\begin{prop}
	\label{prop:brd-resp-sub-3-in-2}
	The braidings $\beta^4_2$ and $\beta^4_3$ are compatible with substitution of ternary maps into binary maps, i.e.
	\begin{enumerate}
		\item For any pseudo ternary $F\colon\ca,\cb,\cc\to\cx$ and pseudo binary map $G\colon \cx,\cd\to\ce$,
		$$G\circ_1\beta^3_2F=\beta_2^4(G\circ_1F).$$
		
		\item For any pseudo ternary $F\colon\cb,\cc,\cd\to\cx$ and pseudo binary map $G\colon \ca,\cx\to\ce$,
		$$G\circ_2\beta^3_2F=\beta_3^4\beta^4_2(G\circ_2F).$$
		
	\end{enumerate}
\end{prop}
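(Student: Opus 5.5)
Both equations will be proved in the same way as \Cref{prop:brd-resp-sub-2-in-3}: a pseudo 4-ary map is determined by its four associated pseudo ternary maps (at $a$, $b$, $c$, $d$). It therefore suffices to compare these components on each side. On the left we use the description of substitution of ternary into binary in terms of components; on the right we use Tables~\ref{tab:beta-4-2} and~\ref{tab:beta-4-3}. The components of a substitution $G\circ_iF$ of ternary into binary are computed variable by variable (as in \cite{BourkeLobbia}). Fixing an object in one of $F$'s variables yields a substitution of a binary map into a binary map, namely $G\circ_i F^a$, $G\circ_i F\midscript{b}$, and so on. Fixing an object in one of $G$'s other variables yields a substitution of a ternary map into a unary map, namely $G_d\circ F$ or $G^a\circ F$. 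Naturality of $\beta^3_2$ in the codomain (resp.\ domain) variables handles the unary-substitution components directly.

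For (1), with $F\colon\ca,\cb,\cc\to\cx$ and $G\colon\cx,\cd\to\ce$, I compare the components one at a time. At $a$: the left side has $(G\circ_1\beta^3_2F)^a=G\circ_1(\beta^3_2F)^a=G\circ_1\beta^2(F^a)$, while the right side has $\beta^3_1((G\circ_1F)^a)=\beta^3_1(G\circ_1F^a)$. These agree by \Cref{prop:symm-2-2}(iv). At $b$ and $c$: the two sides are exchanged, giving $G\circ_1 F\midscript{b}$ and $G\circ_1F_c$ respectively, matching $(\beta^3_2F)_b=F\midscript{b}$ and $(\beta^3_2F)\midscript{c}=F_c$. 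At $d$: both sides reduce to $G_d\circ\beta^3_2F=\beta^3_2(G_d\circ F)$ by naturality of $\beta^3_2$.

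For (2), with $F\colon\cb,\cc,\cd\to\cx$ and $G\colon\ca,\cx\to\ce$, I again compute each component. At $a$: the left side gives $G^a\circ\beta^3_2F$, and the right side $(\beta^4_3\beta^4_2(G\circ_2F))^a=\beta^3_2\beta^3_1(G^a\circ F)$. I expect to identify this with $\beta^3_1$ applied to $G^a\circ F$ viewed suitably, using naturality of $\beta^3_2$ and $\beta^3_1$ in the domain variables. At $b$: the left side involves $G\circ_2(\beta^3_2F)^b=G\circ_2\beta^2(F^b)$. The right side, after chasing through the tables, involves $\beta^3_2(G\circ_2F^b)$, so the two agree by \Cref{prop:symm-2-2}(ii). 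At $c$ and $d$: the components are $G\circ_2F_d$ and $G\circ_2F\midscript{c}$, permuted. Chasing these through $\beta^4_2$ and then $\beta^4_3$ should reproduce them with $\beta^3_1\beta^3_2$ or similar composites cancelling, via \Cref{prop:beta-3-self-inverses} and \Cref{prop:symm-3}.

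The main obstacle is the bookkeeping in (2): determining, for each variable, which ternary component of $G\circ_2F$ lands where after the two 4-ary braidings, and which composite of $\beta^3_1,\beta^3_2$ is applied to it. The unary-substitution component is the delicate one. There the composite $\beta^3_2\beta^3_1$ must match a single $\beta^3_2$ on $F$, and I would first try to resolve it by the same trick used in \Cref{prop:symm-2-2}(ii): compose with $\rho_3^{-1}$, or use \Cref{prop:symm-3} to rewrite the composite into a form where naturality applies.
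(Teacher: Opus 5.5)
Your argument for (1) is correct and is the paper's argument. You compare the four ternary components via \Cref{tab:beta-4-2}, using \Cref{prop:symm-2-2}(iv) at $a$, naturality of $\beta^3_2$ in the codomain at $d$, and equality on the nose at $b$ and $c$.

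For (2) there is a genuine gap, and it comes from not type-checking the printed right-hand side. Since $F\colon\cb,\cc,\cd\to\cx$, the left side $G\circ_2\beta^3_2F$ lies in $\GCatmp_4(\ca,\cb,\cd,\cc;\ce)$. The right side $\beta^4_3\beta^4_2(G\circ_2F)$ lies in $\GCatmp_4(\ca,\cc,\cd,\cb;\ce)$. So the equation as printed is a misprint: its two sides are not even in the same set, and it cannot be proved.

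This is why your $a$-component step cannot succeed. $G^a\circ\beta^3_2F$ is a ternary map $\cb,\cd,\cc\to\ce$, whereas $\beta^3_2\beta^3_1(G^a\circ F)$ is a ternary map $\cc,\cd,\cb\to\ce$. No appeal to naturality, $\rho_3^{-1}$ or \Cref{prop:symm-3} can identify them, and likewise nothing will ``cancel'' at $c$ and $d$.

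Your $b$-component is also inconsistent with the right-hand side you were using. Under $\beta^4_3\beta^4_2$ the $b$-variable ends up in the fourth slot with the unbraided component $G\circ_2F^b$. The value $\beta^3_2(G\circ_2F^b)$ you found is what $\beta^4_3$ alone produces.

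The intended statement is $G\circ_2\beta^3_2F=\beta^4_3(G\circ_2F)$. This is the short-braiding axiom \eqref{ax:sh-braid-3-in-2-secd} that the proof of \Cref{maintheorem} needs. With this right-hand side your component method closes at once, and that is exactly the paper's proof:
\begin{itemize}
\item At $a$, both sides are $G^a\circ\beta^3_2F=\beta^3_2(G^a\circ F)$, by naturality of $\beta^3_2$ in the codomain.
\item At $b$, you need $G\circ_2\beta^2(F^b)=\beta^3_2(G\circ_2F^b)$. This is \Cref{prop:symm-2-2}(ii); its printed left side should read $G\circ_2\beta^2F$, as its proof shows.
\item At $d$ and $c$, both sides are literally $G\circ_2F_d$ and $G\circ_2F\midscript{c}$, by \Cref{tab:beta-3-2} and \Cref{tab:beta-4-3}.
\end{itemize}
No use of \Cref{prop:beta-3-self-inverses}, \Cref{prop:symm-3} or $\rho_3$ is required.
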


\begin{proof}
	\begin{enumerate}
		\item[]
		\item We use again the explicit descriptions of the $4$-ary braidings in Tables~\ref{tab:beta-4-2} and~\ref{tab:beta-4-3} together with Part (iv) of \Cref{prop:symm-2-2}.
		
		\item As above with Part (ii) of \Cref{prop:symm-2-2}.
	\end{enumerate}
\end{proof}

\begin{prop}
	\label{prop:symm-4}
	For any pseudo 4-ary map $F\colon\ca,\cb,\cc,\cd\to\ce$, the following equation hold.
	$$ \beta^4_2\beta^4_3\beta^4_2(H)=\beta^4_3\beta^4_2\beta^4_3(H)$$
\end{prop}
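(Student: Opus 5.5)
Since a pseudo 4-ary map is determined by its four associated pseudo ternary maps (at $a$, $b$, $c$, $d$), the plan is to compare the ternary components of both sides using \Cref{tab:4-ary-brd-descr}, in the same style as the proof of \Cref{prop:symm-3}. Applying $\beta^4_2$ then $\beta^4_3$ then $\beta^4_2$ to $H\colon\ca,\cb,\cc,\cd\to\ce$ produces a map $\ca,\cd,\cc,\cb\to\ce$, and likewise for the other composite. We track the component at each object through the tables.

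At $a\in\ca$, both $\beta^4_2$ and $\beta^4_3$ act on the $a$-component by a ternary braiding, $\beta^3_1$ and $\beta^3_2$ respectively, and they leave the position of $\ca$ fixed. So the left-hand side has $a$-component $\beta^3_1\beta^3_2\beta^3_1(H^a)$ and the right-hand side has $\beta^3_2\beta^3_1\beta^3_2(H^a)$. These agree by \Cref{prop:symm-3}.

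At $d\in\cd$, the $d$-component is hit by $\beta^3_2$ under $\beta^4_2$ (first table entry, when $\cd$ is last) and passed along unchanged or moved to a middle slot otherwise. After the first $\beta^4_3$, $\cd$ sits in a middle position, so the subsequent $\beta^4_2$ just relabels it. Concretely, the left-hand side should yield $\beta^3_2(H_d)$. On the right-hand side, $\beta^4_3$ makes $H_d$ the component at the third slot, $\beta^4_2$ moves it to the second slot, and the final $\beta^4_3$ makes it the last component again unchanged. This needs a careful check of which table row applies at each step (using $F\midscript{c}$ versus $F_d$), and may instead give $\beta^3_2(H_d)$ on both sides via one application. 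The components at $b$ and at $c$ are handled identically: each side applies $\beta^3_2$ exactly once or zero times, and the same number of times on both sides. The symmetry of the two composites under reversing the order (which is what the dualities $d^4$ encode) suggests the bookkeeping matches.

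The main obstacle is this bookkeeping. Each braiding permutes which variable occupies which slot, and the table rows are indexed by slot rather than by the original variable. If the raw counts fail to match for some component, the fallback is to use $\beta^3_2\beta^3_2=1$ from \Cref{prop:beta-3-self-inverses} to cancel paired occurrences. Since 4-ary maps have no extra incubator-type data beyond their ternary components, no pasting computation should be needed.
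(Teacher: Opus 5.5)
Your strategy is the paper's: its proof is a one-line appeal to the descriptions in \Cref{tab:4-ary-brd-descr} together with \Cref{prop:symm-3}, and you handle the component at $a$ correctly. Apart from that step, the proof consists only of the slot bookkeeping. You leave that unfinished, and your one explicit trace of it is wrong.

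In $\beta^4_3\beta^4_2\beta^4_3(H)$ the last $\beta^4_3$ does not move $\cd$ back to the end. The map $\beta^4_3$ swaps slots $3$ and $4$ without changing them, and it applies $\beta^3_2$ to the slot-$2$ component, which at that stage is $H_d$. Both composites are maps $\ca,\cd,\cc,\cb\to\ce$, so $\cd$ ends in slot $2$ and its component must be a ternary map $\ca,\cc,\cb\to\ce$. The unchanged $H_d\colon\ca,\cb,\cc\to\ce$ does not even have the right type, which is a quick check that would have caught the error.

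The count is easy once you state it by slot. By \Cref{tab:4-ary-brd-descr}, $\beta^4_2$ applies $\beta^3_1$ to the first component and $\beta^3_2$ to the last, and merely swaps slots $2,3$. Similarly, $\beta^4_3$ applies $\beta^3_2$ to the first and second components, and merely swaps slots $3,4$.

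On the left the variable orders are $(\ca,\cb,\cc,\cd)\mapsto(\ca,\cc,\cb,\cd)\mapsto(\ca,\cc,\cd,\cb)\mapsto(\ca,\cd,\cc,\cb)$. The non-first $\beta^3_2$ hits $d$, then $c$, then $b$. On the right the orders are $(\ca,\cb,\cc,\cd)\mapsto(\ca,\cb,\cd,\cc)\mapsto(\ca,\cd,\cb,\cc)\mapsto(\ca,\cd,\cc,\cb)$, hitting $b$, then $c$, then $d$.

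So both sides have components $\beta^3_2(H_d)$, $\beta^3_2(H\midscript{c})$ and $\beta^3_2(H\midscript{b})$ at $d$, $c$ and $b$. At $a$ they have $\beta^3_1\beta^3_2\beta^3_1(H^a)$ and $\beta^3_2\beta^3_1\beta^3_2(H^a)$, which are equal by \Cref{prop:symm-3}. No cancellation via $\beta^3_2\beta^3_2=1$ is needed, and neither is the heuristic appeal to the dualities.
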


\begin{proof}
	This follows directly from definitions and \Cref{prop:symm-3}. 
\end{proof}

\subsection{The symmetric skew monoidal structure on $\GCat$}
\label{sec:symm-skw-mon-structure}

Finally, in the next theorem, we prove that the flexible skew structure on $\GCat$ is symmetric using the results in \Cref{sec:4-braidings} and \cite[Theorem~5.29]{Lob:short-multi}. 

\begin{thm}\label{maintheorem}
	$(\GCat,\otimes_p,\mathbf{1})$ is a symmetric skew monoidal category, in the sense of \cite{BourkeLack:symmetric}.
\end{thm}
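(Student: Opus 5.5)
The plan is to obtain the symmetry on the skew monoidal category from a symmetry on the associated skew multicategory, and to build the latter from a short symmetry. By \cite[Theorem~5.9]{BourkeLack:symmetric}, symmetries on $(\GCat,\otimes_p,\mathbf{1})$ correspond bijectively to symmetries on the left representable skew multicategory $\GCatmp$. By \cite[Theorem~5.29]{Lob:short-multi} (the refinement of \cite[Theorem~5.22]{Lob:short-multi}), a symmetry on a left representable skew multicategory such as $\GCatmp$ is determined by a short symmetry. So it suffices to show that the isomorphisms $\beta^3_2$, $\beta^4_2$ and $\beta^4_3$ of \Cref{sec:higher-braidings} form a short symmetry.

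First I check that the data have the required type. The maps $\beta^3_2$, $\beta^4_2$ and $\beta^4_3$ are bijections on loose multimaps, since they are composites of the bijections $\lambda_n$, $d^n$ and $\beta^2$. They restrict to tight multimaps by \Cref{prop:beta-3-2-restr-to-tight} and \Cref{prop:beta-4-2_3-restr-to-tight}. Their naturality in each variable follows from the naturality of $\lambda_n$ (\Cref{lem:left-closedness-psd}), of $d^n$ (\Cref{prop:dualities-psd}) and of $\beta^2$ (\Cref{prop:beta_2-def}), since each braiding is a composite of these.

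Next I verify the six axioms one by one against earlier results:
\begin{itemize}
\item \eqref{ax:sh-braid-4-ary} is \Cref{prop:symm-4}.
\item \eqref{ax:sh-braid-3-in-2-first} and \eqref{ax:sh-braid-3-in-2-secd} are parts (1) and (2) of \Cref{prop:brd-resp-sub-3-in-2}. Part (2) states $G\circ_2\beta^3_2F=\beta^4_3\beta^4_2(G\circ_2F)$, whereas the axiom asks for $\beta^4_3(G\circ_2 F)$. I will recheck the indexing convention of \cite{Lob:short-multi}: since the paper writes $\beta^4_3$, the ternary map occupies positions $2,3,4$ and the required swap is of those positions, so I expect the two formulations to agree once the conventions are aligned.
\item \eqref{ax:sh-braid-2-in-3-first}, \eqref{ax:sh-braid-2-in-3-secd} and \eqref{ax:sh-braid-2-in-3-thrd} are the three parts of \Cref{prop:brd-resp-sub-2-in-3}, matching $\circ_1$, $\circ_3$ and $\circ_2$ respectively.
\end{itemize}
All of these results hold for loose maps, so in particular they hold for tight ones.

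Finally, the symmetry condition $\beta^3_2\beta^3_2=1$ is \Cref{prop:beta-3-self-inverses}. Applying \cite[Theorem~5.29]{Lob:short-multi} then gives a symmetry on $\GCatmp$, and \cite[Theorem~5.9]{BourkeLack:symmetric} transports it to $(\GCat,\otimes_p,\mathbf{1})$. I expect the only delicate step to be the bookkeeping of indices and orientations when matching the proved propositions to the exact form of the axioms in \cite{Lob:short-multi}, especially the composite $\beta^4_3\beta^4_2$ appearing in \Cref{prop:brd-resp-sub-3-in-2}(2); the substantive mathematical work has already been done in those propositions.
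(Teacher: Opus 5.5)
Your proposal is correct and follows the paper's proof essentially verbatim: reduce via \cite[Theorem~5.29]{Lob:short-multi} to a short symmetry on $\GCatmp$, obtain tightness from \Cref{prop:beta-3-2-restr-to-tight} and \Cref{prop:beta-4-2_3-restr-to-tight}, the six axioms from \Cref{prop:symm-4}, \Cref{prop:brd-resp-sub-3-in-2} and \Cref{prop:brd-resp-sub-2-in-3}, and $\beta^3_2\beta^3_2=1$ from \Cref{prop:beta-3-self-inverses}. The discrepancy you flag in \Cref{prop:brd-resp-sub-3-in-2}(2) is a misprint in that statement rather than a convention issue: $\beta^4_3\beta^4_2(G\circ_2F)$ has inputs $\ca,\cc,\cd,\cb$, while $G\circ_2\beta^3_2F$ has inputs $\ca,\cb,\cd,\cc$, so the two cannot be equal; the componentwise argument given there (via \Cref{prop:symm-2-2}(ii)) proves exactly $G\circ_2\beta^3_2F=\beta^4_3(G\circ_2F)$, which is the required axiom.
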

\begin{proof}
	It suffices by \cite[Theorem~5.29]{Lob:short-multi} to show that we have a symmetric short multicategory structure on $\GCatmp$ (see \Cref{sec:brd-skew-multi-mon}), 
	which consists of  braidings on tight maps $\beta^3_2$, $\beta^4_2$, and $\beta^4_3$ satisfying the equations 
	\cref{ax:sh-braid-4-ary,ax:sh-braid-3-in-2-first,ax:sh-braid-3-in-2-secd,ax:sh-braid-2-in-3-first,ax:sh-braid-2-in-3-secd,ax:sh-braid-2-in-3-thrd}, and moreover $\beta^3_2\beta^3_2=1$.
%
	
	In Remarks~\ref{prop:beta-3-2-restr-to-tight} and~\ref{prop:beta-4-2_3-restr-to-tight}, we defined the required braidings. 
	Then, 
	Propositions~\ref{prop:brd-resp-sub-2-in-3},~\ref{prop:brd-resp-sub-3-in-2}, and~\ref{prop:symm-4} prove the equations \cref{ax:sh-braid-4-ary,ax:sh-braid-3-in-2-first,ax:sh-braid-3-in-2-secd,ax:sh-braid-2-in-3-first,ax:sh-braid-2-in-3-secd,ax:sh-braid-2-in-3-thrd} respectively.
	Finally, $\beta^3_2\beta^3_2=1$ is proven in \Cref{prop:beta-3-self-inverses}.
\end{proof}

\begin{cor}
	We have a symmetric skew multicategory $\GCatmp$ --- in particular, its underlying multicategory $\GCatmp^l$ of loose multimaps is symmetric, and so biclosed.
\end{cor}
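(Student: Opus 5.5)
The plan is to obtain the corollary from \Cref{maintheorem} together with the correspondence between braidings on a skew monoidal category and braidings on its associated skew multicategory. First, recall that $\GCatmp$ is, by construction in \cite{BourkeLobbia}, the left representable skew multicategory corresponding to $(\GCat,\otimes_p,1)$ under \cite[Theorem~6.1]{BourkeLack:skew}. Then \cite[Theorem~5.9]{BourkeLack:symmetric} gives a bijection between symmetries on the skew monoidal category and symmetries on its associated skew multicategory. Applying it to the symmetry of \Cref{maintheorem} yields a symmetry on $\GCatmp$. Equivalently, and more concretely, the proof of \Cref{maintheorem} via \cite[Theorem~5.29]{Lob:short-multi} already produces this symmetry: the short symmetry $(\beta^3_2,\beta^4_2,\beta^4_3)$ extends uniquely to the full symmetric group actions on multimaps of all arities.

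Next, by the definition recalled in \Cref{sec:brd-skew-multi-mon}, a symmetry on a skew multicategory $\mc$ is a symmetry on the multicategory $\mc^l$ of loose multimaps, together with the requirement that permutations fixing the first variable preserve tight multimaps. Forgetting this tightness condition, $\GCatmp^l$ is a symmetric multicategory. I would check that the actions on loose multimaps are honest $S_n$-actions. On $2,3,4$-ary multimaps this is visible from \Cref{cor:beta-2-self-inv}, \Cref{prop:beta-3-self-inverses} and \Cref{prop:symm-3}. In general it is part of the output of \cite[Theorem~5.9]{BourkeLack:symmetric}.

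For biclosedness, left closedness of $\GCatmp^l$ is known: the closed skew monoidal structure of \cite{BourkeLobbia} gives natural bijections $\GCatmp^l_n(\ca_1,\ldots,\ca_n;\psd(\cb,\cx))\cong\GCatmp^l_{n+1}(\ca_1,\ldots,\ca_n,\cb;\cx)$ for all $n$, of which \Cref{lem:left-closedness-psd} records the cases $n\le 3$. Right closedness is then obtained by conjugating with the symmetry, exactly as in \Cref{rmk:def-right-closedness}. I would set
\[
\rho_{n+1}\colon\GCatmp^l_n(\ca_1,\ldots,\ca_n;\psd(\cb,\cx))\xrightarrow{\lambda_{n+1}}\GCatmp^l_{n+1}(\ca_1,\ldots,\ca_n,\cb;\cx)\xrightarrow{\sigma\cdot-}\GCatmp^l_{n+1}(\cb,\ca_1,\ldots,\ca_n;\cx),
\]
where $\sigma$ is the cyclic permutation moving the last variable to the front. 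Naturality of $\rho_{n+1}$ with respect to substitution follows from the equivariance axioms of a symmetric multicategory combined with the naturality of $\lambda_{n+1}$. \Cref{cor:rho-preserves-subst} is the first instance of this.

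The step I expect to need the most care is the passage from low arities to all arities. Both the symmetry and the left-closedness bijections must be available for every $n$, not only $n\le 4$. I would handle this by citing the general theorems (\cite[Theorem~5.29]{Lob:short-multi}, \cite[Theorem~5.9]{BourkeLack:symmetric}, and the closedness of $(\GCat,\otimes_p,1)$ from \cite{BourkeLobbia}) rather than by extending the explicit descriptions.
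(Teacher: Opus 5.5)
Your proposal is correct and follows the paper, whose proof is just the one-line deduction of the corollary from \Cref{maintheorem} via \cite[Theorem~5.9]{BourkeLack:symmetric}. Your right-closedness bijection (left closedness followed by the cyclic permutation moving the last variable to the front) matches the pattern of \Cref{rmk:def-right-closedness} and correctly fills in the ``and so biclosed'' clause. One small point: the results you cite for the low-arity check do not literally establish an $S_4$-action on $4$-ary maps. For instance, self-inverseness of the $\beta^4_i$ and $\beta^4_1\beta^4_3=\beta^4_3\beta^4_1$ are not stated there. This is harmless, since you ultimately rely on the general theorem.
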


\begin{proof}
	This is a direct consequence of \cite[Theorem~5.9]{BourkeLack:symmetric}. 
\end{proof}

\appendix

\section{Axioms for Oplax Transformations and Modifications}\label{app:ax-oplax}
\subsection{Oplax Transformations} \label{subsec:ax-oplax-transf}
Let $F,G\colon\ca\rightrightarrows\cb$ be pseudomaps between Gray-categories.  An oplax transformation $\alpha^o\colon F\to G$ has to satisfy the following axioms, corresponding, to the axioms in A.2 of \cite{BourkeLobbia}, respectively.

\begin{enumerate}[(i)]
	\item\label{ax:oplax-tr.deg-2-cell} \emph{Degeneracy for the 2-cell component:} at an object $x\in\ca$, then $\alpha^o_{1_x}=1_{\alpha^o_x}$.
	\item\label{ax:oplax-tr.3-cell} \emph{Compatibility with 3-cells:} at $1$-cells $f,f'\colon x\to y$, $2$-cells $\phi,\phi' \colon f\to f'$ and a $3$-cell $\Gamma\colon\phi\to\phi'$, we require:
	\begin{center}
		\scalebox{0.9}{
			\begin{tikzcd}[ampersand replacement=\&]
				{\alpha_yFf} \&\& {\alpha_yFf'} \\
				\\
				{Gf\alpha_x} \&\& {Gf'\alpha_x}
				\arrow[""{name=0, anchor=center, inner sep=0}, "{1\circ F\phi}", curve={height=-12pt}, from=1-1, to=1-3]
				\arrow[""{name=1, anchor=center, inner sep=0}, "{1\circ F\phi'}"', curve={height=12pt}, from=1-1, to=1-3]
				\arrow["{\alpha^o_f}"', from=1-1, to=3-1]
				\arrow["{\alpha^o_{f'}}", from=1-3, to=3-3]
				\arrow[""{name=2, anchor=center, inner sep=0}, "{G\phi'\circ1}"', curve={height=12pt}, from=3-1, to=3-3]
				\arrow["{1\circ F\Gamma}", shift right=3, between={0.3}{0.7}, Rightarrow, from=0, to=1]
				\arrow["{\alpha^o_{\phi'}}", between={0.4}{0.6}, Rightarrow, from=1, to=2]
			\end{tikzcd}
		}
		=
				\scalebox{0.9}{
			\begin{tikzcd}[ampersand replacement=\&]
				{\alpha_yFf} \&\& {\alpha_yFf'} \\
				\\
				{Gf\alpha_x} \&\& {Gf'\alpha_x}
				\arrow[""{name=0, anchor=center, inner sep=0}, "{1\circ F\phi}", curve={height=-12pt}, from=1-1, to=1-3]
				\arrow["{\alpha^o_f}"', from=1-1, to=3-1]
				\arrow["{\alpha^o_{f'}}", from=1-3, to=3-3]
				\arrow[""{name=1, anchor=center, inner sep=0}, "{G\phi'\circ1}"', curve={height=12pt}, from=3-1, to=3-3]
				\arrow[""{name=2, anchor=center, inner sep=0}, "{G\phi\circ1}", curve={height=-12pt}, from=3-1, to=3-3]
				\arrow["{\alpha^o_\phi}", between={0.4}{0.6}, Rightarrow, from=0, to=2]
				\arrow["{G\Gamma\circ1}", shift right=3, between={0.3}{0.7}, Rightarrow, from=2, to=1]
			\end{tikzcd}
		}
	\end{center}
	
	\item\label{ax:oplax-tr.comp-2-cell} \emph{Compatibility with vertical composition of 2-cells:}
at $1$-cells $f,f',f''\colon x\to y$ and $2$-cells $\phi_1\colon f\to f',\phi_2\colon f'\to f''$ in $\ca$, we require:
\begin{center}
		\begin{tikzcd}[ampersand replacement=\&,column sep=scriptsize]
			{\alpha_yFf} \&\& {\alpha_yFf'} \&\& {\alpha_yFf''} \\
			{Gf\alpha_x} \&\& {Gf'\alpha_x} \&\& {Gf'\alpha_x}
			\arrow[""{name=0, anchor=center, inner sep=0}, "{1\circ F\phi_1}", from=1-1, to=1-3]
			\arrow["{\alpha^o_f}"', from=1-1, to=2-1]
			\arrow[""{name=1, anchor=center, inner sep=0}, "{1\circ F\phi_2}", from=1-3, to=1-5]
			\arrow["{\alpha^o_{f'}}"{description}, from=1-3, to=2-3]
			\arrow["{\alpha^o_{f''}}", from=1-5, to=2-5]
			\arrow[""{name=2, anchor=center, inner sep=0}, "{G\phi_1\circ1}"', from=2-1, to=2-3]
			\arrow[""{name=3, anchor=center, inner sep=0}, "{G\phi_2\circ1}"', from=2-3, to=2-5]
			\arrow["{\alpha^o_{\phi_1}}", between={0.4}{0.6}, Rightarrow, from=0, to=2]
			\arrow["{\alpha^o_{\phi_2}}", between={0.4}{0.6}, Rightarrow, from=1, to=3]
		\end{tikzcd}
		= 
		\begin{tikzcd}[ampersand replacement=\&,column sep=small]
			{\alpha_yFf} \&\& {\alpha_yFf'} \&\& {\alpha_yFf''} \\
			{Gf\alpha_x} \&\& {Gf'\alpha_x} \&\& {Gf'\alpha_x}
			\arrow["{1\circ F\phi_1}", from=1-1, to=1-3]
			\arrow["{\alpha^o_f}"', from=1-1, to=2-1]
			\arrow["{1\circ F\phi_2}", from=1-3, to=1-5]
			\arrow["{\alpha^o_{\phi_2\cdot\phi_1}}", between={0.3}{0.7}, Rightarrow, from=1-3, to=2-3]
			\arrow["{\alpha^o_{f''}}", from=1-5, to=2-5]
			\arrow["{G\phi_1\circ1}"', from=2-1, to=2-3]
			\arrow["{G\phi_2\circ1}"', from=2-3, to=2-5]
		\end{tikzcd}
	\end{center}

	\item \label{ax:oplax-tr.deg-3-cell}\emph{Degeneracy for the 3-cell component:} at a $1$-cell $f \colon x \to y$ we require $\alpha^o_{1_f}=1_{\alpha^o_f}$.

	\item\label{ax:oplax-tr.comp-1-cell} \emph{Compatibility with composition of 1-cells:} at composable $1$-cells $f_1\colon w\to x$, $f_2\colon x\to y$ and $f_3\colon y\to z$ in $\ca$, we require:
	\begin{center}
		\scalebox{0.75}{
\begin{tikzcd}[ampersand replacement=\&,column sep=small]
	{\alpha_w Ff_3Ff_2Ff_1} \&\& {\alpha_w F(f_3f_2)Ff_1} \&\& {\alpha_w F(f_3f_2f_1)} \\
	{Gf_3\alpha_z Ff_2Ff_1} \\
	{Gf_3Gf_2\alpha_y Ff_1} \&\& {G(f_3f_2)\alpha_y Ff_1} \\
	{Gf_3G{f_2}G{f_1}\alpha_x} \&\& {G(f_3{f_2})G{f_1} \alpha_x} \&\& {G(f_3{f_2}{f_1}) \alpha_x}
	\arrow[""{name=0, anchor=center, inner sep=0}, "{1\circ F^2_{f_3,f_2}\circ1}", from=1-1, to=1-3]
	\arrow["{{\alpha^o_{f_3}}\circ1}"', from=1-1, to=2-1]
	\arrow[""{name=1, anchor=center, inner sep=0}, "{1\circ F^2_{f_3f_2,f_1}}", from=1-3, to=1-5]
	\arrow["{{\alpha^o_{f_3f_2}}\circ1}", from=1-3, to=3-3]
	\arrow["{{\alpha^o_{f_3f_2f_1}}}", from=1-5, to=4-5]
	\arrow["{1\circ{\alpha^o_{f_2}}\circ1}"', from=2-1, to=3-1]
	\arrow[""{name=2, anchor=center, inner sep=0}, "{G^2_{f_3,f_2}\circ1}"{description}, from=3-1, to=3-3]
	\arrow["{1\circ\alpha^o_{f_1}}"', from=3-1, to=4-1]
	\arrow["{1\circ\alpha^o_{f_1}}", from=3-3, to=4-3]
	\arrow[""{name=3, anchor=center, inner sep=0}, "{G^2_{f_3,f_2}\circ1}"', from=4-1, to=4-3]
	\arrow[""{name=4, anchor=center, inner sep=0}, "{ G^2_{f_3f_2,f_1}\circ1}"', from=4-3, to=4-5]
	\arrow["{{\alpha^o}_2^{f_3,f_2}\circ1}", between={0.4}{0.6}, Rightarrow, nfold, from=0, to=2]
	\arrow["{{\alpha^o}_2^{f_3f_2,f_1}}", between={0.4}{0.6}, Rightarrow, nfold, from=1, to=4]
	\arrow[between={0.3}{0.7}, Rightarrow, nfold, dashed, from=2, to=3]
\end{tikzcd}
		}
		$=$
		\scalebox{0.75}{
\begin{tikzcd}[ampersand replacement=\&,column sep=small]
	{\alpha_w Ff_3Ff_2Ff_1} \&\& {\alpha_w Ff_3F(f_2f_1)} \&\& {\alpha_w F(f_3f_2f_1)} \\
	{Gf_3\alpha_z Ff_2Ff_1} \&\& {Gf_3\alpha_zF(f_2f_1)} \\
	{Gf_3Gf_2\alpha_y Ff_1} \\
	{Gf_3G{f_2}G{f_1}\alpha_x} \&\& {Gf_3G({f_2}{f_1}) \alpha_x} \&\& {G(f_3{f_2}{f_1}) \alpha_x}
	\arrow[""{name=0, anchor=center, inner sep=0}, "{1\circ F^2_{f_2,f_1}}", from=1-1, to=1-3]
	\arrow["{{\alpha^o_{f_3}}\circ1}"', from=1-1, to=2-1]
	\arrow[""{name=1, anchor=center, inner sep=0}, "{1\circ F^2_{f_3,f_2f_1}}", from=1-3, to=1-5]
	\arrow["{{\alpha^o_{f_3}}\circ1}", from=1-3, to=2-3]
	\arrow["{{\alpha^o_{f_3f_2f_1}}}", from=1-5, to=4-5]
	\arrow[""{name=2, anchor=center, inner sep=0}, "{1\circ F^2_{f_2,f_1}}"{description}, from=2-1, to=2-3]
	\arrow["{1\circ{\alpha^o_{f_2}}\circ1}"', from=2-1, to=3-1]
	\arrow["{1\circ{\alpha^o_{f_2f_1}}}", from=2-3, to=4-3]
	\arrow["{1\circ\alpha^o_{f_1}}"', from=3-1, to=4-1]
	\arrow[""{name=3, anchor=center, inner sep=0}, "{1\circ G^2_{f_2,f_1}\circ1}"', from=4-1, to=4-3]
	\arrow[""{name=4, anchor=center, inner sep=0}, "{G^2_{f_3,f_2f_1}\circ1}"', from=4-3, to=4-5]
	\arrow[between={0.3}{0.7}, Rightarrow, nfold, dashed, from=0, to=2]
	\arrow["{{\alpha^o}_2^{f_3,f_2f_1}}", between={0.4}{0.6}, Rightarrow, nfold, from=1, to=4]
	\arrow["{1\circ({\alpha^o})_2^{f_2,f_1}}", between={0.4}{0.6}, Rightarrow, nfold, from=2, to=3]
\end{tikzcd}
		}
	\end{center}

\item\label{ax:oplax-tr.deg-alpha2} \emph{Degeneracy for $(\alpha^o)_2^{f_2,f_1}$:} at a $1$-cell $f\colon x\to y$ in $\ca$, we require
$(\alpha^o)_2^{1_y,f}=(\alpha^o)_2^{f,1_x}=1_{\alpha^o_f}$.

\item\label{ax:oplax-tr.whisk-alpha2} \emph{Compatibility of $\alpha^2_{-,-}$ with whiskering of 2-cells:} at a diagram in $\ca$ as below
\[\begin{tikzcd}
	x & y & z,
	\arrow["{f_1}", from=1-1, to=1-2]
	\arrow[""{name=0, anchor=center, inner sep=0}, "{f_2}", curve={height=-12pt}, from=1-2, to=1-3]
	\arrow[""{name=1, anchor=center, inner sep=0}, "{f_2'}"', curve={height=12pt}, from=1-2, to=1-3]
	\arrow["\psi"{xshift=0.1cm}, shorten <=3pt, shorten >=3pt, Rightarrow, from=0, to=1]
\end{tikzcd}\] we require:
\begin{center}
	(L)
\scalebox{0.8}{
\begin{tikzcd}[ampersand replacement=\&,column sep=small]
	{\alpha_z Ff_2Ff_1} \&\& {\alpha_zF(f_2f_1)} \&\& {\alpha_zF(f'_2f_1)} \\
	{Gf_2\alpha_y Ff_1} \\
	{G{f_2}G{f_1}\alpha_x} \&\& {G({f_2}{f_1}) \alpha_x} \&\& {G({f'_2}{f_1}) \alpha_x}
	\arrow[""{name=0, anchor=center, inner sep=0}, "{1\circ F^2_{f_2,f_1}}", from=1-1, to=1-3]
	\arrow["{{\alpha^o_{f_2}}\circ1}"', from=1-1, to=2-1]
	\arrow[""{name=1, anchor=center, inner sep=0}, "{1\circ F(\psi f_1)}", from=1-3, to=1-5]
	\arrow["{{\alpha^o_{f_2f_1}}}", from=1-3, to=3-3]
	\arrow["{{\alpha^o_{f'_2f_1}}}", from=1-5, to=3-5]
	\arrow["{1\circ\alpha^o_{f_1}}"', from=2-1, to=3-1]
	\arrow[""{name=2, anchor=center, inner sep=0}, "{G^2_{f_2,f_1}\circ1}"', from=3-1, to=3-3]
	\arrow[""{name=3, anchor=center, inner sep=0}, "{G(\psi f_1)\circ1}"', from=3-3, to=3-5]
	\arrow["{({\alpha^o})_2^{f_2,f_1}}", between={0.4}{0.6}, Rightarrow, nfold, from=0, to=2]
	\arrow["{\alpha^o_{\psi f_1}}", between={0.4}{0.6}, Rightarrow, nfold, from=1, to=3]
\end{tikzcd}}
$=$
\scalebox{0.8}{
\begin{tikzcd}[ampersand replacement=\&,column sep=small]
	{\alpha_z Ff_2Ff_1} \&\& {\alpha_z Ff'_2Ff_1} \&\& {\alpha_zF(f'_2f_1)} \\
	{Gf_2\alpha_y Ff_1} \&\& {Gf'_2\alpha_y Ff_1} \\
	{G{f_2}G{f_1}\alpha_x} \&\& {G{f'_2}G{f_1}\alpha_x} \&\& {G(f'_2f_1) \alpha_x}
	\arrow[""{name=0, anchor=center, inner sep=0}, "{1\circ F\psi\circ1}", from=1-1, to=1-3]
	\arrow["{{\alpha^o_{f_2}}\circ1}"', from=1-1, to=2-1]
	\arrow[""{name=1, anchor=center, inner sep=0}, "{1\circ F^2_{f'_2,f_1}}", from=1-3, to=1-5]
	\arrow["{{\alpha^o_{f'_2}}\circ1}", from=1-3, to=2-3]
	\arrow["{{\alpha^o_{f'_2f_1}}}", from=1-5, to=3-5]
	\arrow[""{name=2, anchor=center, inner sep=0}, "{G\psi\circ1}"{description}, from=2-1, to=2-3]
	\arrow["{1\circ\alpha^o_{f_1}}"', from=2-1, to=3-1]
	\arrow["{1\circ\alpha^o_{f_1}}", from=2-3, to=3-3]
	\arrow[""{name=3, anchor=center, inner sep=0}, "{1\circ G\phi}"', from=3-1, to=3-3]
	\arrow[""{name=4, anchor=center, inner sep=0}, "{G^2_{f'_2,f_1}\circ1}"', from=3-3, to=3-5]
	\arrow["{\alpha^o_\psi\circ1}", between={0.4}{0.6}, Rightarrow, nfold, from=0, to=2]
	\arrow["{{(\alpha^o)}_2^{f_2,f'_1}}", between={0.4}{0.6}, Rightarrow, nfold, from=1, to=4]
	\arrow[between={0.3}{0.7}, Rightarrow, nfold, dashed, from=2, to=3]
\end{tikzcd}
}
\end{center}
Similarly, given a diagram in $\ca$ as below
\[\begin{tikzcd}
	x & y & z,
	\arrow[""{name=0, anchor=center, inner sep=0}, "{f_1}", curve={height=-12pt}, from=1-1, to=1-2]
	\arrow["{f_2}", from=1-2, to=1-3]
	\arrow[""{name=1, anchor=center, inner sep=0}, "{f'_1}"', curve={height=12pt}, from=1-1, to=1-2]
	\arrow["\phi"{xshift=0.1cm}, shorten <=3pt, shorten >=3pt, Rightarrow, from=0, to=1]
\end{tikzcd}\]
we require:
	\begin{center}
		(R)
	\scalebox{0.8}{
			\begin{tikzcd}[ampersand replacement=\&,column sep=small]
				{\alpha_z Ff_2Ff_1} \&\& {\alpha_zF(f_2f_1)} \&\& {\alpha_zF(f_2f'_1)} \\
				{Gf_2\alpha_y Ff_1} \\
				{G{f_2}G{f_1}\alpha_x} \&\& {G({f_2}{f_1}) \alpha_x} \&\& {G({f_2}{f'_1}) \alpha_x}
				\arrow[""{name=0, anchor=center, inner sep=0}, "{1\circ F^2_{f_2,f_1}}", from=1-1, to=1-3]
				\arrow["{{\alpha^o_{f_2}}\circ1}"', from=1-1, to=2-1]
				\arrow[""{name=1, anchor=center, inner sep=0}, "{1\circ F(f_2\phi)}", from=1-3, to=1-5]
				\arrow["{{\alpha^o_{f_2f_1}}}", from=1-3, to=3-3]
				\arrow["{{\alpha^o_{f_2f'_1}}}", from=1-5, to=3-5]
				\arrow["{1\circ\alpha^o_{f_1}}"', from=2-1, to=3-1]
				\arrow[""{name=2, anchor=center, inner sep=0}, "{G^2_{f_2,f_1}\circ1}"', from=3-1, to=3-3]
				\arrow[""{name=3, anchor=center, inner sep=0}, "{G(f_2\phi)\circ1}"', from=3-3, to=3-5]
				\arrow["{({\alpha^o})_2^{f_2,f_1}}", between={0.4}{0.6}, Rightarrow, from=0, to=2]
				\arrow["{\alpha^o_{f_2\phi}}", between={0.4}{0.6}, Rightarrow, from=1, to=3]
			\end{tikzcd}}
		$=$
		\scalebox{0.8}{
\begin{tikzcd}[ampersand replacement=\&,column sep=small]
	{\alpha_z Ff_2Ff_1} \&\& {\alpha_z Ff_2Ff'_1} \&\& {\alpha_zF(f_2f'_1)} \\
	{Gf_2\alpha_y Ff_1} \&\& {Gf_2\alpha_y Ff'_1} \\
	{G{f_2}G{f_1}\alpha_x} \&\& {G{f_2}G{f'_1}\alpha_x} \&\& {G({f_2}{f'_1}) \alpha_x}
	\arrow[""{name=0, anchor=center, inner sep=0}, "{1\circ F\phi}", from=1-1, to=1-3]
	\arrow["{{\alpha^o_{f_2}}\circ1}"', from=1-1, to=2-1]
	\arrow[""{name=1, anchor=center, inner sep=0}, "{1\circ F^2_{f_2,f'_1}}", from=1-3, to=1-5]
	\arrow["{{\alpha^o_{f_2}}\circ1}", from=1-3, to=2-3]
	\arrow["{{\alpha^o_{f_2f'_1}}}", from=1-5, to=3-5]
	\arrow[""{name=2, anchor=center, inner sep=0}, "{1\circ F\phi}"{description}, from=2-1, to=2-3]
	\arrow["{1\circ\alpha^o_{f_1}}"', from=2-1, to=3-1]
	\arrow["{1\circ\alpha^o_{f_1}}", from=2-3, to=3-3]
	\arrow[""{name=3, anchor=center, inner sep=0}, "{1\circ G\phi\circ1}"', from=3-1, to=3-3]
	\arrow[""{name=4, anchor=center, inner sep=0}, "{G^2_{f_2,f'_1}\circ1}"', from=3-3, to=3-5]
	\arrow[between={0.3}{0.7}, Rightarrow, nfold, dashed, from=0, to=2]
	\arrow["{{(\alpha^o)}_2^{f_2,f'_1}}", between={0.4}{0.6}, Rightarrow, nfold, from=1, to=4]
	\arrow["{1\circ\alpha^o_\phi}", between={0.4}{0.6}, Rightarrow, nfold, from=2, to=3]
\end{tikzcd} }
	\end{center}
\end{enumerate}

\subsection{Oplax Modifications}
\label{subsec:ax-oplax-mod}

An oplax modification $\Gamma^o\colon\alpha^o\to\beta^o$ has to satisfy the following axioms.

\begin{enumerate}[(i)]
	\item\label{ax:op-mod.deg-3} \emph{Degeneracy for the 3-cell component:} at an object $x\in\ca$, $\Gamma^o_{1_x}=1_{\Gamma^o_x}$. 

	\item\label{ax:op-mod.comp-1} \emph{Compatibility with composition of 1-cells:}
at $1$-cells $f_1\colon x\to y$ and $f_2\colon y\to z$ in $\ca$, 
\begin{center}
	\adjustbox{width=\textwidth}{
			\begin{tikzcd}[ampersand replacement=\&,column sep=tiny,row sep=small]
				{\alpha_z Ff_2Ff_1} \&\& {\alpha_zF(f_2f_1)} \&\& \\
				\& {Gf_2\alpha_y Ff_1} \&\& \textcolor{white}{{Gf_2\alpha_y Ff_1}} \\
				{\beta_z Ff_2Ff_1} \&\& {G{f_2}G{f_1}\alpha_x} \&\& {G({f_2}{f_1}) \alpha_x} \\
				\& {Gf_2\beta_y Ff_1} \\
				\&\& {G{f_2}G{f_1}\beta_x} \&\& {G({f_2}{f_1}) \beta_x}
				\arrow[""{name=0, anchor=center, inner sep=0}, "{1\circ F^2_{f_2,f_1}}", from=1-1, to=1-3]
				\arrow[""{name=1, anchor=center, inner sep=0}, "{{\alpha^o_{f_2}}\circ1}"{pos=0.7}, from=1-1, to=2-2]
				\arrow["{\Lambda_z\circ1}"', from=1-1, to=3-1]
				\arrow["{{\alpha^o_{f_2f_1}}}", from=1-3, to=3-5]
				\arrow[""{name=2, anchor=center, inner sep=0}, "{1\circ\alpha^o_{f_1}}"{pos=0.7}, from=2-2, to=3-3]
				\arrow["{1\circ\Lambda_y\circ1}"{description}, from=2-2, to=4-2]
				\arrow[""{name=3, anchor=center, inner sep=0}, "{{\beta^o_{f_2}}\circ1}"', from=3-1, to=4-2]
				\arrow[""{name=4, anchor=center, inner sep=0}, "{G^2_{f_2,f_1}\circ1}"{description}, from=3-3, to=3-5]
				\arrow["{1\circ\Lambda_x}", from=3-3, to=5-3]
				\arrow["{1\circ\Lambda_x}", from=3-5, to=5-5]
				\arrow[""{name=5, anchor=center, inner sep=0}, "{1\circ\beta^o_{f_1}}"', from=4-2, to=5-3]
				\arrow[""{name=6, anchor=center, inner sep=0}, "{G^2_{f_2,f_1}\circ1}"{description}, from=5-3, to=5-5]
				\arrow["{({\alpha^o})_2^{f_2,f_1}}", between={0.4}{0.6}, Rightarrow, from=0, to=4]
				\arrow["{\Lambda^o_{f_2}\circ1}", shift right=3, between={0.4}{0.6}, Rightarrow, from=1, to=3]
				\arrow["{1\circ\Lambda^o_{f_1}}", shift right=3, between={0.4}{0.6}, Rightarrow, from=2, to=5]
				\arrow[between={0.3}{0.7}, Rightarrow, dashed, from=4, to=6]
			\end{tikzcd}
		$=$ 
			\begin{tikzcd}[ampersand replacement=\&,column sep=tiny,row sep=small]
				{\alpha_z Ff_2Ff_1} \&\& {\alpha_zF(f_2f_1)} \&\& \\
				\&\&\& \textcolor{white}{{Gf_2\alpha_y Ff_1}} \\
				{\beta_z Ff_2Ff_1} \&\& {\beta_zF(f_2f_1)} \&\& {G({f_2}{f_1}) \alpha_x} \\
				\& {Gf_2\beta_y Ff_1} \\
				\&\& {G{f_2}G{f_1}\beta_x} \&\& {G({f_2}{f_1}) \beta_x}
				\arrow[""{name=0, anchor=center, inner sep=0}, "{1\circ F^2_{f_2,f_1}}", from=1-1, to=1-3]
				\arrow["{\Lambda_z\circ1}"', from=1-1, to=3-1]
				\arrow["{\Lambda_z\circ1}"{description}, from=1-3, to=3-3]
				\arrow[""{name=1, anchor=center, inner sep=0}, "{{\alpha^o_{f_2f_1}}}", from=1-3, to=3-5]
				\arrow[""{name=2, anchor=center, inner sep=0}, "{1\circ F^2_{f_2,f_1}}", from=3-1, to=3-3]
				\arrow["{{\beta^o_{f_2}}\circ1}"', from=3-1, to=4-2]
				\arrow[""{name=3, anchor=center, inner sep=0}, "{{\beta^o_{f_2f_1}}}"{description}, from=3-3, to=5-5]
				\arrow["{1\circ\Lambda_x}", from=3-5, to=5-5]
				\arrow["{1\circ\beta^o_{f_1}}"', from=4-2, to=5-3]
				\arrow[""{name=4, anchor=center, inner sep=0}, "{G^2_{f_2,f_1}\circ1}"{description}, from=5-3, to=5-5]
				\arrow[between={0.3}{0.7}, Rightarrow, dashed, from=0, to=2]
				\arrow["{\Lambda^o_{f_2f_1}}", between={0.4}{0.6}, Rightarrow, from=1, to=3]
				\arrow["{{\beta^o}_2^{f_2,f_1}}", between={0.4}{0.6}, Rightarrow, from=2, to=4]
			\end{tikzcd}
		}
	\end{center}

	\item\label{ax:op-mod.2-cell} \emph{Compatibility with 2-cells:}
at $1$-cells $f,f'\colon x\to y$ and a $2$-cell $\phi\colon f\to f'$, we require:
\begin{center}
\scalebox{0.8}{
\begin{tikzcd}[ampersand replacement=\&]
	{\alpha_yFf} \&\& {\alpha_yFf'} \&\& \\
	\&\& {Gf\alpha_x} \&\& {Gf'\alpha_x} \\
	{\beta_yFf} \\
	\&\& {Gf\beta_x} \&\& {Gf'\beta_x}
	\arrow[""{name=0, anchor=center, inner sep=0}, "{1\circ F\phi}", from=1-1, to=1-3]
	\arrow[""{name=1, anchor=center, inner sep=0}, "{\alpha^o_f}"{description}, from=1-1, to=2-3]
	\arrow["{\Lambda_y\circ1}"', from=1-1, to=3-1]
	\arrow["{\alpha^o_{f'}}", from=1-3, to=2-5]
	\arrow[""{name=2, anchor=center, inner sep=0}, "{G\phi\circ1}"{description}, from=2-3, to=2-5]
	\arrow["{1\circ\Lambda_x}"{description}, from=2-3, to=4-3]
	\arrow["{1\circ\Lambda_x}", from=2-5, to=4-5]
	\arrow[""{name=3, anchor=center, inner sep=0}, "{\beta^o_f}"', from=3-1, to=4-3]
	\arrow[""{name=4, anchor=center, inner sep=0}, "{G\phi\circ1}"', from=4-3, to=4-5]
	\arrow["{\alpha^o_\phi}"{pos=0.6}, shift right, between={0.4}{0.6}, Rightarrow, nfold, from=0, to=2]
	\arrow["{{\Lambda^o_f}}", between={0.4}{0.6}, Rightarrow, nfold, from=1, to=3]
	\arrow[between={0.4}{0.6}, Rightarrow, nfold, dashed, from=2, to=4]
\end{tikzcd}
}
$=$
\scalebox{0.8}{
\begin{tikzcd}[ampersand replacement=\&]
	{\alpha_yFf} \&\& {\alpha_yFf'} \&\& \\
	\&\&\&\& {Gf'\alpha_x} \\
	{\beta_yFf} \&\& {\beta_yFf'} \\
	\&\& {Gf\beta_x} \&\& {Gf'\beta_x}
	\arrow[""{name=0, anchor=center, inner sep=0}, "{1\circ F\phi}", from=1-1, to=1-3]
	\arrow["{\Lambda_y\circ1}"', from=1-1, to=3-1]
	\arrow[""{name=1, anchor=center, inner sep=0}, "{\alpha^o_{f'}}", from=1-3, to=2-5]
	\arrow["{\Lambda_y\circ1}"{description}, from=1-3, to=3-3]
	\arrow["{1\circ\Lambda_x}", from=2-5, to=4-5]
	\arrow[""{name=2, anchor=center, inner sep=0}, "{1\circ F\phi}", from=3-1, to=3-3]
	\arrow["{\beta^o_f}"', from=3-1, to=4-3]
	\arrow[""{name=3, anchor=center, inner sep=0}, "{\beta^o_{f'}}"{description}, from=3-3, to=4-5]
	\arrow[""{name=4, anchor=center, inner sep=0}, "{G\phi\circ1}"', from=4-3, to=4-5]
	\arrow[between={0.4}{0.6}, Rightarrow, nfold, dashed, from=0, to=2]
	\arrow["{{\Lambda^o_{f'}}}", between={0.4}{0.6}, Rightarrow, nfold, from=1, to=3]
	\arrow["{\beta^o_\phi}"{pos=0.6}, shift right, between={0.4}{0.6}, Rightarrow, nfold, from=2, to=4]
\end{tikzcd}
}
\end{center}

\end{enumerate}

\subsection{Oplax Perturbations}
\label{ax:oplax-pert}
An oplax perturbation $\theta^o\colon\Lambda^o\to\Gamma^o$ has to satisfy the following axiom.
\begin{center}
		\adjustbox{scale=1}{
			\begin{tikzcd}[ampersand replacement=\&,column sep=scriptsize,row sep=2.25em]
				{\alpha_yFf} \&\& {Gf\alpha_x} \\
				{\beta_yFf} \&\& {Gf\beta_x}
				\arrow["{\alpha^o_f}", from=1-1, to=1-3]
				\arrow[""{name=0, anchor=center, inner sep=0}, "{\Gamma_y\circ1}"', curve={height=12pt}, from=1-1, to=2-1]
				\arrow[""{name=1, anchor=center, inner sep=0}, "{\Lambda_y\circ1}", curve={height=-12pt}, from=1-1, to=2-1]
				\arrow[""{name=2, anchor=center, inner sep=0}, "{1\circ\Lambda_x}", curve={height=-12pt}, from=1-3, to=2-3]
				\arrow["{\beta^o_f}"', from=2-1, to=2-3]
				\arrow["{\theta_y\circ1}"', between={0.3}{0.7}, Rightarrow, from=1, to=0]
				\arrow["{\Lambda^o_f}"', between={0.4}{0.6}, Rightarrow, from=2, to=1]
			\end{tikzcd}
		}
		=
		\adjustbox{scale=1}{
			\begin{tikzcd}[ampersand replacement=\&,column sep=scriptsize,row sep=2.25em]
				{\alpha_yFf} \&\& {Gf\alpha_x} \\
				{\beta_yFf} \&\& {Gf\beta_x}
				\arrow["{\alpha^o_f}", from=1-1, to=1-3]
				\arrow[""{name=0, anchor=center, inner sep=0}, "{\Gamma_y\circ1}"', curve={height=12pt}, from=1-1, to=2-1]
				\arrow[""{name=1, anchor=center, inner sep=0}, "{1\circ\Lambda_x}", curve={height=-12pt}, from=1-3, to=2-3]
				\arrow[""{name=2, anchor=center, inner sep=0}, "{1\circ\Gamma_x}"', curve={height=12pt}, from=1-3, to=2-3]
				\arrow["{\beta^o_f}"', from=2-1, to=2-3]
				\arrow["{\Gamma_f^o}"', between={0.4}{0.6}, Rightarrow, from=2, to=0]
				\arrow["{1\circ\theta_x}"', between={0.3}{0.7}, Rightarrow, from=1, to=2]
			\end{tikzcd}
		}
	\end{center}

\section{Local adjoint equivalences in $\psd(\ca,\cb)$}
\label{app:loc-adj-psd}

Let $\ca$ and $\cb$ be Gray-categories and $F,G\colon\ca\ \rightrightarrows \cb$ be pseudomaps between them. In this appendix we characterise the invertible $2$-cells and adjoint equivalence $1$-cells in the 2-category $\psd(\ca,\cb)(F,G)$, showing that they are computed pointwise.  We begin with the simpler result.

\begin{prop}\label{invertpert}
	A perturbation $\theta \colon \Gamma \to \Delta \in \psd(\ca,\cb)(F,G)(\alpha,\beta)$ is an invertible $3$-cell in $\psd(\ca,\cb)$ if and only if each $3$-cell $\theta_x$ is invertible in $\cb$.
\end{prop}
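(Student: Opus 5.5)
The forward direction is immediate, and the converse is proved by checking that the pointwise inverse is again a perturbation.

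For the forward direction, use the evaluation Gray-functor $ev_x\colon\psd(\ca,\cb)\to\cb$ recalled before \Cref{lem:left-closedness-psd}. It sends a perturbation $\theta$ to its component $\theta_x$. Gray-functors preserve identities and composition of 3-cells, so they preserve invertibility. Hence if $\theta$ has an inverse $\theta'$, then $\theta'_x$ is an inverse of $\theta_x$ for every $x$.

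For the converse, suppose each $\theta_x\colon\Gamma_x\to\Delta_x$ is invertible in $\cb(Fx,Gx)$. Define $\theta^{-1}$ componentwise by $(\theta^{-1})_x:=(\theta_x)^{-1}$. Composition of perturbations in $\psd(\ca,\cb)$ is computed componentwise in $\cb$ (as noted in the proof of \Cref{prop:forg-fct-psden-oplax}), so $\theta^{-1}$ will be a two-sided inverse of $\theta$ as soon as it is a perturbation $\Delta\to\Gamma$.

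The one thing to check is the perturbation axiom of \cite[Appendix~A.4]{BourkeLobbia}. For each 1-cell $f\colon x\to y$ it reads
\[(\theta_y\circ1)\cdot\Gamma_f=\Delta_f\cdot(1\circ\theta_x)\]
as 3-cells in the hom 2-category $\cb(Fx,Gy)$, suitably whiskered by $\alpha_f$ and $\beta_f$.

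\begin{itemize}
\item First, note that $\theta_y\circ1$ and $1\circ\theta_x$ are invertible, since the whiskering operations are 2-functors and so preserve invertible 2-cells of the hom 2-categories (which are the 3-cells of $\cb$).
\item Then precompose both sides of the axiom with $1\circ\theta_x^{-1}$ and postcompose with $\theta_y^{-1}\circ1$.
\item Using the 2-functoriality of whiskering together with the interchange law in the 2-category $\cb(Fx,Gy)$, this yields
\[(\theta_y^{-1}\circ1)\cdot\Delta_f=\Gamma_f\cdot(1\circ\theta_x^{-1}),\]
which is exactly the axiom for $\theta^{-1}$.
\end{itemize}

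I expect this to be routine 2-categorical algebra: it is the standard fact that a componentwise inverse of a modification between 2-functors is again a modification.

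No other axioms need checking, since a pseudo-perturbation is just a lax perturbation and has no further conditions. I do not anticipate a real obstacle; the only care needed is getting the whiskering directions right in the displayed axiom.
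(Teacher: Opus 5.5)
Your proposal is correct and follows essentially the same route as the paper. The paper also treats the forward direction as trivial, and for the converse it checks the perturbation axiom for the componentwise inverses in the same way: paste with the whiskered $\theta_x$ and $\theta_y$, apply the axiom for $\theta$, and cancel inverses (your use of $ev_x$ simply makes the trivial direction explicit).
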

\begin{proof}
For the non-trivial direction, suppose that each $\theta_x$ is invertible.  We must show that their inverses $(\theta_x)^{-1} \colon \Delta_x \to \Gamma_x$ form a perturbation, which is to say that the equality in the diagram below holds.
	
	\[\begin{tikzcd}[ampersand replacement=\&]
		{Gf\alpha_X} \&\& {Gf\beta_X} \&\& {Gf\alpha_X} \&\& {Gf\beta_X} \\
		\&\&\& {=} \\
		{\alpha_YFf} \&\& {\beta_YFf} \&\& {\alpha_YFf} \&\& {\beta_YFf}
		\arrow[""{name=0, anchor=center, inner sep=0}, "{1\circ\Lambda_X}", curve={height=-18pt}, from=1-1, to=1-3]
		\arrow[""{name=1, anchor=center, inner sep=0}, "{1\circ\Gamma_X}"{description}, curve={height=18pt}, from=1-1, to=1-3]
		\arrow["{\alpha_f}"', from=1-1, to=3-1]
		\arrow["{\beta_f}", from=1-3, to=3-3]
		\arrow[""{name=2, anchor=center, inner sep=0}, "{1\circ\Lambda_X}", curve={height=-18pt}, from=1-5, to=1-7]
		\arrow["{\alpha_f}"{description}, from=1-5, to=3-5]
		\arrow["{\beta_f}", from=1-7, to=3-7]
		\arrow[""{name=3, anchor=center, inner sep=0}, "{\Gamma_Y\circ1}"', curve={height=18pt}, from=3-1, to=3-3]
		\arrow[""{name=4, anchor=center, inner sep=0}, "{\Lambda_Y\circ1}"{description}, curve={height=-18pt}, from=3-5, to=3-7]
		\arrow[""{name=5, anchor=center, inner sep=0}, "{\Gamma_Y\circ1}"', curve={height=18pt}, from=3-5, to=3-7]
		\arrow["{1\circ (\theta_X)^{-1}}"{description}, between={0.2}{0.8}, Rightarrow, from=0, to=1]
		\arrow["{\Gamma_f}"{description}, between={0.4}{0.6}, Rightarrow, from=1, to=3]
		\arrow["{\Lambda_f}"{description}, between={0.4}{0.6}, Rightarrow, from=2, to=4]
		\arrow["{(\theta_Y)^{-1}\circ1}"{description}, between={0.2}{0.8}, Rightarrow, from=4, to=5]
	\end{tikzcd}\]
	which is equally to say the composite pasting below left
	\[\begin{tikzcd}[ampersand replacement=\&]
		{Gf\alpha_X} \&\& {Gf\beta_X} \&\& {Gf\alpha_X} \&\& {Gf\beta_X} \\
		\&\&\& {=} \\
		{\alpha_YFf} \&\& {\beta_YFf} \&\& {\alpha_YFf} \&\& {\beta_YFf}
		\arrow[""{name=0, anchor=center, inner sep=0}, "{1\circ\Lambda_X}", curve={height=-18pt}, from=1-1, to=1-3]
		\arrow[""{name=1, anchor=center, inner sep=0}, "{1\circ\Gamma_X}"{description}, curve={height=18pt}, from=1-1, to=1-3]
		\arrow["{\alpha_f}"', from=1-1, to=3-1]
		\arrow["{\beta_f}", from=1-3, to=3-3]
		\arrow[""{name=2, anchor=center, inner sep=0}, "{1\circ\Lambda_X}", curve={height=-24pt}, from=1-5, to=1-7]
		\arrow[""{name=3, anchor=center, inner sep=0}, "{1\circ\Gamma_X}"{description}, from=1-5, to=1-7]
		\arrow[""{name=4, anchor=center, inner sep=0}, "{1\circ\Lambda_X}"', curve={height=24pt}, from=1-5, to=1-7]
		\arrow["{\alpha_f}"', from=1-5, to=3-5]
		\arrow["{\beta_f}", from=1-7, to=3-7]
		\arrow[""{name=5, anchor=center, inner sep=0}, "{\Gamma_Y\circ1}", curve={height=-12pt}, from=3-1, to=3-3]
		\arrow[""{name=6, anchor=center, inner sep=0}, "{\Lambda_Y \circ 1}"{description}, curve={height=12pt}, from=3-1, to=3-3]
		\arrow[""{name=7, anchor=center, inner sep=0}, "{\Lambda_Y \circ 1}"{description}, curve={height=12pt}, from=3-5, to=3-7]
		\arrow["{1\circ (\theta_X)^{-1}}"{description}, between={0.2}{0.8}, Rightarrow, from=0, to=1]
		\arrow["{\Gamma_f}"{description}, between={0.4}{0.6}, Rightarrow, from=1, to=5]
		\arrow["{1\circ (\theta_X)^{-1}}"{description}, between={0.2}{0.8}, Rightarrow, from=2, to=3]
		\arrow["{1 \circ \theta_X}"{description}, between={0.2}{0.8}, Rightarrow, from=3, to=4]
		\arrow["{\Lambda_f}"{description}, between={0.2}{0.8}, Rightarrow, from=4, to=7]
		\arrow["{\theta_X \circ 1}"{description}, between={0.2}{0.8}, Rightarrow, from=5, to=6]
	\end{tikzcd}\]
	equals $\Gamma_f$.  Indeed, applying the perturbation axiom for $\theta$, it equals the central pasting above which equals $\Gamma_f$ on cancelling inverses.
\end{proof}

We now turn to adjoint equivalences in $\psd(\ca,\cb)(F,G)$, which consist of pseudo-transformations $\alpha$ and $\beta$ together with modifications $\Lambda\colon\alpha\to\beta$ and $\Gamma\colon\beta\to\alpha$ and  invertible perturbations $\eta\colon1_\beta\to\Lambda\Gamma$ and $\epsilon\colon\Gamma\Lambda\to1_\alpha$ in $\psd(\ca,\cb)$, satisfying the  triangle equations for an adjunction.  By \Cref{invertpert}, a perturbation is invertible just when its components are.  Fixing $\Gamma$ and $\Lambda$, therefore an adjoint equivalence $\Gamma \dashv \Lambda$ is specified by families of invertible $3$-cells $\eta_x\colon 1_{\beta_x}\to \Lambda_x\Gamma_x$ and $\epsilon_x\colon\Gamma_x\Lambda_x\to1_{\alpha_x}$ giving adjoint equivalences in $\Gamma_x \dashv \Lambda_x$ in $\cb(Fx,Gx)$ and moreover satisfying the perturbation equations below.


\begin{equation}
	\label{eq:loca-adj-psd-1}
	\begin{tikzcd}[ampersand replacement=\&]
		{Gf\beta_x} \&\& {Gf\beta_x} \\
		\& {Gf\alpha_x} \\
		{\beta_yFf} \&\& {\beta_yFf} \\
		\& {\alpha_yFf}
		\arrow["{\beta_f}"', from=1-1, to=3-1]
		\arrow["{\beta_f}", from=1-3, to=3-3]
		\arrow["{\alpha_f}"{description}, from=2-2, to=4-2]
		\arrow[""{name=0, anchor=center, inner sep=0}, "{\Gamma_y\circ1}"', curve={height=12pt}, from=3-1, to=4-2]
		\arrow[""{name=1, anchor=center, inner sep=0}, "{\Lambda_y\circ1}"', curve={height=12pt}, from=4-2, to=3-3]
		\arrow[""{name=2, anchor=center, inner sep=0}, "{1\circ\Gamma_x}"{description}, curve={height=12pt}, from=1-1, to=2-2]
		\arrow[""{name=3, anchor=center, inner sep=0}, "{1\circ\Lambda_x}"{description}, curve={height=12pt}, from=2-2, to=1-3]
		\arrow[""{name=4, anchor=center, inner sep=0}, curve={height=-18pt}, Rightarrow, no head, from=1-1, to=1-3]
		\arrow["{\Lambda_f}"', shorten <=17pt, shorten >=17pt, Rightarrow, from=3, to=1]
		\arrow["{1\circ\eta_x}"{description}, shorten <=8pt, shorten >=8pt, Rightarrow, from=4, to=2-2]
		\arrow["{\Gamma_f}", shorten <=17pt, shorten >=17pt, Rightarrow, from=2, to=0]
	\end{tikzcd}
	=
	\begin{tikzcd}[ampersand replacement=\&]
		{Gf\beta_x} \&\& {Gf\beta_x} \\
		\\
		{\beta_yFf} \&\& {\beta_yFf} \\
		\& {\alpha_yFf}
		\arrow["{\beta_f}"', from=1-1, to=3-1]
		\arrow["{\beta_f}", from=1-3, to=3-3]
		\arrow["{\Gamma_y\circ1}"', curve={height=12pt}, from=3-1, to=4-2]
		\arrow["{\Lambda_y\circ1}"', curve={height=12pt}, from=4-2, to=3-3]
		\arrow[curve={height=-18pt}, Rightarrow, no head, from=1-1, to=1-3]
		\arrow[""{name=0, anchor=center, inner sep=0}, curve={height=-18pt}, Rightarrow, no head, from=3-1, to=3-3]
		\arrow["{\eta_y\circ1}"{description}, shorten <=8pt, shorten >=8pt, Rightarrow, from=0, to=4-2]
	\end{tikzcd}
\end{equation}

\begin{equation}
	\label{eq:loca-adj-psd-2}
	\begin{tikzcd}[ampersand replacement=\&]
		\& {Gf\beta_x} \\
		{Gf\alpha_x} \&\& {Gf\alpha_x} \\
		\\
		{\alpha_yFf} \&\& {\alpha_yFf}
		\arrow["{\alpha_f}"', from=2-1, to=4-1]
		\arrow["{\alpha_f}", from=2-3, to=4-3]
		\arrow["{1\circ\Lambda_x}", curve={height=-12pt}, from=2-1, to=1-2]
		\arrow["{1\circ\Gamma_x}", curve={height=-12pt}, from=1-2, to=2-3]
		\arrow[curve={height=18pt}, Rightarrow, no head, from=4-1, to=4-3]
		\arrow[""{name=0, anchor=center, inner sep=0}, curve={height=18pt}, Rightarrow, no head, from=2-1, to=2-3]
		\arrow["{1\circ\epsilon_x}"{description}, shorten <=8pt, shorten >=8pt, Rightarrow, from=1-2, to=0]
	\end{tikzcd}
	=
	\begin{tikzcd}[ampersand replacement=\&]
		\& {Gf\beta_x} \\
		{Gf\alpha_x} \&\& {Gf\alpha_x} \\
		\& {\beta_yFf} \\
		{\alpha_yFf} \&\& {\alpha_yFf}
		\arrow["{\alpha_f}"', from=2-1, to=4-1]
		\arrow["{\alpha_f}", from=2-3, to=4-3]
		\arrow["{\beta_f}"{description}, from=1-2, to=3-2]
		\arrow[""{name=0, anchor=center, inner sep=0}, "{\Lambda_y\circ1}"{description}, curve={height=-12pt}, from=4-1, to=3-2]
		\arrow[""{name=1, anchor=center, inner sep=0}, "{\Gamma_y\circ1}"{description}, curve={height=-12pt}, from=3-2, to=4-3]
		\arrow[""{name=2, anchor=center, inner sep=0}, "{1\circ\Lambda_x}", curve={height=-12pt}, from=2-1, to=1-2]
		\arrow[""{name=3, anchor=center, inner sep=0}, "{1\circ\Gamma_x}", curve={height=-12pt}, from=1-2, to=2-3]
		\arrow[""{name=4, anchor=center, inner sep=0}, curve={height=18pt}, Rightarrow, no head, from=4-1, to=4-3]
		\arrow["{\Lambda_f}", shorten <=17pt, shorten >=17pt, Rightarrow, from=2, to=0]
		\arrow["{\Gamma_f}"', shorten <=17pt, shorten >=17pt, Rightarrow, from=3, to=1]
		\arrow["{\epsilon_y\circ1}"{description}, shorten <=8pt, shorten >=8pt, Rightarrow, from=3-2, to=4]
	\end{tikzcd}
\end{equation}

Now pasting both equations with ${\Gamma_f}^{-1}$ we can rewrite equations \eqref{eq:loca-adj-psd-1} and \eqref{eq:loca-adj-psd-2} as follows. 

\begin{equation}\tag{\ref{eq:loca-adj-psd-1}'}
	\label{eq:loca-adj-psd-1'}
	\begin{tikzcd}[ampersand replacement=\&]
		{Gf\beta_x} \\
		{Gf\alpha_x} \& {Gf\beta_x} \\
		{\alpha_yFf} \& {\beta_yFf}
		\arrow["{\beta_f}", from=2-2, to=3-2]
		\arrow["{\alpha_f}"', from=2-1, to=3-1]
		\arrow[""{name=0, anchor=center, inner sep=0}, "{\Lambda_y\circ1}"', from=3-1, to=3-2]
		\arrow["{1\circ\Gamma_x}"', from=1-1, to=2-1]
		\arrow[""{name=1, anchor=center, inner sep=0}, "{1\circ\Lambda_x}"{description}, from=2-1, to=2-2]
		\arrow[""{name=2, anchor=center, inner sep=0}, Rightarrow, no head, from=1-1, to=2-2]
		\arrow["{\Lambda_f}"', shorten <=10pt, shorten >=10pt, Rightarrow, from=1, to=0]
		\arrow["{1\circ\eta_x}"', shorten <=5pt, shorten >=5pt, Rightarrow, from=2, to=1]
	\end{tikzcd}
	\quad=\quad
	\begin{tikzcd}[ampersand replacement=\&]
		{Gf\beta_x} \& {\beta_yFf} \\
		{Gf\alpha_x} \& {\alpha_yFf} \& {\beta_yFf}
		\arrow[""{name=0, anchor=center, inner sep=0}, "{\beta_f}", from=1-1, to=1-2]
		\arrow["{\Gamma_y\circ1}"{description}, from=1-2, to=2-2]
		\arrow[""{name=1, anchor=center, inner sep=0}, "{\Lambda_y\circ1}"', from=2-2, to=2-3]
		\arrow[""{name=2, anchor=center, inner sep=0}, Rightarrow, no head, from=1-2, to=2-3]
		\arrow["{1\circ\Gamma_x}"', from=1-1, to=2-1]
		\arrow[""{name=3, anchor=center, inner sep=0}, "{\alpha_f}"', from=2-1, to=2-2]
		\arrow["{\eta_y\circ1}"', shorten <=5pt, shorten >=5pt, Rightarrow, from=2, to=1]
		\arrow["{{\Gamma_f}^{-1}}"', shorten <=10pt, shorten >=10pt, Rightarrow, from=0, to=3]
	\end{tikzcd}
\end{equation}

\begin{equation}\tag{\ref{eq:loca-adj-psd-2}'}
	\label{eq:loca-adj-psd-2'}
	\begin{tikzcd}[ampersand replacement=\&]
		{Gf\alpha_x} \& {Gf\beta_x} \& {\beta_yFf} \\
		\& {Gf\alpha_x} \& {\alpha_yFf}
		\arrow[""{name=0, anchor=center, inner sep=0}, "{\alpha_f}"', from=2-2, to=2-3]
		\arrow[""{name=1, anchor=center, inner sep=0}, "{1\circ\Lambda_x}", from=1-1, to=1-2]
		\arrow["{1\circ\Gamma_x}"{description}, from=1-2, to=2-2]
		\arrow[""{name=2, anchor=center, inner sep=0}, Rightarrow, no head, from=1-1, to=2-2]
		\arrow[""{name=3, anchor=center, inner sep=0}, "{\beta_f}", from=1-2, to=1-3]
		\arrow["{\Gamma_y\circ1}", from=1-3, to=2-3]
		\arrow["{1\circ\epsilon_x}", shorten <=5pt, shorten >=5pt, Rightarrow, from=1, to=2]
		\arrow["{{\Gamma_f}^{-1}}", shorten <=10pt, shorten >=10pt, Rightarrow, from=3, to=0]
	\end{tikzcd}
	\quad=\quad
	\begin{tikzcd}[ampersand replacement=\&]
		{Gf\alpha_x} \& {Gf\beta_x} \\
		{\alpha_yFf} \& {\beta_yFf} \\
		\& {\alpha_yFf}
		\arrow["{\alpha_f}"', from=1-1, to=2-1]
		\arrow["{\beta_f}", from=1-2, to=2-2]
		\arrow[""{name=0, anchor=center, inner sep=0}, "{\Lambda_y\circ1}"{description}, from=2-1, to=2-2]
		\arrow["{\Gamma_y\circ1}", from=2-2, to=3-2]
		\arrow[""{name=1, anchor=center, inner sep=0}, "{1\circ\Lambda_x}", from=1-1, to=1-2]
		\arrow[""{name=2, anchor=center, inner sep=0}, Rightarrow, no head, from=2-1, to=3-2]
		\arrow["{\Lambda_f}", shorten <=10pt, shorten >=10pt, Rightarrow, from=1, to=0]
		\arrow["{\epsilon_y\circ1}", shorten <=5pt, shorten >=5pt, Rightarrow, from=0, to=2]
	\end{tikzcd}
\end{equation}

Notice that both equations \eqref{eq:loca-adj-psd-1'} and \eqref{eq:loca-adj-psd-2'} are equivalent to the single equation below: the former by pasting with $1 \circ \epsilon_x$ and using a triangle equation, and the latter by pasting with $\eta_y \circ 1$ and using a triangle equation.

\begin{equation}\label{eq:loca-adj-psd-final2}
\begin{tikzcd}
	{Gf\alpha_x} & {Gf\beta_x} \\
	{\alpha_yFf} & {\beta_yFf}
	\arrow[""{name=0, anchor=center, inner sep=0}, "{1\circ\Lambda_x}", from=1-1, to=1-2]
	\arrow["{\alpha_f}"', from=1-1, to=2-1]
	\arrow["{\beta_f}", from=1-2, to=2-2]
	\arrow[""{name=1, anchor=center, inner sep=0}, "{\Lambda_y\circ1}"', from=2-1, to=2-2]
	\arrow["{\Lambda_f}", between={0.4}{0.6}, Rightarrow, nfold, from=0, to=1]
\end{tikzcd}
		\quad=\quad
\begin{tikzcd}[column sep=scriptsize]
	{Gf\alpha_x} && {Gf\beta_x} && {\beta_yFf} && \\
	&& {Gf\alpha_x} && {\alpha_yFf} && {\beta_yFf}
	\arrow[""{name=0, anchor=center, inner sep=0}, "{1\circ\Lambda_x}", from=1-1, to=1-3]
	\arrow[""{name=1, anchor=center, inner sep=0}, equals, nfold, from=1-1, to=2-3]
	\arrow[""{name=2, anchor=center, inner sep=0}, "{\beta_f}", from=1-3, to=1-5]
	\arrow["{1\circ\Gamma_x}"{description}, from=1-3, to=2-3]
	\arrow["{\Gamma_y\circ1}"{description}, from=1-5, to=2-5]
	\arrow[""{name=3, anchor=center, inner sep=0}, equals, nfold, from=1-5, to=2-7]
	\arrow[""{name=4, anchor=center, inner sep=0}, "{\alpha_f}"', from=2-3, to=2-5]
	\arrow[""{name=5, anchor=center, inner sep=0}, "{\Lambda_y\circ1}"', from=2-5, to=2-7]
	\arrow["{1\circ\epsilon_x}", between={0.3}{0.7}, Rightarrow, nfold, from=0, to=1]
	\arrow["{{\Gamma_f}^{-1}}", between={0.4}{0.6}, Rightarrow, nfold, from=2, to=4]
	\arrow["{\eta_y\circ1}"', between={0.3}{0.7}, Rightarrow, nfold, from=3, to=5]
\end{tikzcd}
\end{equation}

Equally, both equations \eqref{eq:loca-adj-psd-1'} and \eqref{eq:loca-adj-psd-2'} are equivalent to the single equation below: in the first case by pasting by $ \epsilon_y \circ 1$, using a triangle equation and taking inverses, and in the second case by pasting with $1 \circ \eta_x$, using a triangle equation and taking inverses.

\begin{equation}\label{eq:loca-adj-psd-final}
\begin{tikzcd}[ampersand replacement=\&]
	{Gf\beta_x} \& {Gf\alpha_x} \\
	{\beta_yFf} \& {\alpha_yFf}
	\arrow[""{name=0, anchor=center, inner sep=0}, "{1\circ\Gamma_x}", from=1-1, to=1-2]
	\arrow["{\beta_f}"', from=1-1, to=2-1]
	\arrow["{\alpha_f}", from=1-2, to=2-2]
	\arrow[""{name=1, anchor=center, inner sep=0}, "{\Gamma_y\circ1}"', from=2-1, to=2-2]
	\arrow["{\Gamma_f}", between={0.2}{0.8}, Rightarrow, nfold, from=0, to=1]
\end{tikzcd}
		\quad=\quad
\begin{tikzcd}[ampersand replacement=\&]
	{Gf\beta_x} \&\& {Gf\alpha_x} \& {\alpha_yFf} \&\& \\
	\&\& {Gf\beta_x} \& {\beta_yFf} \&\& {\alpha_yFf}
	\arrow[""{name=0, anchor=center, inner sep=0}, "{1\circ\Gamma_x}", from=1-1, to=1-3]
	\arrow[""{name=1, anchor=center, inner sep=0}, equals, nfold, from=1-1, to=2-3]
	\arrow[""{name=2, anchor=center, inner sep=0}, "{\alpha_f}", from=1-3, to=1-4]
	\arrow["{1\circ\Lambda_x}"{description}, from=1-3, to=2-3]
	\arrow["{\Lambda_y\circ1}"{description}, from=1-4, to=2-4]
	\arrow[""{name=3, anchor=center, inner sep=0}, equals, nfold, from=1-4, to=2-6]
	\arrow[""{name=4, anchor=center, inner sep=0}, "{\beta_f}"', from=2-3, to=2-4]
	\arrow[""{name=5, anchor=center, inner sep=0}, "{\Gamma_y\circ1}"', from=2-4, to=2-6]
	\arrow["{1\circ\eta_x^{-1}}", between={0.3}{0.7}, Rightarrow, nfold, from=0, to=1]
	\arrow["{\Lambda_f^{-1}}", between={0.4}{0.6}, Rightarrow, nfold, from=2, to=4]
	\arrow["{\epsilon_y^{-1}\circ1}"', between={0.3}{0.7}, Rightarrow, nfold, from=3, to=5]
\end{tikzcd}
\end{equation}

\begin{prop}
	\label{prop:loc-adj-psd-char}
	Adjoint equivalences in $\psd(\ca,\cb)(F,G)$ between pseudo-transformations $\alpha,\beta\colon F\to G$ corresponds bijectively to modifications $\Lambda\colon\alpha\to\beta$ equipped with, for any object $x\in\ca$, an adjoint equivalence $\Gamma_x\dashv\Lambda_x$. 
\end{prop}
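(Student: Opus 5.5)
The plan is to build the bijection in two directions, using the analysis just before the statement. By \Cref{invertpert}, an adjoint equivalence in $\psd(\ca,\cb)(F,G)$ with right adjoint $\Lambda\colon\alpha\to\beta$ consists of:
\begin{itemize}
\item a left adjoint modification $\Gamma\colon\beta\to\alpha$;
\item families of invertible 3-cells $\eta_x$ and $\epsilon_x$ giving adjoint equivalences $\Gamma_x\dashv\Lambda_x$ in $\cb(Fx,Gx)$;
\item the perturbation equations \eqref{eq:loca-adj-psd-1} and \eqref{eq:loca-adj-psd-2}.
\end{itemize}
Perturbations in $\psd$ are computed pointwise, so the triangle equations for $\eta,\epsilon$ hold exactly when they hold at each $x$. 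The forward map therefore sends such data to $\Lambda$ together with the family $(\eta_x,\Gamma_x\dashv\Lambda_x,\epsilon_x)$.

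For the inverse, start from $\Lambda$ and the pointwise adjoint equivalences $\Gamma_x\dashv\Lambda_x$. Define the 3-cells $\Gamma_f$ by the right-hand side of \eqref{eq:loca-adj-psd-final}. This is the mate of $\Lambda_f^{-1}$, which makes sense because $\Lambda$ is a pseudo-modification, so $\Lambda_f$ is invertible; hence $\Gamma_f$ is invertible too. We showed above that \eqref{eq:loca-adj-psd-final} is equivalent to both \eqref{eq:loca-adj-psd-1'} and \eqref{eq:loca-adj-psd-2'}, and hence to the perturbation equations for $\eta$ and $\epsilon$. So once $\Gamma$ is known to be a modification, $\eta$ and $\epsilon$ are automatically perturbations. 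The same equivalence gives uniqueness: in any adjoint equivalence over the given data, $\Gamma_f$ is forced to equal this mate. That makes the two constructions mutually inverse.

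The main work is checking that $(\Gamma_x,\Gamma_f)$ satisfies the lax modification axioms of \cite[Appendix~A.3]{BourkeLobbia}:
\begin{itemize}
\item the degeneracy axiom at identity 1-cells;
\item compatibility with composition of 1-cells;
\item compatibility with 2-cells $\phi\colon f\to f'$.
\end{itemize}
Degeneracy follows from $\Lambda_{1_x}=1$ and a triangle identity for $\Gamma_x\dashv\Lambda_x$.

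For the other two axioms I would argue as in the proof of \Cref{prop:equ-psd-mod}. The key observation is that the horizontal data ($1\circ\Gamma_x$, $\Gamma_y\circ1$) are whiskerings of the pointwise adjoint equivalences, so the mates correspondence of \cite[Proposition~2.2]{KellyStreet:rew-el-2cats} applies in each hom-2-category. Concretely:
\begin{itemize}
\item substitute the definition of $\Gamma_f$;
\item use \Cref{lemma:interchange-property} to move units and counits past interchange cells and past the pseudomap constraints $F^2$ and $G^2$;
\item invoke the corresponding axiom for $\Lambda$ (inverted);
\item cancel the units and counits using the triangle identities.
\end{itemize}
I expect the composition axiom to be the main obstacle. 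It involves $\alpha^2$, $\beta^2$ and the whiskered adjoint equivalences $1\circ\Gamma_x$ and $\Gamma_z\circ1$ at three objects, so the interchange cells must be handled carefully. I would first reduce it, by pasting with the invertible $\Gamma_{f_i}^{-1}$, to an equation involving only $\Lambda_{f_i}^{-1}$ and the units and counits, and then apply \Cref{lemma:interchange-property} twice as in \Cref{app:big-diagram}.
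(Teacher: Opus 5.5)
Your proposal is correct and follows essentially the same route as the paper. Both define $\Gamma_f$ by \eqref{eq:loca-adj-psd-final}, get uniqueness (and the perturbation equations for $\eta,\epsilon$) from its equivalence with \eqref{eq:loca-adj-psd-1'} and \eqref{eq:loca-adj-psd-2'}, and then verify the composition and $2$-cell axioms for $\Gamma$ by substituting the definition and using triangle identities, \Cref{lemma:interchange-property} for the whiskered units and counits, and the (inverted) corresponding axioms for $\Lambda$; your explicit check of the degeneracy axiom is a small detail the paper leaves implicit.
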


\begin{proof}
The equation \eqref{eq:loca-adj-psd-final} shows that in such an adjoint equivalence $\Gamma \dashv \Lambda$, the components $\Gamma_f$ of $\Gamma$ are uniquely determined by $\Lambda$ and the adjoint equivalences  $\Gamma_x\dashv\Lambda_x$.  Conversely, consider a modification $\Lambda\colon\alpha\to\beta$ equipped with, for any object $x\in\ca$, an adjoint equivalence $\Gamma_x\dashv\Lambda_x$.  Defining $\Gamma_f$ as in \eqref{eq:loca-adj-psd-final}, it remains to show that $\Gamma$ forms a modification.
	For any pair of composable 1-cells $x\xrightarrow{f_1}y\xrightarrow{f_2}z$, we consider the following chain of equalities, which follow from (in order): definition of $\Gamma_f$; triangle identity for $\Gamma_y\dashv\Lambda_y$; \Cref{lemma:interchange-property} for $\epsilon_z^{-1}$; equivalent description of the corresponding axiom for $\Lambda$; \Cref{lemma:interchange-property} for $\eta_x^{-1}$. 
	\begin{center}
		\scalebox{0.7}{
			\begin{tikzcd}[ampersand replacement=\&,column sep=scriptsize]
				{Gf_2Gf_1\beta_x} \&\&\&\&\&\& \\
				\&\& {Gf_2Gf_1\alpha_x} \\
				{Gf_2Gf_1\beta_x} \&\&\& {Gf_2\alpha_yFf_1} \\
				\& {Gf_2\beta_yFf_1} \&\&\& {Gf_2\alpha_yFf_1} \\
				{G(f_2f_1)\beta_x} \&\& {Gf_2\beta_yFf_1} \&\&\& {\alpha_zFf_2Ff_1} \\
				\&\&\& {\beta_zFf_2Ff_1} \&\&\& {\alpha_zFf_2Ff_1} \\
				\\
				\&\&\& {\beta_zF(f_2f_1)} \&\&\& {\alpha_zF(f_2f_1)}
				\arrow[""{name=0, anchor=center, inner sep=0}, "{1\circ\Gamma_x}", from=1-1, to=2-3]
				\arrow[""{name=1, anchor=center, inner sep=0}, equals, from=1-1, to=3-1]
				\arrow["{1\circ\Lambda_x}"{description}, from=2-3, to=3-1]
				\arrow[""{name=2, anchor=center, inner sep=0}, "{1\circ\alpha^l_{f_1}}", from=2-3, to=3-4]
				\arrow[""{name=3, anchor=center, inner sep=0}, "{1\circ\beta^l_{f_1}}"{description}, from=3-1, to=4-2]
				\arrow["{G^2_{f_2,f_1}\circ1}"', from=3-1, to=5-1]
				\arrow["{1\circ\Lambda_y\circ1}"{description}, from=3-4, to=4-2]
				\arrow[""{name=4, anchor=center, inner sep=0}, equals, from=3-4, to=4-5]
				\arrow[""{name=5, anchor=center, inner sep=0}, "{1\circ\Gamma_y\circ1}"{description}, from=4-2, to=4-5]
				\arrow[""{name=6, anchor=center, inner sep=0}, equals, from=4-2, to=5-3]
				\arrow["{1\circ\Lambda_y\circ1}"{description}, from=4-5, to=5-3]
				\arrow[""{name=7, anchor=center, inner sep=0}, "{\alpha^l_{f_2}\circ1}", from=4-5, to=5-6]
				\arrow[""{name=8, anchor=center, inner sep=0}, "{\beta^l_{f_2f_1}}"', from=5-1, to=8-4]
				\arrow[""{name=9, anchor=center, inner sep=0}, "{\beta^l_{f_2}\circ1}"{description}, from=5-3, to=6-4]
				\arrow["{\Lambda_z\circ1}"{description}, from=5-6, to=6-4]
				\arrow[""{name=10, anchor=center, inner sep=0}, equals, from=5-6, to=6-7]
				\arrow[""{name=11, anchor=center, inner sep=0}, "{\Gamma_z\circ1}"', from=6-4, to=6-7]
				\arrow["{1\circ F^2_{f_2,f_1}}"{description}, from=6-4, to=8-4]
				\arrow["{1\circ F^2_{f_2,f_1}}", from=6-7, to=8-7]
				\arrow[""{name=12, anchor=center, inner sep=0}, "{\Gamma_z\circ1}"', from=8-4, to=8-7]
				\arrow["{1\circ\eta_x^{-1}}", between={0.4}{0.6}, Rightarrow, from=0, to=1]
				\arrow["{1\circ\Lambda_{f_1}^{-1}}", shift right=3, between={0.4}{0.6}, Rightarrow, from=2, to=3]
				\arrow["{1\circ\epsilon_y^{-1}\circ1}"'{pos=0.6}, between={0.3}{0.5}, Rightarrow, from=4, to=5]
				\arrow["{1\circ\eta_y^{-1}\circ1}"{pos=0.4}, between={0.5}{0.7}, Rightarrow, from=5, to=6]
				\arrow["{\beta^2_{f_2,f_1}}", between={0.4}{0.6}, Rightarrow, from=6, to=8]
				\arrow["{\Lambda_{f_2}^{-1}\circ1}", shift right=2, between={0.4}{0.6}, Rightarrow, from=7, to=9]
				\arrow["{\epsilon_z^{-1}\circ1}"', between={0.3}{0.5}, Rightarrow, from=10, to=11]
				\arrow[between={0.3}{0.7}, Rightarrow, dashed, from=11, to=12]
			\end{tikzcd}
		}\\
		$=$
		\scalebox{0.8}{
			\begin{tikzcd}[ampersand replacement=\&,column sep=small,row sep=scriptsize]
				{Gf_2Gf_1\beta_x} \&\&\&\& \\
				\&\& {Gf_2Gf_1\alpha_x} \\
				{Gf_2Gf_1\beta_x} \&\&\& {Gf_2\alpha_yFf_1} \\
				\& {Gf_2\beta_yFf_1} \&\&\& {\alpha_zFf_2Ff_1} \\
				{G(f_2f_1)\beta_x} \&\& {\beta_zFf_2Ff_1} \\
				\&\&\&\& {\alpha_zF(f_2f_1)} \\
				\&\& {\beta_zF(f_2f_1)} \\
				\&\&\&\& {\alpha_zF(f_2f_1)}
				\arrow[""{name=0, anchor=center, inner sep=0}, "{1\circ\Gamma_x}", from=1-1, to=2-3]
				\arrow[""{name=1, anchor=center, inner sep=0}, equals, from=1-1, to=3-1]
				\arrow["{1\circ\Lambda_x}"{description}, from=2-3, to=3-1]
				\arrow[""{name=2, anchor=center, inner sep=0}, "{1\circ\alpha^l_{f_1}}", from=2-3, to=3-4]
				\arrow[""{name=3, anchor=center, inner sep=0}, "{1\circ\beta^l_{f_1}}"{description}, from=3-1, to=4-2]
				\arrow["{G^2_{f_2,f_1}\circ1}"', from=3-1, to=5-1]
				\arrow["{1\circ\Lambda_y\circ1}"{description}, from=3-4, to=4-2]
				\arrow[""{name=4, anchor=center, inner sep=0}, "{\alpha^l_{f_2}\circ1}", from=3-4, to=4-5]
				\arrow[""{name=5, anchor=center, inner sep=0}, "{\beta^l_{f_2}\circ1}"{description}, from=4-2, to=5-3]
				\arrow["{\Lambda_z\circ1}"{description}, from=4-5, to=5-3]
				\arrow[""{name=6, anchor=center, inner sep=0}, "{1\circ F^2_{f_2,f_1}}", from=4-5, to=6-5]
				\arrow[""{name=7, anchor=center, inner sep=0}, "{\beta^l_{f_2f_1}}"', from=5-1, to=7-3]
				\arrow[""{name=8, anchor=center, inner sep=0}, "{1\circ F^2_{f_2,f_1}}"{description}, from=5-3, to=7-3]
				\arrow["{\Lambda_z\circ1}"', from=6-5, to=7-3]
				\arrow[""{name=9, anchor=center, inner sep=0}, equals, from=6-5, to=8-5]
				\arrow[""{name=10, anchor=center, inner sep=0}, "{\Gamma_z\circ1}"', from=7-3, to=8-5]
				\arrow["{1\circ\eta_x^{-1}}", between={0.4}{0.6}, Rightarrow, from=0, to=1]
				\arrow["{1\circ\Lambda_{f_1}^{-1}}", shift right=3, between={0.4}{0.6}, Rightarrow, from=2, to=3]
				\arrow["{\Lambda_{f_2}^{-1}\circ1}", shift right=2, between={0.4}{0.6}, Rightarrow, from=4, to=5]
				\arrow["{\beta^2_{f_2,f_1}}", between={0.3}{0.7}, Rightarrow, from=4-2, to=7]
				\arrow[between={0.4}{0.6}, Rightarrow, dashed, from=6, to=8]
				\arrow["{\epsilon_z^{-1}\circ1}"', between={0.3}{0.5}, Rightarrow, from=9, to=10]
			\end{tikzcd}
		} \\
		$=$
		\scalebox{0.8}{
			\begin{tikzcd}[ampersand replacement=\&,column sep=small,row sep=scriptsize]
				{Gf_2Gf_1\beta_x} \&\&\&\& \\
				\&\& {Gf_2Gf_1\alpha_x} \\
				{Gf_2Gf_1\beta_x} \&\&\& {Gf_2\alpha_yFf_1} \\
				\&\& {G(f_2f_1)\alpha_x} \&\& {\alpha_zFf_2Ff_1} \\
				{G(f_2f_1)\beta_x} \& \textcolor{white}{{Gf_2\alpha_yFf_1}} \\
				\&\&\&\& {\alpha_zF(f_2f_1)} \\
				\&\& {\beta_zF(f_2f_1)} \\
				\&\&\&\& {\alpha_zF(f_2f_1)}
				\arrow[""{name=0, anchor=center, inner sep=0}, "{1\circ\Gamma_x}", from=1-1, to=2-3]
				\arrow[""{name=1, anchor=center, inner sep=0}, equals, from=1-1, to=3-1]
				\arrow["{1\circ\Lambda_x}"{description}, from=2-3, to=3-1]
				\arrow["{1\circ\alpha^l_{f_1}}", from=2-3, to=3-4]
				\arrow[""{name=2, anchor=center, inner sep=0}, from=2-3, to=4-3]
				\arrow[""{name=3, anchor=center, inner sep=0}, "{G^2_{f_2,f_1}\circ1}"', from=3-1, to=5-1]
				\arrow["{\alpha^l_{f_2}\circ1}", from=3-4, to=4-5]
				\arrow["{1\circ\Lambda_x}", from=4-3, to=5-1]
				\arrow[""{name=4, anchor=center, inner sep=0}, from=4-3, to=6-5]
				\arrow["{1\circ F^2_{f_2,f_1}}", from=4-5, to=6-5]
				\arrow[""{name=5, anchor=center, inner sep=0}, "{\beta_{f_2f_1}}"', from=5-1, to=7-3]
				\arrow["{\Lambda_z\circ1}"', from=6-5, to=7-3]
				\arrow[""{name=6, anchor=center, inner sep=0}, equals, from=6-5, to=8-5]
				\arrow[""{name=7, anchor=center, inner sep=0}, "{\Gamma_z\circ1}"', from=7-3, to=8-5]
				\arrow["{1\circ\eta_x^{-1}}", between={0.4}{0.6}, Rightarrow, from=0, to=1]
				\arrow[between={0.4}{0.6}, Rightarrow, dashed, from=2, to=3]
				\arrow["{\alpha^2_{f_2,f_1}}", between={0.3}{0.7}, Rightarrow, from=3-4, to=4]
				\arrow["{\Lambda^{-1}_{f_2f_1}}", between={0.4}{0.6}, Rightarrow, from=4, to=5]
				\arrow["{\epsilon_z^{-1}\circ1}"', between={0.3}{0.5}, Rightarrow, from=6, to=7]
			\end{tikzcd}
		} \\
		$=$
		\scalebox{0.8}{
			\begin{tikzcd}[ampersand replacement=\&,column sep=small,row sep=scriptsize]
				{Gf_2Gf_1\beta_x} \&\& {Gf_2Gf_1\alpha_x} \&\& \\
				\&\&\& {Gf_2\alpha_yFf_1} \\
				{G(f_2f_1)\alpha_x} \&\& {G(f_2f_1)\alpha_x} \&\& {\alpha_zFf_2Ff_1} \\
				\& {G(f_2f_1)\alpha_x} \\
				\&\&\&\& {\alpha_zF(f_2f_1)} \\
				\&\&\& {\beta_zF(f_2f_1)} \\
				\&\&\&\& {\alpha_zF(f_2f_1)}
				\arrow[""{name=0, anchor=center, inner sep=0}, "{1\circ\Gamma_x}", from=1-1, to=1-3]
				\arrow["{G^2_{f_2,f_1}\circ1}"', from=1-1, to=3-1]
				\arrow["{1\circ\alpha^l_{f_1}}", from=1-3, to=2-4]
				\arrow["{G^2_{f_2,f_1}\circ1}"{description}, from=1-3, to=3-3]
				\arrow["{\alpha^l_{f_2}\circ1}", from=2-4, to=3-5]
				\arrow[""{name=1, anchor=center, inner sep=0}, "{1\circ\Gamma_x}", from=3-1, to=3-3]
				\arrow[""{name=2, anchor=center, inner sep=0}, equals, from=3-1, to=4-2]
				\arrow[""{name=3, anchor=center, inner sep=0}, "{1\circ\Lambda_x}", from=3-3, to=4-2]
				\arrow[""{name=4, anchor=center, inner sep=0}, from=3-3, to=5-5]
				\arrow["{1\circ F^2_{f_2,f_1}}", from=3-5, to=5-5]
				\arrow[""{name=5, anchor=center, inner sep=0}, "{\beta_{f_2f_1}}"', from=4-2, to=6-4]
				\arrow["{\Lambda_z\circ1}"', from=5-5, to=6-4]
				\arrow[""{name=6, anchor=center, inner sep=0}, equals, from=5-5, to=7-5]
				\arrow[""{name=7, anchor=center, inner sep=0}, "{\Gamma_z\circ1}"', from=6-4, to=7-5]
				\arrow[between={0.4}{0.6}, Rightarrow, dashed, from=0, to=1]
				\arrow["{\alpha^2_{f_2,f_1}}", between={0.3}{0.7}, Rightarrow, from=2-4, to=4]
				\arrow["{1\circ\eta_x^{-1}}"', between={0.3}{0.7}, Rightarrow, from=3, to=2]
				\arrow["{\Lambda^{-1}_{f_2f_1}}", between={0.4}{0.6}, Rightarrow, from=4, to=5]
				\arrow["{\epsilon_z^{-1}\circ1}"', between={0.3}{0.5}, Rightarrow, from=6, to=7]
			\end{tikzcd}
		}
	\end{center}
	
	Then, if we consider $\phi\colon f\to f'$ be a 2-cell in $\ca$, we get the following chain of equalities using (in order): definition of $\Gamma_f$; \Cref{lemma:interchange-property} for $\epsilon_y^{-1}$; reformulation of the corresponding axiom for $\Lambda$ (pasting the equality with $\Lambda_f*{-1}$ and an interchange isomorphism); \Cref{lemma:interchange-property} for $\eta_x^{-1}$.
	\begin{center}
		\scalebox{0.7}{
			\begin{tikzcd}[ampersand replacement=\&,sep=scriptsize]
				{Gf\beta_x} \&\& {Gf\alpha_x} \&\& \\
				\&\& {\beta_yFf} \&\& {\beta_yFf} \\
				{Gf'\beta_x} \\
				\&\& {\beta_yFf'} \&\& {\beta_yFf'}
				\arrow[""{name=0, anchor=center, inner sep=0}, "{1\circ\Gamma_x}", from=1-1, to=1-3]
				\arrow[""{name=1, anchor=center, inner sep=0}, "{\beta_f}"{description}, from=1-1, to=2-3]
				\arrow["{G\phi\circ1}"', from=1-1, to=3-1]
				\arrow["{\alpha_f}", from=1-3, to=2-5]
				\arrow[""{name=2, anchor=center, inner sep=0}, "{\Gamma_y\circ1}"{description}, from=2-3, to=2-5]
				\arrow["{1\circ F\phi}"{description}, from=2-3, to=4-3]
				\arrow["{1\circ F\phi}", from=2-5, to=4-5]
				\arrow[""{name=3, anchor=center, inner sep=0}, "{\beta_{f'}}"', from=3-1, to=4-3]
				\arrow[""{name=4, anchor=center, inner sep=0}, "{\Gamma_y\circ1}"', from=4-3, to=4-5]
				\arrow["{\Gamma_f}", between={0.4}{0.6}, Rightarrow, from=0, to=2]
				\arrow["{\beta_\phi}", between={0.4}{0.6}, Rightarrow, from=1, to=3]
				\arrow[between={0.4}{0.6}, Rightarrow, dashed, from=2, to=4]
			\end{tikzcd}
		}
		$=$ 
		\scalebox{0.8}{
			\begin{tikzcd}[ampersand replacement=\&,column sep=scriptsize]
				{Gf\beta_x} \&\& {Gf\alpha_x} \&\& {\alpha_yFf} \& {} \& \\
				\& {} \& {Gf\beta_x} \&\& {\beta_yFf} \&\& {\beta_yFf} \\
				\&\& {Gf'\beta_x} \&\& {\beta_yFf'} \&\& {\beta_yFf'}
				\arrow[""{name=0, anchor=center, inner sep=0}, "{1\circ\Gamma_x}", from=1-1, to=1-3]
				\arrow[curve={height=12pt}, equals, from=1-1, to=2-3]
				\arrow[""{name=1, anchor=center, inner sep=0}, "{\alpha_f}", from=1-3, to=1-5]
				\arrow["{1\circ\Lambda_x}"{description}, from=1-3, to=2-3]
				\arrow["{\Lambda_y\circ1}"{description}, from=1-5, to=2-5]
				\arrow[curve={height=-12pt}, equals, from=1-5, to=2-7]
				\arrow[""{name=2, anchor=center, inner sep=0}, "{\beta_f}"{description}, from=2-3, to=2-5]
				\arrow["{G\phi\circ1}"', from=2-3, to=3-3]
				\arrow[""{name=3, anchor=center, inner sep=0}, "{\Gamma_y\circ1}"{description}, from=2-5, to=2-7]
				\arrow["{1\circ F\phi}"{description}, from=2-5, to=3-5]
				\arrow["{1\circ F\phi}", from=2-7, to=3-7]
				\arrow[""{name=4, anchor=center, inner sep=0}, "{\beta_{f'}}"', from=3-3, to=3-5]
				\arrow[""{name=5, anchor=center, inner sep=0}, "{\Gamma_y\circ1}"', from=3-5, to=3-7]
				\arrow["{1\circ\eta_x^{-1}}"{pos=0.4}, shift right=2, between={0.2}{0.6}, Rightarrow, from=0, to=2-2]
				\arrow["{\Lambda^{-1}_f}", shift right=2, between={0.3}{0.7}, Rightarrow, from=1, to=2]
				\arrow["{\epsilon_y^{-1}\circ1}"'{pos=0.4}, shift left=2, between={0.3}{0.7}, Rightarrow, from=1-6, to=3]
				\arrow["{\beta_\phi}", shift right, between={0.3}{0.7}, Rightarrow, from=2, to=4]
				\arrow[between={0.3}{0.7}, Rightarrow, dashed, from=3, to=5]
			\end{tikzcd}
		}
		\\
		$=$
		\scalebox{0.8}{
			\begin{tikzcd}[ampersand replacement=\&,column sep=scriptsize]
				{Gf\beta_x} \&\&\& {Gf\alpha_x} \&\& {\alpha_yFf} \&\& \\
				\& {} \& {Gf\beta_x} \&\& {\beta_yFf} \\
				\&\&\&\&\& {\alpha_yFf'} \& {} \\
				\&\& {Gf'\beta_x} \&\& {\beta_yFf'} \&\&\& {\beta_yFf'}
				\arrow[""{name=0, anchor=center, inner sep=0}, "{1\circ\Gamma_x}", from=1-1, to=1-4]
				\arrow[curve={height=12pt}, equals, from=1-1, to=2-3]
				\arrow[""{name=1, anchor=center, inner sep=0}, "{\alpha_f}", from=1-4, to=1-6]
				\arrow["{1\circ\Lambda_x}"{description}, from=1-4, to=2-3]
				\arrow["{\Lambda_y\circ1}"{description}, from=1-6, to=2-5]
				\arrow[""{name=2, anchor=center, inner sep=0}, "{1\circ F\phi}", from=1-6, to=3-6]
				\arrow[""{name=3, anchor=center, inner sep=0}, "{\beta_f}"{description}, from=2-3, to=2-5]
				\arrow["{G\phi\circ1}"', from=2-3, to=4-3]
				\arrow[""{name=4, anchor=center, inner sep=0}, "{1\circ F\phi}"', from=2-5, to=4-5]
				\arrow["{\Lambda_y\circ1}"{description}, from=3-6, to=4-5]
				\arrow[curve={height=-12pt}, equals, from=3-6, to=4-8]
				\arrow[""{name=5, anchor=center, inner sep=0}, "{\beta_{f'}}"', from=4-3, to=4-5]
				\arrow[""{name=6, anchor=center, inner sep=0}, "{\Gamma_y\circ1}"', from=4-5, to=4-8]
				\arrow["{1\circ\eta_x^{-1}}"{pos=0.4}, shift right=2, between={0.3}{0.7}, Rightarrow, from=0, to=2-2]
				\arrow["{\Lambda^{-1}_f}"{pos=0.4}, shift right=2, between={0.3}{0.7}, Rightarrow, from=1, to=3]
				\arrow[between={0.3}{0.7}, Rightarrow, dashed, from=2, to=4]
				\arrow["{\beta_\phi}"', shift right, between={0.3}{0.7}, Rightarrow, from=3, to=5]
				\arrow["{\epsilon_y^{-1}\circ1}"'{pos=0.6}, shift left=3, between={0.2}{0.6}, Rightarrow, from=3-7, to=6]
			\end{tikzcd}
		}
		\\
		$=$
		\scalebox{0.8}{
			\begin{tikzcd}[ampersand replacement=\&,column sep=scriptsize]
				{Gf\beta_x} \&\&\& {Gf\alpha_x} \&\& {\alpha_yFf} \&\& \\
				\& {} \& {Gf\beta_x} \\
				\&\&\& {Gf'\alpha_x} \&\& {\alpha_yFf'} \& {} \\
				\&\& {Gf'\beta_x} \&\& {\beta_yFf'} \&\&\& {\beta_yFf'}
				\arrow[""{name=0, anchor=center, inner sep=0}, "{1\circ\Gamma_x}", from=1-1, to=1-4]
				\arrow[curve={height=12pt}, equals, from=1-1, to=2-3]
				\arrow[""{name=1, anchor=center, inner sep=0}, "{\alpha_f}", from=1-4, to=1-6]
				\arrow["{1\circ\Lambda_x}"{description}, from=1-4, to=2-3]
				\arrow[""{name=2, anchor=center, inner sep=0}, "{G\phi\circ1}", from=1-4, to=3-4]
				\arrow["{1\circ F\phi}", from=1-6, to=3-6]
				\arrow[""{name=3, anchor=center, inner sep=0}, "{G\phi\circ1}"', from=2-3, to=4-3]
				\arrow[""{name=4, anchor=center, inner sep=0}, "{\alpha_{f'}}", from=3-4, to=3-6]
				\arrow["{1\circ\Lambda_x}"{description}, from=3-4, to=4-3]
				\arrow["{\Lambda_y\circ1}"{description}, from=3-6, to=4-5]
				\arrow[curve={height=-12pt}, equals, from=3-6, to=4-8]
				\arrow[""{name=5, anchor=center, inner sep=0}, "{\beta_{f'}}"', from=4-3, to=4-5]
				\arrow[""{name=6, anchor=center, inner sep=0}, "{\Gamma_y\circ1}"', from=4-5, to=4-8]
				\arrow["{1\circ\eta_x^{-1}}"{pos=0.4}, shift right=2, between={0.4}{0.7}, Rightarrow, from=0, to=2-2]
				\arrow["{\alpha_\phi}", between={0.3}{0.7}, Rightarrow, from=1, to=4]
				\arrow[between={0.3}{0.7}, Rightarrow, dashed, from=2, to=3]
				\arrow["{\Lambda^{-1}_f}"{pos=0.4}, shift right=2, between={0.4}{0.8}, Rightarrow, from=4, to=5]
				\arrow["{\epsilon_y^{-1}\circ1}"'{pos=0.6}, shift left=3, between={0.2}{0.6}, Rightarrow, from=3-7, to=6]
			\end{tikzcd}
		}
		\\
		$=$
		\scalebox{0.8}{
			\begin{tikzcd}[ampersand replacement=\&,column sep=scriptsize]
				{Gf\beta_x} \&\& {Gf\alpha_x} \&\& {\alpha_yFf} \&\& \\
				{Gf'\beta_x} \&\& {Gf'\alpha_x} \&\& {\alpha_yFf'} \& {} \\
				\& {} \& {Gf'\beta_x} \&\& {\beta_yFf'} \&\& {\beta_yFf'}
				\arrow[""{name=0, anchor=center, inner sep=0}, "{1\circ\Gamma_x}", from=1-1, to=1-3]
				\arrow["{G\phi\circ1}"', from=1-1, to=2-1]
				\arrow[""{name=1, anchor=center, inner sep=0}, "{\alpha_f}", from=1-3, to=1-5]
				\arrow["{G\phi\circ1}", from=1-3, to=2-3]
				\arrow["{1\circ F\phi}", from=1-5, to=2-5]
				\arrow[""{name=2, anchor=center, inner sep=0}, "{1\circ\Gamma_x}"{description}, from=2-1, to=2-3]
				\arrow[curve={height=12pt}, equals, from=2-1, to=3-3]
				\arrow[""{name=3, anchor=center, inner sep=0}, "{\alpha_{f'}}"{description}, from=2-3, to=2-5]
				\arrow["{1\circ\Lambda_x}"{description}, from=2-3, to=3-3]
				\arrow["{\Lambda_y\circ1}"{description}, from=2-5, to=3-5]
				\arrow[curve={height=-12pt}, equals, from=2-5, to=3-7]
				\arrow[""{name=4, anchor=center, inner sep=0}, "{\beta_{f'}}"', from=3-3, to=3-5]
				\arrow[""{name=5, anchor=center, inner sep=0}, "{\Gamma_y\circ1}"', from=3-5, to=3-7]
				\arrow[between={0.3}{0.7}, Rightarrow, dashed, from=0, to=2]
				\arrow["{\alpha_\phi}", between={0.3}{0.7}, Rightarrow, from=1, to=3]
				\arrow["{1\circ\eta_x^{-1}}"{pos=0.4}, shift right=3, between={0.2}{0.6}, Rightarrow, from=2, to=3-2]
				\arrow["{\Lambda^{-1}_f}", shift right=2, between={0.3}{0.7}, Rightarrow, from=3, to=4]
				\arrow["{\epsilon_y^{-1}\circ1}"'{pos=0.4}, shift left=3, between={0.3}{0.7}, Rightarrow, from=2-6, to=5]
			\end{tikzcd}
		}
		$=$
		\scalebox{0.7}{
			\begin{tikzcd}[ampersand replacement=\&,sep=scriptsize]
				{Gf\beta_x} \&\& {Gf\alpha_x} \&\& \\
				\&\&\&\& {\beta_yFf} \\
				{Gf'\beta_x} \&\& {Gf'\alpha_x} \\
				\&\& {\beta_yFf'} \&\& {\beta_yFf'}
				\arrow[""{name=0, anchor=center, inner sep=0}, "{1\circ\Gamma_x}", from=1-1, to=1-3]
				\arrow["{G\phi\circ1}"', from=1-1, to=3-1]
				\arrow[""{name=1, anchor=center, inner sep=0}, "{\alpha_f}", from=1-3, to=2-5]
				\arrow["{G\phi\circ1}"', from=1-3, to=3-3]
				\arrow["{1\circ F\phi}", from=2-5, to=4-5]
				\arrow[""{name=2, anchor=center, inner sep=0}, "{1\circ\Gamma_x}", from=3-1, to=3-3]
				\arrow["{\beta_{f'}}"', from=3-1, to=4-3]
				\arrow[""{name=3, anchor=center, inner sep=0}, "{\alpha_{f'}}", from=3-3, to=4-5]
				\arrow[""{name=4, anchor=center, inner sep=0}, "{\Gamma_y\circ1}"', from=4-3, to=4-5]
				\arrow[between={0.3}{0.7}, Rightarrow, dashed, from=0, to=2]
				\arrow["{\alpha_\phi}", between={0.3}{0.7}, Rightarrow, from=1, to=3]
				\arrow["{\Gamma_f}", shift right, between={0.2}{0.6}, Rightarrow, from=2, to=4]
			\end{tikzcd}
		}
	\end{center}
	This proves that $(\Gamma,\Gamma_f)$ form a modification. 
\end{proof}

\begin{cor}
	\label{cor:char_mod-equ_inv}
	A modification $\Lambda\colon\alpha\to\beta$ is:
	\begin{enumerate}[(a)]
		\item an equivalence if and only if its 0-cells components $\Lambda_x$ are equivalences;
		\item an isomorphism if and only if its 0-cells  components $\Lambda_x$ are invertible. 
	\end{enumerate}
\end{cor}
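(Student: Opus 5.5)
Both parts follow from \Cref{prop:loc-adj-psd-char} and \Cref{invertpert}, together with the general fact that in any 2-category (here the hom 2-category $\psd(\ca,\cb)(F,G)$) every equivalence can be promoted to an adjoint equivalence. In each part the forward direction is immediate: evaluation $ev_x\colon\psd(\ca,\cb)\to\cb$ is a Gray-functor, hence locally a 2-functor. So it sends an equivalence (respectively an isomorphism) $\Lambda$ in $\psd(\ca,\cb)(F,G)$ to an equivalence (respectively an isomorphism) $\Lambda_x$ in $\cb(Fx,Gx)$.

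For the converse in (a), suppose each $\Lambda_x$ is an equivalence in the 2-category $\cb(Fx,Gx)$. I first promote each to an adjoint equivalence $\Gamma_x\dashv\Lambda_x$, choosing a pseudo-inverse $\Gamma_x$ and invertible 3-cells $\eta_x,\epsilon_x$ satisfying the triangle identities; this is the standard result that any equivalence in a 2-category refines to an adjoint equivalence. Then \Cref{prop:loc-adj-psd-char} applies directly to the modification $\Lambda$ together with these pointwise adjoint equivalences. It produces an adjoint equivalence $\Gamma\dashv\Lambda$ in $\psd(\ca,\cb)(F,G)$, with $\Gamma_f$ defined by \eqref{eq:loca-adj-psd-final}. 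In particular $\Lambda$ is an equivalence.

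For the converse in (b), suppose each $\Lambda_x$ is invertible, so that $\Gamma_x:=\Lambda_x^{-1}$ and the unit and counit are identities. Identity 3-cells trivially satisfy the triangle identities, so $\Gamma_x\dashv\Lambda_x$ is an adjoint equivalence with identity unit and counit. \Cref{prop:loc-adj-psd-char} then gives a modification $\Gamma$, with $\Gamma_f$ obtained from \eqref{eq:loca-adj-psd-final} by pasting $\Lambda_f^{-1}$ with identities. The perturbations $\eta$ and $\epsilon$ of the resulting adjoint equivalence have components $\eta_x,\epsilon_x$, which are identities. A perturbation is determined by its components, so $\eta$ and $\epsilon$ are identity perturbations. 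Hence $\Lambda\Gamma=1_\beta$ and $\Gamma\Lambda=1_\alpha$, so $\Lambda$ is an isomorphism.

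There is no real obstacle here; the substantive work was already done in \Cref{prop:loc-adj-psd-char}. The only point that needs care is in (b): equality of the composite modifications with the identities must include equality of their components at 1-cells, not only at objects. This follows because $\eta$ and $\epsilon$ are perturbations, i.e.\ 3-cells between these modifications, whose components are identities. Alternatively, one can check directly that the pasting \eqref{eq:loca-adj-psd-final} with identity units and counits composes with $\Lambda_f$ to give the interchange-trivial identity.
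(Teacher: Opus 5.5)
Your proposal is correct and follows the paper's proof. For (a), you promote the pointwise equivalences to adjoint equivalences and apply \Cref{prop:loc-adj-psd-char}; for (b), you apply the same proposition with identity units and counits, so that the resulting unit and counit perturbations are identities and $\Gamma$ is an honest inverse. Your additional remarks (the forward direction via the evaluation Gray-functors $ev_x$, and the observation that the perturbation axiom for $\eta$ and $\epsilon$ with identity components forces the composite modifications to agree also at $1$-cells) only make explicit what the paper covers with ``clearly'' and ``hence they are the identities''.
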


\begin{proof}
	\begin{enumerate}[(a)]
	\item[] 
	\item This follows directly from the fact that in any 2-category an equivalence can be lifted to an adjoint equivalence plus \Cref{prop:loc-adj-psd-char}.
	\item Clearly, if $\Lambda$ is invertible, then its 0-cells components must be so as well.  On the other hand, let us consider a pseudo-modification $\Lambda\colon\alpha\to\beta$ such that each $\Lambda_x$ is invertible, which is to say for each $x$ we have $\Gamma_x\colon\beta_x\to\alpha_x$  with unit and counit identities.  Then, by \Cref{prop:loc-adj-psd-char}, $\Lambda$ is an adjoint equivalence where the unit and counit perturbations forming the adjoint equivalence are componentwise identities, hence they are the identities. 
	Therefore, we can conclude that the left adjoint $\Gamma$ of $\Lambda$ is actually an inverse. \qedhere
	\end{enumerate}
\end{proof} 

\section{Incubator for substitution of binary into binary}

\subsection{First variable}
\label{app:inc-sub-b-in-b-1st}
Let $G\colon\ca,\cc\to\ce$ and $F\colon\ca,\cb\to\cx$ be binary maps. 
The incubator of $G\circ_1F$ is defined as the following pasting. 

\[\hspace{-1.5cm}\begin{tikzcd}[ampersand replacement=\&]
	{G^{F^{a'}_{b'}}_{C}G_{c}^{F^{a'}_B}G^{F^A_{b}}_{c}} \&\&\& {G^{F^{a'}_{b'}}_{C}G^{F^{a'}_BF^A_{b}}_{c}} \&\&\& {G^{F^{a'}_{b'}}_{C}G^{F^A_{b'}F^{a'}_B}_{c}} \&\&\& {G^{F^{a'}_{b'}}_{C}G_{c}^{F^A_{b'}}G^{F^{a'}_B}_{c}} \\
	{G_{c'}^{F^{a'}_B}G^{F^{a'}_{b}}_{C}G^{F^A_{b}}_{c}} \&\&\&\&\&\&\&\&\& {G^{F^A_{b'}}_{c'}G^{F^{a}_{b'}}_{C}G^{F^{a'}_B}_{c}} \\
	{G_{c'}^{F^{a'}_B}G^{F^A_{b}}_{c'}G^{F^{a}_{b}}_{C}} \&\&\& {G^{F^{a'}_BF^A_{b}}_{c'}G^{F^{a}_{b}}_{C}} \&\&\& {G^{F^A_{b'}F^{a'}_B}_{c'}G^{F^{a}_{b}}_{C}} \&\&\& {G^{F^A_{b'}}_{c'}G^{F^{a'}_B}_{c'}G^{F^{a}_{b}}_{C}}
	\arrow[""{name=0, anchor=center, inner sep=0}, "{1\circ G^{F^{a'}_B,F^A_{b}}_{c}}", from=1-1, to=1-4]
	\arrow[""{name=1, anchor=center, inner sep=0}, "{1\circ G_c(A_B)}", from=1-4, to=1-7]
	\arrow["{{(F^{a'}_B)}_C\circ1}"', from=1-1, to=2-1]
	\arrow["{1\circ {(F^A_{b})}_C}"', from=2-1, to=3-1]
	\arrow[""{name=2, anchor=center, inner sep=0}, "{G^{F^{a'}_B,F^A_{b}}_{c'}\circ1}"', from=3-1, to=3-4]
	\arrow[""{name=3, anchor=center, inner sep=0}, "{(G^{F^A_{b'},F^{a}_B}_{c'})^{-1}\circ1}"', from=3-7, to=3-10]
	\arrow["{{(F^{A}_{b'})}_C\circ1}", from=1-10, to=2-10]
	\arrow["{1\circ {(F^a_{B})}_C}", from=2-10, to=3-10]
	\arrow["{(F^{a'}_BF^A_{b})_C}"{description}, from=1-4, to=3-4]
	\arrow[""{name=4, anchor=center, inner sep=0}, "{G_{c'}(A_B)\circ1}"', from=3-4, to=3-7]
	\arrow[""{name=5, anchor=center, inner sep=0}, "{1\circ (G^{F^A_{b'},F^{a}_B}_{c})^{-1}}", from=1-7, to=1-10]
	\arrow["{(F^A_{b'}F^{a'}_B)_C}"{description}, from=1-7, to=3-7]
	\arrow["{{(A_B)}_C}"{description}, shorten <=20pt, shorten >=20pt, Rightarrow, from=1, to=4]
	\arrow["{G_C^{F^{a'}_B,F^A_{b}}}"{description}, shorten <=20pt, shorten >=20pt, Rightarrow, from=0, to=2]
	\arrow["{(G_C^{F^A_{b'},F^{a}_B})\sharp}"{description}, shorten <=20pt, shorten >=20pt, Rightarrow, from=5, to=3]
\end{tikzcd}\]

From the diagram above, we can see that if $G_c$ is a Gray-functor, then the incubator is just $(A_B)_C$.  

\subsection{Second variable}
\label{app:inc-sub-b-in-b-2nd}
Let $G\colon\ca,\cx\to\ce$ and $F\colon\cb,\cc\to\cx$ be binary maps. 
The incubator of $G\circ_2F$ is defined as the following pasting. 

\[\begin{tikzcd}[ampersand replacement=\&]
	{G_{F^{b'}_{C}}^{a'}G_{F^{B}_{c}}^{a'}G_{F^{b}_{c}}^{A}} \&\& {G_{F^{b'}_{C}}^{a'}G_{F^{b'}_{c}}^{A}G_{F^{B}_{c}}^{a}} \&\& {G_{F^{b'}_{c'}}^{A}G_{F^{b'}_{C}}^{a}G_{F^{B}_{c}}^{a}} \\
	{G_{F^{b'}_{C}F^{B}_{c}}^{a'}G_{F^{b}_{c}}^{A}} \&\&\&\& {G_{F^{b'}_{c'}}^{A}G_{F^{b'}_{C}F^{B}_{c}}^{a}} \\
	{G_{F^{B}_{c'}F^{b}_{C}}^{a'}G_{F^{b}_{c}}^{A}} \&\&\&\& {G_{F^{b'}_{c'}}^{A}G_{F^{B}_{c'}F^{b}_{C}}^{a}} \\
	{G_{F^{B}_{c'}}^{a'}G_{F^{b}_{C}}^{a'}G_{F^{b}_{c}}^{A}} \&\& {G_{F^{B}_{c'}}^{a'}G_{F^{b}_{c'}}^{A}G_{F^{b}_{C}}^{a}} \&\& {G_{F^{b'}_{c'}}^{A}G^{a}_{F^{B}_{c'}}G_{F^{b}_{C}}^{a}}
	\arrow["{G_{F^{b'}_{C},F^{B}_c}^{a'}\circ1}"', from=1-1, to=2-1]
	\arrow["{G^{a'}(B_C)\circ1}"', from=2-1, to=3-1]
	\arrow["{1\circ A_{(F^{B}_c)}}", from=1-1, to=1-3]
	\arrow["{A_{(F^{b'}_{C})}\circ1}", from=1-3, to=1-5]
	\arrow["{1\circ G_{F^{b'}_{C},F^{B}_c}^{a}}", from=1-5, to=2-5]
	\arrow["{1\circ A_{(F^{b}_C)}}"', from=4-1, to=4-3]
	\arrow["{A_{(F^{B}_{c'})}\circ1}"', from=4-3, to=4-5]
	\arrow["{(G_{F^B_{c'},F^{b}_C}^{c})^{-1}\circ1}"', from=3-1, to=4-1]
	\arrow[""{name=0, anchor=center, inner sep=0}, "{A_{(F^{B}_{c'}F^{b}_{C})}}"{description}, from=3-1, to=3-5]
	\arrow[""{name=1, anchor=center, inner sep=0}, "{A_{(F^{b'}_{C}F^{B}_c)}}"{description}, from=2-1, to=2-5]
	\arrow["{1\circ (G_{F^B_{c'},F^{b}_C}^{c})^{-1}}", from=3-5, to=4-5]
	\arrow["{1\circ G^{a}(B_C)}", from=2-5, to=3-5]
	\arrow["{G_{F^{b'}_{C},F^{B}_c}^{A}}"{xshift=0.1cm,pos=0.3}, shorten <=4pt, shorten >=12pt, Rightarrow, from=1-3, to=1]
	\arrow["{A_{(B_C)}}"{xshift=0.1cm}, shorten <=12pt, shorten >=12pt, Rightarrow, from=1, to=0]
	\arrow["{(G_{F^{B}_{c'},F^{b}_{C}}^{A})\flat}"{xshift=0.1cm,pos=0.6}, shorten <=12pt, shorten >=4pt, Rightarrow, from=0, to=4-3]
\end{tikzcd}\]

\black 


\section{Details of proof of \Cref{prop:equ-psd-transf}}
\label{app:big-diagram}

Using the definitions of $(\alpha^o)_2^{f_2,f_1}$ and $(\alpha^o)_2^{f_3,f_2f_1}$, we obtain the following diagram. 

\begin{center}
	\adjustbox{width=\textwidth}{
\begin{tikzcd}[ampersand replacement=\&,sep=scriptsize]
	{\alpha_w Ff_3Ff_2Ff_1} \&\&\&\&\&\&\&\& \\
	\\
	{Gf_3\alpha_z Ff_2Ff_1} \\
	\\
	{Gf_3Gf_2\alpha_y Ff_1} \& {} \& {} \&\& {\alpha_w Ff_3Ff_2Ff_1} \\
	\\
	{Gf_3G{f_2}G{f_1}\alpha_x} \&\& {Gf_3Gf_2\alpha_y Ff_1} \&\& {Gf_3\alpha_z Ff_2Ff_1} \&\& {\alpha_w Ff_3F(f_2f_1)} \\
	\\
	\& {Gf_3G({f_2}{f_1}) \alpha_x} \&\&\& {} \&\& {Gf_3\alpha_zF(f_2f_1)} \& {} \\
	\\
	\&\&\&\& {Gf_3G({f_2}{f_1}) \alpha_x} \& {} \& {Gf_3\alpha_zF(f_2f_1)} \&\& {\alpha_w Ff_3F(f_2f_1)} \\
	\\
	\&\&\&\& {G(f_3{f_2}{f_1}) \alpha_x} \&\&\&\& {\alpha_w F(f_3f_2f_1)} \\
	\\
	\&\&\&\&\&\& {} \&\& {G(f_3{f_2}{f_1}) \alpha_x}
	\arrow["{{\alpha^o_{f_3}}\circ1}"', from=1-1, to=3-1]
	\arrow[curve={height=-12pt}, equals, from=1-1, to=5-5]
	\arrow["{1\circ{\alpha^o_{f_2}}\circ1}"', from=3-1, to=5-1]
	\arrow[curve={height=-12pt}, equals, from=3-1, to=7-5]
	\arrow["{1\circ\alpha^o_{f_1}}"', from=5-1, to=7-1]
	\arrow[curve={height=-12pt}, equals, from=5-1, to=7-3]
	\arrow["{1\circ\eta_{f_2}\circ1}", between={0.4}{0.6}, Rightarrow, from=5-3, to=7-3]
	\arrow["{{\alpha^o_{f_3}}\circ1}"', from=5-5, to=7-5]
	\arrow[""{name=0, anchor=center, inner sep=0}, "{1\circ F^2_{f_2,f_1}}", from=5-5, to=7-7]
	\arrow[""{name=1, anchor=center, inner sep=0}, "{1\circ\alpha_{f_1}}"', from=7-1, to=7-3]
	\arrow["{1\circ G^2_{f_2,f_1}\circ1}"', from=7-1, to=9-2]
	\arrow["{1\circ{\alpha^l_{f_2}}\circ1}"', from=7-3, to=7-5]
	\arrow["{1\circ\alpha^2_{f_2.f_1}}"{pos=0.6}, between={0.3}{0.7}, Rightarrow, from=7-3, to=9-5]
	\arrow[""{name=2, anchor=center, inner sep=0}, "{1\circ F^2_{f_2,f_1}}"{description}, from=7-5, to=9-7]
	\arrow["{{\alpha^o_{f_3}}\circ1}", from=7-7, to=9-7]
	\arrow[curve={height=-12pt}, equals, from=7-7, to=11-9]
	\arrow["{1\circ\alpha_{f_2f_1}}"{description}, from=9-2, to=9-7]
	\arrow[curve={height=6pt}, equals, from=9-2, to=11-5]
	\arrow["{1\circ\epsilon_{f_2f_1}}"', between={0.3}{0.7}, Rightarrow, from=9-5, to=11-5]
	\arrow[""{name=3, anchor=center, inner sep=0}, "{1\circ{\alpha^o_{f_2f_1}}}"', from=9-7, to=11-5]
	\arrow[equals, from=9-7, to=11-7]
	\arrow["{1\circ{\alpha_{f_2f_1}}}"', from=11-5, to=11-7]
	\arrow["{G^2_{f_3,f_2f_1}\circ1}"', from=11-5, to=13-5]
	\arrow[""{name=4, anchor=center, inner sep=0}, "{{\alpha_{f_3}}\circ1}"', from=11-7, to=11-9]
	\arrow["{1\circ F^2_{f_3,f_2f_1}}", from=11-9, to=13-9]
	\arrow[""{name=5, anchor=center, inner sep=0}, "{{\alpha_{f_3f_2f_1}}}"{description}, from=13-5, to=13-9]
	\arrow[curve={height=12pt}, equals, from=13-5, to=15-9]
	\arrow["{{\alpha^o_{f_3f_2f_1}}}", from=13-9, to=15-9]
	\arrow["{1\circ\eta_{f_1}}"'{pos=0.6}, between={0.4}{0.7}, Rightarrow, from=5-2, to=1]
	\arrow[between={0.3}{0.7}, Rightarrow, dashed, from=0, to=2]
	\arrow["{1\circ\eta_{f_2f_1}}", between={0.3}{0.8}, Rightarrow, from=3, to=11-6]
	\arrow["{\eta_{f_3}\circ1}", between={0.3}{0.7}, Rightarrow, from=9-8, to=4]
	\arrow["{{\alpha^o}_2^{f_3,f_2f_1}}", between={0.3}{0.7}, Rightarrow, from=11-7, to=5]
	\arrow["{\epsilon_{f_3f_2f_1}}"{pos=0.4}, between={0.3}{0.6}, Rightarrow, from=5, to=15-7]
\end{tikzcd}
}
\end{center}

Then, using the triangle identity for  $\alpha^o_{f_2f_1}\dashv\alpha_{f_2f_1}$   

\begin{center}
\begin{tikzcd}[ampersand replacement=\&]
	{Gf_3G({f_2}{f_1}) \alpha_x} \& {} \&\& {Gf_3\alpha_zF(f_2f_1)} \\
	\\
	\& {Gf_3G({f_2}{f_1}) \alpha_x} \& {} \& {Gf_3\alpha_zF(f_2f_1)}
	\arrow["{1\circ\alpha_{f_2f_1}}"{description}, from=1-1, to=1-4]
	\arrow[curve={height=6pt}, equals, from=1-1, to=3-2]
	\arrow["{1\circ\epsilon_{f_2f_1}}"', between={0.3}{0.7}, Rightarrow, from=1-2, to=3-2]
	\arrow[""{name=0, anchor=center, inner sep=0}, "{1\circ{\alpha^o_{f_2f_1}}}"', from=1-4, to=3-2]
	\arrow[equals, from=1-4, to=3-4]
	\arrow["{1\circ{\alpha_{f_2f_1}}}"', from=3-2, to=3-4]
	\arrow["{1\circ\eta_{f_2f_1}}", between={0.3}{0.8}, Rightarrow, from=0, to=3-3]
\end{tikzcd}
$=\quad 1_{1\circ\alpha_{f_2f_1}}$
\end{center}

and the Gray-interchange axiom interacting with $\eta_{f_3}\circ1$ (see \Cref{lemma:interchange-property})

\begin{center}
\begin{tikzcd}[ampersand replacement=\&,column sep=small,row sep=scriptsize]
	{\alpha_w Ff_3Ff_2Ff_1} \&\& {\alpha_w Ff_3F(f_2f_1)} \& {} \& \\
	\\
	{Gf_3\alpha_z Ff_2Ff_1} \&\& {Gf_3\alpha_zF(f_2f_1)} \&\& {\alpha_w Ff_3F(f_2f_1)}
	\arrow[""{name=0, anchor=center, inner sep=0}, "{1\circ F^2_{f_2,f_1}}", from=1-1, to=1-3]
	\arrow["{{\alpha^o_{f_3}}\circ1}"', from=1-1, to=3-1]
	\arrow["{{\alpha^o_{f_3}}\circ1}"{description}, from=1-3, to=3-3]
	\arrow[curve={height=-12pt}, equals, from=1-3, to=3-5]
	\arrow[""{name=1, anchor=center, inner sep=0}, "{1\circ F^2_{f_2,f_1}}"', from=3-1, to=3-3]
	\arrow[""{name=2, anchor=center, inner sep=0}, "{{\alpha_{f_3}}\circ1}"', from=3-3, to=3-5]
	\arrow[between={0.3}{0.7}, Rightarrow, dashed, from=0, to=1]
	\arrow["{\eta_{f_3}\circ1}"', between={0.4}{0.6}, Rightarrow, from=1-4, to=2]
\end{tikzcd} \\
$=$
\begin{tikzcd}[ampersand replacement=\&,column sep=small,row sep=scriptsize]
	{\alpha_w Ff_3Ff_2Ff_1} \& {} \& \\
	\\
	{Gf_3\alpha_z Ff_2Ff_1} \&\& {\alpha_w Ff_3F(f_2f_1)} \\
	\\
	{Gf_3\alpha_zF(f_2f_1)} \&\& {\alpha_w Ff_3F(f_2f_1)}
	\arrow["{{\alpha^o_{f_3}}\circ1}"', from=1-1, to=3-1]
	\arrow[curve={height=-12pt}, equals, from=1-1, to=3-3]
	\arrow[""{name=0, anchor=center, inner sep=0}, "{{\alpha_{f_3}}\circ1}"', from=3-1, to=3-3]
	\arrow["{1\circ F^2_{f_2,f_1}}"', from=3-1, to=5-1]
	\arrow["{1\circ F^2_{f_2,f_1}}", from=3-3, to=5-3]
	\arrow[""{name=1, anchor=center, inner sep=0}, "{{\alpha_{f_3}}\circ1}"', from=5-1, to=5-3]
	\arrow["{\eta_{f_3}\circ1}"'{pos=0.6}, between={0.4}{0.8}, Rightarrow, from=1-2, to=0]
	\arrow[between={0.4}{0.7}, Rightarrow, dashed, from=0, to=1]
\end{tikzcd}
\end{center}

we obtain the following diagram, which has in the middle one side of the corresponding axiom for $\alpha$ as a lax transformation. 

\begin{center}
\adjustbox{width=\textwidth}{
\begin{tikzcd}[ampersand replacement=\&,column sep=scriptsize,row sep=small]
	{\alpha_w Ff_3Ff_2Ff_1} \&\&\&\&\&\& \\
	\\
	{Gf_3\alpha_z Ff_2Ff_1} \\
	\&\&\& {} \\
	{Gf_3Gf_2\alpha_y Ff_1} \&\& {} \\
	\& {} \&\& {} \\
	{Gf_3G{f_2}G{f_1}\alpha_x} \&\& {Gf_3Gf_2\alpha_y Ff_1} \&\& {Gf_3\alpha_z Ff_2Ff_1} \&\& {\alpha_w Ff_3Ff_2Ff_1} \\
	\\
	{Gf_3G({f_2}{f_1}) \alpha_x} \&\& {} \& {} \& {Gf_3\alpha_zF(f_2f_1)} \&\& {\alpha_w Ff_3F(f_2f_1)} \\
	\\
	{G(f_3{f_2}{f_1}) \alpha_x} \&\&\& {} \& {} \&\& {\alpha_w F(f_3f_2f_1)} \\
	\\
	\&\&\&\& {} \&\& {G(f_3{f_2}{f_1}) \alpha_x}
	\arrow["{{\alpha^o_{f_3}}\circ1}"', from=1-1, to=3-1]
	\arrow[curve={height=-18pt}, equals, from=1-1, to=7-7]
	\arrow["{1\circ{\alpha^o_{f_2}}\circ1}"', from=3-1, to=5-1]
	\arrow[curve={height=-12pt}, equals, from=3-1, to=7-5]
	\arrow["{\eta_{f_3}\circ1}"{pos=0.4}, between={0.2}{0.6}, Rightarrow, from=4-4, to=6-4]
	\arrow["{1\circ\alpha^o_{f_1}}"', from=5-1, to=7-1]
	\arrow[curve={height=-12pt}, equals, from=5-1, to=7-3]
	\arrow["{1\circ\eta_{f_2}\circ1}"{pos=0.4}, between={0.2}{0.6}, Rightarrow, from=5-3, to=7-3]
	\arrow[""{name=0, anchor=center, inner sep=0}, "{1\circ\alpha_{f_1}}"', from=7-1, to=7-3]
	\arrow["{1\circ G^2_{f_2,f_1}\circ1}"', from=7-1, to=9-1]
	\arrow["{1\circ{\alpha^l_{f_2}}\circ1}"', from=7-3, to=7-5]
	\arrow["{1\circ\alpha^2_{f_2.f_1}}", between={0.2}{0.8}, Rightarrow, from=7-3, to=9-3]
	\arrow[""{name=1, anchor=center, inner sep=0}, "{{\alpha_{f_3}}\circ1}", from=7-5, to=7-7]
	\arrow["{1\circ F^2_{f_2,f_1}}"', from=7-5, to=9-5]
	\arrow["{1\circ F^2_{f_2,f_1}}", from=7-7, to=9-7]
	\arrow["{1\circ\alpha_{f_2f_1}}"', from=9-1, to=9-5]
	\arrow["{G^2_{f_3,f_2f_1}\circ1}"', from=9-1, to=11-1]
	\arrow["{{\alpha^o}_2^{f_3,f_2f_1}}", between={0.3}{0.7}, Rightarrow, from=9-4, to=11-4]
	\arrow[""{name=2, anchor=center, inner sep=0}, "{{\alpha_{f_3}}\circ1}"', from=9-5, to=9-7]
	\arrow["{1\circ F^2_{f_3,f_2f_1}}", from=9-7, to=11-7]
	\arrow["{{\alpha_{f_3f_2f_1}}}"{description}, from=11-1, to=11-7]
	\arrow[curve={height=12pt}, equals, from=11-1, to=13-7]
	\arrow["{\epsilon_{f_3f_2f_1}}"{pos=0.4}, between={0.2}{0.6}, Rightarrow, from=11-5, to=13-5]
	\arrow["{{\alpha^o_{f_3f_2f_1}}}", from=11-7, to=13-7]
	\arrow["{1\circ\eta_{f_1}}"'{pos=0.3}, between={0.1}{0.6}, Rightarrow, from=6-2, to=0]
	\arrow[between={0.3}{0.7}, Rightarrow, dashed, from=1, to=2]
\end{tikzcd}
}
\end{center}

Analogously, the right hand side of the sought-for equation can be reduced to this diagram as well.
\black

\bibliographystyle{plain}

\begin{thebibliography}{10}

\bibitem{Altenkirch}
{\sc Altenkirch, T., Chapman, J., and Uustalu, T.} 
\newblock Monads need not be endofunctors. 
\newblock{\em Logical Methods in Computer Science} 11(1:3) (2015), 1--40.

\bibitem{Bourke2015A-cocategorical}
{\sc Bourke, J., and Gurski, N.}
\newblock A cocategorical obstruction to tensor products of Gray-categories.
\newblock {\em Theory and Applications of Categories}, 11 (2015), 387--409.

\bibitem{BourkeLack:skew}
{\sc Bourke, J. and Lack, S.}
\newblock Skew monoidal categories and skew multicategories.
\newblock {\em Journal of Algebra}, volume 56, 237--266, 2018.

\bibitem{BourkeLack:symmetric}
{\sc Bourke, J. and Lack, S.}
\newblock Braided skew monoidal categories.
\newblock {\em Theory and Applications of Categories}, Vol. 35 (2020) 19--63.

\bibitem{BourkeLobbia}
{\sc Bourke, J. and Lobbia, G.}
\newblock A skew approach to enrichment for Gray-categories.
\newblock {\em Advances in Mathematics}, Vol. 434:109327, 2023.

\bibitem{BourkeLobbiaCorrected}
{\sc Bourke, J. and Lobbia, G.}
\newblock A skew approach to enrichment for Gray-categories.
\newblock {Arxiv preprint 2026, \url{https://arxiv.org/abs/2212.12358v4}}

\bibitem{BourkeLobbia2}
{\sc Bourke, J. and Lobbia, G.}
\newblock Semi-strict 4-categories as enriched categories.
\newblock {Arxiv preprint 2026, \url{https://arxiv.org/abs/2610.31448}}

\bibitem{GambLob:psm}
{\sc Gambino, N. and Lobbia, G.}
\newblock On the formal theory of pseudomonads and pseudodistributive laws.
\newblock {\em Theory and Applications of Categories} 37(2) (2021), 14--56.

\bibitem{GohlaB:mapp}
{\sc Gohla, B.}
\newblock Mapping Spaces of Gray-Categories.
\newblock {\em Theory and Applications of Categories}, 29(5) (2014), 100-187.

\bibitem{Gordon1995Coherence}
{\sc Gordon, R., Power, J., and Street, R.}
\newblock Coherence for tricategories.
\newblock {\em Memoirs of the American Mathematical Society 117\/} (1995).

\bibitem{Gurskibook}
{\sc Gurski, N.}
\newblock {\em Coherence in Three-Dimensional Category Theory}
\newblock Cambridge University Press, 2013.

\bibitem{Hermida2000Representable}
C.~Hermida.
\newblock Representable multicategories.
\newblock {\em Adv. Math.}, 151(2):164--225, 2000.

  

\bibitem{KellyStreet:rew-el-2cats}
{\sc Kelly, G. M. and Street, R.}
\newblock Review of the elements of 2-categories.
\newblock {\em Category Seminar}. Lecture Notes in Mathematics, vol 420. Springer, Berlin, Heidelberg, 1974.

\bibitem{Lob:short-multi}
{\sc Lobbia, G.}
\newblock A finite approach to representable multicategories and related structures.
\newblock {\em Theory and Applications of Categories}, vol. 45, 2026, No. 10, pp 332-390, 2026.

\bibitem{Szlachanyi2012Skew}
{\sc Szlach\'{a}nyi, K.}
\newblock Skew-monoidal categories and bialgebroids.
\newblock {\em Advances in Mathematics 231}, 3-4 (2012), 1694--1730.



\end{thebibliography}

\end{document}